\documentclass[a4paper, 11pt]{article}

\usepackage[utf8]{inputenc}
\usepackage[english]{babel}
\usepackage{amsthm, amsmath, amssymb}
\usepackage[T1]{fontenc}
\usepackage{fullpage}
\usepackage{lipsum}
\usepackage{authblk}
\usepackage{url} 
\usepackage{hyperref}
\usepackage{graphicx}
\usepackage{mathtools}
\usepackage{bbm}
\usepackage{tikz-cd,tikz-3dplot}
\usepackage{enumitem}
\usepackage{soul}
\usepackage{mathrsfs}
\usepackage{subcaption}
\usepackage{float}
\usepackage{longtable}
\usepackage{array}

\usepackage[maxbibnames=99, style=numeric]{biblatex} 

\newtheorem{assumption}{Assumption}[section]
\newtheorem{definition}[assumption]{Definition}
\newtheorem{theorem}[assumption]{Theorem}
\newtheorem{proposition}[assumption]{Proposition}
\newtheorem{corollary}[assumption]{Corollary}
\newtheorem{lemma}[assumption]{Lemma}

\theoremstyle{remark}

\theoremstyle{remark}
\newtheorem{remark}[assumption]{Remark}

\newcommand{\Acal}{\mathcal{A}}

\newcommand{\Ccal}{\mathcal{C}}

\newcommand{\Ical}{\mathcal{I}}
\newcommand{\Jcal}{\mathcal{J}}

\newcommand{\Scal}{\mathcal{S}}

\newcommand{\Wcal}{\mathcal{W}}

\newcommand{\Prob}{\mathscr{P}}
\newcommand{\Mart}{\mathscr{M}}

\newcommand{\EE}{\mathbb{E}}

\newcommand{\NN}{\mathbb{N}}

\newcommand{\PP}{\mathbb{P}}

\newcommand{\RR}{\mathbb{R}}

\newcommand{\mean}{{\textup{mean}}}
\newcommand{\diam}{{\textup{diam}}}

\newcommand{\CDF}{{\textup{CDF}}}
\newcommand{\supp}{{\textup{supp}}}

\newcommand{\adj}{{\textup{adj}}}
\newcommand{\diag}{{\textup{diag}}}
\newcommand{\iadd}{{i^{*}}}

\newcommand{\interior}{{\textup{int}}}

\numberwithin{equation}{section}

\makeatletter
\renewcommand{\p@enumii}{}            

\makeatother

\newcounter{stepimg}
\renewcommand{\thestepimg}{\alph{stepimg}}

\newcommand{\stepfig}[2]{
  \refstepcounter{figure}\label{#1}\setcounter{stepimg}{0}
  \begin{longtable}{|@{\hspace{.5em}}
  >{\centering\arraybackslash}p{.32\textwidth}@{\hspace{0.5cm}}
  >{\centering\arraybackslash}p{.32\textwidth}@{\hspace{0.5cm}}
  >{\centering\arraybackslash}p{.32\textwidth}
  @{\hspace{1em}}|}
  \hline
  \multicolumn{3}{|@{\hspace{.5em}}l@{\hspace{.5em}}|}{
    \rule{0pt}{1.5em}\textbf{Figure \thefigure: #2}
  }\\[0.5em]
  \hline
  \noalign{\vskip .25em}
}

\newcommand{\closestepfig}{
  \hline
  \end{longtable}
}

\newcommand{\I}[2]{
  \refstepcounter{stepimg}
  \begin{minipage}[t]{1\linewidth}
    \centering
    \includegraphics[width=\linewidth]{#1}\\[-.1em]
    {\small\textbf{(\thestepimg)} #2}
  \end{minipage}
}

\newcommand{\R}[6]{
  \I{#1}{#2}&\I{#3}{#4}&\I{#5}{#6}\\[1.2em]
}

\begin{document}

\title{Existence of $q$-Bass martingales in the semidiscrete setting}
\author{Beatrice Acciaio\thanks{Department of Mathematics, ETH Z\"{u}rich, Switzerland. \emph{beatrice.acciaio@math.ethz.ch}}~~and~Antonio Marini\thanks{Department of Mathematics, ETH Z\"{u}rich, Switzerland. \emph{antonio.marini@math.ethz.ch}}}

\maketitle

\begin{abstract}
The class of $q$-Bass martingales provides a natural answer to a central question in martingale optimal transport: how to construct martingales with prescribed initial and terminal marginals whose transition kernel remains as close as possible to a given reference measure $q$. We prove the existence of $q$-Bass martingales when the initial marginal is supported on finitely many atoms, and establish uniqueness, up to an additive translation constant, of the associated Bass measure.
Our approach is geometric and relies on the analysis of a suitable parametrization of convex polygonal chains.
\end{abstract}

\section{Introduction}
The concept of $q$-Bass martingale finds its roots in Bass’s classical solution to the Skorokhod embedding problem (SEP), introduced in \cite{Ba83}. Given a centered distribution $\nu$, the SEP consists in finding stopping times $\tau$ such that a Brownian motion stopped at $\tau$ has distribution $\nu$. The idea of Bass is that of transporting a Gaussian random variable into the target distribution $\nu$ via a monotone map, then take conditional expectations in order to build a martingale out of it, and finally perform a time change via the Dambis–Dubins–Schwarz theorem thus finding a solution $\tau$ to the SEP.
Thinking of the above Gaussian distribution as the distribution at some time $T$ of a Brownian motion, this construction can be interpreted as a way of stretching it in order to match a target distribution  while preserving the martingale structure.

An extension of this construction appeared more recently in martingale optimal transport through the notion of stretched Brownian motion. In \cite{BaBeHuKa20}, stretched Brownian motion is introduced as the optimizer of the Martingale Benamou--Brenier formula, namely as the martingale with prescribed initial and terminal marginals which stays as close as possible to Brownian motion. This point of view has since led to a substantial literature. The dual formulation and the structural characterization of optimizers were developed further in \cite{BaBeScTs23}, while the associated Bass functional was studied in \cite{BaScTs23}. On the computational side, Conze and Henry-Labordère introduced in \cite{CoHe21} a fixed-point iteration for financial applications, leading to the Bass local volatility model. Its convergence and its use for calibration in dimension one were investigated in \cite{AcMaPa23}. More recently, \cite{HaJoLoObPa26} extended this approach to a multidimensional setting and referred to the resulting procedure as the martingale Sinkhorn algorithm, which generalizes the fixed-point iteration and whose convergence is proved by exploiting the dual formulation.
In this recent literature, the term Bass martingale is often used not only for the original construction with trivial initial law, but also as a synonym for the standard stretched Brownian motion construction, where the reference Brownian motion may start from a non-trivial initial distribution chosen so as to satisfy the prescribed marginal constraints.

In the present article, we follow this terminology. More precisely, the usual Bass martingale corresponds to the construction in Definition~\ref{def:q-Bass} when the reference measure is the standard Gaussian law. Here we study the same construction for a general reference measure $q$. This is the point of view initiated by Tschiderer in \cite{Ts24}, where the Gaussian reference law in the martingale Benamou--Brenier problem is replaced by a general measure $q$, the corresponding dual problem is derived, and the resulting optimizer is described through the notion of a $q$-Bass martingale.

\paragraph{The $q$-Bass martingale.}
The definition below is a reformulation, in the one-dimensional setting, of the definition of $q$-Bass martingale given by Tschiderer in \cite{Ts24}. Tschiderer’s definition is formulated in the $n$-dimensional setting in terms of dual convex potentials, which provides a general but rather abstract description.  In dimension one, however, one can exploit the explicit structure of the optimal transport maps for quadratic cost: by Brenier’s theorem, these maps are given by compositions of quantile and cumulative distribution functions. This allows us to avoid the use of dual potentials and to give a more concrete formulation. We denote by $\Mart(\mu,\nu)$ the set of martingale measures between two distributions $\mu$ and $\nu$. Moreover, given measurable functions $f,g$ and a distribution $\xi$, we set $f\ast g(x)=\int f(x-y)g(y)dy$ and $\xi \star f(x)=\int f(x+y)\xi(dy)$, for $x\in\RR$. See the Notations paragraph below for more details.

\begin{definition}[$q$-Bass martingale] \label{def:q-Bass}
	Let $\mu, \nu \in \Prob_1(\RR)$, $q \in \Prob(\RR)$ such that $q\ll \lambda$, and
	\begin{equation}\label{eq.mmpi}
		(M_0,M_1)\sim\pi\in\Mart(\mu,\nu).
	\end{equation}
	We say that $\pi$ is a $q$-Bass martingale if there exists a measure $\alpha \in \Prob(\RR)$ such that
	\begin{equation}
	\label{eq:q-Bass}
		M_0= (q \star T)(X_0), \qquad M_1=T(X_0+Q), \qquad \text{and} \qquad q \star T \text{ is increasing $\alpha$-a.s.},
	\end{equation}
	where $X_0 \sim \alpha$, $Q\sim q$, $X_0$ is independent of $Q$, and $T=Q_\nu \circ F_{\alpha \ast q}$. Such a  measure $\alpha$ is called Bass measure.
\end{definition}
In particular, any $q$-Bass martingale from $\mu$ to $\nu$ satisfies the following diagram.
\[
\begin{tikzcd}[row sep=0.7in, column sep = 1.4in]
  M_0 \sim \mu \arrow[r, "\text{\normalsize $q$-Bass martingale}"]  & M_1 \sim \nu  \\
  X_0 \arrow[u, "\text{\normalsize $q \star T$}"] \sim \alpha \arrow[r] & X_0+Q \sim \alpha \ast q  \arrow[u, "\text{\normalsize $T$}"]
\end{tikzcd}
\]
From \eqref{eq.mmpi}, a necessary condition for the existence of a $q$-Bass martingale from $\mu$ to $\nu$ is the convex order relation $\mu \preceq_c \nu$; see Definition \ref{def:cvx-order-irreducibility}. Indeed, Strassen's theorem \cite{St65} states that $\Mart(\mu,\nu)$ is non-empty if and only if $\mu$ is dominated by $\nu$ in convex order.

The condition in Definition~\ref{def:q-Bass} that $q \star T$ be increasing $\alpha$-a.s. is equivalent to the strict convexity of the potential $q \star \widehat v$ appearing in \cite[Definition~1.4]{Ts24}. Indeed, in our notation, $T = \nabla \widehat v$.
Moreover, when $q$ is symmetric with respect to $0$ (for instance, in the Gaussian case $q=\gamma$, where $\gamma$ is the standard Gaussian distribution), the operations $\ast$ and $\star$ coincide.

\begin{remark}[On the monotonicity of $q \star T$]
    Proposition~\ref{prop:q-star-T-well-posed} below guarantees that $q \star T$ is well defined for every $\nu \in \Prob_1(\mathbb{R})$ and every $q \in \Prob(\mathbb{R})$ satisfying $q \ll \lambda$. In general, however, whether $q \star T$ is $\alpha$-a.s. strictly monotone depends on the choice of $\alpha$. This can be seen from the following example. Let $\mu=\delta_0$, $\nu=\tfrac12\delta_{-1}+\tfrac12\delta_1$, and $q=\mathrm{Unif}_{[-2,-1]\cup[1,2]}$. 
If $\alpha=\mathrm{Unif}_{[-1/2,1/2]}$, then the transport map is $T(x)=-\mathbf{1}_{(-\infty,0)}(x)+\mathbf{1}_{[0,\infty)}(x)$ and $(q\star T)(\supp(\alpha))=\{0\}$. Hence, $q\star T$ is not increasing $\alpha$-a.s. 
On the other hand, if $\alpha=\delta_0$, then the transport map is again $T(x)=-\mathbf{1}_{(-\infty,0)}(x)+\mathbf{1}_{[0,\infty)}(x)$, but now $\supp(\alpha)=\{0\}$, so $q\star T$ is trivially increasing on $\supp(\alpha)$, and therefore increasing $\alpha$-a.s.
\end{remark}

\begin{proposition}
\label{prop:q-star-T-well-posed}
	Let $\nu \in \Prob_1(\RR)$ and $\alpha, q \in \Prob(\RR)$ such that $q \ll \lambda$. Then the map $q\star T$ in \eqref{eq:q-Bass} is $\alpha$-a.e. well-defined.
\end{proposition}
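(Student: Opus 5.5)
We need $(q\star T)(x)=\int T(x+y)\,q(dy)$ to be well defined and finite for $\alpha$-a.e. $x$, where $T=Q_\nu\circ F_{\alpha\ast q}$. The plan is to show first that $T$ is a monotone map pushing $\alpha\ast q$ forward to $\nu$. Then Fubini's theorem, together with $\nu\in\Prob_1(\RR)$, gives integrability in $y$ for $\alpha$-a.e. $x$.

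\textbf{Step 1: $T$ is well defined and pushes $\alpha\ast q$ to $\nu$.} Since $q\ll\lambda$, the convolution $\alpha\ast q$ is absolutely continuous, with density $x\mapsto\int \frac{dq}{d\lambda}(x-z)\,\alpha(dz)$. Hence $F_{\alpha\ast q}$ is continuous. For $Z\sim\alpha\ast q$, the variable $F_{\alpha\ast q}(Z)$ is then uniform on $[0,1]$, and it lies in $(0,1)$ almost surely. The quantile function $Q_\nu$ is finite on $(0,1)$, so $T(Z)=Q_\nu(F_{\alpha\ast q}(Z))$ is finite a.s. and $T_\#(\alpha\ast q)=\nu$. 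Moreover, $T$ is increasing and Borel on $\{0<F_{\alpha\ast q}<1\}$. At points where $F_{\alpha\ast q}\in\{0,1\}$ we may set $T$ to be $\pm\infty$, or any convention; this set is $(\alpha\ast q)$-null.

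\textbf{Step 2: Fubini.} Let $X_0\sim\alpha$ and $Q\sim q$ be independent. Then $X_0+Q\sim\alpha\ast q$. By Step 1 and $\nu\in\Prob_1$,
\[
\int\!\!\int |T(x+y)|\,q(dy)\,\alpha(dx)=\EE\bigl[|T(X_0+Q)|\bigr]=\int |z|\,\nu(dz)<\infty .
\]
Tonelli's theorem applies here because $(x,y)\mapsto |T(x+y)|$ is a nonnegative Borel function. It follows that $\int |T(x+y)|\,q(dy)<\infty$ for $\alpha$-a.e. $x$. On that set $(q\star T)(x)=\int T(x+y)\,q(dy)$ is a well-defined real number.

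The same computation also shows that the choice of $T$ on the null set $\{F_{\alpha\ast q}\in\{0,1\}\}$ is irrelevant. Indeed, $\alpha\otimes q\bigl(\{(x,y): x+y\in N\}\bigr)=(\alpha\ast q)(N)=0$ for any $(\alpha\ast q)$-null set $N$. So for $\alpha$-a.e. $x$, the set $\{y: x+y\in N\}$ is $q$-null, and the value of $(q\star T)(x)$ does not depend on the version of $T$.

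\textbf{Main obstacle.} The only delicate point is Step 1: making sure $F_{\alpha\ast q}$ has no atoms, so that $F_{\alpha\ast q}(X_0+Q)$ is uniform and avoids $\{0,1\}$. This is exactly where $q\ll\lambda$ is used; $\alpha$ itself may be arbitrary. Once that is settled, the rest is a direct Tonelli argument.
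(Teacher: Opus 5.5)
Your proposal is correct and follows essentially the same route as the paper: both apply Tonelli to $\int\!\int |T(x+y)|\,q(dy)\,\alpha(dx)$ and reduce it to $\int_0^1 |Q_\nu(u)|\,du<\infty$. The only difference is presentation. The paper carries this out by explicit changes of variables, writing $\alpha$ through $Q_\alpha$ and then substituting $u=F_{\alpha\ast q}(z)$, whereas you phrase the same step as $T_\#(\alpha\ast q)=\nu$ via the probability integral transform, which is valid because $F_{\alpha\ast q}$ is continuous.
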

The proof is postponed to Appendix~\ref{app.post}.

\paragraph{Optimality of the $q$-Bass martingale.} Assuming that $\mu, \nu, q \in \Prob_2(\RR)$, it has been shown by Tschiderer in \cite{Ts24} that a $q$-Bass martingale from $\mu$ to $\nu$, if it exists, is the unique optimizer of the problem 
\begin{equation}
	\label{def:WT^q_S}
		WT^q_\Scal(\mu, \nu) = \sup_{\pi \in \Mart(\mu, \nu)} \Scal(\pi), \qquad \Scal(\pi)=\int_\RR   \mu(dx) \sup_{p \in \Pi(\pi_x, q)} \int_\RR  m b p(dm, db);
\end{equation}
see \cite{BaBeHuKa20} for the classical Bass case ($q=\gamma$).
In particular, under the assumption that $\mu$ and $\nu$ have finite second moment, solving $WT^q_\Scal(\mu, \nu)$ is equivalent to solving
\begin{equation}
	\label{def:WT^q_I}
		WT^q_\Ical(\mu, \nu) = \inf_{\pi \in \Mart(\mu, \nu)} \Ical(\pi), \qquad \Ical(\pi)=\int_\RR    \Wcal_2^2(\pi_x, q) \mu(dx),
\end{equation}
where $\Wcal_2$ denotes the $2$-Wasserstein distance. The fact that the $q$-Bass martingale is a solution to this problem is what best illustrates its characteristic of being the martingale whose transition kernel remains as close as possible to the measure $q$ while matching the prescribed marginals.

Hasenbichler et al. showed in \cite{HaJoLoObPa26} that $WT^q_\Scal(\mu, \nu)$ admits a $q$-Bass martingale as optimizer when $\mu, \nu \in \Prob_p(\RR)$ for $p>1$ and $q$ is normally distributed.

\paragraph{The fixed-point equations.} As shown by Conze and Henry-Labordère in \cite{CoHe21} for the case $q=\gamma$, the problem of computing a $q$-Bass martingale can be formulated in terms of the fixed points of a suitable operator. The same formulation extends to absolutely continuous reference measures with full support. More precisely, for $F\in\CDF$, set
\begin{equation}
\label{def:fixed_point_operator}
\Acal_qF:=F_\mu\circ\bigl(q\star(Q_\nu\circ(q\ast F))\bigr).
\end{equation}
A direct adaptation of \cite[Theorem~2.1]{CoHe21} shows that, if $\supp(q)=\RR$ and $\nu$ is not a Dirac measure, then a $q$-Bass martingale from $\mu$ to $\nu$ exists if and only if there exists a distribution $\alpha\in\Prob(\RR)$ such that
\begin{equation}
\label{eq:fixed_point_eq}
F_\alpha=\Acal_qF_\alpha.
\end{equation}
This fixed-point characterization explains why the Bass distribution $\alpha$ is also referred to in the literature as the fixed-point distribution.

For a general absolutely continuous reference measure $q$, however, the map $q\star T$ need not be globally strictly increasing, so that the characterization in terms of cumulative distribution functions may fail. In this case, the appropriate formulation is given by the quantile equation introduced in \cite{AcMaPa23}, supplemented by the monotonicity condition in Definition~\ref{def:q-Bass}. We thus obtain the following characterization, valid for arbitrary absolutely continuous reference measures.

\begin{theorem}[Quantile characterization]
\label{rmk:fixed-point-syst}
Let $\mu,\nu\in\Prob_1(\RR)$ and let $q\in\Prob(\RR)$ be such that $q\ll\lambda$. Then a $q$-Bass martingale from $\mu$ to $\nu$ exists if and only if there exists a distribution $\alpha\in\Prob(\RR)$ such that
\begin{equation}
\label{eq:fixed-point_equiv}
\int_\RR Q_\nu\left(\int_0^1F_q\bigl(Q_\alpha(u)-Q_\alpha(v)+z\bigr)\,dv\right)\rho_q(z)\,dz
=Q_\mu(u), \qquad \text{$\lambda$-a.e. $u\in(0,1)$}
\end{equation}
and $S_\alpha := q \star T_\alpha$ is increasing $\alpha$-a.s., where $T_\alpha = Q_\nu\circ F_{\alpha\ast q}$.
\end{theorem}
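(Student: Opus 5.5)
The plan is to prove both implications by rewriting the first two relations in \eqref{eq:q-Bass} in quantile form. The key observation is that, with $X_0 = Q_\alpha(U)$ for $U\sim\mathrm{Unif}(0,1)$, equation \eqref{eq:fixed-point_equiv} says exactly that $S_\alpha(Q_\alpha(u)) = Q_\mu(u)$ for a.e.\ $u$. Indeed, $F_{\alpha\ast q}(x) = \int_0^1 F_q(x - Q_\alpha(v))\,dv$. Substituting $x = Q_\alpha(u)+z$ with $z\sim q$ (density $\rho_q$), we get $(q\star T_\alpha)(Q_\alpha(u)) = \int Q_\nu(F_{\alpha\ast q}(Q_\alpha(u)+z))\rho_q(z)\,dz$, which is the left-hand side of \eqref{eq:fixed-point_equiv}. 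Proposition~\ref{prop:q-star-T-well-posed} guarantees that this integral is well defined for $\alpha$-a.e.\ point, hence for a.e.\ $u$.

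\emph{($\Rightarrow$)} Let $\pi$ be a $q$-Bass martingale with Bass measure $\alpha$. Then $S_\alpha$ is increasing $\alpha$-a.s.\ by definition, and $S_\alpha(X_0) = M_0 \sim \mu$. Write $X_0 \stackrel{d}{=} Q_\alpha(U)$. Since $S_\alpha$ is increasing on a set of full $\alpha$-measure and $Q_\alpha$ is nondecreasing, the map $u\mapsto S_\alpha(Q_\alpha(u))$ is a.e.\ nondecreasing on $(0,1)$ and pushes $\mathrm{Unif}(0,1)$ to $\mu$. A nondecreasing function pushing the uniform law to $\mu$ coincides a.e.\ with $Q_\mu$, which gives \eqref{eq:fixed-point_equiv}. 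I will need a little care here: the set where $S_\alpha$ fails to be monotone is $\alpha$-null, so its preimage under $Q_\alpha$ is Lebesgue-null.

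\emph{($\Leftarrow$)} Given $\alpha$ satisfying \eqref{eq:fixed-point_equiv} and the monotonicity condition, take $U\sim\mathrm{Unif}(0,1)$ and $Q\sim q$ independent, and set $X_0 = Q_\alpha(U)$, $M_0 = S_\alpha(X_0)$ and $M_1 = T_\alpha(X_0+Q)$. Then:
\begin{itemize}
\item $M_0 = Q_\mu(U)\sim\mu$ by \eqref{eq:fixed-point_equiv}.
\item $M_1\sim\nu$, because $X_0+Q\sim\alpha\ast q$, which is absolutely continuous since $q\ll\lambda$. Hence $F_{\alpha\ast q}(X_0+Q)$ is uniform and $Q_\nu$ of it has law $\nu$.
\item Integrability of $M_1$ follows from $\nu\in\Prob_1$.
\end{itemize}

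The martingale property is the main obstacle. Independence gives $\EE[M_1\mid X_0] = (q\star T_\alpha)(X_0) = M_0$, but we need $\EE[M_1\mid M_0] = M_0$, i.e.\ conditioning on $\sigma(M_0)$ rather than $\sigma(X_0)$. This is where the monotonicity of $S_\alpha$ enters. Since $S_\alpha$ is strictly increasing on a set $A$ with $\alpha(A)=1$, it is injective there, with a measurable inverse on $S_\alpha(A)$ (measurable by Lusin–Souslin for injective Borel maps, or directly by monotonicity). Hence $\sigma(M_0) = \sigma(X_0)$ up to null sets, and the tower property gives $\EE[M_1\mid M_0] = M_0$.

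If "increasing" in Definition~\ref{def:q-Bass} is meant only non-strictly, I would instead condition on $X_0$ and use the tower property with $\sigma(M_0)\subseteq\sigma(X_0)$. Then $\EE[M_1\mid M_0] = \EE[S_\alpha(X_0)\mid M_0] = M_0$, which holds trivially. So in either reading, the law $\pi$ of $(M_0,M_1)$ belongs to $\Mart(\mu,\nu)$, and $\alpha$ is a Bass measure.
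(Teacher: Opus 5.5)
Your proposal is correct and follows the paper's proof essentially step by step. You identify the left-hand side of \eqref{eq:fixed-point_equiv} with $S_\alpha(Q_\alpha(u))$, use $(S_\alpha)_\#\alpha=\mu$ together with $\alpha$-a.s.\ monotonicity for the forward direction, and for the converse build $M_0=S_\alpha(X_0)$, $M_1=T_\alpha(X_0+Q)$ with $\EE[M_1\mid X_0]=M_0$. The injectivity detour identifying $\sigma(M_0)$ with $\sigma(X_0)$ is unnecessary: the tower-property argument you give at the end, which the paper uses implicitly, already settles the martingale property however ``increasing'' is read.
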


\begin{proof}
For every $x,z\in\RR$,
\[
F_{\alpha\ast q}(x+z)=\int_0^1F_q\bigl(x-Q_\alpha(v)+z\bigr)\,dv,
\]
and therefore the left-hand side of \eqref{eq:fixed-point_equiv} is precisely $S_\alpha(Q_\alpha(u))$. Assume first that a $q$-Bass martingale exists with Bass distribution $\alpha$. Then $(S_\alpha)_\#\alpha=\mu$ and $S_\alpha$ is increasing $\alpha$-a.s. Hence $S_\alpha\circ Q_\alpha=Q_\mu$ $\lambda$-a.e. on $(0,1)$, which yields \eqref{eq:fixed-point_equiv}.

Conversely, suppose that $\alpha$ satisfies \eqref{eq:fixed-point_equiv} and that $S_\alpha$ is increasing $\alpha$-a.s. If $U\sim\text{Unif}_{[0,1]}$, then $S_\alpha(Q_\alpha(U))=Q_\mu(U)$ a.s., and therefore $(S_\alpha)_\#\alpha=\mu$. Let $X_0\sim\alpha$ and $Y\sim q$ be independent, and define $M_0:=S_\alpha(X_0)$ and $M_1:=T_\alpha(X_0+Y)$. Since $\alpha\ast q$ is atomless, $(T_\alpha)_\#(\alpha\ast q)=\nu$, and hence $M_1\sim\nu$. Moreover, $\EE[M_1\mid X_0]=(q\star T_\alpha)(X_0)=S_\alpha(X_0)=M_0$. Thus $(M_0,M_1)$ is a $q$-Bass martingale from $\mu$ to $\nu$.
\end{proof}

\paragraph{An equivalent characterization of convex order.}In the present work, we characterize convex order in terms of integrated quantile functions.

\begin{definition}[Convex order and irreducibility]\label{def:cvx-order-irreducibility}
Let $\mu,\nu\in\Prob_1(\RR)$. We say that $\mu$ is dominated by $\nu$ in
convex order, and write $\mu\preceq_c \nu$, if
\[
	\int_{\RR}\varphi\,d\mu
	\leq
	\int_{\RR}\varphi\,d\nu
\]
for every convex function $\varphi:\RR\to\RR$ such that both integrals are
well-defined.

For $\rho\in\Prob_1(\RR)$, define its potential function by
\[
	C_\rho(x):=\int_{\RR}(y-x)^+\,\rho(dy),
	\qquad x\in\RR.
\]
We say that the pair $(\mu,\nu)$ is
irreducible if $\mu \preceq_c \nu$ and the set
\[
	I:=\{x\in\RR:C_\mu(x)<C_\nu(x)\}
\]
is an interval and satisfies $\mu(I)=1$.
\end{definition}

\begin{remark}[Potential-function characterization of the convex order] \label{rmk:convex-order}
By the potential-function characterization of the convex order, see \cite[Section 2.2]{BeJu16}, for $\mu,\nu\in\Prob_1(\RR)$ one has $\mu \preceq_c \nu$ if and only if $C_\mu(x)\leq C_\nu(x)$ for all $x \in \RR$  and $\mean(\mu)=\mean(\nu)$. 
\end{remark}

\begin{definition}
\label{def:integ_quant}
	Let $\eta \in \Prob_1(\RR)$. The integrated quantile function of $\eta$ is the function $U_\eta:[0,1] \rightarrow \RR$ given by
	\[
		U_\eta(p)= \int_0^p Q_\eta(u) du.
	\]
\end{definition}
Crucial in our analysis will be the following characterization of convex order and irreducibility in terms of the integrated quantile functions.
\begin{proposition}
\label{prop:convex_order_char}
	Let $\eta, \eta' \in \Prob_1(\RR)$. Then the following statements hold:
	\begin{enumerate}[label = (\roman*)]
		\item \label{pot1} $U_\eta$ is well-defined in $[0,1]$ and convex, with $U_\eta(0)=0$ and $U_\eta(1)=\mean(\eta)$.
		\item \label{pot2} The distribution $\eta$ is dominated in convex order by $\eta'$ if and only if 
		\[
			U_\eta(1) = U_{\eta'}(1), \qquad \text{and} \qquad U_\eta(p) \geq U_{\eta'}(p), \quad \text{for all } p \in (0,1).
		\]
		\item \label{pot3} The pair $(\eta, \eta')$ is irreducible if and only if 
		\[
			U_\eta(1) = U_{\eta'}(1), \qquad \text{and} \qquad U_\eta(p) > U_{\eta'}(p), \quad \text{for all } p \in (0,1).
		\]
	\end{enumerate}
\end{proposition}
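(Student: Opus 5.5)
The plan is to connect $U_\eta$ to the potential function $C_\eta$ through Legendre--Fenchel duality, and then transfer Remark~\ref{rmk:convex-order} and the definition of irreducibility.

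For \ref{pot1}: since $\eta\in\Prob_1(\RR)$, we have $\int_0^1|Q_\eta(u)|\,du=\int|x|\,\eta(dx)<\infty$. Hence $U_\eta$ is finite and continuous on $[0,1]$, with $U_\eta(0)=0$ and $U_\eta(1)=\int_0^1 Q_\eta\,du=\mean(\eta)$. Convexity follows because $U_\eta$ is the integral of the nondecreasing function $Q_\eta$.

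The key identity is
\[
	U_\eta(p)=\min_{x\in\RR}\bigl\{px+\mean(\eta)-x-C_\eta(x)\bigr\}...
\]
It is cleaner to write it as
\[
	U_\eta(p)=\inf_{x\in\RR}\Bigl\{px-\int_0^1 (x-Q_\eta(u))^+\,du\Bigr\}+ \text{(nothing)}.
\]
This holds because
\[
	\int_0^p Q_\eta(u)\,du = \inf_x \Bigl\{px - \int_0^1 (x-Q_\eta(u))^+ du\Bigr\}\cdot(-1)\cdots
\]
Rather than fight signs, I will work directly with the elementary inequality
\[
	\int_0^p Q_\eta(u)\,du \le px - \int_0^1 (x-Q_\eta(u))^+du + \int_0^1\bigl(Q_\eta(u)-x\bigr)\mathbf 1_{\{u\le p\}}du+\dots
\]
Cleanly, the statement I need is: for every $x$ and $p$,
\[
	U_\eta(p)\ \ge\ px-P_\eta(x), \qquad P_\eta(x):=\int(x-y)^+\eta(dy)=C_\eta(x)+x-\mean(\eta),
\]
with equality when $x=Q_\eta(p)$. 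This follows from $\int_0^p Q_\eta \ge \int_0^1 Q_\eta\wedge x\ldots$. Put concretely, $(x-Q_\eta(u))^+\ge (x-Q_\eta(u))\mathbf 1_{u\le p}$, which gives $P_\eta(x)\ge px-U_\eta(p)$, with equality at $x=Q_\eta(p)$ by monotonicity of $Q_\eta$. Hence $-U_\eta$ and $P_\eta$ are Legendre conjugates:
\[
	U_\eta(p)=\inf_x\{px-P_\eta(x)\}\ \text{on }[0,1], \qquad P_\eta(x)=\sup_{p\in[0,1]}\{px-U_\eta(p)\},
\]
the second equality by Fenchel--Moreau, since $U_\eta$ is convex and continuous on $[0,1]$.

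For \ref{pot2}: assume the means agree, so $U_\eta(1)=U_{\eta'}(1)$. Then $C_\eta\le C_{\eta'}$ is equivalent to $P_\eta\le P_{\eta'}$. Conjugation reverses order, so this is in turn equivalent to $U_\eta\ge U_{\eta'}$ on $[0,1]$. Combining with Remark~\ref{rmk:convex-order} gives \ref{pot2}.

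For \ref{pot3}, which I expect to be the main obstacle, I need to translate strict inequality on $(0,1)$ into the statement that $I=\{C_\eta<C_{\eta'}\}$ is an interval carrying $\eta$. I first note that $C_{\eta'}-C_\eta$ is nonnegative, and that $\{P_\eta=P_{\eta'}\}$ is the set where convex $P_{\eta'}$ touches $P_\eta$ from above.

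$(\Leftarrow)$ Suppose $U_\eta>U_{\eta'}$ on $(0,1)$ and $x$ is a touching point. Take $p\in\partial P_{\eta'}(x)$, which gives
\[
	U_{\eta'}(p)=px-P_{\eta'}(x)=px-P_\eta(x)\le U_\eta(p)\le U_{\eta'}(p).
\]
So $U_\eta(p)=U_{\eta'}(p)$, which forces $p\in\{0,1\}$. If $p=0$, then $P_{\eta'}(x)=0$, so $x\le\inf\supp\eta'$. If $p=1$, then $x\ge\sup\supp\eta'$. Hence $I$ is an open interval $(\ell,r)$ with endpoints at the support bounds of $\eta'$. Since $\eta\preceq_c\eta'$ forces $\supp\eta\subset[\ell,r]$, I also need to rule out atoms of $\eta$ at $\ell$ or $r$. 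If $\eta$ had an atom at $\ell$, then $Q_\eta=\ell$ near $0$. Since $Q_{\eta'}\ge\ell$, this gives $U_\eta(p)=p\ell\le U_{\eta'}(p)$ for small $p$, a contradiction. Therefore $\eta(I)=1$.

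$(\Rightarrow)$ Suppose $U_\eta(p)=U_{\eta'}(p)$ for some $p\in(0,1)$. Take $x=Q_\eta(p)$ (or any point of the subdifferential). Equality of the conjugates at $p$ gives $P_\eta(x)=P_{\eta'}(x)$, so $x\notin I$. Since $x$ sits at an interior quantile level of $\eta$, the set $I$ either fails to be an interval or misses $\eta$-mass. Making this precise is the delicate step. I would argue that $\eta$ has mass on both sides of $x$, or that $x$ is an atom of $\eta$, and in either case $\eta(I)<1$ when $I$ is an interval.
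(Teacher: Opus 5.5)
Your Legendre-duality framework is the right one. It is the paper's identity $\overline U_\rho(q)=\inf_x\{C_\rho(x)+qx\}$, rewritten with $p=1-q$ and $P_\eta(x)=C_\eta(x)+x-\mean(\eta)$. Deriving (ii) from it by order reversal under conjugation, instead of citing Shaked--Shanthikumar as the paper does, is a legitimate self-contained alternative. One correction there: your inequality $P_\eta(x)\ge px-U_\eta(p)$, with equality at $x=Q_\eta(p)$, gives $U_\eta(p)=\sup_x\{px-P_\eta(x)\}$, not $\inf$; for $\eta=\delta_0$ the infimum is $-\infty$. So $U_\eta$ itself, not $-U_\eta$, is the conjugate of $P_\eta$, and with that fix (ii) goes through.

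The real problems are in (iii), where in both directions you evaluate the duality at the wrong point.

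In $(\Leftarrow)$, the last link $U_\eta(p)\le U_{\eta'}(p)$ of your chain is the reverse of your hypothesis. For $p\in\partial P_{\eta'}(x)$ nothing forces $p\in\{0,1\}$. For the irreducible pair $\eta=\delta_0$, $\eta'=\tfrac12\delta_{-1}+\tfrac12\delta_1$, the point $x=-1$ is a touching point with $\partial P_{\eta'}(-1)=[0,\tfrac12]$. The fix is to take $p\in\partial P_\eta(x)=[F_\eta(x-),F_\eta(x)]$, the subdifferential of the lower function. Then $U_\eta(p)=px-P_\eta(x)=px-P_{\eta'}(x)\le U_{\eta'}(p)\le U_\eta(p)$, so $\partial P_\eta(x)\subseteq\{0,1\}$. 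The case $p=0$ gives $P_{\eta'}(x)=P_\eta(x)=0$, and $p=1$ gives $C_{\eta'}(x)=C_\eta(x)=0$. After this, your identification $I=(\inf\supp\eta',\sup\supp\eta')$ and your exclusion of endpoint atoms are correct. They even give a more explicit description of $I$ than the paper's argument via $D'=F_{\eta'}-F_\eta$.

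In $(\Rightarrow)$, the choice $x=Q_\eta(p)$ fails. From $U_\eta(p)=px-P_\eta(x)$ and $U_{\eta'}(p)\ge px-P_{\eta'}(x)$ you only recover $P_{\eta'}(x)\ge P_\eta(x)$, which you already knew. Concretely, take $\eta=\tfrac12\delta_{-1}+\tfrac12\delta_1$, $\eta'=\mathrm{Unif}_{[-2,2]}$ and $p=\tfrac12$. Then $U_\eta(\tfrac12)=U_{\eta'}(\tfrac12)=-\tfrac12$, yet $P_\eta(-1)=0<\tfrac18=P_{\eta'}(-1)$, so $Q_\eta(\tfrac12)=-1\in I$. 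Here $I=(-2,0)\cup(0,2)$, and the relevant touching point is $0=Q_{\eta'}(\tfrac12)$.

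The paper instead takes $x$ to be a $p$-quantile of $\eta'$, i.e.\ a maximizer on the $\eta'$ side. Then $px-P_{\eta'}(x)=U_{\eta'}(p)=U_\eta(p)\ge px-P_\eta(x)\ge px-P_{\eta'}(x)$. So $x$ is simultaneously a touching point and a $p$-quantile of $\eta$, i.e.\ $F_\eta(x-)\le p\le F_\eta(x)$. This second property is exactly what closes the step you left open. If $I$ were an interval with $\eta(I)=1$ and $x\notin I$, then $I$ would lie on one side of $x$. That forces $F_\eta(x)=0$ or $F_\eta(x-)=1$, which is impossible for $p\in(0,1)$. As it stands, your proposal does not prove this direction.
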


\begin{figure}[h]
     \centering
          \begin{subfigure}[t]{0.45\textwidth}
         \centering
         \includegraphics[width=\textwidth]{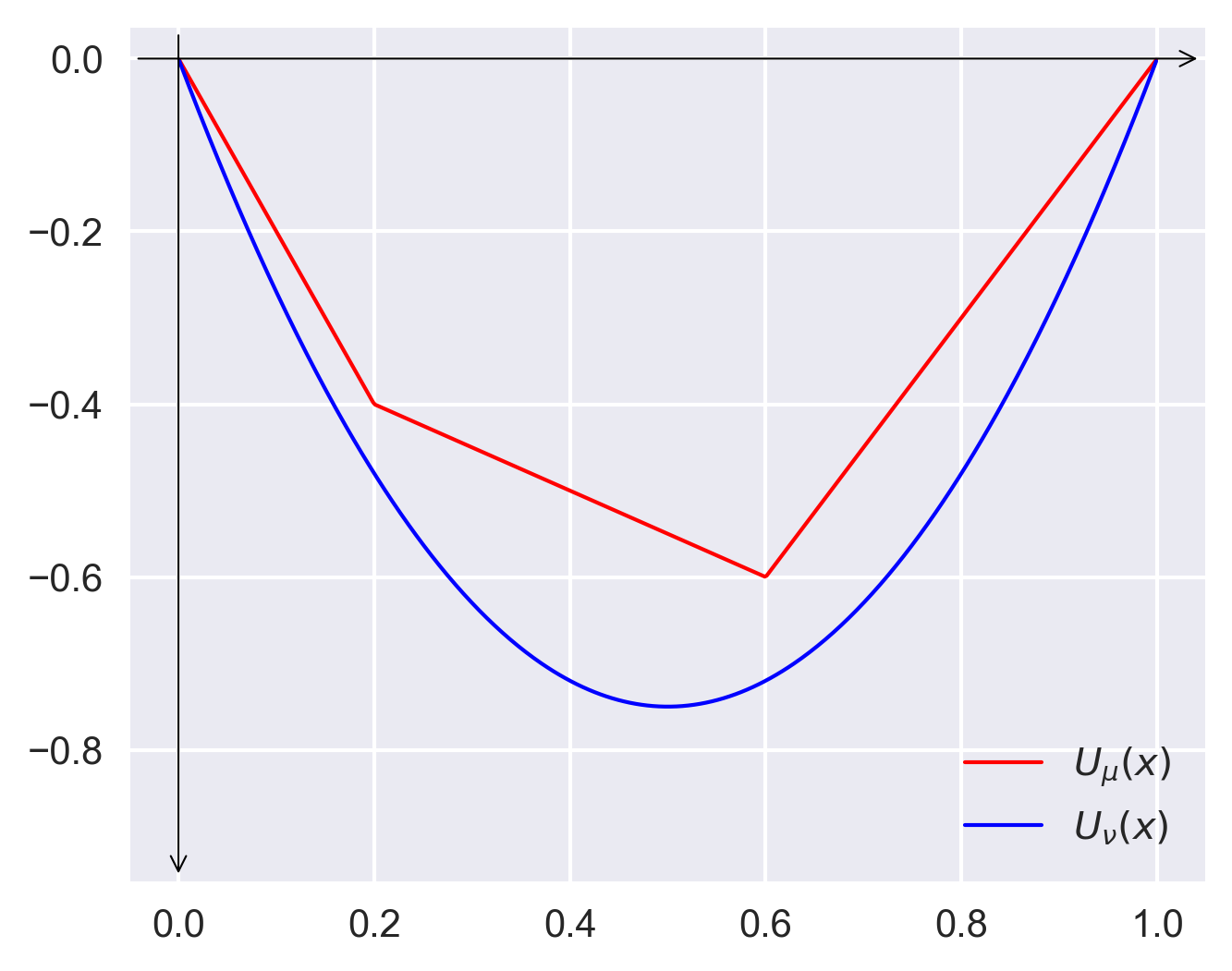}
         \captionsetup{justification=centering}
         \caption{The pair $(\mu, \nu)$ is irreducible\\ ($\mu = 0.2\delta_{-2}+0.4\delta_{-0.5}+0.4\delta_{1.5}$ , $\nu = \text{Unif}_{[-3,3]}$).}
          \label{fig:integrated_quantile_1}

     \end{subfigure}
     \hfill
     \begin{subfigure}[t]{0.45\textwidth}
         \centering
         \includegraphics[width=\textwidth]{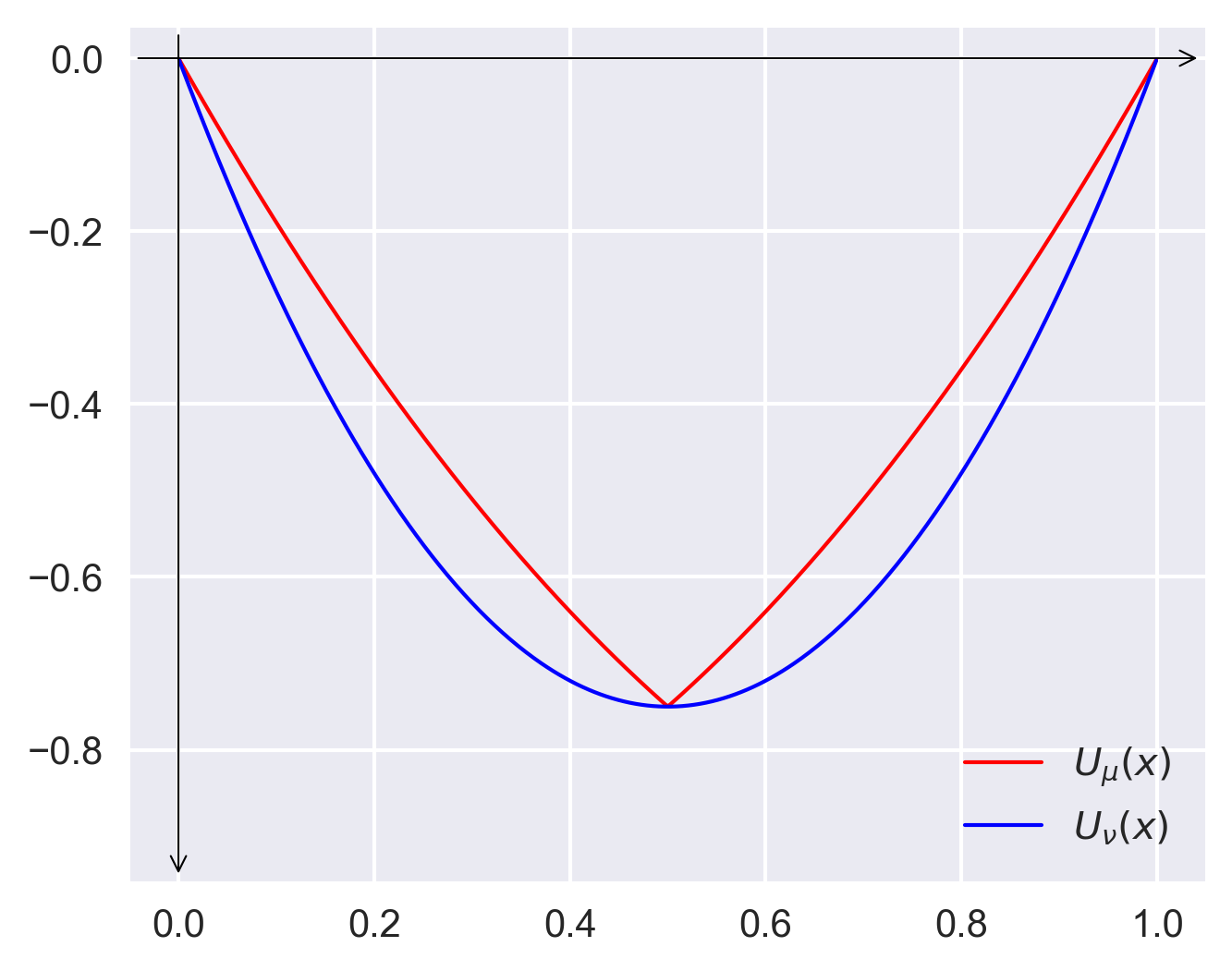}
         \captionsetup{justification=centering}
         \caption{$\mu \preceq_c \nu$, but $(\mu, \nu)$ is not irreducible ($\mu = \text{Unif}_{[-2,-1]\cup[1,2]}$ , $\nu = \text{Unif}_{[-3,3]}$).}
        \label{fig:integrated_quantile_2}

     \end{subfigure}
     \caption{Convex order and irreducibility in terms of integrated quantile functions.}
\end{figure}
The proof is postponed to Appendix~\ref{app.post}.

\subsection{Main contributions}
This section presents the main contributions of the paper; the proofs are deferred to Section~\ref{sec:existence}. 
 Throughout this section, the results are formulated in the semidiscrete setting, meaning that $\mu$ is assumed to be supported on finitely many atoms. This assumption is crucial in the proofs, as it allows the existence problem for $q$-Bass martingales to be linked to the geometry of convex polygonal chains.

\begin{theorem}[Existence of the $q$-Bass martingale for $\mu$ finitely supported]
\label{thm:q-bass-existence}
	Let $\mu, \nu \in \Prob_1(\RR)$, and let $q \in \Prob(\RR)$ be such that $q \ll \lambda$. If $(\mu, \nu)$ is irreducible and $\mu$ is supported on finitely many atoms, there exists a $q$-Bass martingale from $\mu$ to $\nu$.
\end{theorem}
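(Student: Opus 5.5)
Since $\mu=\sum_{i=1}^n m_i\delta_{x_i}$ with $x_1<\dots<x_n$, I will look for a Bass measure of the form $\alpha=\sum_{i=1}^n m_i\delta_{a_i}$ with $a_1<\dots<a_n$ and the same weights as $\mu$. By Theorem~\ref{rmk:fixed-point-syst}, existence reduces to two conditions. First, the system
\[
S_\alpha(a_i)=\int_\RR Q_\nu\bigl(F_{\alpha\ast q}(a_i+z)\bigr)\rho_q(z)\,dz = x_i,\qquad i=1,\dots,n,
\]
must hold. Second, $S_\alpha$ must be increasing on $\{a_1,\dots,a_n\}$. If the first condition holds, the second is automatic, because the $x_i$ are increasing. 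Since translating all $a_i$ by a common constant leaves $\alpha$ unchanged up to a shift of $T$, I fix $a_1=0$ (or impose a normalisation). The unknowns are then the gaps $d=(d_1,\dots,d_{n-1})\in(0,\infty)^{n-1}$ with $a_{i+1}=a_i+d_i$.

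\textbf{Geometric reformulation.} For a given $\alpha$, the mixture $\alpha\ast q=\sum_i m_i\,q(\cdot-a_i)$ splits $[0,1]$ into the contributions of each atom. More precisely, $U:=F_{\alpha\ast q}(X_0+Q)$ is uniform, and conditionally on $X_0=a_i$ it has some law $\kappa_i$ on $[0,1]$ with $\sum_i m_i\kappa_i=\mathrm{Leb}$. The condition $S_\alpha(a_i)=x_i$ then reads $\int Q_\nu\,d\kappa_i=x_i$.

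I encode this through the convex polygonal chain with vertices $(P_k,U_\mu(P_k))$, where $P_k=m_1+\dots+m_k$. The martingale condition is equivalent to the following statement: the ``chain'' $k\mapsto\sum_{i\le k}m_i\int Q_\nu\,d\kappa_i$ coincides with the vertices of $U_\mu$. This uses $U_\mu(P_k)=\sum_{i\le k}m_ix_i$ by Proposition~\ref{prop:convex_order_char}\ref{pot1}. For any $d$, the vector $G(d)_k:=\sum_{i\le k}m_i\int Q_\nu\,d\kappa_i^{d}$ has $G(d)_n=\mean(\nu)=U_\mu(1)$. The goal is to solve $G(d)_k=U_\mu(P_k)$ for $k=1,\dots,n-1$.

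\textbf{Degree/continuity argument.} The plan proceeds in four steps.

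(i) Show that $d\mapsto G(d)$ is continuous on $(0,\infty)^{n-1}$. This follows from $q\ll\lambda$, since then $F_{\alpha\ast q}$ is continuous and depends continuously on $d$, $Q_\nu$ is integrable, and dominated convergence applies.

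(ii) Study the boundary behaviour. As $d_k\to\infty$, the first $k$ atoms separate from the rest, so $\kappa_1,\dots,\kappa_k$ concentrate on $[0,P_k]$. Hence $G(d)_k\to U_\nu(P_k)<U_\mu(P_k)$, with strict inequality by irreducibility and Proposition~\ref{prop:convex_order_char}\ref{pot3}. As $d_k\to0$ with the other gaps bounded, atoms $k$ and $k+1$ merge, and the partial sums become too large: in the limit $G(d)_k\ge$ the value given by the chord, which would violate convexity relative to $U_\mu$. More precisely, I expect $G(d)_k-U_\mu(P_k)$ to change sign between the faces $d_k\to0$ and $d_k\to\infty$.

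(iii) Use the comparison $G(d)_k\ge U_\nu(P_k)$, which holds since $\sum_{i\le k}m_i\kappa_i$ has mass $P_k$ dominated by Lebesgue measure. This shows that the chain defined by $G$ always lies above $U_\nu$, as a convex polygonal chain should.

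(iv) Apply a Poincaré–Miranda type theorem on a large box $[\varepsilon,R]^{n-1}$, with components $f_k(d)=G(d)_k-U_\mu(P_k)$, to obtain a zero.

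\textbf{Main obstacle.} The hard step is (ii) for the face $d_k\to0$. There, $G_k$ depends on all gaps, and the sign must be uniform over the other coordinates in the box. Moreover, $q$ may have bounded or disconnected support, so ``merging'' atoms does not automatically push $G_k$ above $U_\mu(P_k)$. If Poincaré–Miranda fails, I would first try a monotonicity structure: show that $G_k$ is nonincreasing in $d_k$ and nondecreasing in $d_j$ for $j\neq k$, in the spirit of a gross-substitutes property. A sequential sweep argument or a continuous-Gale/Brouwer argument would then produce the zero. An alternative for the degenerate face is to allow $d_k=0$ by collapsing two atoms, and to use strict convexity of the $U_\mu$ chain at its vertex $P_k$ to derive the sign.
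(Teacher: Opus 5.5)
Your reduction is correct and matches the paper's. With $a_1=0$ and gaps $d$, your $G$ is the $n$-atomic $q$-Bass map $f$, and the system $G_k(d)=U_\mu(P_k)$ is \eqref{eq:fixed_point_atoms2}. Steps (i), (iii) and the face $d_k\to\infty$ in (ii) are Propositions~\ref{prop:q-Bass-map-stability} and~\ref{prop:mainThm-n_atoms-part1}.

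The gap is the face $d_k\to0$, and it is not merely technical: the sign Poincar\'e--Miranda needs there can fail on every large box. When $d_k=0$ the atoms $k,k+1$ coincide, so $(P_k,G_k(d))$ lies \emph{on the chord} joining $(P_{k-1},G_{k-1}(d))$ and $(P_{k+1},G_{k+1}(d))$, not on a chord of $U_\mu$. Strict convexity of $U_\mu$ at $P_k$ therefore gives $G_k(d)>U_\mu(P_k)$ only when these neighbouring vertices are (nearly) matched. On a face of the box the other gaps are free. When they are large, $G_{k\pm1}(d)\approx U_\nu(P_{k\pm1})$, so $G_k(d)$ approaches the chord of $U_\nu$ over $[P_{k-1},P_{k+1}]$, which may lie below $U_\mu(P_k)$. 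Concretely, take $\mu=\frac{1}{3}(\delta_{-1}+\delta_0+\delta_1)$, $\nu=\mathrm{Unif}_{[-L,L]}$ with $L>3$ (so $U_\nu(p)=-Lp(1-p)$ and $(\mu,\nu)$ is irreducible), and $q=\gamma$. On the face $\{d_1=\varepsilon\}$ at $d_2=R$, monotonicity and collinearity give $G_1(\varepsilon,R)\le G_1(0,R)=\frac{1}{2}G_2(0,R)\to\frac{1}{2}U_\nu(\frac{2}{3})=-\frac{L}{9}<-\frac{1}{3}=U_\mu(\frac{1}{3})$. Meanwhile the opposite face $\{d_1=R\}$ forces $R$ past the point where $G_1(R,0)$ drops to $-\frac{1}{3}$. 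So (iv) fails for large boxes, and nothing in your plan produces a box on which its hypotheses hold.

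Your fallback monotonicity also has the wrong sign. Each $G_k$ is nonincreasing in \emph{every} gap: under the regularity assumptions, Proposition~\ref{prop:regulatiry_f} gives $\partial_{d_s}G_r=-\sum_{j\le\min(r,s)}\sum_{\ell>\max(r,s)}\psi_{j\ell}\le0$ (Remark~\ref{rmk:one-coordinate-q-Bass-map}). So there is no gross-substitutes structure, and this is precisely why the face condition is worst at the far corner.

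The missing idea is a mechanism that invokes collinearity plus strict convexity only at points whose neighbouring vertices are already matched. The paper does this by matching vertices one at a time (Lemma~\ref{lemma:induction}, with condition \eqref{eq:tech-implications}). It controls the error created elsewhere by the non-expansiveness of Proposition~\ref{prop:non-expans}, which rests on $f=\nabla V$, and then removes the regularity assumptions by approximation.

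Within your setup, the same gradient structure gives a much shorter route. With $\mean(\nu)=0$, let
\[
\Phi(d):=\int_\RR\bigl[U_\nu(F_{\alpha_d\ast q}(z))-U_\nu(F_q(z))\bigr]\,dz-\sum_{k=1}^{n-1}U_\mu(P_k)\,d_k,\qquad d\in\RR^{n-1}_{\ge0},
\]
where $\alpha_d$ has $a_1=0$ and gaps $d$. The bracket has $L^1(dz)$-norm at most $(\sum_k d_k)\|Q_\nu\|_{L^1(0,1)}$. Differentiating under the integral (Lemma~\ref{prop:interchange}, dominating via $u=F_{\alpha_d\ast q}(z)$) gives $\partial_{d_k}\Phi=G_k-U_\mu(P_k)$, which is continuous. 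Your limit at infinity is uniform in the other gaps and $U_\nu(P_k)<U_\mu(P_k)$, so $\partial_{d_k}\Phi<0$ on $\{d_k\ge R\}$ for some $R$. Hence $\Phi$ attains its maximum at some $d^*\in[0,R]^{n-1}$. There $G_k(d^*)=U_\mu(P_k)$ if $d^*_k>0$, and $G_k(d^*)\le U_\mu(P_k)$ if $d^*_k=0$. On a maximal run $d^*_a=\dots=d^*_b=0$, the $G$-chain is affine on $[P_{a-1},P_{b+1}]$ with matched endpoints (set $G_0:=0$ and note $G_n=0=U_\mu(1)$). Strict convexity of $U_\mu$ then forces $G_k(d^*)>U_\mu(P_k)$ for $a\le k\le b$, a contradiction. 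Hence $d^*\in(0,\infty)^{n-1}$ solves your system.
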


The proof of this result is entirely geometric and relies on the connection between solutions of \eqref{eq:fixed-point_equiv} and convex curves dominating $U_\nu$, in the spirit of Proposition~\ref{prop:convex_order_char}. Since no optimization problem is involved, it is not surprising that no assumptions on $\mu$ and $\nu$ are needed beyond the finiteness of the first moment, which is required for the construction of a martingale. 
This result suggests that the $q$-Bass martingale plays, within the martingale setting, the same role as monotone transport maps do in Monge--Kantorovich transport with squared Euclidean distance: even when the value of the optimization problem is $+\infty$, one can still identify a canonical optimal way to transport one measure into the other.

We now introduce  additional regularity conditions on the problem that are used in many arguments of the present paper, and finally to establish uniqueness of the solution to \eqref{eq:fixed-point_equiv}. In particular, Assumption~\ref{ass:A3} is equivalent to the condition introduced in \cite{HaJoLoObPa26} for the existence of a $\gamma$-Bass martingale attaining the value of $WT^q_\Scal(\mu,\nu)$. 

\begin{assumption}[Regularity of the distributions]\label{ass:technical-assumptions}
Let $\nu \in \Prob_1(\RR)$ and $q \in \Prob(\RR)$ with $\mean(\nu)=0$. We assume that
\begin{enumerate}[label=(A\arabic*)]
	\item \label{ass:A1} the probability measures $\nu$ and $q$ are absolutely continuous w.r.t. Lebesgue;
	\item \label{ass:A2} the supports of $\nu$ and $q$ are intervals,  and their densities are strictly positive $\lambda$-a.e. on the interior of their respective support;
	\item \label{ass:A3} there exist $a>1$ and $b>\frac{a}{a-1}$ such that $\nu \in \Prob_a(\RR)$ and $q \in \Prob_b(\RR)$;
	\item \label{ass:A4} there exists $\varsigma >1$ such that $Q_\nu' \in L^\varsigma(0,1)$ and $\rho_q \in L^{\max\left(\frac{2\varsigma-1}{\varsigma-1}, \frac{a}{a-1} \right)}(\RR)$.
\end{enumerate}
\end{assumption}
\begin{remark}
	In the above assumption $\rho_q \in L^1(\RR)$, thus the interpolation inequality yields 
	\[
		\rho_q \in L^{\max\left(\frac{2\varsigma-1}{\varsigma-1}, \frac{a}{a-1} \right)}(\RR) \implies \rho_q \in L^{\frac{2\varsigma-1}{\varsigma-1}}(\RR) \cap L^{\frac{a}{a-1}}(\RR).
	\]
\end{remark}

\begin{theorem}[Uniqueness of the Bass measure for $\mu$ finitely supported]
	\label{thm:uniqueness-fixed-point-sol}
	Let $\mu, \nu \in \Prob_1(\RR)$ and $q \in \Prob(\RR)$ be such that Assumption~\ref{ass:technical-assumptions} holds. If the pair $(\mu,\nu)$ is irreducible and $\mu$ is supported on finitely many atoms, the  Bass measure is unique up to translation.
\end{theorem}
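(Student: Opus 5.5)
We propose to reduce uniqueness to the structure of the fixed-point equation \eqref{eq:fixed-point_equiv} when $\mu=\sum_{i=1}^n m_i\delta_{x_i}$ with $x_1<\dots<x_n$. Since the Bass measure $\alpha$ must satisfy $(S_\alpha)_\#\alpha=\mu$ with $S_\alpha$ increasing $\alpha$-a.s., we first show that under Assumption~\ref{ass:technical-assumptions} any Bass measure is itself purely atomic with exactly $n$ atoms. Under \ref{ass:A1}--\ref{ass:A2}, $T_\alpha=Q_\nu\circ F_{\alpha\ast q}$ is continuous and strictly increasing on the relevant interval, and $q$ has an interval support with positive density. Hence $S_\alpha=q\star T_\alpha$ is \emph{strictly} increasing on $\RR$, so each level set $S_\alpha^{-1}(x_i)$ is a single point. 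This gives
\[
\alpha=\sum_{i=1}^n m_i\delta_{a_i},\qquad a_1<\dots<a_n .
\]
The problem therefore becomes finite dimensional. The unknown is $a=(a_1,\dots,a_n)$, modulo the translation $a\mapsto a+c\mathbf 1$, which leaves the system invariant up to the corresponding shift in $T$. The equations are
\[
G_i(a):=\int_\RR Q_\nu\Bigl(\sum_{j=1}^n m_jF_q(a_i-a_j+z)\Bigr)\rho_q(z)\,dz = x_i,\qquad i=1,\dots,n.
\]
Only differences $a_i-a_j$ enter, so we normalize $a_1=0$.

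The core step is to show that the map $G$ is injective on $\{a_1=0,\ a_1<\dots<a_n\}$. We would follow the dual/variational route. The system is the first-order condition of a concave functional, namely the semidiscrete analogue of the Bass functional from \cite{BaScTs23,Ts24}:
\[
\Phi(a)=\sum_i m_i a_i x_i-\int \Psi_\nu\, d\Bigl(\sum_j m_j\delta_{a_j}\ast q\Bigr)\text{-type term},
\]
more concretely $\Phi(a)=\sum_i m_i x_ia_i - \mathcal{C}\bigl(\sum_j m_j\delta_{a_j}\ast q,\nu\bigr)$. Here $\mathcal C(\eta,\nu)=\int_0^1 Q_\eta Q_\nu$ is the maximal correlation, which is convex in $a$ since it is a supremum of linear functionals. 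One computes that $\partial_{a_i}\mathcal C=m_i\,\EE[T_\alpha(a_i+Q)]=m_iG_i(a)$, so critical points of $\Phi$ are exactly the solutions.

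Uniqueness up to translation then follows from \emph{strict} convexity of $a\mapsto\mathcal C$ in directions transversal to $\mathbf 1$. To get this, we compute the Hessian
\[
\partial_{a_i a_k}\mathcal C = m_i\,\partial_{a_k}G_i
\]
and show that it is a symmetric matrix which is weakly diagonally dominant with zero row sums (a graph Laplacian). Its off-diagonal entries are
\[
-m_im_k\int Q_\nu'\bigl(F_{\alpha\ast q}(a_i+z)\bigr)\rho_q(a_i-a_k+z)\rho_q(z)\,dz\le 0,
\]
and they are strictly negative whenever the shifted supports of $q$ overlap. Irreducibility of $(\mu,\nu)$ should force the weighted graph on $\{1,\dots,n\}$ to be connected: if it split, $\alpha\ast q$ would decompose into pieces with separated supports, and $\nu$ would split accordingly, contradicting the strict inequality in Proposition~\ref{prop:convex_order_char}\ref{pot3}. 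A connected Laplacian has kernel exactly $\mathrm{span}(\mathbf 1)$, which gives strict convexity modulo translation and hence uniqueness.

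The main obstacle is analytic: justifying differentiation under the integral and finiteness of these quantities. This is exactly where \ref{ass:A3}--\ref{ass:A4} enter. Hölder's inequality with $Q_\nu'\in L^\varsigma$ and $\rho_q\in L^{(2\varsigma-1)/(\varsigma-1)}$ controls the Hessian entries. The moment condition with exponents $a$ and $b$ controls $\mathcal C$ and its gradient, through Hölder pairing of $Q_\nu\in L^a$ with $\rho_q\in L^{a/(a-1)}$. I would first establish $C^1$ regularity of $\mathcal C$ and the Hessian formula for a.e. direction. If full $C^2$ regularity proves delicate, I would fall back on a monotonicity argument. Given two solutions $a,b$, consider $\langle G(a)-G(b),\,\mathrm{diag}(m)(a-b)\rangle=0$ and integrate the Hessian along the segment, which needs only absolute continuity of $t\mapsto G(b+t(a-b))$.
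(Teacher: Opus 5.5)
Your argument is essentially the paper's. In increment coordinates $h_i=a_{i+1}-a_i$, your maximal-correlation functional is exactly $-V$, where $V(h)=\int_\RR U_\nu(g(h,z))\,dz$ is the potential of Proposition~\ref{prop:regulatiry_f} (so $\nabla V=f$); your Laplacian edge weights are the paper's $\psi_{j\ell}$; and your irreducibility/connectivity argument is Proposition~\ref{prop:mainThm-n_atoms-part1}, which places every solution in $(0,\diam(\supp(q)))^{n-1}$.

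The one point to make explicit is that strict convexity modulo translation holds only on the convex set of ordered configurations whose consecutive gaps are all below $\diam(\supp(q))$; across a larger gap $\mathcal C$ is affine. So you should conclude by noting that both solutions lie in this convex set, equivalently that connectivity persists along the segment joining them, which is exactly how the paper uses $D$.
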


\paragraph{Why the semidiscrete setting?}
The semidiscrete setting assumed in this paper allows us to show existence of the $q$-Bass martingale by relying on geometric arguments. 
This offers a simpler perspective on the problem and, crucially, provides the foundation to prove existence of the $q$-Bass martingale in the general setting, which we do in the forthcoming paper \cite{AcMa26_general}. 

The semidiscrete setting may also be of interest in its own right, when the initial marginal is not meant to approximate a smooth distribution, but rather to encode a finite set of relevant scenarios. This is the case when most of the uncertainty at some date $T_1$ is absorbed by the realization of a specific event, such as a policy decision, a regulatory announcement, or any other event with a finite number of relevant outcomes. To illustrate this, let $0<T_1<T_2$ and let $X$ be the discounted forward price of an asset under a risk-neutral measure. Suppose that, at time $T_1$, the market-relevant information is summarized by a finite-valued random variable
$Z \in \{1,\dots,n\}$,
where each value of $Z$ corresponds to one possible scenario. For instance, in the case of a central-bank decision, the scenarios may represent different policy outcomes, without specifying the exact size or nature of the decision. If $p_i$ is the risk-neutral probability of scenario $i$, and $x_i$ is the corresponding post-event forward level, then the event-date marginal may be modeled as
\[
    \mu=\mathcal L(X_{T_1}) = \sum_{i=1}^n p_i\delta_{x_i}.
\]
This should be understood as a scenario-based marginal rather than as a numerical discretization of a continuous law.
After $T_1$, the asset is again exposed to ordinary market risk, and the later marginal $\nu=\mathcal L(X_{T_2})$ may be inferred, for example, from vanilla option prices via the Breeden--Litzenberger formula. In particular, $\nu$ may be absolutely continuous. The calibration problem is then to find conditional laws
\[
    \pi_i(dy) = \mathcal L(X_{T_2}\in dy\mid X_{T_1}=x_i), \qquad i=1,\dots,n,
\]
such that $\int y\,\pi_i(dy)=x_i$ and $\sum_{i=1}^n p_i\pi_i=\nu$.
Equivalently, one seeks a martingale coupling between the atomic event-date marginal $\mu$ and the continuous later marginal $\nu$, necessarily with $\mu\preceq_c\nu$.

\paragraph{Notations.}
We write $\Prob(\RR)$ for the probability measures on $\RR$, $\Prob_p(\RR)$ for the subset of probability measures with finite $p$-moment, $p\in[1,\infty)$, and $\Prob_\infty(\RR)$ for the subset of probability measures with compact support. We denote by $\gamma$ the standard Gaussian distribution and by $\phi$ its density. Moreover, we write $\lambda$ for the Lebesgue measure on $\RR$. For $\xi\in\Prob(\RR)$, the notation $\xi\ll\lambda$ means that $\xi$ is absolutely continuous with respect to the Lebesgue measure. In this case, we denote by $\rho_\xi$ its density. 
For  $\xi\in\Prob(\RR)$, we denote by $\supp(\xi)$ its support and by $F_\xi$ its cumulative distribution function. We denote by $Q_\xi:(0,1)\to\RR$ its left-continuous quantile function, defined by $Q_\xi(u):=\inf\{y\in\RR:\ F_\xi(y)\ge u\}$ for $u\in(0,1)$. By convention, we extend $Q_\xi$ to $[0,1]$ by setting $Q_\xi(0):=\inf(\supp(\xi))$ and $Q_\xi(1):=\sup(\supp(\xi))$. For any $\xi\in\Prob_1(\RR)$, we write $\mean(\xi):=\int y\,\xi(dy)$ for its mean.
We use CDF as abbreviation for cumulative distribution function and, with an abuse of notation, we also denote by CDF the set of all cumulative distribution functions on $\RR$.

For $\mu,\nu\in\Prob(\RR)$, we denote by  $\Pi(\mu,\nu)$ the subset of $\Prob(\RR\times\RR)$ of measures with first marginal $\mu$ and second marginal $\nu$. The elements of $\Pi(\mu,\nu)$ are called couplings of $\mu$ and $\nu$. 
For $p\in[1,\infty)$, the $p$-Wasserstein distance between two probability measures $\xi,\zeta\in\Prob_p(\RR)$ is given by
\[
\Wcal_p(\xi,\zeta) :=\inf_{\pi\in\Pi(\xi,\zeta)}\left(\int |x-y|^p\pi(dx,dy)\right)^{1/p}.
\]
We use $\Mart(\mu,\nu)$ for the subset of $\Pi(\mu,\nu)$ containing the measures $\pi$ such that $\mbox{mean}(\pi_x)=x\; \mu\text{-a.e.}$, where $\pi_x$ is the regular conditional disintegration of $\pi$ w.r.t. $\mu$: $\pi(dx,dy)=\mu(dx)\pi_x(dy)$. The elements of $\Mart(\mu,\nu)$ are called martingale couplings of $\mu$ and $\nu$.  

The push-forward measure of $\xi \in \Prob (\RR)$ through a measurable map $T: \RR \rightarrow \RR$, denoted by $T_\# \xi$, is the probability measure such that $T_\#\xi(A)=\xi(T^{-1}(A))$, for any Borel set  $A$ of $\RR$.
For $\xi,\zeta\in\Prob(\RR)$, we write $\xi\ast\zeta$ for the probability measure representing their convolution, so that $\xi\ast\zeta(A)=\int 1_A(x+y)d\xi(x)d\zeta(y)$, for any $A$ Borel set of $\RR$. For two measurable functions $f,g$, their convolution is the function given by $f\ast g(x)=\int f(x-y)g(y)dy$, $x\in\RR$. Moreover, the convolution of $f$ and $\xi$ is the function defined as $\xi \ast f(x)=\int f(x-y)\xi(dy)$, $x\in\RR$. Similarly, we define $\xi \star f(x)=\int f(x+y)\xi(dy)$, $x\in\RR$.
For $A\subseteq\RR$ and $p\in[1,\infty)$, with $L^p(A)$ we denote the set of $\lambda$-measurable functions $f:A\to\RR$ such that $|f|^p$ is $\lambda$-integrable, and with $L^\infty(A)$ we mean the set of $\lambda$-essentially bounded functions $f:A\to\RR$. With an abuse of notation, for $a,b\in\RR$ and $p\in[1,\infty]$, we write $L^p(a,b):=L^p((a,b))$, and analogously for $(a,b], [a,b), [a,b]$.
We equip $L^p(A)$ with convergence w.r.t.\ the usual $\lambda$-$L^p$ norm $\|\cdot\|_p$, with $\|\cdot\|_\infty$ denoting the $\lambda$-essential supremum norm.

\section{The $n$-atomic $q$-Bass maps}
\label{sec:n-atom}

In this section, we introduce and study a finite-dimensional map which, in the $n$-atomic setting, provides an equivalent formulation of the problem \eqref{eq:fixed-point_equiv}. 
From this point on, unless otherwise stated, we work under the standing assumption that $\mu$ has finite support, in the sense of Definition~\ref{def.natom}.

For simplicity, throughout this section we fix $n \ge 2$ and $p_1,\dots,p_n>0$ such that $\sum_{i=1}^n p_i=1$. We also set
\[
p_i^*:=\sum_{j=1}^i p_j,\qquad i=1,\dots,n.
\]

\begin{definition}[$n$-atomic distribution]\label{def.natom}
	Let $\mu \in \Prob(\RR)$. We say that $\mu$ is an $n$-atomic distribution if there exist $x_1, \dots, x_n \in \RR$ such that $x_1 < \dots < x_n$ and 
	\begin{equation}
	\label{n_atoms_repr}
		\mu = \sum_{j=1}^n p_j \delta_{x_j}.
	\end{equation}
\end{definition}
\begin{remark}\label{rem.fpn}
If $\mu \in \Prob(\RR)$ is an $n$-atomic distribution with representation \eqref{n_atoms_repr}, equation \eqref{eq:fixed-point_equiv} writes as
\begin{equation}
\label{eq:fixed_point_atoms1}
	\int_\RR Q_\nu \left (\sum_{j=1}^n p_j F_q(y_i-y_j + z) \right) \rho_q(z) dz = x_i, \quad \text{for any } i \in \{1, \dots, n\},
\end{equation} 
and any  Bass measure is of the form
\begin{equation}
\label{eq:fixed-point-n-atomic}
	\alpha = \sum_{j=1}^n p_j \delta_{y_j}, \qquad \text{where} \quad y_1 < y_2 < ... < y_n.
\end{equation}
Indeed, when $\mu$ is $n$-atomic, its quantile function $Q_\mu$ is piecewise constant. Therefore, the left-hand side of \eqref{eq:fixed-point_equiv} must be constant on each corresponding interval. Consequently, the quantile function $Q_\alpha$ of any  Bass measure is piecewise constant on the same intervals as $Q_\mu$, leading to \eqref{eq:fixed-point-n-atomic}. Moreover, since the right-hand side of \eqref{eq:fixed-point_equiv} is constant on each such interval, \eqref{eq:fixed_point_eq} is equivalent to the system of $n$ equations given in \eqref{eq:fixed_point_atoms1}.
\end{remark}
In view of the above remark, when $\mu$ is an $n$-atomic distribution, studying  the existence of a $q$-Bass martingale from $\mu$ to $\nu$ is equivalent to studying the solutions of \eqref{eq:fixed_point_atoms1}. To this end, we introduce the notion of $n$-atomic $q$-Bass map with respect to the weights $p_1,\dots,p_n$ and terminal distribution $\nu$.

\begin{definition}[$n$-atomic $q$-Bass map]
\label{def:n-atom_q-Bass_map}
Let $\nu \in \Prob_1(\RR), q \in \Prob(\RR)$ such that $q \ll \lambda$, and define the function $g:\RR^{n-1}_{\geq 0}\times\RR\to\RR$ by
\[
g(h,z)=\sum_{j=1}^n p_j F_q\left(z-\sum_{k=1}^{j-1}h_k\right),
\qquad h=(h_1,\dots,h_{n-1})\in\RR^{n-1}_{\geq 0},\ z\in\RR.
\]
The map $f:\RR^{n-1}_{\geq 0}\to\RR^{n-1}$ defined componentwise by
\begin{equation}
\label{def:f_function}
	f_i(h_1, \dots, h_{n-1})
	=
	\int_\RR Q_\nu \big(g(h,z)\big)\,
	\sum_{j=1}^{i} p_j \rho_q \left( z - \sum_{k=1}^{j-1} h_k \right)\, dz,
	\qquad i\in\{1,\dots,n-1\},
\end{equation}
is called the \emph{$n$-atomic $q$-Bass map} with respect to the weights $p_1,\dots,p_n$ and terminal distribution $\nu$.
\end{definition}
In what follows, whenever the weights  are clear from the context, we do not explicitly indicate the dependence of $f$ on them and simply refer to $f$ as the \emph{$n$-atomic $q$-Bass map with respect to $\nu$}.

\begin{remark}\label{rmk:one-coordinate-q-Bass-map}
For later use, we record the form of the maps $g$ and $f$ when only one component of the argument is non-zero. Fix $i\in\{1,\dots,n-1\}$ and denote by $e_i$ the $i$-th vector of the canonical basis of $\RR^{n-1}$. Then, for every $t\geq 0$ and $z\in\RR$,
\[
g(te_i,z)=p_i^* F_q(z)+(1-p_i^*)F_q(z-t).
\]
Consequently,
\[
f_i(te_i)=p_i^* \int_\RR Q_\nu\!\left(p_i^* F_q(z)+(1-p_i^*)F_q(z-t)\right)\rho_q(z)\,dz.
\]
In particular, $t\mapsto g(te_i,z)$ is non-increasing for every fixed $z\in\RR$. If $\nu$ and $q$ satisfy Assumption~\ref{ass:technical-assumptions}, then Proposition~\ref{prop:regulatiry_f} below implies that each component $f_i$ is non-increasing in each variable separately, with all the other variables kept fixed. In particular, for every fixed $(h_j)_{j\neq i}$, the map
\[
t\mapsto f_i(h_1,\dots,h_{i-1},t,h_{i+1},\dots,h_{n-1})
\]
is decreasing on $[0,\diam(\supp(q)))$.
\end{remark}

\begin{proposition}[The $n$-atomic $q$-Bass maps are well-defined]
\label{prop:q-Bass-map-well-defined}
Let $\nu \in \Prob_1(\RR)$ and let $q \in \Prob(\RR)$ be such that $q \ll \lambda$.
Then the $n$-atomic $q$-Bass map $f$ with respect to $\nu$ is well-defined.\end{proposition}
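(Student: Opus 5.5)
We need to show that each integral in \eqref{def:f_function} converges absolutely, i.e.\ that $Q_\nu(g(h,\cdot))$ is integrable against the weight $w_i(z):=\sum_{j=1}^{i}p_j\rho_q(z-s_j)$, where $s_j:=\sum_{k=1}^{j-1}h_k$. It is also worth noting why the value of $Q_\nu$ at the endpoints $0,1$ is harmless, and that $g(h,\cdot)$ is measurable.

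\emph{Dominating the weight.} Since the $p_j$ are positive, we have $w_i\le w_n$, and $w_n=\rho_{\beta_h\ast q}$ with
\[
\beta_h:=\sum_{j=1}^n p_j\delta_{s_j}.
\]
Moreover, $g(h,z)=\sum_j p_jF_q(z-s_j)=F_{\beta_h\ast q}(z)$ exactly. So it suffices to show
\[
\int_\RR |Q_\nu(F_{\beta_h\ast q}(z))|\,\rho_{\beta_h\ast q}(z)\,dz<\infty .
\]

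\emph{Change of variables.} Let $Z\sim\beta_h\ast q$. This law is absolutely continuous, hence atomless, so $F_{\beta_h\ast q}(Z)$ is uniformly distributed on $[0,1]$. The integral above therefore equals $\EE|Q_\nu(U)|$ with $U\sim\mathrm{Unif}_{[0,1]}$, and
\[
\EE|Q_\nu(U)|=\int_0^1|Q_\nu(u)|\,du=\int_\RR|y|\,\nu(dy)<\infty,
\]
because $\nu\in\Prob_1(\RR)$. The set $\{z: g(h,z)\in\{0,1\}\}$ has $Z$-probability zero, since $U\in(0,1)$ almost surely. Thus the conventions $Q_\nu(0)=\inf\supp(\nu)$ and $Q_\nu(1)=\sup\supp(\nu)$, possibly infinite, do not matter. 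Measurability holds because $Q_\nu$ is monotone and $g(h,\cdot)$ is continuous.

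This gives $|f_i(h)|\le\int_\RR|y|\,\nu(dy)$ for every $h\in\RR^{n-1}_{\ge0}$ and every $i$, so $f$ is well-defined. The only delicate point is the probability integral transform, which requires $\beta_h\ast q$ to be atomless. This is guaranteed by $q\ll\lambda$, and it is the same argument used for Proposition~\ref{prop:q-star-T-well-posed}.
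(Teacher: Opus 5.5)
Your proof is correct and follows the paper's argument: you dominate the weight $\sum_{j\le i}p_j\rho_q(z-s_j)$ by $\partial_z g(h,z)$ and then change variables $u=g(h,z)$ to get the bound $\int_0^1|Q_\nu(u)|\,du$. Phrasing that change of variables as a probability integral transform for $g(h,\cdot)=F_{\beta_h\ast q}$ simply makes it, and the endpoint and measurability issues, explicit.
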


\begin{proof}
Fix $i \in \{1,\dots,n-1\}$. We first show that $f_i$ is well defined. For every $h\in\RR^{n-1}_{\geq 0}$,
\begin{align*}
|f_i(h)|
&\leq \int_\RR \big| Q_\nu \big(g(h,z)\big)\big|\, \sum_{j=1}^{i} p_j \rho_q \left( z - \sum_{\ell=1}^{j-1} h_\ell \right)\, dz \\
&\leq \int_\RR \big| Q_\nu \big(g(h,z)\big)\big|\, \frac{\partial g(h,z)}{\partial z}\, dz = \int_0^1 |Q_\nu(u)|\,du < \infty,
\end{align*}
where the last equality follows from the change of variable $u=g(h,z)$.
\end{proof}

Since, if $y \in \RR^n$ solves \eqref{eq:fixed_point_atoms1}, then $y+c$ is a solution to \eqref{eq:fixed_point_atoms1} for any $c\in \RR$, Proposition~\ref{prop:dimension_reduction} can be used to transform \eqref{eq:fixed_point_atoms1} into a system of $n-1$ equations in $n-1$ variables.

\begin{proposition}[Dimension reduction]
\label{prop:dimension_reduction}
Let $\mu,\nu\in\Prob_1(\RR)$ and $q \in \Prob(\RR)$ be such that $q \ll \lambda$, and assume that $\mu$ is $n$-atomic with representation \eqref{n_atoms_repr}. If \eqref{eq:fixed_point_atoms1} admits a solution, then $\mean(\mu)=\mean(\nu)$. Moreover, $y\in\RR^n$ solves \eqref{eq:fixed_point_atoms1} if and only if $h\in\RR^{n-1}_{>0}$, defined by $h_i:=y_{i+1}-y_i$ for $i=1,\dots,n-1$, solves
\begin{equation}
\label{eq:fixed_point_atoms2}
f_i(h)=x_i^*,\qquad i=1,\dots,n-1,
\end{equation}
where $x_i^*:=\sum_{j=1}^i p_jx_j$ and $f$ is the $n$-atomic $q$-Bass map with respect to $\nu$.
\end{proposition}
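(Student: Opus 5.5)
The plan is to show that the $n$ equations in \eqref{eq:fixed_point_atoms1} are equivalent to the $n-1$ partial sums $\sum_{j\le i}p_j(\cdot)$, for $i\le n-1$, together with one global identity that holds automatically. Write $y_j = y_1+\sum_{k<j}h_k$ and substitute $w = z + y_i - y_1$ in the $i$-th equation of \eqref{eq:fixed_point_atoms1}. Then $\sum_j p_j F_q(y_i-y_j+z)=\sum_j p_jF_q(w-(y_j-y_1)) = g(h,w)$. Hence the $i$-th equation reads
\[
\int_\RR Q_\nu\bigl(g(h,w)\bigr)\rho_q\Bigl(w-\sum_{k=1}^{i-1}h_k\Bigr)\,dw = x_i .
\]
Call the left-hand side $\Phi_i(h)$, so that \eqref{eq:fixed_point_atoms1} becomes $\Phi_i(h)=x_i$ for all $i$. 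Each $\Phi_i$ is well defined because $p_i\rho_q(\cdot-\sum_{k<i}h_k)\le \partial_w g$, exactly as in the proof of Proposition~\ref{prop:q-Bass-map-well-defined}.

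By the definition \eqref{def:f_function}, $f_i(h)=\sum_{j=1}^i p_j\Phi_j(h)$. The key identity is
\[
\sum_{j=1}^n p_j\Phi_j(h)=\int_\RR Q_\nu\bigl(g(h,w)\bigr)\,\partial_w g(h,w)\,dw=\int_0^1Q_\nu(u)\,du=\mean(\nu).
\]
This uses the change of variables $u=g(h,w)$: $g(h,\cdot)$ is absolutely continuous and nondecreasing, with limits $0$ and $1$ at $\mp\infty$. The substitution is justified by integrability of $|Q_\nu|$, via the same bound as in Proposition~\ref{prop:q-Bass-map-well-defined}, approximating $Q_\nu$ by bounded truncations if needed. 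This global identity is the only delicate step; everything else is bookkeeping.

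The first claim follows at once. If $y$ solves \eqref{eq:fixed_point_atoms1}, then $\mean(\mu)=\sum_j p_jx_j=\sum_jp_j\Phi_j(h)=\mean(\nu)$.

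For the equivalence, first note that by Remark~\ref{rem.fpn} we take $y_1<\dots<y_n$, so $h\in\RR^{n-1}_{>0}$. Conversely, any $h\in\RR^{n-1}_{>0}$ defines such a $y$ up to the free constant $y_1$, and the $\Phi_i$ do not depend on $y_1$.

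Now suppose $\Phi_i(h)=x_i$ for all $i$. Taking partial sums gives $f_i(h)=x_i^*$ for $i\le n-1$.

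Conversely, suppose $f_i(h)=x_i^*$ for $i\le n-1$. Taking differences and dividing by $p_i>0$ gives $\Phi_i(h)=x_i$ for $i\le n-1$. The last equation follows from the global identity, $p_n\Phi_n=\mean(\nu)-\sum_{j<n}p_jx_j$. This equals $p_nx_n$ precisely when $\mean(\mu)=\mean(\nu)$, which I take to be part of the standing setting, since it is necessary by the first claim.
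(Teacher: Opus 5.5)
Your proof is correct and follows the paper's argument: the same shift $w=z+\sum_{k<i}h_k$ rewrites the $i$-th equation as $\Phi_i(h)=x_i$, the partial sums $\sum_{j\le i}p_j\Phi_j$ are exactly the $f_i$, and the identity $\sum_{j=1}^n p_j\Phi_j(h)=\int_0^1 Q_\nu(u)\,du=\mean(\nu)$ handles the $n$-th equation and yields $\mean(\mu)=\mean(\nu)$. Your treatment of the converse direction is more explicit than the paper's. You correctly observe that recovering the $n$-th equation from the $n-1$ reduced ones requires $\mean(\mu)=\mean(\nu)$, which is the paper's standing assumption from Section 3 on.
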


\begin{proof}
Let $y\in\RR^n$ and set $h_i:=y_{i+1}-y_i$ for $i=1,\dots,n-1$. Then, for any $i,j\in\{1,\dots,n\}$,
\[
y_i-y_j=\sum_{k=1}^{i-1}h_k-\sum_{k=1}^{j-1}h_k .
\]
Substituting this identity into \eqref{eq:fixed_point_atoms1} and performing, for each fixed $i$, the change of variables
$z\mapsto z+\sum_{k=1}^{i-1}h_k$, we obtain the equivalent system
\begin{equation}\label{eq:sys_g3}
\int_{\RR} Q_\nu\big(g(h,z)\big)\,\rho_q\!\left(z-\sum_{k=1}^{i-1}h_k\right)\,dz = x_i,
\qquad i=1,\dots,n.
\end{equation}
Now fix $i\in\{1,\dots,n\}$. Multiply the $j$-th equation in \eqref{eq:sys_g3} by $p_j$ and sum over $j=1,\dots,i$ to get
\[
\int_{\RR} Q_\nu\big(g(h,z)\big)\,\sum_{j=1}^{i} p_j\,\rho_q\!\left(z-\sum_{k=1}^{j-1}h_k\right)\,dz = x_i^* .
\]
For $i=1,\dots,n-1$, the left-hand side is $f_i(h)$ by \eqref{def:f_function}, hence \eqref{eq:fixed_point_atoms2}. For $i=n$, note that $\partial_z g(h,z)=\sum_{j=1}^{n} p_j\,\rho_q\!\left(z-\sum_{k=1}^{j-1}h_k\right)$, so the last identity reads
\[
\mean(\nu) = \int_{\RR} Q_\nu\big(g(h,z)\big)\,\partial_z g(h,z)\,dz = x_n^* = \mean(\mu).
\]
\end{proof}

To study the solutions of \eqref{eq:fixed_point_atoms2}, we first investigate the regularity properties of the $n$-atomic $q$-Bass maps.

\subsection{Regularity of the $n$-atomic $q$-Bass maps}
In this subsection, we study the regularity of $n$-atomic $q$-Bass maps. In particular, under Assumption~\ref{ass:technical-assumptions}, we show that every $n$-atomic $q$-Bass map admits a potential function; see Proposition~\ref{prop:regulatiry_f}.

\begin{definition}[Potential of the $n$-atomic $q$-Bass map]
\label{def:potential-f}
Let $\nu, q \in \Prob(\RR)$ satisfy Assumption~\ref{ass:technical-assumptions}, and set $D := [0,\diam(\supp(q)))^{n-1}$. Let $f : D \to \RR^{n-1}$ be the corresponding $n$-atomic $q$-Bass map. Define $v : D \times \RR \to \RR$ by $v(h,z) := U_\nu\big(g(h,z)\big)$ and $V : D \to \RR$ by 
\[
V(h) := \int_\RR v(h,z)\,dz.
\]
We call $V$ the \emph{potential} of the $n$-atomic $q$-Bass map $f$.
\end{definition}

\begin{lemma}\label{lem:psi-continuous}
\label{lemma:C2-q-Bass-map}
Let $\nu, q \in \Prob(\RR)$ such that Assumption~\ref{ass:technical-assumptions} holds, let $D = [0, \diam(\supp(q)))^{n-1}$, $1\le j<\ell\le n$, and define the map $\psi_{j\ell}:D\to[0,\infty)$  by
\[
\psi_{j\ell}(h)
:= p_j p_\ell \int_\RR Q_\nu'(g(h,z))\,
\rho_q\left(z-\sum_{k=1}^{j-1}h_k\right)\rho_q\left(z-\sum_{k=1}^{\ell-1}h_k\right)\,dz \ge 0.
\]
If $\psi_{j\ell}$ is well-defined, then it is continuous.
\end{lemma}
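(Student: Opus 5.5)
We need to show that $\psi_{j\ell}$ is continuous on $D$. We fix $h\in D$ and a sequence $h^m\to h$ in $D$, and prove $\psi_{j\ell}(h^m)\to\psi_{j\ell}(h)$. The natural tool is a convergence theorem for integrals, but $Q_\nu'$ is merely in $L^\varsigma(0,1)$, so pointwise convergence of the integrand fails. We therefore handle the composition $Q_\nu'\circ g$ by approximation.

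\emph{Step 1 (density).} Choose a sequence $\phi_k\in C_c(0,1)$ with $\phi_k\to Q_\nu'$ in $L^\varsigma(0,1)$. Define $\psi^k(h)$ by replacing $Q_\nu'$ with $\phi_k$. For fixed $k$, the map $h\mapsto g(h,z)$ is continuous for every $z$, since $F_q$ is continuous because $q\ll\lambda$. Hence $\phi_k(g(h^m,z))\to\phi_k(g(h,z))$ pointwise, with bound $\|\phi_k\|_\infty$. The densities appear through the product $\rho_q(z-a^m)\rho_q(z-b^m)$, where the shifts $a^m=\sum_{k<j}h^m_k$ and $b^m=\sum_{k<\ell}h^m_k$ converge. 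Continuity of translations in $L^2$ gives $\rho_q(\cdot-a^m)\to\rho_q(\cdot-a)$ in $L^2$, because $\rho_q\in L^{r}$ for some $r\ge 2$: indeed $\frac{2\varsigma-1}{\varsigma-1}>2$, and interpolation with $L^1$ gives $\rho_q\in L^2$. The product of the two densities then converges in $L^1$. Hence $\psi^k$ is continuous for every $k$.

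\emph{Step 2 (uniform approximation).} This is the main obstacle. We need $\sup_{h\in K}|\psi^k(h)-\psi_{j\ell}(h)|\to0$ on a compact neighbourhood $K$ of $h$ in $D$. We bound
\[
|\psi^k(h)-\psi_{j\ell}(h)|\le \int |\phi_k-Q_\nu'|(g(h,z))\,\rho_q(z-a)\rho_q(z-b)\,dz .
\]
We will use $p_\ell\rho_q(z-b)\le\partial_z g(h,z)$, which allows the change of variables $u=g(h,z)$. To do so, we write the integrand as $|\phi_k-Q_\nu'|(g)\,(\partial_zg)^{1/\varsigma}$ times $\rho_q(z-a)\,(\partial_z g)^{1-1/\varsigma}$ and apply Hölder with exponents $\varsigma$ and $\varsigma'=\varsigma/(\varsigma-1)$. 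The first factor gives $\|\phi_k-Q_\nu'\|_{L^\varsigma(0,1)}$. The second factor is bounded by a constant times
\[
\Bigl(\int\rho_q(z-a)^{\varsigma'}\,\partial_z g^{\,\varsigma'-1}\,dz\Bigr)^{1/\varsigma'} .
\]
Since $\partial_z g$ is a finite sum of shifts of $\rho_q$, this integral is controlled by $\|\rho_q\|_{2\varsigma'-1}^{2\varsigma'-1}$, using Hölder or Young for products of shifts. Note that $2\varsigma'-1=\frac{2\varsigma-1}{\varsigma-1}$, which is exactly the integrability required in (A4). This yields the bound $C\,\|\phi_k-Q_\nu'\|_\varsigma$ with $C$ independent of $h$, so the convergence is uniform on all of $D$.

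\emph{Step 3.} A uniform limit of continuous functions is continuous, hence $\psi_{j\ell}$ is continuous. Step 2 also shows that $\psi_{j\ell}$ is finite under (A4), which settles the hypothesis ``if well-defined''. A point to check carefully is that the constant $C$ from Hölder's inequality on products of shifted densities is indeed bounded by powers of $\|\rho_q\|_{2\varsigma'-1}$ via the generalized Hölder inequality, independently of the shifts.
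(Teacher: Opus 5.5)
Your route is the paper's. You approximate $Q_\nu'$ in $L^\varsigma(0,1)$ by continuous functions, prove continuity of the approximants through continuity of translations in $L^2(\RR)$, and control $\psi^k-\psi_{j\ell}$ uniformly in $h$ via the change of variables $u=g(h,z)$ and Hölder. Your Step 1 is in fact more complete than the paper's middle-term estimate. That estimate only tracks the relative shift of the two densities and ignores that $\zeta_m(g(h,\cdot))$ itself depends on $h$; for $(j,\ell)=(1,2)$, changing $h_2$ leaves the shift unchanged but moves $g$. Your argument handles both effects: pointwise convergence of $\phi_k(g(h^m,\cdot))$ with the bound $\|\phi_k\|_\infty$, combined with $L^1$ convergence of the product of shifted densities.

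In Step 2, however, the exponents are miscomputed, and as written the estimate would not close. With $B:=\rho_q(z-a)\,(\partial_z g)^{1-1/\varsigma}$ one gets $B^{\varsigma'}=\rho_q(z-a)^{\varsigma'}\,\partial_z g$, because $(1-1/\varsigma)\varsigma'=1$. So the power of $\partial_z g$ is $1$, not $\varsigma'-1$. Moreover $2\varsigma'-1=\frac{\varsigma+1}{\varsigma-1}$, not $\frac{2\varsigma-1}{\varsigma-1}$. For $1<\varsigma<2$ this exceeds the integrability granted by (A4).

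The correct computation, with $s_i:=\sum_{k<i}h_k$, is
\[
\int_\RR \rho_q(z-a)^{\varsigma'}\,\partial_z g(h,z)\,dz
=\sum_{i=1}^n p_i\int_\RR \rho_q(z-a)^{\varsigma'}\rho_q(z-s_i)\,dz
\le \|\rho_q\|_{L^{\varsigma'+1}(\RR)}^{\varsigma'+1}.
\]
This uses Hölder with exponents $\frac{\varsigma'+1}{\varsigma'}$ and $\varsigma'+1$, translation invariance, and $\sum_i p_i=1$. Note that $\varsigma'+1=\frac{2\varsigma-1}{\varsigma-1}$ is exactly the exponent in (A4). Hence, uniformly in $h$,
\[
|\psi^k(h)-\psi_{j\ell}(h)|\le p_j\,\|\phi_k-Q_\nu'\|_{L^\varsigma(0,1)}\,\|\rho_q\|_{L^{\frac{2\varsigma-1}{\varsigma-1}}(\RR)}^{\frac{2\varsigma-1}{\varsigma}}.
\]
This is the bound the paper obtains; it dominates both densities by $\partial_z g$ and changes variables with $g^{-1}(h,\cdot)$, which is equivalent bookkeeping. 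With this correction your proof is complete.
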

\begin{proof}
Let $(\zeta_m)_{m\in\NN}\subseteq C_0([0,1])$ be such that $\zeta_m\to Q_\nu'$ in $L^\varsigma(0,1)$. For $m\in\NN$ define
\[
\psi_{j\ell}^{(m)}(h):=p_jp_\ell\int_\RR \zeta_m(g(h,z))\,
\rho_q\left(z-\sum_{k=1}^{j-1}h_k\right)\rho_q\left(z-\sum_{k=1}^{\ell-1}h_k\right)\,dz.
\]
Fix $h,h'\in D$ and write
\[
|\psi_{j\ell}(h)-\psi_{j\ell}(h')|
\le |\psi_{j\ell}(h)-\psi_{j\ell}^{(m)}(h)|
+|\psi_{j\ell}^{(m)}(h)-\psi_{j\ell}^{(m)}(h')|
+|\psi_{j\ell}^{(m)}(h')-\psi_{j\ell}(h')|.
\]
For the first and third terms, a change of variables $u=g(h,z)$ (respectively $u=g(h',z)$) and Hölder's inequality as above yield
\[
|\psi_{j\ell}(h)-\psi_{j\ell}^{(m)}(h)|
\le p_\ell\,\|Q_\nu'-\zeta_m\|_{L^\varsigma(0,1)}\,
\|\rho_q\|_{L^{\frac{2\varsigma-1}{\varsigma-1}}(\RR)},
\]
and the same bound holds with $h'$ in place of $h$. Hence these terms can be made arbitrarily small uniformly in $h,h'$ by choosing $m$ large.

Fix now $m$. Since $\zeta_m$ is bounded, after the change of variables
$z\mapsto z+\sum_{k=1}^{\ell-1}h_k$ and $z\mapsto z+\sum_{k=1}^{\ell-1}h'_k$ we obtain
\begin{align*}
|\psi_{j\ell}^{(m)}(h)-\psi_{j\ell}^{(m)}(h')|
&\le 2p_jp_\ell\|\zeta_m\|_\infty
\int_\RR \left|
\rho_q\left(z-\sum_{k=1}^{j-1}h_k+\sum_{k=1}^{\ell-1}h_k\right)
-\rho_q\left(z-\sum_{k=1}^{j-1}h'_k+\sum_{k=1}^{\ell-1}h'_k\right)
\right|\rho_q(z)\,dz \\
&\le 2p_jp_\ell\|\zeta_m\|_\infty\,
\|T_{t}\rho_q-T_{t'}\rho_q\|_{L^2(\RR)}\,\|\rho_q\|_{L^2(\RR)},
\end{align*}
where $T_a f=f(\cdot+a)$ and
\[
t=-\sum_{k=1}^{j-1}h_k+\sum_{k=1}^{\ell-1}h_k,
\qquad
t'=-\sum_{k=1}^{j-1}h'_k+\sum_{k=1}^{\ell-1}h'_k.
\]
Since $\rho_q\in L^{\frac{2\varsigma-1}{\varsigma-1}}(\RR)$ and $\frac{2\varsigma-1}{\varsigma-1}>2$, we have $\rho_q\in L^2(\RR)$, and translations are continuous in $L^2(\RR)$. Thus $|\psi_{j\ell}^{(m)}(h)-\psi_{j\ell}^{(m)}(h')|\to 0$ as $h'\to h$ for each fixed $m$. Combining the estimates and letting $m\to\infty$ yields the continuity of $\psi_{j\ell}$.
\end{proof}

\begin{proposition}
\label{prop:regulatiry_f}
Let $\nu, q \in \Prob(\RR)$ satisfy Assumption~\ref{ass:technical-assumptions}, let $D := [0,\diam(\supp(q)))^{n-1}$, let $f$ be the corresponding $n$-atomic $q$-Bass map, and let $V : D \to \RR$ and $v : D \times \RR \to \RR$ be the maps defined in Definition~\ref{def:potential-f}.

Then $V$ is strictly concave on $D$, its restriction to $\interior(D)$ belongs to $C^2(\interior(D))$, and its first- and second-order partial derivatives extend continuously to $D$. Moreover, $\nabla V=f$ and the map $f|_{\interior(D)}:\interior(D)\rightarrow f(\interior(D))$ is a $C^1$-diffeomorphism. More specifically, for every $1\leq r\leq s\leq n-1$,
\begin{equation}
\label{eq:secondDeriv}
\frac{\partial^2 V(h)}{\partial h_s\,\partial h_r} = -\sum_{j=1}^r\sum_{\ell=s+1}^{n}\psi_{j\ell}(h),
\end{equation}
where $\psi_{j\ell}:D \rightarrow \RR$ are continuous non-negative functions, and $\psi_{j(j+1)}$ is strictly positive on $D$ for every $j=1,\dots,n-1$.
\end{proposition}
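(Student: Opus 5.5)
We compute derivatives of $V$ by differentiating under the integral sign, and then read off the sign structure of the Hessian. Write $s_j(h):=\sum_{k=1}^{j-1}h_k$, so that $g(h,z)=\sum_j p_jF_q(z-s_j(h))$. Since $U_\nu'=Q_\nu$ and $\partial_{h_r}s_j=\mathbf 1_{\{j>r\}}$, we get formally
\[
\partial_{h_r}v(h,z)=-Q_\nu(g(h,z))\sum_{j=r+1}^{n}p_j\rho_q(z-s_j(h)).
\]

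\textbf{Step 1: identify $\nabla V$ with $f$.} Integrating the formula for $\partial_{h_r}v$ in $z$ and using Proposition~\ref{prop:dimension_reduction}, we get
\[
-\int Q_\nu(g)\,\partial_z g\,dz+\int Q_\nu(g)\sum_{j\le r}p_j\rho_q(z-s_j)\,dz=-\mean(\nu)+f_r(h)=f_r(h),
\]
because $\mean(\nu)=0$. So $\partial_{h_r}V=f_r$.

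To justify the interchange of derivative and integral, we dominate difference quotients. This is done by writing $V(h')-V(h)=\int_0^1 \nabla V$ along segments: we integrate the pointwise derivative, whose absolute value is bounded after the change of variables $u=g(h,z)$ by $\int_0^1|Q_\nu|$ as in Proposition~\ref{prop:q-Bass-map-well-defined}. Integrability of $v$ itself is needed so that $V$ is finite. Here we use (A3): $U_\nu(g)$ decays like the tails of $\nu$ composed with $F_q$, and Hölder with exponents $a$ and $a/(a-1)$, with $b>a/(a-1)$, controls $\int|U_\nu(g(h,z))|dz$ via $U_\nu(u)\lesssim u^{1-1/a}$ near $0$ and the tails of $q$. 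I expect this to be a routine but fiddly estimate.

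\textbf{Step 2: second derivatives.} Differentiate $f_r(h)=\int Q_\nu(g)\sum_{j\le r}p_j\rho_q(z-s_j)\,dz$ in $h_s$, for $s\ge r$. Since $\rho_q$ need not be differentiable, first substitute $z\mapsto z+s_j$ in each term $j\le r$. Because $s\ge r\ge j$, $s_j$ does not depend on $h_s$, so only $g$ moves, and
\[
\partial_{h_s}g(h,z+s_j)=-\sum_{\ell>s}p_\ell\rho_q(z+s_j-s_\ell).
\]
Hence
\[
\partial_{h_s}f_r=-\sum_{j\le r}\sum_{\ell>s}p_jp_\ell\int Q_\nu'(g)\rho_q(z-s_j)\rho_q(z-s_\ell)\,dz,
\]
which is \eqref{eq:secondDeriv}. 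Well-definedness of $\psi_{j\ell}$ follows from Hölder: change variables $u=g$, bounding one factor $\rho_q(z-s_j)\le p_j^{-1}\partial_zg$. The remaining factor, $\rho_q$ composed with the inverse of $g$, is controlled in $L^{\varsigma/(\varsigma-1)}$ by $\|\rho_q\|_{(2\varsigma-1)/(\varsigma-1)}$ after undoing the change of variables. This is exactly the bound in Lemma~\ref{lem:psi-continuous}, which then gives continuity on $D$. Justifying differentiation under the integral sign uses the same approximation $\zeta_m\to Q_\nu'$: the identity holds for smooth $\zeta_m$ and passes to the limit uniformly on compacts. Continuous partials then give $V\in C^2(\interior D)$, and the extension to $D$ comes from continuity of the $\psi_{j\ell}$.

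\textbf{Step 3: strict concavity and the diffeomorphism.} This is the main obstacle. Positivity of $\psi_{j(j+1)}$ holds because $s_{j+1}-s_j=h_j<\diam(\supp q)$. Thus the supports of $\rho_q(\cdot-s_j)$ and $\rho_q(\cdot-s_{j+1})$, both intervals by (A2), overlap in a set of positive measure. On that set $g\in(0,1)$, and $Q_\nu'>0$ a.e. because $\nu$ has positive density on an interval.

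For negative definiteness, I would change variables to $w_k=-\sum_{m\ge k}\xi_m$, i.e. write $\xi_m=w_{m+1}-w_m$ with $w_n=0$. The quadratic form $\sum_{r,s}\xi_r\xi_s\partial^2_{rs}V$ is then rewritten, using \eqref{eq:secondDeriv}, as $-\sum_{j<\ell}\psi_{j\ell}(h)(w_\ell-w_j)^2$. This is a graph-Laplacian form on the vertices $1,\dots,n$. It is $\le0$, and it vanishes only if $w$ is constant along the edges $(j,j+1)$, which all carry positive weight, i.e. only if $\xi=0$. Hence the Hessian is negative definite on $\interior D$, and strict concavity on $D$ follows, including at boundary points, by continuity along segments.

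Finally, $f=\nabla V$ is $C^1$ with invertible Jacobian, and it is injective because $V$ is strictly concave: $\langle f(h)-f(h'),h-h'\rangle<0$ for $h\ne h'$. By the inverse function theorem, $f|_{\interior D}$ is then a $C^1$-diffeomorphism onto its image, which is open.
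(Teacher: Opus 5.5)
Your argument is essentially the paper's. It has the same ingredients: the Hölder tail bound on $U_\nu$ plus Markov's inequality for $q$ (the bound near $p=1$ is where $\mean(\nu)=0$ enters), $\nabla V=f$ via the change of variables $u=g(h,z)$, the second derivatives via the Hölder bound in $\|\rho_q\|_{L^{(2\varsigma-1)/(\varsigma-1)}}$ and the $\zeta_m$-approximation, positivity of $\psi_{j(j+1)}$ from overlapping supports, and negative definiteness from $-D^2V=\sum_{j<\ell}\psi_{j\ell}\,v^{j\ell}(v^{j\ell})^T$. Your graph-Laplacian form is exactly the associated quadratic form written in the variables $w$. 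The differences are cosmetic. The paper shifts $z\mapsto z+\sum_k h_k$ before differentiating instead of cancelling $\int Q_\nu(g)\,\partial_z g\,dz=\mean(\nu)=0$. It also justifies the second-order interchange with Lemma~\ref{prop:interchange} and the integrability of $|Q_\nu'(g)|(\partial_z g)^2$, rather than through uniform limits of $f_r^{(m)}$ and $\psi^{(m)}_{j\ell}$.

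The step you skip is the symmetry of the Hessian. You compute $\partial_{h_s}f_r$ only for $s\ge r$, where the substitution $z\mapsto z+s_j$ is in fact vacuous, since $s_j$ does not involve $h_s$ when $j\le r\le s$. But you then use all entries: ``continuous partials give $C^2$'', the quadratic form $\sum_{r,s}\xi_r\xi_s\partial^2_{rs}V$, and invertibility of $Jf$. For $s<r$, $f_r$ depends on $h_s$ also through $\rho_q(z-s_j)$ with $s<j\le r$, and $\rho_q$ cannot be differentiated. The paper fills this with the mixed-partials theorem in Rudin's form: $\partial_{h_r}V=f_r$ and $\partial_{h_s}V=f_s$ exist and $\partial_{h_s}\partial_{h_r}V$ is continuous, so $\partial_{h_r}\partial_{h_s}V$ exists and equals it.

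Alternatively, your substitution does the job precisely in this case. After $z\mapsto z+s_j$, all $h$-dependence of the $j$-th term sits inside $Q_\nu(g(h,z+s_j))$. Differentiating in $h_s$ produces the coefficients $\mathbf{1}_{\{j>s\}}-\mathbf{1}_{\{\ell>s\}}$. The terms with $s<j\le r$, $\ell\le s$ then cancel those with $j\le s$, $s<\ell\le r$, by the symmetry in $(j,\ell)$ of $p_jp_\ell\int Q_\nu'(g)\rho_q(z-s_j)\rho_q(z-s_\ell)\,dz$. What remains is $-\sum_{j\le s}\sum_{\ell>r}\psi_{j\ell}$.

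Finally, strict concavity along segments lying in a face of $D$ does not follow from ``continuity along segments'', since a limit of strictly concave functions is only concave. Use instead that $\psi_{j(j+1)}>0$ on all of $D$, which your overlap argument already gives, so the Hessian is negative definite at boundary points as well.
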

The proof is postponed to Appendix~\ref{app.post}.

\begin{remark}
\label{rmk:JF_prop}
Proposition~\ref{prop:regulatiry_f} yields, for every $h\in D$,
\[
-D^2V(h)=\sum_{1\le j<\ell\le n} \psi_{j\ell}(h)\, v^{j\ell}(v^{j\ell})^T,
\qquad \psi_{j\ell}(h)\ge 0,
\]
with $\psi_{j(j+1)}(h)>0$ for $j=1,\dots,n-1$, and $v^{j\ell}\in\RR^{n-1}$, where $V$ is the potential function of an $n$-atomic $q$-Bass map.
Let $I\subseteq \{1,\dots,n-1\}$ be such that $|I|=d>1$, and let
\[
H_I(h):= -\bigl(D^2V(h)\bigr)_{I,I}\in \RR^{d\times d}
\]
be the corresponding positive definite principal submatrix of $-D^2V(h)$. Taking principal submatrices in the decomposition above gives
\[
H_I(h)=\sum_{1\le j<\ell\le n} \psi_{j\ell}(h)\,\bigl(v^{j\ell}_I\bigr)\bigl(v^{j\ell}_I\bigr)^T,
\]
where $v^{j\ell}_I\in\RR^{d}$ denotes the restriction to the coordinates in $I$.
For each pair $(j,\ell)$, the restricted vector $v^{j\ell}_I$ is either $0$ or the indicator of a discrete interval in $\{1,\dots,d\}$, hence it equals $v^{ab}\in\RR^{d}$ for some $1\le a<b\le d+1$ after relabelling.
Grouping identical vectors, one finds coefficients $a^{(I)}_{ab}(h)\ge 0$ such that
\[
H_I(h) = \sum_{1\le a<b\le d+1} a^{(I)}_{ab}(h)\, v^{ab}(v^{ab})^T.
\]
Moreover, since $\psi_{j(j+1)}(h)>0$ and all the coefficients in the decomposition are non-negative, it follows that $a^{(I)}_{a(a+1)}(h) > 0$, for all $1 \leq a\leq d$.
Therefore Lemma~\ref{adjLemma} applies to every principal submatrix $H_I(h)$, and in particular for all $i\neq j$,
\[
\frac{|\adj(H_I(h))_{ij}|}{\adj(H_I(h))_{jj}}<1.
\]
\end{remark}

In the next result, we use sets of indices to keep track of which coordinates satisfy certain properties either at the level of the domain or at the level of the image of $f$. More precisely, the subscript ``dom'' refers to coordinates of the arguments $h,\widetilde h$, while the subscript ``cdom'' refers to coordinates of their images $f(h),f(\widetilde h)$ in the codomain.

\begin{proposition}\label{prop:non-expans}
Let $\nu,q\in\Prob(\RR)$ such that Assumption~\ref{ass:technical-assumptions} holds, let $D = [0, \diam(\supp(q)))^{n-1}$ and let $f$ be the corresponding $n$-atomic $q$-Bass map.
Let $h,\widetilde h\in D$ and define
\[
I_{\mathrm{dom}}=\{i\in\{1,\dots,n-1\}:h_i\neq \widetilde h_i\},
\qquad
I_{\mathrm{cdom}}=\{i\in\{1,\dots,n-1\}:f_i(h)=f_i(\widetilde h)\}.
\]
Assume that $I_{\mathrm{cdom}}\subseteq I_{\mathrm{dom}}$ and that $I_{\mathrm{dom}}\setminus I_{\mathrm{cdom}}=\{\iadd\}$ for some $\iadd\in\{1,\dots,n-1\}$.
Then $f_\iadd(h)\neq f_\iadd(\widetilde h)$ and, for every $i\neq \iadd$,
\[
\frac{|f_i(h)-f_i(\widetilde h)|}{|f_\iadd(h)-f_\iadd(\widetilde h)|}<1.
\]
More precisely, set $u:=h-\widetilde h$ and
\[
\bar J:=\int_0^1 Jf(\widetilde h+t u)\,dt.
\]
If $i\notin I_{\mathrm{cdom}}$ and $I:=I_{\mathrm{dom}}\cup\{i\}$, then
\[
\frac{|f_i(h)-f_i(\widetilde h)|}{|f_\iadd(h)-f_\iadd(\widetilde h)|}
=
\left|\frac{\adj\!\left(\bar J_{I,I}\right)_{\iadd i}}{\adj\!\left(\bar J_{I,I}\right)_{ii}}\right|
<1,
\]
whereas if $i\in I_{\mathrm{cdom}}$ the ratio is $0$.
\end{proposition}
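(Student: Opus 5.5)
The plan is to linearize along the segment and then solve the resulting linear system by Cramer's rule. By Proposition~\ref{prop:regulatiry_f}, $f=\nabla V$ is $C^1$ on $\interior(D)$, and $Jf=D^2V$ extends continuously to $D$, which is convex. Hence the mean value theorem in integral form gives
\[
f(h)-f(\widetilde h)=\bar J\,u,\qquad \bar J=\int_0^1 Jf(\widetilde h+tu)\,dt .
\]
Near the boundary of $D$ I would first work on slightly shrunk segments and then pass to the limit, using the continuous extension. The matrix $\bar J$ is an average of matrices $D^2V$. By Remark~\ref{rmk:JF_prop}, each $-D^2V(h')$ is a nonnegative combination $\sum\psi_{j\ell}v^{j\ell}(v^{j\ell})^T$ with $\psi_{j(j+1)}>0$. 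Integrating in $t$ keeps this structure: $-\bar J=\sum\bar\psi_{j\ell}v^{j\ell}(v^{j\ell})^T$ with $\bar\psi_{j(j+1)}>0$. The same holds for every principal submatrix $-\bar J_{I,I}$ after the regrouping described in that remark. Therefore Lemma~\ref{adjLemma} applies to $-\bar J_{I,I}$, and the signs cancel in ratios of adjugate entries.

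Next I set $w:=f(h)-f(\widetilde h)$. Since $u_k=0$ for $k\notin I_{\mathrm{dom}}$, we have $w_i=\sum_{k\in I_{\mathrm{dom}}}\bar J_{ik}u_k$ for every $i$. Fix $i\notin I_{\mathrm{cdom}}$ with $i\ne\iadd$, and set $I=I_{\mathrm{dom}}\cup\{i\}$. Consider the square system $\bar J_{I,I}\,u_I = w_I+(\text{correction})$. Here $u_I$ has $u_i=0$ if $i\notin I_{\mathrm{dom}}$. If instead $i\in I_{\mathrm{dom}}$, then $i$ must equal $\iadd$, because $I_{\mathrm{dom}}\setminus I_{\mathrm{cdom}}=\{\iadd\}$; this case is excluded. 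So $i\notin I_{\mathrm{dom}}$, and $u_I$ vanishes in coordinate $i$. On $I$, the right-hand side $w_I$ vanishes except at coordinates $\iadd$ and $i$, since the other indices lie in $I_{\mathrm{cdom}}$.

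The matrix $\bar J_{I,I}$ is negative definite, hence invertible, so $u_I=\bar J_{I,I}^{-1}w_I=\adj(\bar J_{I,I})\,w_I/\det$. Reading off coordinate $i$ gives
\[
0=u_i=\frac{\adj(\bar J_{I,I})_{i\iadd}\,w_\iadd+\adj(\bar J_{I,I})_{ii}\,w_i}{\det\bar J_{I,I}} .
\]
The adjugate of a symmetric matrix is symmetric, and $\adj(\bar J_{I,I})_{ii}\ne0$, being up to sign a principal minor of a definite matrix. Hence
\[
\frac{w_i}{w_\iadd}=-\frac{\adj(\bar J_{I,I})_{\iadd i}}{\adj(\bar J_{I,I})_{ii}} ,
\]
provided $w_\iadd\neq0$. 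Lemma~\ref{adjLemma} then bounds the absolute value of this ratio strictly by $1$. If $i\in I_{\mathrm{cdom}}$, then $w_i=0$ and the ratio is $0$.

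It remains to show $w_\iadd\ne0$. I would apply the same system with $I=I_{\mathrm{dom}}$. If $w_\iadd=0$, then $w_{I_{\mathrm{dom}}}=0$, so $u_{I_{\mathrm{dom}}}=0$ by invertibility. This contradicts the definition of $I_{\mathrm{dom}}$, which is nonempty because it contains $\iadd$. I expect the main obstacle to be the transfer of the structural hypotheses of Lemma~\ref{adjLemma} to the averaged matrix $\bar J$ and its principal submatrices. One must check that averaging preserves the strictly positive adjacent coefficients, which holds because $\psi_{j(j+1)}$ is continuous and strictly positive on the compact segment. One must also handle the case $|I|=1$, where the claim is trivial, and the boundary points of $D$.
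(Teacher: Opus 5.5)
Your proposal is correct and follows the paper's argument. It uses the integral mean-value identity $f(h)-f(\widetilde h)=\bar J u$ and the fact that averaging preserves the decomposition with strictly positive adjacent coefficients, so that Lemma~\ref{adjLemma} applies to $-\bar J_{I,I}$ and yields the adjugate bound. The only difference is cosmetic: you extract the ratio by Cramer's rule on the square system $\bar J_{I,I}u_I=w_I$, while the paper parametrizes the one-dimensional kernel of $\bar J_{I_{\mathrm{cdom}},I_{\mathrm{dom}}}$ by signed maximal minors and then uses Laplace expansion. In your version the ``correction'' term is actually zero, since $u$ vanishes off $I_{\mathrm{dom}}\subseteq I$. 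Also, $f_\iadd(h)\neq f_\iadd(\widetilde h)$ follows directly from $\iadd\notin I_{\mathrm{cdom}}$, and the case $|I|=1$ never arises because $I\supseteq\{\iadd,i\}$.
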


\begin{proof}
Since $f=\nabla V$ on $D$ and $V\in C^2(D)$ by Proposition~\ref{prop:regulatiry_f}, the Jacobian $Jf$ is continuous on $D$. Let $u:=h-\widetilde h$ and consider the segment $\widetilde h+t u$, $t\in[0,1]$, which lies in $D$. By the fundamental theorem of calculus,
\[
f(h)-f(\widetilde h)
=\int_0^1 \frac{d}{dt}f(\widetilde h+t u)\,dt
=\int_0^1 Jf(\widetilde h+t u)\,u\,dt
=\bar J\,u,
\]
where $\bar J:=\int_0^1 Jf(\widetilde h+t u)\,dt$. In particular, since $u_j=0$ for $j\notin I_{\mathrm{dom}}$, for every $i\in\{1,\dots,n-1\}$,
\begin{equation}\label{eq:FTC_component}
f_i(h)-f_i(\widetilde h)=\sum_{j\in I_{\mathrm{dom}}}\bar J_{ij}\,u_j.
\end{equation}

By assumption $I_{\mathrm{dom}}\setminus I_{\mathrm{cdom}}=\{\iadd\}$, hence $\iadd\notin I_{\mathrm{cdom}}$ and therefore $f_\iadd(h)\neq f_\iadd(\widetilde h)$.
For every $i\in I_{\mathrm{cdom}}$, we have $f_i(h)-f_i(\widetilde h)=0$, and \eqref{eq:FTC_component} yields
\[
\left(\bar J_{I_{\mathrm{cdom}},I_{\mathrm{dom}}}\right)u_{I_{\mathrm{dom}}}=0.
\]

Set $d:=|I_{\mathrm{dom}}|$. Since $I_{\mathrm{dom}}=I_{\mathrm{cdom}}\cup\{\iadd\}$, the matrix $\bar J_{I_{\mathrm{cdom}},I_{\mathrm{dom}}}$ has size $(d-1)\times d$. Moreover, the principal submatrix $\bar J_{I_{\mathrm{dom}},I_{\mathrm{dom}}}$ is nonsingular because $Jf(\widetilde h+t u)$ is negative definite for every $t\in[0,1]$ by Proposition~\ref{prop:regulatiry_f}.
Therefore $\bar J_{I_{\mathrm{cdom}},I_{\mathrm{dom}}}$ has full row rank $d-1$, and its kernel is one-dimensional. Consequently, there exists $c\in\RR$ such that for every $j\in I_{\mathrm{dom}}$,
\begin{equation}\label{eq:kernel-minors}
u_j
=
c\cdot(-1)^{|\{k \in I_{\mathrm{dom}}: k<j\}|}\det \left( \bar J_{I_{\mathrm{cdom}}, I_{\mathrm{dom}} \setminus \{j\}}\right).
\end{equation}

Substituting \eqref{eq:kernel-minors} into \eqref{eq:FTC_component} and applying Laplace expansion along the row indexed by $i$ gives, for every $i\notin I_{\mathrm{cdom}}$,
\[
f_i(h)-f_i(\widetilde h)
=
c\,\det\!\left(\bar J_{I_{\mathrm{cdom}}\cup\{i\},I_{\mathrm{dom}}}\right).
\]
In particular, using $I_{\mathrm{cdom}}\cup\{\iadd\}=I_{\mathrm{dom}}$,
\[
f_\iadd(h)-f_\iadd(\widetilde h)
=
c\,\det\!\left(\bar J_{I_{\mathrm{dom}},I_{\mathrm{dom}}}\right).
\]
Hence, if $i\notin I_{\mathrm{cdom}}$,
\begin{equation}\label{eq:ratio-det-avgJ}
\frac{|f_i(h)-f_i(\widetilde h)|}{|f_\iadd(h)-f_\iadd(\widetilde h)|}
=
\left|\frac{\det\!\left(\bar J_{I_{\mathrm{cdom}}\cup\{i\},I_{\mathrm{dom}}}\right)}
{\det\!\left(\bar J_{I_{\mathrm{dom}},I_{\mathrm{dom}}}\right)}\right|.
\end{equation}
If $i\in I_{\mathrm{cdom}}$, then $f_i(h)=f_i(\widetilde h)$ and the ratio is $0$, so we may assume $i\notin I_{\mathrm{cdom}}$ from now on. Under the standing hypothesis $I_{\mathrm{dom}}\setminus I_{\mathrm{cdom}}=\{\iadd\}$, this implies $i\notin I_{\mathrm{dom}}$, and we set $I:=I_{\mathrm{dom}}\cup\{i\}$.

Note that $I_{\mathrm{cdom}}\cup\{i\}=I\setminus\{\iadd\}$ and $I_{\mathrm{dom}}=I\setminus\{i\}$. With the convention $\adj(A)_{pq}=(-1)^{p+q}\det(A_{\{1,\dots,|I|\}\setminus\{p\},\{1,\dots,|I|\}\setminus\{q\}})$ applied to the principal submatrix $A=\bar J_{I,I}$, identity \eqref{eq:ratio-det-avgJ} becomes
\[
\left|\frac{\det\!\left(\bar J_{I\setminus\{\iadd\},\,I\setminus\{i\}}\right)}{\det\!\left(\bar J_{I\setminus\{i\},\,I\setminus\{i\}}\right)}\right|
=
\left|\frac{\adj\!\left(\bar J_{I,I}\right)_{\iadd i}}{\adj\!\left(\bar J_{I,I}\right)_{ii}}\right|.
\]

Finally, by Remark \ref{rmk:JF_prop}, every principal submatrix of $-Jf(h)$, $h\in D$, has the decomposition required in Lemma~\ref{adjLemma}. Since this structure is preserved under integration, the same holds for $-\bar J_{I,I}=\int_0^1 \bigl(-Jf(\widetilde h+t u)_{I,I}\bigr)\,dt$, and $-\bar J_{I,I}$ is symmetric positive definite as noted above. Hence Lemma~\ref{adjLemma} applies to $-\bar J_{I,I}$ (and therefore also to $\bar J_{I,I}$), yielding
\[
\left|\frac{\adj\!\left(\bar J_{I,I}\right)_{\iadd i}}{\adj\!\left(\bar J_{I,I}\right)_{ii}}\right|<1.
\]
\end{proof}

Finally, we conclude this section by stating that $n$-atomic $q$-Bass maps are stable with respect to the reference measure q and the terminal distribution $\nu$, and continuous, even without Assumption~\ref{ass:technical-assumptions}. The proof is postponed to Appendix~\ref{app.post}.

\begin{proposition}[Continuity and stability of the $n$-atomic $q$-Bass map]
\label{prop:q-Bass-map-stability}
Let $\nu \in \Prob_1(\RR)$ and let $q\in \Prob(\RR)$ be such that $q \ll \lambda$. Let $(\nu_k)_{k\in\NN}\subseteq \Prob_1(\RR)$ and $(q_k)_{k\in\NN}\subseteq \Prob(\RR)$ be such that
\[
    Q_{\nu_k}\to Q_\nu \quad\text{pointwise a.e.}, \qquad |Q_{\nu_k}|\le H \text{ for some } H\in L^1(0,1),
\]
and
\[
    q_k \ll \lambda, \qquad \rho_{q_k}\to \rho_q \quad\text{in } L^1(\RR).
\]
Let $f$ be the $n$-atomic $q$-Bass map with respect to $\nu$, $f^{(k)}$ be the $n$-atomic $q_k$-Bass map with respect to $\nu_k$ for $k\in\NN$, and $(h^{(k)})_{k\in\NN}\subseteq \RR_{\ge0}^{n-1}$. Then there exists a subsequence $(h^{(k_m)})_{m\in\NN}$, such that one of the following alternatives holds:
\begin{enumerate}[label=(\roman*)]
    \item there exists $h\in\RR_{\ge0}^{n-1}$ such that
    \[
        h^{(k_m)}\to h \qquad\text{and}\qquad f_j^{(k_m)}(h^{(k_m)})\to f_j(h) \quad\text{for every } j\in\{1,\dots,n-1\};
    \]
    \item there exists $i\in\{1,\dots,n-1\}$ such that
    \[
        h_i^{(k_m)}\to+\infty \qquad\text{and}\qquad f_i^{(k_m)}(h^{(k_m)})\to U_\nu(p_i^*).
    \]
\end{enumerate}
In particular, whenever $h^{(k)}\to h$ in $\RR_{\ge0}^{n-1}$, one has
\[
    f_j^{(k)}(h^{(k)})\to f_j(h) \qquad \text{for every } j\in\{1,\dots,n-1\},
\]
and any $n$-atomic $q$-Bass map is continuous.
\end{proposition}

\section{Existence and uniqueness of the Bass measure}
\label{sec:existence}
In this section, we study existence and uniqueness of the Bass measure to \eqref{eq:fixed_point_eq} when $\mu$ is supported on finitely many atoms.
Proposition~\ref{prop:dimension_reduction} shows that, when $\mu$ is $n$-atomic, solving system \eqref{eq:fixed-point_equiv} is equivalent to solving the reduced system \eqref{eq:fixed_point_atoms2} for the associated $n$-atomic $q$-Bass map $f$. In this section, we provide a geometric interpretation of \eqref{eq:fixed_point_atoms2} and use it to prove the existence of a solution when the pair $(\mu,\nu)$ is irreducible.  Since irreducibility implies convex order, $\mu$ and $\nu$ have the same mean. Translating both marginals by this common mean does not affect either irreducibility or the existence of a $q$-Bass martingale, so throughout this section we assume that $\mean(\mu)=\mean(\nu)=0$. We also assume that $\nu\neq\delta_0$, since otherwise $\mu\preceq_c\nu$ would imply $\mu=\delta_0$, contradicting the standing assumption that $\mu$ has $n\geq2$ distinct atoms.

\begin{definition}[Convex polygonal chain]
A convex polygonal chain on $[0,1]$ is the graph $\Ccal\subseteq\RR^2$
of a continuous convex piecewise affine function $\varphi_\Ccal:[0,1]\to\RR$.
We write its vertices as
\[
V_i(\Ccal)=(t_i,y_i), \qquad i=0,\dots,m,
\]
where $0=t_0<t_1<\cdots<t_m=1$. Thus, $\Ccal$ is obtained by joining each pair of consecutive vertices $V_{i-1}(\Ccal)$ and $V_i(\Ccal)$ by a line segment.
\end{definition}

If $\mu\in\Prob(\RR)$ is an $n$-atomic distribution, then its quantile function is piecewise constant with exactly $n$ distinct values. As a consequence, the integrated quantile function $U_\mu$ is piecewise affine on $[0,1]$, and the graph of $U_\mu$ is a convex polygonal chain starting at $(0,0)$ and ending at $(1,\mean(\mu))$. 
For a visual intuition, see $U_\mu$ in Figure \ref{fig:integrated_quantile_1}. This polygonal representation is particularly useful because convex order admits a simple characterization in terms of integrated quantiles (Proposition~\ref{prop:convex_order_char}): the condition $\mu\preceq_c \nu$ can be read as the graph of $U_\mu$ lying above the graph of $U_\nu$, with matching endpoints.

We begin by characterizing those polygonal chains which arise from an $n$-atomic law with prescribed weights $(p_1,\dots,p_n)$ and dominate $U_\nu$.

\begin{definition}
\label{def:lambda_nu}
Let $\nu\in\Prob_1(\RR)$, and let
\[
\Gamma=\bigl\{(p,U_\nu(p)):\,p\in[0,1]\bigr\}.
\]
We denote by $\Lambda_\nu^{p_1,\dots,p_n}$ the family of all convex
polygonal chains $\Ccal\subseteq\RR^2$ whose vertices are of the form
\[
V_0(\Ccal)=(0,0), \qquad
V_i(\Ccal)=(p_i^*,y_i), \quad i=1,\dots,n-1, \qquad
V_n(\Ccal)=(1,0),
\]
and satisfying the following conditions:
\begin{enumerate}[label=(\roman*)]
    \item $\Ccal$ strictly dominates $\Gamma$, namely $\varphi_\Ccal(p)>U_\nu(p)$ for every $p\in(0,1)$;
    \item no three consecutive vertices of $\Ccal$ are collinear.
\end{enumerate}
\end{definition}

The next lemma records the fact that an $n$-atomic law $\mu$ such that $(\mu, \nu)$ is irreducible produces a chain belonging to $\Lambda_\nu^{p_1,\dots,p_n}$.

\begin{lemma}\label{lemma:n_atoms-cvx-order}
Let $\mu, \nu \in \Prob_1(\RR)$ such that the distribution $\mu$ is an $n$-atomic distribution with representation \eqref{n_atoms_repr} and the pair $(\mu, \nu)$ is irreducible. Then the graph of $U_\mu$ is a convex polygonal chain in $\Lambda_\nu^{p_1, \dots, p_n}$.
\end{lemma}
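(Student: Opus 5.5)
We verify the defining properties of $\Lambda_\nu^{p_1,\dots,p_n}$ directly for the graph of $U_\mu$, using the standing normalization $\mean(\mu)=\mean(\nu)=0$ of this section. We rely only on Proposition~\ref{prop:convex_order_char} and the explicit form of $Q_\mu$.

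\emph{Chain structure and vertices.} Since $\mu=\sum_j p_j\delta_{x_j}$ with $x_1<\dots<x_n$, its quantile function satisfies $Q_\mu(u)=x_i$ for $u\in(p_{i-1}^*,p_i^*]$, with $p_0^*:=0$. Hence
\[
U_\mu(p)=x_i^*-p_i^*x_i+\dots
\]
more precisely, $U_\mu(p)=U_\mu(p_{i-1}^*)+x_i(p-p_{i-1}^*)$ on $[p_{i-1}^*,p_i^*]$. So $U_\mu$ is continuous and piecewise affine. Its slopes $x_1<\dots<x_n$ are strictly increasing, so it is convex and its graph is a convex polygonal chain. The breakpoints are exactly $t_i=p_i^*$, with $y_i=U_\mu(p_i^*)=x_i^*$. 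The endpoints are $U_\mu(0)=0$ and $U_\mu(1)=\mean(\mu)=0$, by Proposition~\ref{prop:convex_order_char}\ref{pot1}. Thus the vertices are $(0,0)$, $(p_i^*,x_i^*)$ for $i=1,\dots,n-1$, and $(1,0)$, as required.

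\emph{No three collinear consecutive vertices.} The segment from $V_{i-1}$ to $V_i$ has slope $x_i$. Three consecutive vertices $V_{i-1},V_i,V_{i+1}$ would be collinear only if $x_i=x_{i+1}$, which the strict ordering of the atoms excludes.

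\emph{Strict domination.} Irreducibility of $(\mu,\nu)$ together with Proposition~\ref{prop:convex_order_char}\ref{pot3} gives $U_\mu(p)>U_\nu(p)$ for all $p\in(0,1)$. This is exactly condition (i), since $\varphi_\Ccal=U_\mu$.

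There is no real obstacle. The only point requiring care is matching the vertex indexing, i.e.\ checking that the breakpoints are exactly at the $p_i^*$, which holds because all $p_i>0$ and all slopes are distinct.
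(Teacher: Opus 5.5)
Your proposal is correct and follows the paper's route. The paper simply says the lemma follows immediately from Proposition~\ref{prop:convex_order_char}, and you supply the details this leaves implicit: breakpoints exactly at the $p_i^*$, distinct slopes $x_1<\dots<x_n$ ruling out collinear consecutive vertices, the endpoint $(1,0)$ from the section's standing normalization $\mean(\mu)=0$, and strict domination from part~\ref{pot3}. The stray display $U_\mu(p)=x_i^*-p_i^*x_i+\dots$ should be dropped or completed to $x_i^*+x_i(p-p_i^*)$.
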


\begin{proof}
It follows immediately from Proposition~\ref{prop:convex_order_char}.
\end{proof}

We now introduce the analogous family of chains generated by the $q$-Bass map $f$. This is the geometric counterpart of the reduced system \eqref{eq:fixed_point_atoms2}: prescribing the values $f_i(h)$ at the abscissas $p_i^*$ means prescribing the intermediate vertices of a polygonal chain.

\begin{definition}\label{def:Lambda_f}
Let $\nu \in \Prob_1(\RR)$, $q \in \Prob(\RR)$ such that $q \ll \lambda$, and let $f$ be the $n$-atomic $q$-Bass map.
We denote by $\Lambda_f$ the collection of convex polygonal chains $\Ccal$ in $\RR^2$ with vertices
\[
(0,0),\ (p_1^*, f_1(h)),\ \dots,\ (p_{n-1}^*, f_{n-1}(h)),\ (1,0),
\]
for some $h \in (0,\diam(\supp(q)))^{n-1}$.
In this case we say that $\Ccal$ is generated by $f(h)$.
\end{definition}

The main result of this section identifies the two geometric families introduced in Definition~\ref{def:lambda_nu} and Definition~\ref{def:Lambda_f}: the convex polygonal chains strictly dominating $U_\nu$ with the prescribed abscissas can always be generated by the $n$-atomic $q$-Bass map and, under Assumptions \ref{ass:A1}--\ref{ass:A2}, these two families coincide. In particular, this provides a geometric existence argument for solutions to \eqref{eq:fixed_point_atoms2}.

\begin{theorem}\label{thm:mainThm-n_atoms}
Let $\nu\in\Prob_1(\RR)$, $q\in \Prob(\RR)$ such that $q \ll \lambda$, and let $f$ be the $n$-atomic $q$-Bass map. Then
 \[
\Lambda^{p_1, \dots, p_n}_\nu \subseteq \Lambda_f.
\] 
Moreover, under Assumptions \ref{ass:A1}--\ref{ass:A2}, we have $\Lambda^{p_1, \dots, p_n}_\nu = \Lambda_f$.
\end{theorem}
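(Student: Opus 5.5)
The plan is to prove the reverse inclusion $\Lambda_f\subseteq\Lambda^{p_1,\dots,p_n}_\nu$ first under Assumptions~\ref{ass:A1}--\ref{ass:A2}. Then I would show $\Lambda^{p_1,\dots,p_n}_\nu\subseteq\Lambda_f$ in the regular case by a connectedness (open--closed) argument. Finally I would remove the assumptions by approximation, using Proposition~\ref{prop:q-Bass-map-stability}.

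\emph{Step 1: $\Lambda_f\subseteq\Lambda_\nu^{p_1,\dots,p_n}$ under \ref{ass:A1}--\ref{ass:A2}.} Fix $h\in(0,\diam(\supp(q)))^{n-1}$ and set $s_i=\sum_{k<i}h_k$. Define $m_i:=\int Q_\nu(g(h,z))\rho_q(z-s_i)\,dz$, so that $f_i(h)-f_{i-1}(h)=p_im_i$ by \eqref{def:f_function}. The chain generated by $f(h)$ is then the graph of $U_{\mu_h}$, where $\mu_h=\sum_ip_i\delta_{m_i}$. Moreover $\mu_h$ is the first marginal of the martingale $(S(X_0),T(X_0+Q))$ with $X_0\sim\sum p_i\delta_{s_i}$, as in the proof of Theorem~\ref{rmk:fixed-point-syst}.
\begin{itemize}
\item Convexity and the absence of three collinear vertices amount to $m_1<\dots<m_n$. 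This should follow from $h_i>0$, $h_i<\diam(\supp(q))$ (the translated kernels overlap) and strict positivity of the densities. Then $z\mapsto Q_\nu(g(h,z))$ is strictly increasing on the relevant interval, and shifting $\rho_q$ to the right strictly increases the integral.
\item The martingale coupling gives $\mu_h\preceq_c\nu$. For strict domination I would show the pair is irreducible via Proposition~\ref{prop:convex_order_char}\ref{pot3}. Since consecutive kernels $\pi_{m_i}$ and $\pi_{m_{i+1}}$ have overlapping interval supports whose union is $\supp(\nu)$, the potentials satisfy $C_{\mu_h}<C_\nu$ on the convex hull of $\supp(\mu_h)$, hence $U_{\mu_h}>U_\nu$ on $(0,1)$.
\end{itemize}

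\emph{Step 2: $\Lambda_\nu^{p_1,\dots,p_n}\subseteq\Lambda_f$ under the full Assumption~\ref{ass:technical-assumptions}.} Identify a chain with its vector $y\in\RR^{n-1}$ of intermediate vertex heights, and let $A$ be the set of $y$ whose chain lies in $\Lambda_\nu^{p_1,\dots,p_n}$. Convexity with no collinear triples is a family of strict linear inequalities in $y$. Domination says $\varphi_{\Ccal}(p)>U_\nu(p)$ for each $p$, and for fixed $p$ the value $\varphi_{\Ccal}(p)$ is linear in $y$ once convexity holds. Hence $A$ is convex, so connected. It is also open, with a little care near the endpoints $p=0,1$.

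By Step 1, $B:=f(\interior(D))\cap A=f(\interior(D))$, and $B$ is nonempty. Since $f|_{\interior(D)}$ is a $C^1$-diffeomorphism (Proposition~\ref{prop:regulatiry_f}), $B$ is open. For relative closedness in $A$, take $y^{(k)}=f(h^{(k)})\to y\in A$ and apply Proposition~\ref{prop:q-Bass-map-stability} with constant $\nu,q$. There are three cases.
\begin{itemize}
\item If some $h_i^{(k)}\to\infty$, then $y_i=U_\nu(p_i^*)$, which contradicts strict domination.
\item If $h^{(k)}\to h$ with some $h_i\ge\diam(\supp(q))$, then $g(h,\cdot)$ has a plateau at level $p_i^*$, and again $f_i(h)=U_\nu(p_i^*)$.
\item If $h^{(k)}\to h$ with some $h_i=0$, then atoms $i$ and $i+1$ coincide, so $m_i=m_{i+1}$ and three consecutive vertices are collinear, contradicting $y\in A$.
\end{itemize}
Hence $h\in\interior(D)$ and $y=f(h)\in B$, so $B=A$ by connectedness.

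\emph{Step 3: general $\nu$, $q$.} Given $\Ccal\in\Lambda_\nu^{p_1,\dots,p_n}$ with heights $y$, I would choose regular approximations $\nu_k$, $q_k$ satisfying Assumption~\ref{ass:technical-assumptions} (for example by mollification and slight truncation or tail modification) such that $Q_{\nu_k}\to Q_\nu$ a.e. with an integrable dominating function, $\rho_{q_k}\to\rho_q$ in $L^1$, and $\mean(\nu_k)=0$. Because $U_{\nu_k}\to U_\nu$ uniformly, $\Ccal$ still strictly dominates $U_{\nu_k}$ on compact subsets of $(0,1)$. Near $0$ and $1$ I would compare slopes: the first slope of $\Ccal$ must not fall below the corresponding quantile behaviour of $\nu_k$, which I would arrange by choosing approximations whose support contains that of $\nu$.

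Step 2 then gives $h^{(k)}$ with $f^{(k)}(h^{(k)})=y$, and the same trichotomy from Proposition~\ref{prop:q-Bass-map-stability} yields a limit $h\in(0,\diam(\supp(q)))^{n-1}$ with $f(h)=y$. The case $h_i=0$ needs no regularity, since collinearity follows directly from $g$. The case $h_i\ge\diam(\supp(q))$ uses the plateau of $g$. I expect the main obstacle to be this approximation step, namely keeping strict domination near the endpoints uniformly in $k$ (Step 3), together with the strict-monotonicity argument in Step 1, where positivity of the densities is essential.
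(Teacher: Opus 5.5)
Your argument is correct, and for the hard inclusion $\Lambda_\nu^{p_1,\dots,p_n}\subseteq\Lambda_f$ it takes a genuinely different route from the paper.

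\textbf{The paper's route.} The paper obtains this inclusion from Theorem~\ref{thm:induction_thm}. That is an explicit induction over index sets: coordinates of $h$ are corrected one at a time. The side effects of each correction are controlled by the non-expansiveness estimate of Proposition~\ref{prop:non-expans}, which rests on the adjugate bound of Lemma~\ref{adjLemma}. The induction also uses the minimal-gap $L$-approximations and Lemma~\ref{lemma:induction}.

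\textbf{Your route.} You replace all of this by a continuity-method argument on the convex set $A$. The set $f(\interior(D))$ is nonempty and contained in $A$ by your Step~1. It is open because Proposition~\ref{prop:regulatiry_f} gives a nonsingular Jacobian. It is relatively closed in $A$ because Proposition~\ref{prop:q-Bass-map-stability}, together with the two boundary identities of Proposition~\ref{prop:mainThm-n_atoms-part1}, excludes every way a sequence of preimages can leave $\interior(D)$. Those two identities ($h_i=0$ forces collinearity; $h_i\ge\diam(\supp(q))$ forces $f_i=U_\nu(p_i^*)$) hold without any regularity.

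\textbf{What each buys.} Your argument needs the same regularity input and the same approximation step as the paper. Your Step~3 is the paper's Part~II combined with the boundary exclusion of Corollary~\ref{cor:mainThm-n_atoms-part2}. The approximating sequences you only sketch are supplied by Propositions~\ref{prop:approx_nu_1}--\ref{prop:approx_q_1}. Your route is far shorter and makes Lemma~\ref{adjLemma}, Proposition~\ref{prop:non-expans} and Lemma~\ref{lemma:induction} unnecessary. In exchange, the paper's route yields an explicit coordinate-wise correction scheme.

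\textbf{Your endpoint concerns are unnecessary.} Since $U_\eta$ is convex with $U_\eta(0)=U_\eta(1)=0$, and the chain is affine between consecutive abscissas, strict domination on $(0,1)$ is equivalent to $\Ccal_i>U_\eta(p_i^*)$ for $i=1,\dots,n-1$. This has three consequences.
\begin{itemize}
\item $A$ is an open polyhedron cut out by $2(n-1)$ strict linear inequalities.
\item In Step~3, uniform convergence $U_{\nu_k}\to U_\nu$ suffices, with no support-containment condition. This is exactly the role of $\vartheta$ in Proposition~\ref{prop:approx_nu_1}.
\item In Step~1, the irreducibility argument can be replaced by the paper's direct bound $f_i(h)\ge U_\nu(p_i^*)$. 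It is obtained by dropping the terms $j>i$ inside $Q_\nu$, with equality if and only if $h_i\ge\diam(\supp(q))$.
\end{itemize}
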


Combining Proposition~\ref{prop:dimension_reduction} with the geometric characterization above yields the existence of a  Bass measure as soon as $U_\mu$ belongs to $\Lambda_\nu^{p_1,\dots,p_n}$, which is guaranteed by Lemma~\ref{lemma:n_atoms-cvx-order} under irreducibility.

We split the proof of Theorem~\ref{thm:mainThm-n_atoms} into two parts. The first inclusion is proved in Corollary~\ref{cor:mainThm-n_atoms-part2}, and the opposite one under Assumptions \ref{ass:A1}--\ref{ass:A2} is shown in Proposition~\ref{prop:mainThm-n_atoms-part1}.

Since the abscissae of the intermediate vertices are fixed ($p_1^*,\ldots,p_{n-1}^*$), every $\Ccal\in\Lambda_\nu^{p_1,\dots,p_n}$ is uniquely determined by the ordinates of such vertices. We shall therefore identify $\Ccal$ with the vector $(\Ccal_1,\dots,\Ccal_{n-1}):=(y_1,\dots,y_{n-1})\in\RR^{n-1}$. With this convention, $\Ccal\in\Lambda_\nu^{p_1,\dots,p_n}$ means that the convex polygonal chain with vertices $(0,0)$, $(p_i^*,\Ccal_i)$, $i=1,\dots,n-1$, and $(1,0)$ belongs to $\Lambda_\nu^{p_1,\dots,p_n}$. Similarly, if $f=(f_1,\dots,f_{n-1})$ and $h\in(0,\diam(\supp(q)))^{n-1}$, then $f(h)\in\Lambda_\nu^{p_1,\dots,p_n}$ means that the convex polygonal chain with intermediate vertices $(p_i^*,f_i(h))$, $i=1,\dots,n-1$, belongs to $\Lambda_\nu^{p_1,\dots,p_n}$.

\begin{proposition}
\label{prop:mainThm-n_atoms-part1}
Let $h\in\RR^{n-1}_{\ge 0}$, $\nu\in\Prob_1(\RR)$, $q\in \Prob(\RR)$, such that $q \ll \lambda$ and  Assumptions \ref{ass:A1}--\ref{ass:A2} hold. Let $f$ be the corresponding $n$-atomic $q$-Bass map and fix $i \in \{1, \dots, n-1\}$. 
Then $h_i = 0$ if and only if
\begin{equation}
\label{eq:colinearity}
	f_i(h) = f_{i-1}(h)+\frac{f_{i+1}(h)-f_{i-1}(h)}{p_{i+1}^*-p_{i-1}^*}p_i,
\end{equation}
where we set $f_0 = f_n = 0$.
Additionally, $h_i \geq \diam(\supp(q))$ if and only if
\begin{equation}
\label{eq:reduc_polygonal}
	f_i(h)= U_\nu(p^*_i).
\end{equation}
In particular,
\[
	\Lambda_f \subseteq \Lambda^{p_1, \dots, p_n}_\nu.
\]
\end{proposition}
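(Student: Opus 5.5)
The plan is to rewrite the slopes of the chain generated by $f(h)$ as averages of the nondecreasing function $z\mapsto Q_\nu(g(h,z))$ against shifted copies of $q$. Both characterizations then become strict stochastic-comparison statements, with strictness coming from Assumptions \ref{ass:A1}--\ref{ass:A2}.

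\textbf{Slope formula.} Set $s_j:=\sum_{k=1}^{j-1}h_k$, so $s_{i+1}=s_i+h_i$, and set $\phi(z):=Q_\nu(g(h,z))$. With $f_0=0$, and with $f_n=0$ coming from $\mean(\nu)=0$ as in the proof of Proposition~\ref{prop:dimension_reduction}, the definition \eqref{def:f_function} gives
\[
\frac{f_j(h)-f_{j-1}(h)}{p_j}=\int_\RR \phi(z)\,\rho_q(z-s_j)\,dz=:\sigma_j ,\qquad j=1,\dots,n .
\]
Condition \eqref{eq:colinearity} is exactly $\sigma_i=\sigma_{i+1}$. Since $s_{i+1}\ge s_i$ and $\phi$ is nondecreasing, $\sigma_{i+1}\ge\sigma_i$, which already gives convexity of the chain. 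If $h_i=0$, the two shifted densities coincide and $\sigma_i=\sigma_{i+1}$.

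\textbf{Case $h_i>0$.} Integrating by parts (Stieltjes against $\phi$) gives
\[
\sigma_{i+1}-\sigma_i=\int_\RR \bigl(F_q(z-s_i)-F_q(z-s_{i+1})\bigr)\,d\phi(z).
\]
Under \ref{ass:A1}--\ref{ass:A2}, $Q_\nu$ is continuous and strictly increasing on $(0,1)$, since $\supp(\nu)$ is an interval and $\nu$ has no atoms. Moreover, $g(h,\cdot)$ is strictly increasing, with values in $(0,1)$, on the open interval where at least one shifted density is positive. The bracket is strictly positive on a nondegenerate interval inside $s_i+\interior(\supp q)$, using that $\supp q$ is an interval and $h_i>0$. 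So the integral is strictly positive. The main care point is checking that this interval meets the region where $\phi$ strictly increases. This holds because on it $\rho_q(\cdot-s_i)>0$ and hence $\partial_z g>0$.

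\textbf{Characterization $f_i(h)=U_\nu(p_i^*)$.} Write $G_i(z):=\sum_{j\le i}p_jF_q(z-s_j)$. Then $f_i(h)=\int \phi\,dG_i$, and $dG_i\le dg(h,\cdot)$ with total mass $p_i^*$. After the change of variable $u=g(h,z)$, $f_i(h)=\int_0^1 Q_\nu(u)\,w(u)\,du$ with $0\le w\le1$ and $\int w=p_i^*$. By a bathtub argument and strict monotonicity of $Q_\nu$, this is $\ge\int_0^{p_i^*}Q_\nu=U_\nu(p_i^*)$, with equality iff $w=\mathbf 1_{(0,p_i^*)}$ a.e. The equality condition says the mass of components $j\le i$ sits entirely below that of components $j>i$. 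Since the supports are intervals and the shifts are ordered, this is equivalent to $s_i+\sup\supp q\le s_{i+1}+\inf\supp q$, i.e.\ $h_i\ge\diam(\supp(q))$. The step I expect to be most delicate is making this equality case rigorous, including the case $\diam(\supp q)=\infty$.

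\textbf{Inclusion.} For $h\in(0,\diam(\supp q))^{n-1}$, the first part gives that no three consecutive vertices are collinear. The second part gives $f_i(h)>U_\nu(p_i^*)$ for all $i$, while the endpoints agree. Convexity of $U_\nu$ (Proposition~\ref{prop:convex_order_char}) puts $U_\nu$ below each chord on $[p_{i-1}^*,p_i^*]$. Since the chain lies strictly above that chord except at the shared endpoint $(0,0)$ or $(1,0)$, we get strict domination on $(0,1)$. Hence the chain belongs to $\Lambda_\nu^{p_1,\dots,p_n}$.
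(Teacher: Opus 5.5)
Your proposal is correct, and it departs from the paper mainly in the second characterization.

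For the collinearity criterion, the paper uses the same monotone-comparison idea but shifts $\phi$ rather than the density. It writes $\sigma_{i+1}=\int_\RR\phi(z+h_i)\,\rho_q(z-s_i)\,dz$ and compares $\phi(z+h_i)\ge\phi(z)$ pointwise. This sidesteps your Stieltjes integration by parts, whose boundary terms you leave unchecked. They do vanish: work on the interval where $0<g(h,\cdot)<1$, outside which $\phi$ may be infinite, and use $F_q(z-s_i)\le g(h,z)/p_i$, $1-F_q(z-s_{i+1})\le (1-g(h,z))/p_{i+1}$, and $|Q_\nu(u)|\min(u,1-u)\to0$ at the endpoints of $(0,1)$. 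Your localisation of strictness on $s_i+\interior(\supp(q))$ is more explicit than the paper's one-line remark.

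For $f_i(h)=U_\nu(p_i^*)$, the paper never passes to the uniform scale of $g$. It drops the nonnegative terms $j>i$ inside $g$, so that $Q_\nu(g(h,z))\ge Q_\nu(G_i(z))$ pointwise, and the substitution $u=G_i(z)$ returns exactly $U_\nu(p_i^*)$. Equality then holds iff $\sum_{j>i}p_jF_q(z-s_j)$ vanishes $dG_i$-a.e., which reads off immediately as $s_i+\sup\supp(q)\le s_{i+1}+\inf\supp(q)$.

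Your bathtub argument yields the same inequality. It also gives a clean extremal description of $U_\nu(p)$ as the minimum of $\int_0^1Q_\nu w$ over sub-densities $0\le w\le 1$ of mass $p$. Its equality case, however, costs an extra step: translating $w=\mathbf 1_{(0,p_i^*)}$ back to the supports. There you must allow for a nondegenerate interval on which $g(h,\cdot)\equiv p_i^*$. That interval carries no $dg$-mass, so $s_i+\interior(\supp(q))$ lies to its left and $s_{i+1}+\interior(\supp(q))$ to its right. Once this is written out, the case $\diam(\supp(q))=\infty$ that you flagged needs no separate treatment, because the point where $g(h,\cdot)$ first reaches $p_i^*$ is finite.

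Finally, your inclusion step supplies a justification the paper leaves implicit. The paper simply asserts that, for a convex chain with these abscissas, membership in $\Lambda_\nu^{p_1,\dots,p_n}$ amounts to the vertex inequalities $f_i(h)>U_\nu(p_i^*)$. You derive strict domination between vertices from the convexity of $U_\nu$.
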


\begin{proof}
Fix $h\in\RR^{n-1}_{\ge 0}$ and let $\Ccal$ be the convex polygonal chain generated by the vertices $(0,0)$, $(p_1^*,f_1(h))$, $\dots$ , $(1,0)$. For each $i\in\{1,\dots,n\}$, denote by $\widetilde x_i$ the slope of the $i$-th edge of $\Ccal$, namely
\[
	\widetilde x_i
	=\frac{f_i(h)-f_{i-1}(h)}{p_i}
	=\int_\RR Q_\nu (g(h,z)) \rho_q \left( z - \sum_{j=1}^{i-1} h_j \right) dz.
\]
Convexity of $\Ccal$ amounts to showing that these slopes are non-decreasing with $i$.

To compare two consecutive slopes, we write, by change of variables,
\begin{equation}
\label{ineq:coeff-ang-ineq}
	\widetilde x_{i+1}
	= \int_\RR Q_\nu (g(h,z+h_i)) \rho_q \left( z - \sum_{j=1}^{i-1} h_j \right) dz.
\end{equation}
Now observe that, for any fixed $h\in\RR^{n-1}_{>0}$, the map $z\mapsto g(h,z)$ is non-decreasing. Since $h_i\ge 0$, we have $z+h_i\ge z$ and therefore
\[
	g(h,z+h_i)\ge g(h,z)
	\qquad \text{for every } z\in\RR.
\]
Applying $Q_\nu$ preserves this inequality by monotonicity of the quantile map, and multiplying by the non-negative density
$\rho_q\!\left( z - \sum_{j=1}^{i-1} h_j \right)$ preserves it as well. Integrating in $z$ yields $\widetilde x_{i+1}\ge \widetilde x_i$ and proves that $\Ccal$ is convex.

The previous argument also shows that equality holds whenever $h_i=0$. Moreover, since $\nu \ll \lambda$, the only step at which strictness may fail is the comparison $g(h,z+h_i)\ge g(h,z)$, which is an equality for all $z$ if and only if $h_i=0$. Hence, under this additional assumption, if $\widetilde x_{i+1}= \widetilde x_i$, then $h_i =0$.
Geometrically, $\widetilde x_i=\widetilde x_{i+1}$ means that two consecutive edges of $\Ccal$ have the same slope, i.e.\ the three consecutive vertices $\bigl(p_{i-1}^*,f_{i-1}(h)\bigr)$, $\bigl(p_i^*,f_i(h)\bigr)$, $\bigl(p_{i+1}^*,f_{i+1}(h)\bigr)$ are collinear. Writing the
condition that the middle point lies on the segment joining the other two gives exactly \eqref{eq:colinearity}.

Since $\Ccal$ is convex and its vertices have abscissas $0, p_1^*, \cdots, p_{n-1}^*, 1$, the condition $\Ccal\in\Lambda_\nu^{p_1,\dots,p_n}$
is equivalent to requiring that
\[
	f_i(h)> U_\nu(p_i^*),\qquad i\in\{1,\dots,n-1\}.
\]

To verify the above comparison, we use the defining formula for $f_i(h)$ and the fact that, for each fixed $i$, the contribution of the terms of the function $g$
with indices $j>i$ is non-negative. Dropping these terms yields the lower bound
\begin{equation}
\label{ineq:reduc_ineq}
	f_i(h) \geq \int_\RR Q_\nu \left( \sum_{j=1}^i p_j F_q \left( z - \sum_{k=1}^{j-1} h_k \right) \right)\sum_{j=1}^i p_j \rho_q \left( z - \sum_{k=1}^{j-1} h_k \right)dz = U_\nu(p^*_i),
\end{equation}
which is the desired inequality. In particular, the inequality \eqref{ineq:reduc_ineq} becomes strict exactly when the discarded sum
\[
	\sum_{j=i+1}^n p_j F_q \left( z - \sum_{k=1}^{j-1} h_k \right)
\]
is strictly positive on  the support of $\sum_{j=1}^i p_j \rho_q \left( z - \sum_{k=1}^{j-1} h_k \right)dz$, which is equivalent to requiring that the shift $h_i$ does not exceed the diameter of the support of $q$. Consequently, \eqref{eq:reduc_polygonal} holds if and only if $h_i \geq \diam(\supp(q))$. Putting the two parts together, we conclude that, if $h \in (0, \diam(\supp(q)))^{n-1}$, then $\Ccal\in\Lambda_\nu^{p_1,\dots,p_n}$. Hence, $\Lambda_f\subseteq\Lambda_\nu^{p_1,\dots,p_n}$.
\end{proof}

The proof of the inclusion $\Lambda_\nu^{p_1, \dots, p_n} \subseteq \Lambda_f$ relies on the following result, whose proof is given in Section~\ref{sec:subdle-implication}. 
This result establishes that, for any prescribed non-empty set of indices, the corresponding components of a vector $\Ccal\in \Lambda_\nu^{p_1,\dots,p_n}$ can be matched by the $n$-atomic $q$-Bass map $f$, while the remaining components of the argument of $f$ are fixed equal to zero.

\begin{theorem}\label{thm:induction_thm}
Let $\nu\in\Prob_1(\RR)$ and $q\in \Prob(\RR)$ such that $q \ll \lambda$, and set $D:=[0,\diam(\supp(q)))^{n-1}$.
Let $f$ be the corresponding $n$-atomic $q$-Bass map, let $\Ccal\in \Lambda_\nu^{p_1,\dots,p_n}$, and let $\emptyset \neq J \subseteq \{1, \dots, n-1\}$.
Then there exists $\widehat h \in D$ such that
\begin{equation}\label{eq:theoremClaim}
f_i(\widehat h) = \Ccal_i \quad \text{for all } i \in J,
\qquad\text{and}\qquad
\widehat h_i = 0 \quad \text{for all } i \notin J.
\end{equation}
\end{theorem}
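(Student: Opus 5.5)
Our plan is induction on $|J|$, with a continuity/degree-type argument at each stage. Since only $q\ll\lambda$ is assumed, we cannot use the regularity of Proposition~\ref{prop:regulatiry_f} directly. We will therefore prove the statement first under Assumption~\ref{ass:technical-assumptions}, where $f$ is a gradient of a strictly concave $C^2$ potential. Afterwards we approximate general $(\nu,q)$ by regular $(\nu_k,q_k)$: mollify $q$ and mix $\nu$ with a small smooth perturbation while keeping the mean, then pass to the limit using Proposition~\ref{prop:q-Bass-map-stability}. In the limit, alternative (ii) of that proposition is excluded by the strict domination $\Ccal_i>U_\nu(p_i^*)$ from Definition~\ref{def:lambda_nu}. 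Since $\Lambda_\nu^{p_1,\dots,p_n}$ is defined by strict inequalities, $\Ccal$ stays admissible for $\nu_k$ with $k$ large. A limit $h_i\geq\diam(\supp(q))$ is excluded similarly, because \eqref{ineq:reduc_ineq} would force $f_i(h)=U_\nu(p_i^*)$.

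\textbf{Base case $|J|=1$, $J=\{i\}$.} We consider $t\mapsto f_i(te_i)$ given by Remark~\ref{rmk:one-coordinate-q-Bass-map}. At $t=0$, $g(0,z)=F_q(z)$, so $f_i(0)=p_i^*\,\mean(\nu)=0$. The vertex $(p_i^*,\Ccal_i)$ lies on or below the chord from $(0,0)$ to $(1,0)$ by convexity, hence $\Ccal_i\le0=f_i(0)$. As $t\uparrow\diam(\supp(q))$ (or $t\to\infty$), Proposition~\ref{prop:q-Bass-map-stability}(ii), respectively Proposition~\ref{prop:mainThm-n_atoms-part1}, gives $f_i(te_i)\to U_\nu(p_i^*)<\Ccal_i$. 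The intermediate value theorem, via continuity from Proposition~\ref{prop:q-Bass-map-stability}, then yields $\widehat h=te_i$. The case $\Ccal_i=0$ is only possible with $t=0$, and this is allowed since $D$ includes $0$.

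\textbf{Inductive step.} Given $J$ and $i^*\in J$, set $J'=J\setminus\{i^*\}$. We consider the one-parameter family obtained by fixing $h_{i^*}=t$ and solving $f_j(h)=\Ccal_j$ for $j\in J'$ with $h_j=0$ off $J$. The induction hypothesis, applied in the lower-dimensional setting where the coordinate $i^*$ is frozen, provides a solution $h(t)$. Under the regular assumptions, the principal submatrix $(D^2V)_{J',J'}$ is negative definite, so the implicit function theorem makes $t\mapsto h(t)$ a $C^1$ curve and gives uniqueness. We then track $\phi(t):=f_{i^*}(h(t))$ and aim to show that it sweeps past $\Ccal_{i^*}$. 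At $t=0$, the point $h(0)$ comes from the induction hypothesis with $J'$, and we need $\phi(0)\ge\Ccal_{i^*}$. As $t\to\diam(\supp(q))$, we need $\phi(t)\to U_\nu(p_{i^*}^*)<\Ccal_{i^*}$, and the intermediate value theorem then concludes. Proposition~\ref{prop:non-expans} is the tool for controlling $h(t)$: along the curve, $I_{\mathrm{cdom}}\supseteq J'$ and $I_{\mathrm{dom}}\setminus I_{\mathrm{cdom}}=\{i^*\}$. Hence the changes of $f_j$ for $j\notin J$ are strictly dominated by the change of $f_{i^*}$. Combined with the geometric constraints (convexity, and the chain $\Ccal$ lying above $U_\nu$), this should keep the generated chain pinned between the chord configuration and $\Ccal$, and prevent the coordinates $h_j$, $j\in J'$, from escaping to the boundary of $D$ before $t$ does.

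\textbf{Main obstacle.} The main obstacle is the endpoint comparison $\phi(0)\ge\Ccal_{i^*}$, together with the global existence of the curve $h(t)$ up to the boundary. At $t=0$, the vertex $i^*$ is collinear with its neighbours by \eqref{eq:colinearity}, while the matched vertices of $J'$ equal those of $\Ccal$. We will argue that the chain generated by $f(h(0))$ is the convex hull of the $J'$-vertices of $\Ccal$, using $h_j=0$ off $J'$ and Proposition~\ref{prop:mainThm-n_atoms-part1}. A convex chain through a subset of the vertices of a convex chain $\Ccal$, linear between them, lies above $\Ccal$, which gives $\phi(0)\ge\Ccal_{i^*}$. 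Care is needed because $n$ and the neighbours of $i^*$ in $J'$ may be the endpoints $(0,0)$ and $(1,0)$. For the continuation in $t$, we would first try to prove a priori bounds on $h_j(t)$, $j\in J'$, by contradiction via Proposition~\ref{prop:q-Bass-map-stability}(ii): a blow-up of $h_j$ would force $f_j\to U_\nu(p_j^*)<\Ccal_j$, contradicting $f_j=\Ccal_j$.
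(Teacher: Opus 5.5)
Your continuation strategy differs from the paper's. The paper fixes a minimal-gap index $i_\delta$, targets the $L$-approximations $\Ccal^{(r)}$, and reaches them through the alternating corrections of Lemma~\ref{lemma:induction}, with errors summed as a geometric series via Proposition~\ref{prop:non-expans} and Lemma~\ref{adjLemma}. As written, however, your inductive step has a genuine gap.

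First, the induction hypothesis does not give you $h(t)$ for $t>0$. It only produces solutions with $h_k=0$ for every $k\notin J'$, in particular $h_{i^*}=0$. With $h_{i^*}=t>0$ frozen, the reduced problem is not of the type covered by the hypothesis. So only $h(0)$ comes for free, and $h(t)$ for $t>0$ must come from implicit-function continuation.

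Second, and more seriously, the curve generally does \emph{not} survive until $t\to\diam(\supp(q))$. Your aim of keeping $h_j(t)$, $j\in J'$, away from $\partial D$ ``before $t$ does'' is false. Take $n=3$, $J=\{1,2\}$, $i^*=2$:
\begin{itemize}
\item Since $f_1$ is non-increasing in $h_1$, $f_1(h_1,t)=\Ccal_1$ has a solution only if $f_1(0,t)\ge\Ccal_1$.
\item By \eqref{eq:colinearity}, $f_1(0,t)=\frac{p_1^*}{p_2^*}f_2(0,t)\to\frac{p_1^*}{p_2^*}U_\nu(p_2^*)$ as $t\to\diam(\supp(q))$.
\item Choose $\Ccal_2\in(U_\nu(p_2^*),0)$ and $\Ccal_1$ slightly below $\frac{p_1^*}{p_2^*}\Ccal_2$. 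This gives $\Ccal\in\Lambda_\nu^{p_1,p_2,p_3}$ with $\Ccal_1>\frac{p_1^*}{p_2^*}U_\nu(p_2^*)$.
\end{itemize}
Then the frozen problem has no solution for $t$ near $\diam(\supp(q))$, and the curve dies with $h_1(t)\to0$. Your a priori bound via Proposition~\ref{prop:q-Bass-map-stability}(ii) only controls the upper face, so nothing in your plan handles this exit. Proposition~\ref{prop:non-expans} does not help either.

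The missing idea is an analysis of the lower face. Let $[0,T)$ be the maximal interval of IFT continuation; uniqueness on each slice follows from strict concavity of $V$ restricted to the slice. The upper face is excluded by $f_j\ge\Ccal_j\Rightarrow h_j\le\widetilde f_j^{-1}(\Ccal_j)<\diam(\supp(q))$, cf.\ \eqref{eq:bound-coordinate}.
\begin{itemize}
\item If $T<\diam(\supp(q))$, any limit point $\bar h$ as $t\uparrow T$ must satisfy $\bar h_j=0$ for some $j\in J'$. Otherwise the IFT and slice uniqueness would extend the curve past $T$.
\item By Proposition~\ref{prop:mainThm-n_atoms-part1}, $(p_j^*,\Ccal_j)$ is then collinear with the generated vertices at the nearest indices $a<j<b$ with $\bar h_a,\bar h_b>0$ (or with the endpoints $(0,0)$, $(1,0)$).
\item $\Ccal$ has no three collinear vertices, so one of $a,b$ must be $i^*$. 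Strict convexity of $\Ccal$ then forces $f_{i^*}(\bar h)<\Ccal_{i^*}$.
\item The same collinearity argument gives $h_j(0)>0$ for $j\in J'$.
\end{itemize}
Combined with $\phi(0)>\Ccal_{i^*}$ (your chord argument is fine) and $\phi\to U_\nu(p_{i^*}^*)$ when $T=\diam(\supp(q))$, the intermediate value theorem closes the step. You must also justify the IFT at $t=0$, where $h_{i^*}=0$ lies on $\partial D$, for instance by extending the formulas for $f$ and $D^2V$ to small negative $h_{i^*}$.

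With these additions your route appears to bypass the minimal-gap index, the $L$-approximations and Proposition~\ref{prop:non-expans}. Without them the inductive step does not go through.

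A smaller point: in the approximation step, mollifying $q$ alone does not make its support an interval, so you also need the mixing used in Proposition~\ref{prop:approx_q_1} to get \ref{ass:A2}.
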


\begin{corollary}\label{cor:mainThm-n_atoms-part2}
Let $\nu\in\Prob_1(\RR)$, $q\in \Prob(\RR)$ such that $q \ll \lambda$, and let $f$ be the corresponding $n$-atomic $q$-Bass map. Then
\[
\Lambda^{p_1, \dots, p_n}_\nu \subseteq \Lambda_f.
\]
\end{corollary}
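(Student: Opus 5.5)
The corollary follows by applying Theorem~\ref{thm:induction_thm} with the full index set $J=\{1,\dots,n-1\}$.

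Fix $\Ccal\in\Lambda_\nu^{p_1,\dots,p_n}$ and identify it with its vector of ordinates $(\Ccal_1,\dots,\Ccal_{n-1})$. With this choice of $J$ there are no indices $i\notin J$. Theorem~\ref{thm:induction_thm} therefore produces $\widehat h\in D=[0,\diam(\supp(q)))^{n-1}$ with $f_i(\widehat h)=\Ccal_i$ for every $i=1,\dots,n-1$. The chain with vertices $(0,0)$, $(p_i^*,f_i(\widehat h))$, $(1,0)$ is then exactly $\Ccal$.

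By Definition~\ref{def:Lambda_f}, membership of $\Ccal$ in $\Lambda_f$ additionally requires $\widehat h\in(0,\diam(\supp(q)))^{n-1}$. Since $\widehat h\in D$, only strict positivity of every coordinate remains to be checked.

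Suppose, for contradiction, that $\widehat h_i=0$ for some $i$. From the proof of Proposition~\ref{prop:mainThm-n_atoms-part1}, the $i$-th and $(i+1)$-th slopes are
\[
\widetilde x_i=\int_\RR Q_\nu(g(\widehat h,z))\rho_q\Bigl(z-\sum_{j<i}\widehat h_j\Bigr)dz,\qquad
\widetilde x_{i+1}=\int_\RR Q_\nu(g(\widehat h,z+\widehat h_i))\rho_q\Bigl(z-\sum_{j<i}\widehat h_j\Bigr)dz .
\]
When $\widehat h_i=0$ these two integrals coincide. This direction uses no assumption on $\nu$ or $q$ beyond $q\ll\lambda$; only the converse implication in that proposition needed \ref{ass:A1}--\ref{ass:A2}. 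Hence the vertices at abscissas $p_{i-1}^*,p_i^*,p_{i+1}^*$ of the chain generated by $f(\widehat h)$ are collinear. That chain is $\Ccal$, and condition (ii) of Definition~\ref{def:lambda_nu} forbids three consecutive collinear vertices. This contradiction gives $\widehat h\in(0,\diam(\supp(q)))^{n-1}$, so $\Ccal\in\Lambda_f$.

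This argument is routine. All the substance sits in Theorem~\ref{thm:induction_thm}, which is assumed. The only point requiring care is that the collinearity implication ``$\widehat h_i=0\Rightarrow$ equal slopes'' is used here without \ref{ass:A1}--\ref{ass:A2}, so the argument must rely on that direction alone.
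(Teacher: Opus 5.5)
Your proof is correct and follows the paper's argument: apply Theorem~\ref{thm:induction_thm} with $J=\{1,\dots,n-1\}$ to get $\widehat h\in D$ with $f(\widehat h)=\Ccal$, then rule out $\widehat h_i=0$ because it would make three consecutive vertices of $\Ccal$ collinear. Your remark that only the assumption-free direction of Proposition~\ref{prop:mainThm-n_atoms-part1} ($h_i=0\Rightarrow$ equal consecutive slopes) is needed is a useful clarification that the paper leaves implicit.
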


\begin{proof}
Let $\Ccal \in \Lambda^{p_1, \dots, p_n}_\nu$ and set $J:=\{1,\dots,n-1\}$. By Theorem~\ref{thm:induction_thm} applied to the polygonal chain $\Ccal$ and the index set $J$, there exists $\widehat h\in[0,\diam(\supp(q)))^{n-1}$ such that $f(\widehat h)=\Ccal$. In particular, $\widehat h \in (0,\diam(\supp(q)))^{n-1}$. Otherwise, $\Ccal$ would have at least three collinear vertices. Hence $\Ccal\in\Lambda_f$, proving the inclusion.
\end{proof}
\medskip

We conclude this section with the proofs of our main results, that is, existence and uniqueness of the $q$-Bass martingale for $\mu$ finite, stated in the Introduction.

\medskip
\begin{proof}[Proof of Theorem~\ref{thm:q-bass-existence}]
By Lemma~\ref{lemma:n_atoms-cvx-order}, the graph of $U_\mu$ belongs to $\Lambda_\nu^{p_1,\dots,p_n}$. Hence, by Theorem~\ref{thm:mainThm-n_atoms}, there exists $h \in (0,\diam(\supp(q)))^{n-1}$ such that $f_i(h)=U_\mu(p_i^*)=x_i^*$ for all $i=1,\dots,n-1$, that is, $h$ solves \eqref{eq:fixed_point_atoms2}, where $f$ is the $n$-atomic $q$-Bass map with respect to $\nu$. The conclusion follows from Proposition~\ref{prop:dimension_reduction}, Remark~\ref{rem.fpn} and Theorem~\ref{rmk:fixed-point-syst}.
\end{proof}
\medskip

\begin{proof}[Proof of Theorem~\ref{thm:uniqueness-fixed-point-sol}]
Assume that $y,y' \in \RR^n$ are two solutions to \eqref{eq:fixed_point_atoms1}. By Proposition~\ref{prop:dimension_reduction}, their increment vectors $h,h' \in \RR^{n-1}_{\geq 0}$, defined by $h_i:=y_{i+1}-y_i$ and $h_i':=y_{i+1}'-y_i'$ for $i=1,\dots,n-1$, satisfy $f_i(h)=f_i(h')=U_\mu(p_i^*)=x_i^*$ for all $i=1,\dots,n-1$, where $f$ is the $n$-atomic $q$-Bass map with respect to $\nu$. Moreover, by Proposition~\ref{prop:mainThm-n_atoms-part1}, we have $h,h' \in (0,\diam(\supp(q)))^{n-1}$. Since $f$ is the gradient of a strictly concave function on $(0,\diam(\supp(q)))^{n-1}$ by Proposition~\ref{prop:regulatiry_f}, it follows that $h=h'$. Therefore, the Bass measure is unique up to translation.
\end{proof}

\subsection{Proof of Theorem~\ref{thm:induction_thm}}\label{sec:subdle-implication}

The proof of Theorem~\ref{thm:induction_thm} is by induction on $|J|$. We first introduce some auxiliary definitions and preliminary results that will be used repeatedly in the inductive argument.

\begin{remark}[Restriction and merged weights]
\label{rmk:restrictions}
Let $\emptyset \not = I \subsetneq \{1, \dots, n-1\}$ and let $f$ be an $n$-atomic $q$-Bass map. 
Consider the coordinate subspace
\[
\RR^{I}_{\geq 0}\ \cong\ \{h\in \RR^{n-1}_{\geq 0} : h_i = 0 \text{ for all } i \notin I\},
\]
and let $f_{I}:\RR^{I}_{\geq 0}\to\RR^{I}$ be the map obtained by restricting $f$ to this subspace and keeping only the components indexed by $I$, namely
\[
f_{I}(h_I):=\big(f_i(h)\big)_{i\in I},
\qquad\text{where } h_j=
\begin{cases}
(h_I)_j,& j\in I,\\
0,& j\notin I.
\end{cases}
\]
Then $f_{I}$ can be identified with an $m$-atomic $q$-Bass map, where $m:=|I|+1$, with respect to the merged weights obtained by summing the original weights across the blocks determined by $I$.
More precisely, writing $I=\{i_1<\dots<i_{m-1}\}$ and setting $i_0:=0$, $i_m:=n$, define
\[
\bar p_\ell:=\sum_{j=i_{\ell-1}+1}^{i_\ell} p_j,\qquad \ell=1,\dots,m.
\]
With these weights $(\bar p_1,\dots,\bar p_m)$, the map $f_{I}$ has the same form as \eqref{def:f_function} (with $n$ replaced by $m$ and $(p_j)_j$ replaced by $(\bar p_\ell)_\ell$).
\end{remark}

\begin{definition}[Minimal diagonal gap]
\label{def:minimal-diagonal-gap}
Let $\Ccal \in \Lambda_\nu^{p_1,\dots,p_n}$. For every $i\in\{1,\dots,n-1\}$, define
\[
\delta_i:=\mathrm{dist}\Big((p_i^*,\Ccal_i),\,\big[(p_{i-1}^*,\Ccal_{i-1}),(p_{i+1}^*,\Ccal_{i+1})\big]\cap\{x=p_i^*\}\Big).
\]
The quantity
\[
\delta:=\min_{1\le i\le n-1}\delta_i
\]
is called the \emph{minimal diagonal gap} of $\Ccal$. An index $i_\delta\in\{1,\dots,n-1\}$ is called a \emph{minimal-gap index} with respect to $\Ccal$ if
\[
i_\delta\in\arg\min_{1\le j\le n-1}\delta_j.
\]
\end{definition}

\begin{figure}
     \centering
          \begin{subfigure}{0.495\textwidth}
         \centering
         \includegraphics[width=\textwidth]{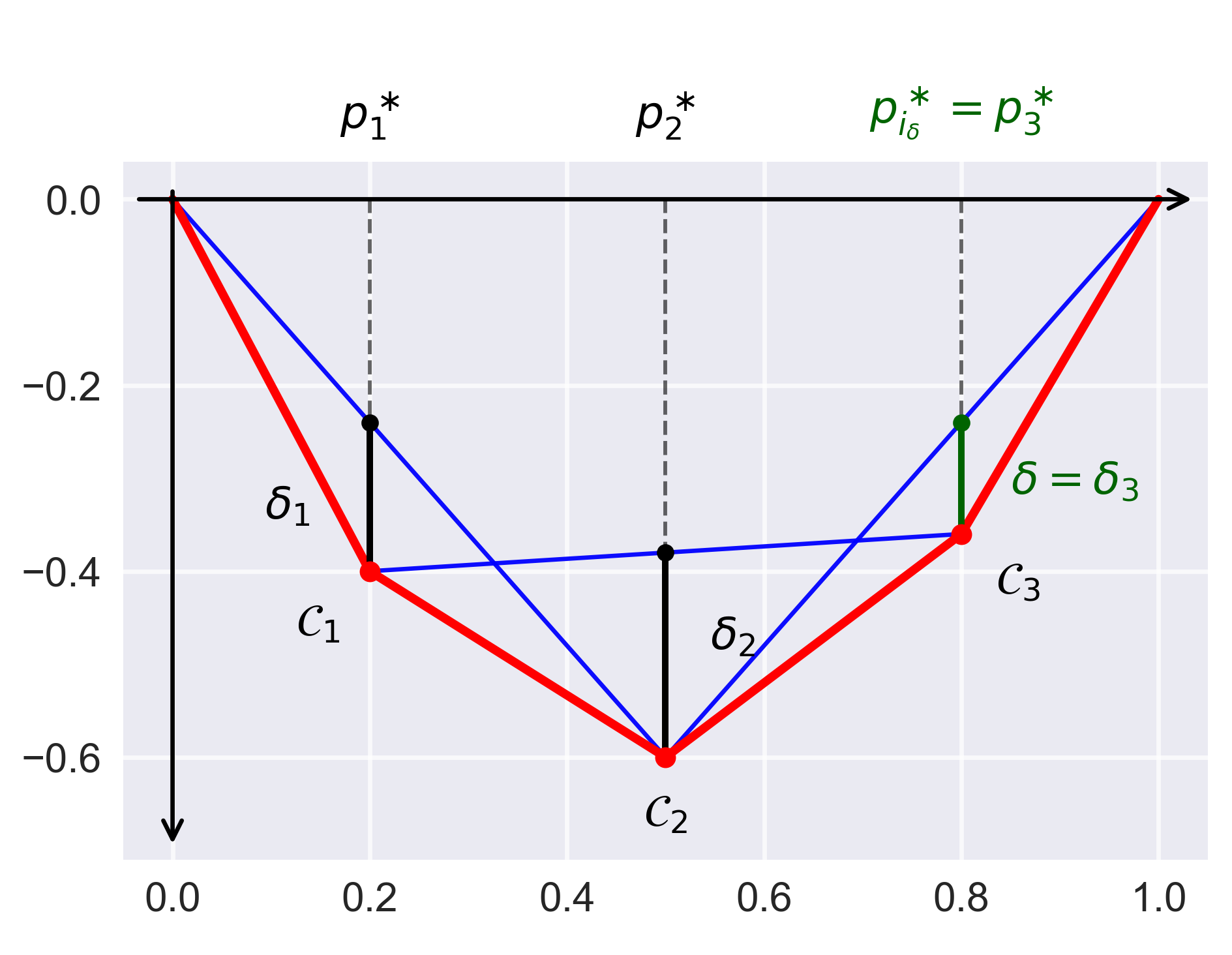}
         \captionsetup{justification=centering}
			\caption{The polygonal chain $\Ccal$, shown in red, has minimal-gap index $3$.}          
			\label{fig:minimal-diagonal-gap}
     \end{subfigure}
     \hfill
     \begin{subfigure}{0.495\textwidth}
         \centering
         \includegraphics[width=\textwidth]{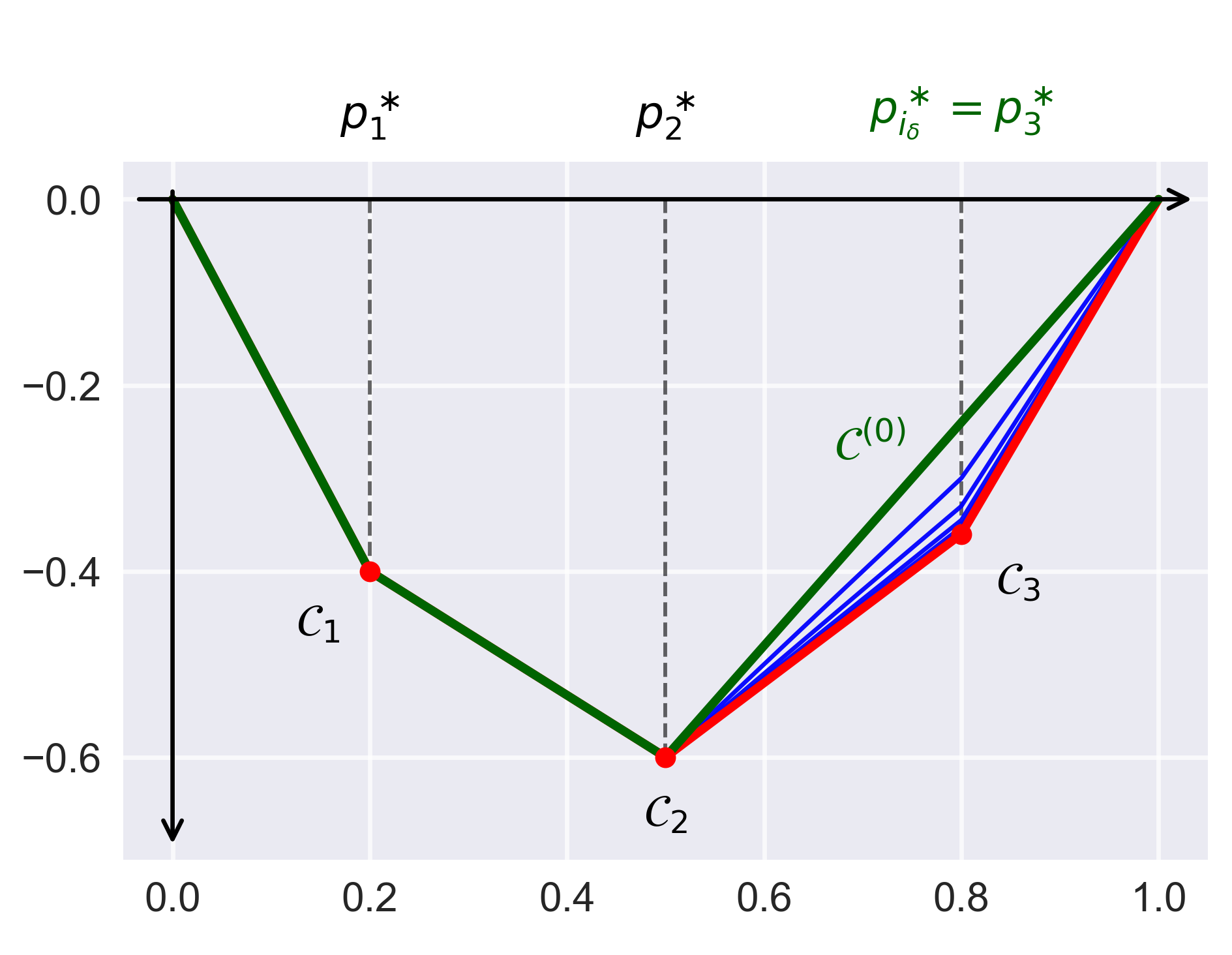}
         \captionsetup{justification=centering}
		\caption{$\Ccal^{(0)}$ is shown in green, while the $1/2$-approximations of $\Ccal$ are shown in blue.}
        \label{fig:L-approximation}
     \end{subfigure}
     \caption{Minimal diagonal gap and $L$-approximations of a convex polygonal chain $\Ccal$.}
\end{figure}

\begin{definition}[$L$-approximation of $\Ccal$ of order $r$ along a minimal-gap index]
Let $\Ccal\in\Lambda_\nu^{p_1,\dots,p_n}$, $\delta$ be its minimal
diagonal gap, and $i_\delta$ be a minimal-gap index. Let $L\in(0,1)$ and
$r\in\NN$. 
Denote by $\widehat y_{i_\delta}$ the ordinate at $p_{i_\delta}^*$ of the
diagonal joining the two neighboring vertices $V_{i_\delta-1}(\Ccal)$ and $V_{i_\delta+1}(\Ccal)$, i.e., $\widehat y_{i_\delta}=\Ccal_{i_\delta} + \delta$.

The \emph{$L$-approximation of $\Ccal$ of order $r$ along $i_\delta$} is the
polygonal chain $\Ccal^{(r)}$ with the same abscissas as $\Ccal$ and ordinates
given by
\[
\Ccal^{(r)}_i=\Ccal_i
\qquad\text{for } i\neq i_\delta,
\]
and
\[
\Ccal^{(r)}_{i_\delta}
=
\widehat y_{i_\delta}-(1-L^r)\delta
=
\Ccal_{i_\delta}+L^r\delta .
\]
\end{definition}

\begin{remark}
\label{rmk:approx}
Let $\Ccal\in \Lambda_\nu^{p_1,\dots,p_n}$, let $L\in(0,1)$, fix a minimal-gap index $i_\delta$ with respect to $\Ccal$, and denote by $\Ccal^{(r)}$ the $L$-approximation of $\Ccal$ of order $r$ along $i_\delta$, for every $r\in\NN$. Then $\Ccal^{(r)}$ is a convex polygonal chain and
\[
\lim_{r\to\infty}\Ccal^{(r)}_i=\Ccal_i,\qquad \text{for all } i\in\{1,\dots,n-1\}.
\]
\end{remark}

\begin{definition}[Error map of order $r$]
Let $\nu\in\Prob_1(\RR)$, $q\in \Prob(\RR)$ such that $q \ll \lambda$, $f$ be the corresponding $n$-atomic $q$-Bass map. Let $\Ccal \in \Lambda_\nu^{p_1,\dots,p_n}$, $i_\delta$ be a minimal-gap index with respect to $\Ccal$, $L\in(0,1)$, and fix $r\in\NN$. Let $\Ccal^{(r)}$ be the $L$-approximation of $\Ccal$ of order $r$ along $i_\delta$.
Define the map $E^{(r)}:\RR^{n-1}_{\ge 0}\to\RR^{n-1}$ by
\[
E^{(r)}_i(h):=f_i(h)-\Ccal^{(r)}_i,\qquad i\in\{1,\dots,n-1\}.
\]
The map $E^{(r)}$ is called the \emph{error map of order $r$} associated with $\Ccal^{(r)}$.
\end{definition}

\paragraph{Idea of the proof of Theorem~\ref{thm:induction_thm}.}
We will first prove Theorem~\ref{thm:induction_thm} under the additional Assumption~\ref{ass:technical-assumptions}. The main tool is Proposition~\ref{prop:non-expans}, which should be interpreted as a non-expansiveness principle for the map $f$. The idea is the following. For simplicity, assume that the set $J$ appearing in the statement of Theorem~\ref{thm:induction_thm} is the whole set $\{1,\dots,n-1\}$. Suppose that we are given a vector $h \in D:=[0,\diam(\supp(q)))^{n-1}$ and that we want to modify it into a new vector $\widetilde h \in D$ in order to improve the distance between $f(h)$ and a target polygonal chain. In the notation of Proposition~\ref{prop:non-expans}, the set $I_{\mathrm{dom}}=\{i:h_i\neq \widetilde h_i\}$ is the set of coordinates of $h$ which are changed, while $I_{\mathrm{cdom}}=\{i:f_i(h)=f_i(\widetilde h)\}$ is the set of components of $f$ which remain fixed during the correction. If $I_{\mathrm{cdom}} \subseteq I_{\mathrm{dom}}$ and $I_{\mathrm{dom}}\setminus I_{\mathrm{cdom}}=\{\iadd\}$, then only the component $f_{\iadd}$ is effectively changed, whereas the components of $f$ indexed by $I_{\mathrm{cdom}}$ are kept fixed. Proposition~\ref{prop:non-expans} says that every other component $f_i$, with $i\neq \iadd$, moves by a strictly smaller amount than $f_{\iadd}$. More precisely, the ratios between these variations are given by the adjugate minors of the averaged Jacobian $\bar J=\int_0^1 Jf(\widetilde h+t(h-\widetilde h))\,dt$, and they are all strictly smaller than $1$.

This is the basic mechanism which allows us to construct sequences improving the error in the $L^\infty$ sense. If one component of $f(h)$ is far from the value of the target convex polygonal chain and we want to pass to a different $\widetilde h$ in order to improve this, then Proposition~\ref{prop:non-expans} ensures that the error created in the other components is smaller than the correction we have just made. In order to use this mechanism uniformly, we work inside a compact cube $[0,H]^{n-1}$ and choose a constant $L\in(0,1)$ which bounds all the relevant ratios of adjugate minors, as in Lemma~\ref{lemma:induction} below. Thus, after the constant $L$ has been fixed, each correction can increase the remaining errors by at most a factor $L$ of the error which has just been corrected.

The role of Lemma~\ref{lemma:induction} is to turn this non-expansiveness principle into an actual construction. Indeed, Proposition~\ref{prop:non-expans} can be applied only after finding two vectors $h,\widetilde h\in D$ such that, among the coordinates where $h$ and $\widetilde h$ differ, the corresponding components of $f(h)$ and $f(\widetilde h)$ agree in all but one coordinate. This is precisely what the lemma provides. Starting from a point $h$ which already matches a given set of vertices, in the sense that $E_i^{(r)}(h)=0$ for $i\in I$, the lemma constructs a new point $\widehat h$ which also matches one additional vertex $\iadd$, while keeping the previously matched components of $f$ fixed. Thus $E_i^{(r)}(\widehat h)=0$ for every $i\in I\cup\{\iadd\}$, and the construction leaves unchanged the coordinates of $h$ outside $I\cup\{\iadd\}$.

The key point is that Lemma~\ref{lemma:induction} is proved by induction over subsets of indices. If, before applying the lemma, the total error is bounded by $\sum_{k=1}^m L^kK\delta$, then after adding the new index the total error is bounded by $\sum_{k=1}^{m+1} L^kK\delta$ (see Figure \ref{fig:steps}). Thus each application of the lemma contributes one additional term to a geometric series. This reflects exactly the mechanism behind Proposition~\ref{prop:non-expans}: correcting one component may introduce errors in the others, but those errors are bounded by a factor controlled by $L$. The induction over subsets is therefore organized so that, after all corrections have been performed, the total variation produced by the procedure is controlled by the sum of this geometric series.

We now explain why the minimal-gap index and the $L$-approximations are introduced. Let $\delta$ be the minimal diagonal gap of $\Ccal$, and let $i_\delta$ be a minimal-gap index. If we leave the vertex $i_\delta$ free, the natural first polygonal chain to consider is $\Ccal^{(0)}$: it agrees with $\Ccal$ at all intermediate vertices except possibly at $i_\delta$, and at $i_\delta$ it is obtained by joining the neighbouring vertices. In this sense, $\Ccal^{(0)}$ is the closest polygonal chain to $\Ccal$ once the vertex $i_\delta$ is released. The distance between $\Ccal^{(0)}$ and $\Ccal$ is precisely measured by the minimal gap $\delta$ at the index $i_\delta$.

However, we do not try to pass directly from $\Ccal^{(0)}$ to $\Ccal$. If we attempted to correct the entire gap $\delta$ at once, the errors produced by the successive applications of Lemma~\ref{lemma:induction} could accumulate to a quantity of order $\sum_{k>0}L^k\delta$, which is not necessarily smaller than the initial error $\delta$. For this reason, we first aim at the intermediate approximation $\Ccal^{(1)}$, rather than at $\Ccal$ itself. At the minimal-gap index, the difference between $\Ccal^{(0)}$ and $\Ccal^{(1)}$ is only $K\delta$, with $K=1-L$. Therefore, the total error generated by the iterative corrections is bounded by $K\sum_{k>0}L^k\delta=L\delta$, which is strictly smaller than $\delta$. This is precisely the purpose of the normalization by $K$: it ensures that the correction procedure remains within the room allowed by the minimal diagonal gap, a key requirement for constructing the $L^\infty$-non-expansive sequence.

Essentially, Lemma~\ref{lemma:induction} is used as a tool to construct points matching the successive approximations of $\Ccal$. More precisely, it first gives a point $h^{(1)}$ such that $f_i(h^{(1)})=\Ccal_i^{(1)}$ for every $i$. Then the same argument is applied to the second approximation, giving a point $h^{(2)}$ such that $f_i(h^{(2)})=\Ccal_i^{(2)}$ for every $i$. Iterating this procedure, for every $k\in\NN$ we obtain a point $h^{(k)}$ satisfying $f_i(h^{(k)})=\Ccal_i^{(k)}$ for every $i$. The sequence $(h^{(k)})_{k\in\NN}$ is bounded, by the choice of the compact set determined by $H$. Hence it admits a limit point $\widehat h$. By Remark \ref{rmk:approx}, we have $\Ccal_i^{(k)}\to\Ccal_i$ for every $i$, and by continuity of $f$ we can pass to the limit in $f_i(h^{(k)})=\Ccal_i^{(k)}$. Therefore $f_i(\widehat h)=\Ccal_i$ for every $i$. This proves Theorem~\ref{thm:induction_thm} under the additional Assumption~\ref{ass:technical-assumptions}.

Finally, the general case is obtained by approximation. We choose sequences $(\nu_k)_{k \in \NN}$ and $(q_k)_{k \in \NN}$ satisfying Assumption~\ref{ass:technical-assumptions} and converging to $\nu$ and $q$, respectively. If $f^{(k)}$ denotes the corresponding $q_k$-Bass map, the result already proved gives, for each $k$, a point $\widetilde h^{(k)}$ such that $f_i^{(k)}(\widetilde h^{(k)})=\Ccal_i$ for every $i$. Passing to a limit point and using the convergence of $f^{(k)}$ to $f$, we obtain the desired point $h^*$ for the original pair $(\nu,q)$.

\begin{remark}
The minimal-gap index need not be unique. Determining a minimal-gap index and constructing the corresponding $L$-approximations along the chosen minimal-gap index are crucial for ensuring that the algorithm works. Nevertheless, the particular choice of the minimal-gap index is irrelevant for the purpose of determining a vector $h \in D$ such that $f(h)$ matches the prescribed components of $\Ccal_i$.
\end{remark}

\stepfig{fig:steps}{Illustration of the algorithm in Theorem ~\ref{thm:induction_thm} and Lemma~\ref{lemma:induction}.}

\R{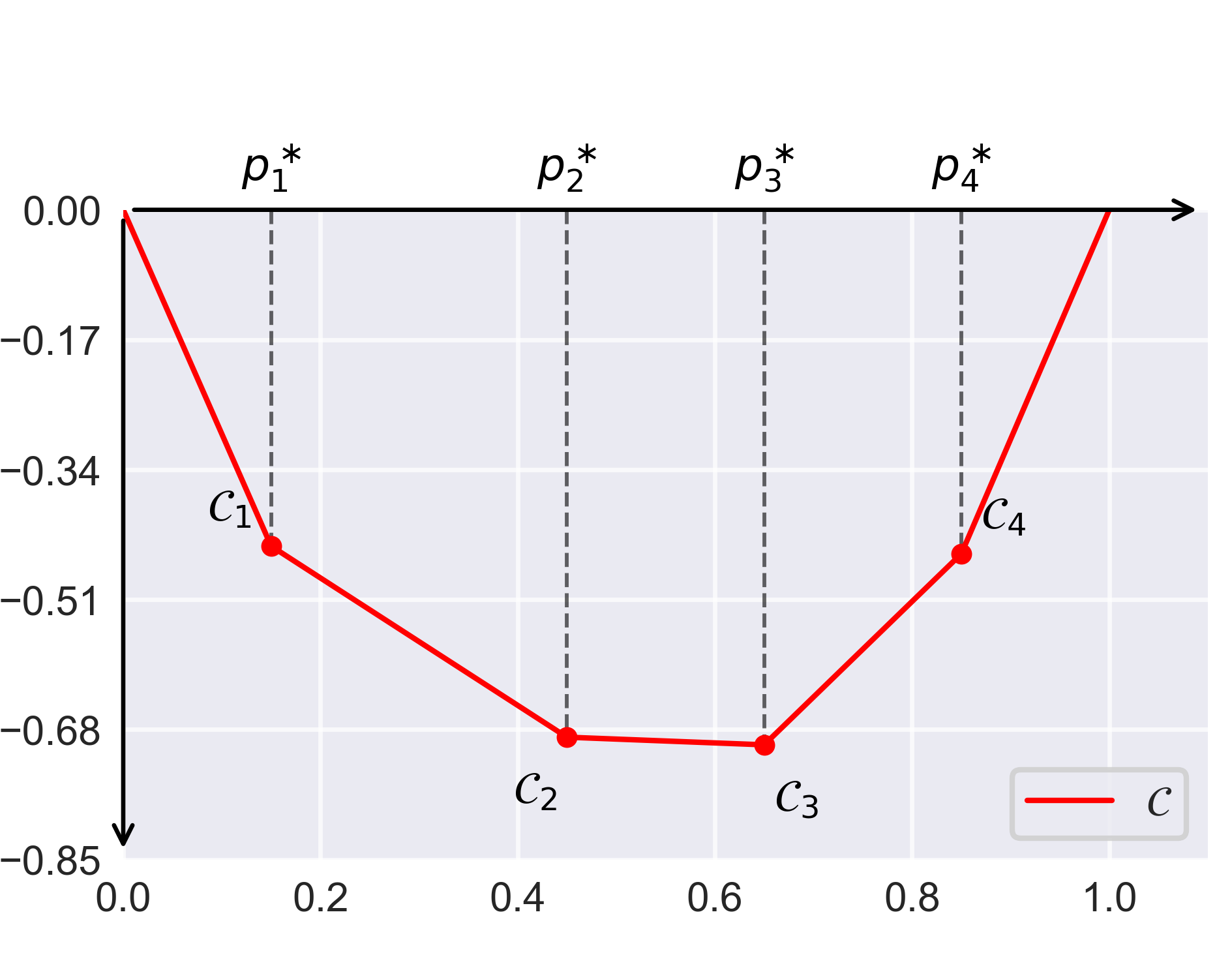}{Fix the constants $L$ and $K=1-L$.}
{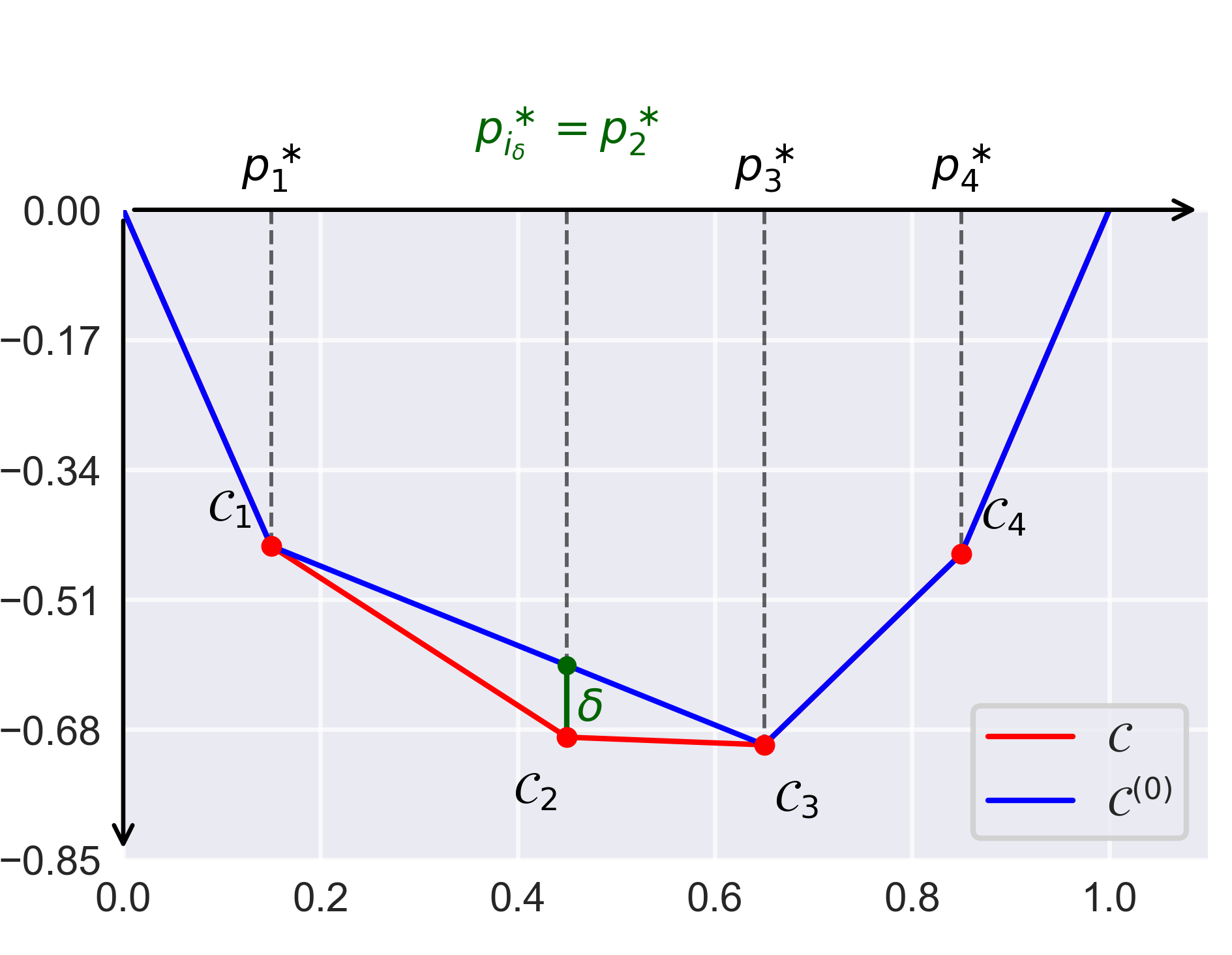}{The minimal-gap index is $i_\delta=2$. By induction, choose $h$ such that $f_i(h)=\Ccal_i^{(0)}$ for all $i$.}
{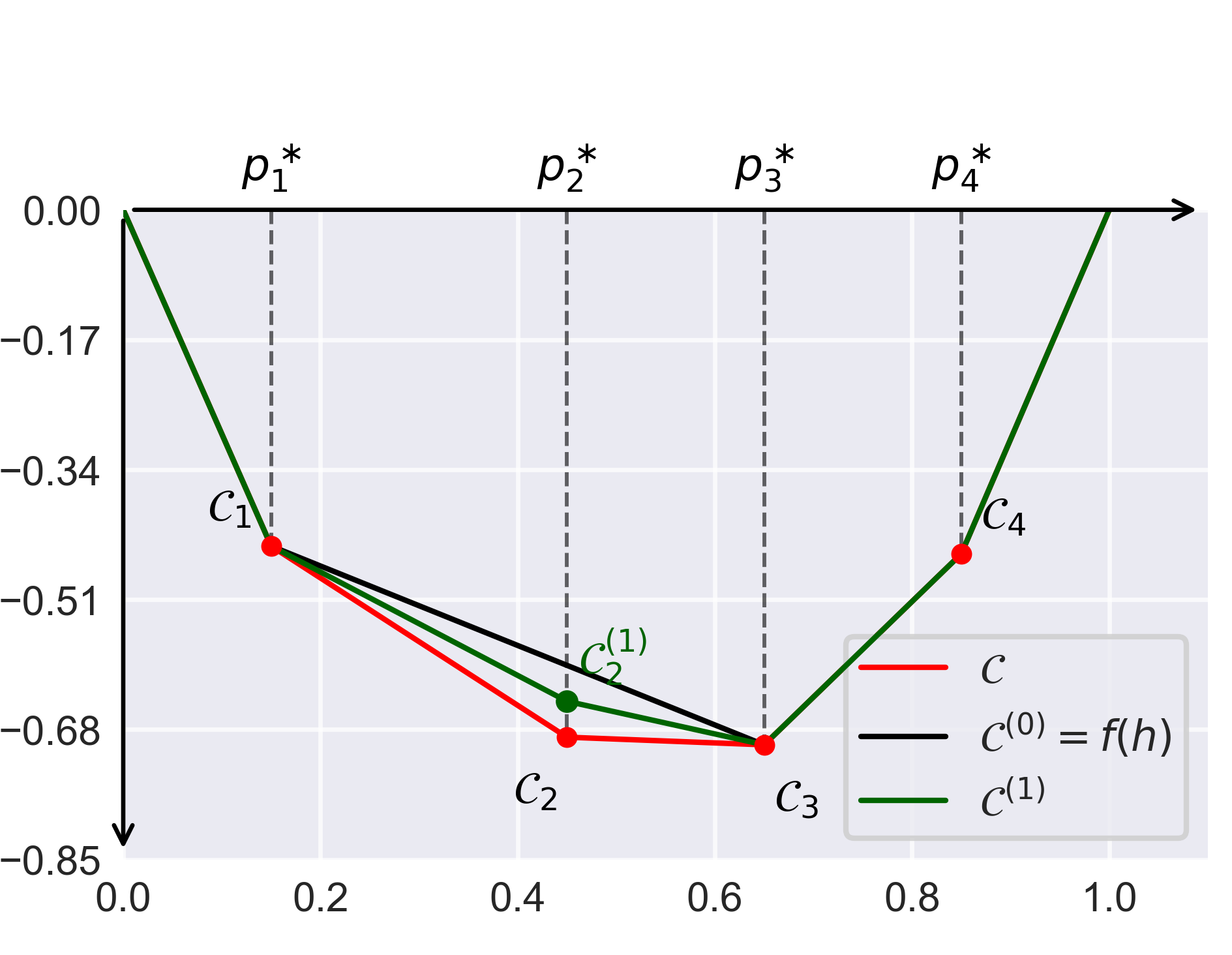}{We now target $\Ccal^{(1)}$, shown in green. Since $f_i(h)=\Ccal_i^{(0)}$, we have $|f_2(h)-\Ccal_2^{(1)}|\leq K\delta$.}

\R{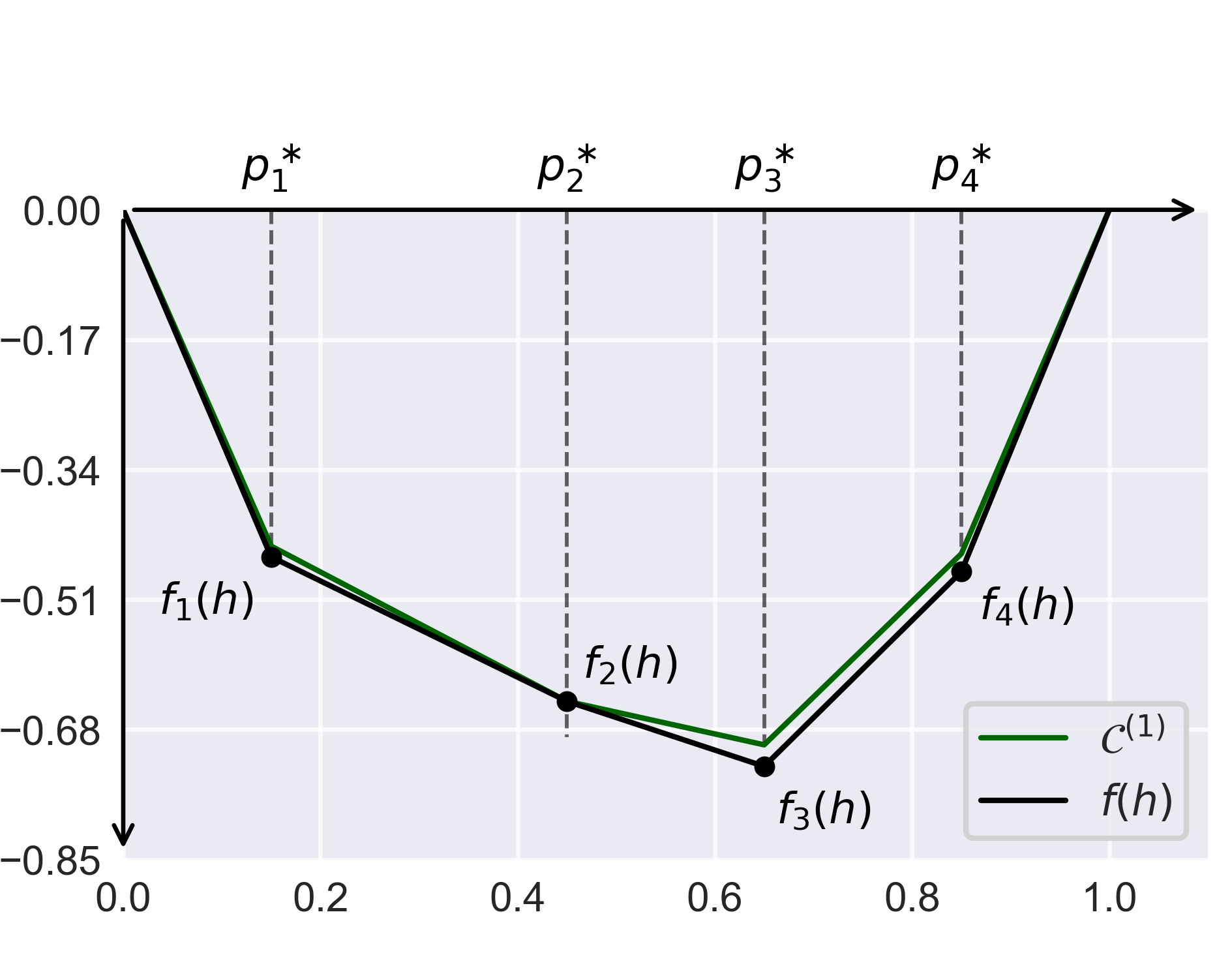}{Increase $h_2$ until $f_2(h)=\Ccal_2^{(1)}$. By Prop. \ref{prop:non-expans}, $|f_i(h)-\Ccal_i^{(1)}|\leq LK\delta$ for every $i$.}
{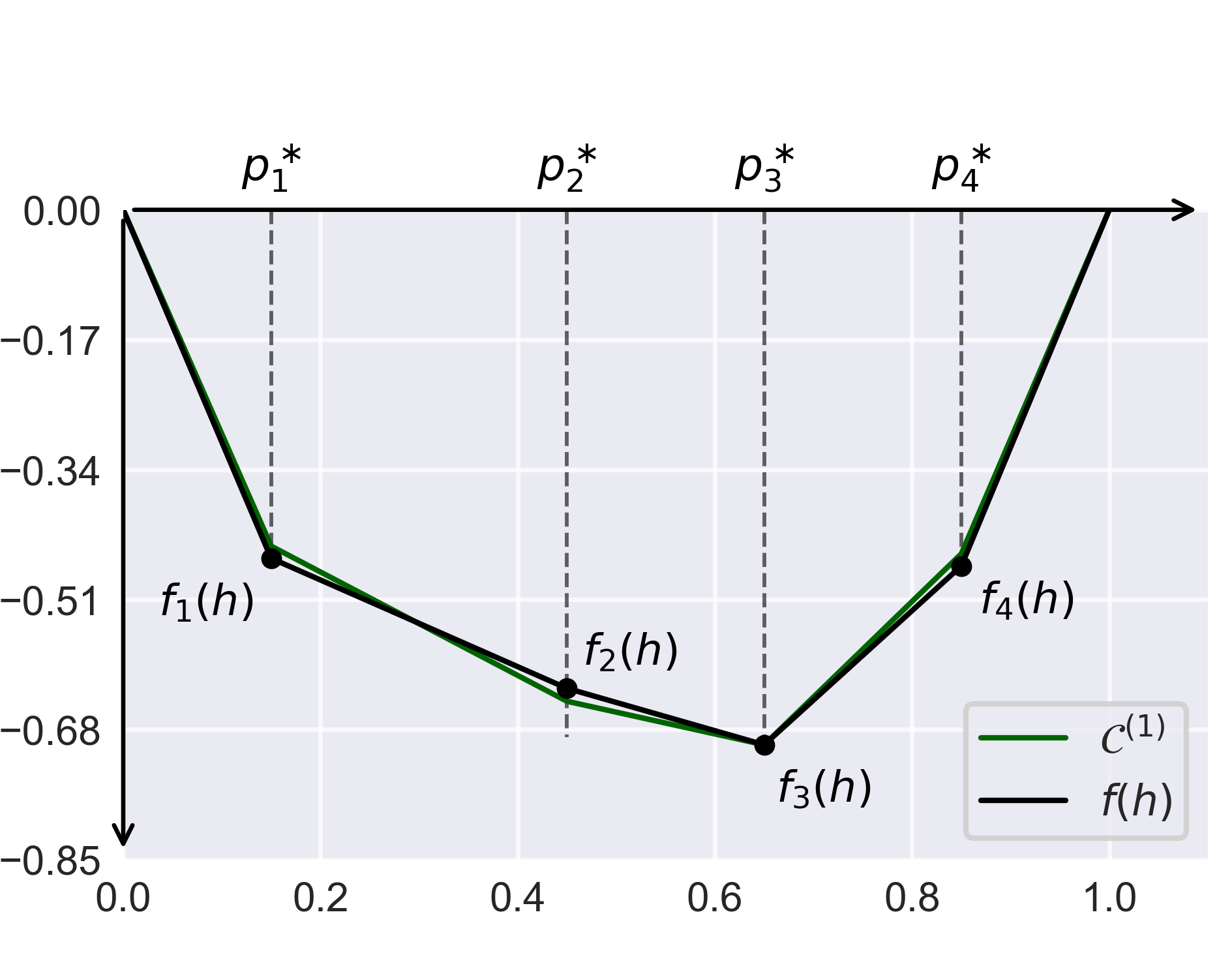}{Decrease $h_3$ until $f_3(h)=\Ccal_3^{(1)}$. Then $|f_2(h)-\Ccal_2^{(1)}|\leq L^2K\delta$, while $|f_i(h)-\Ccal_i^{(1)}|\leq (L+L^2)K\delta$ for the other components.}
{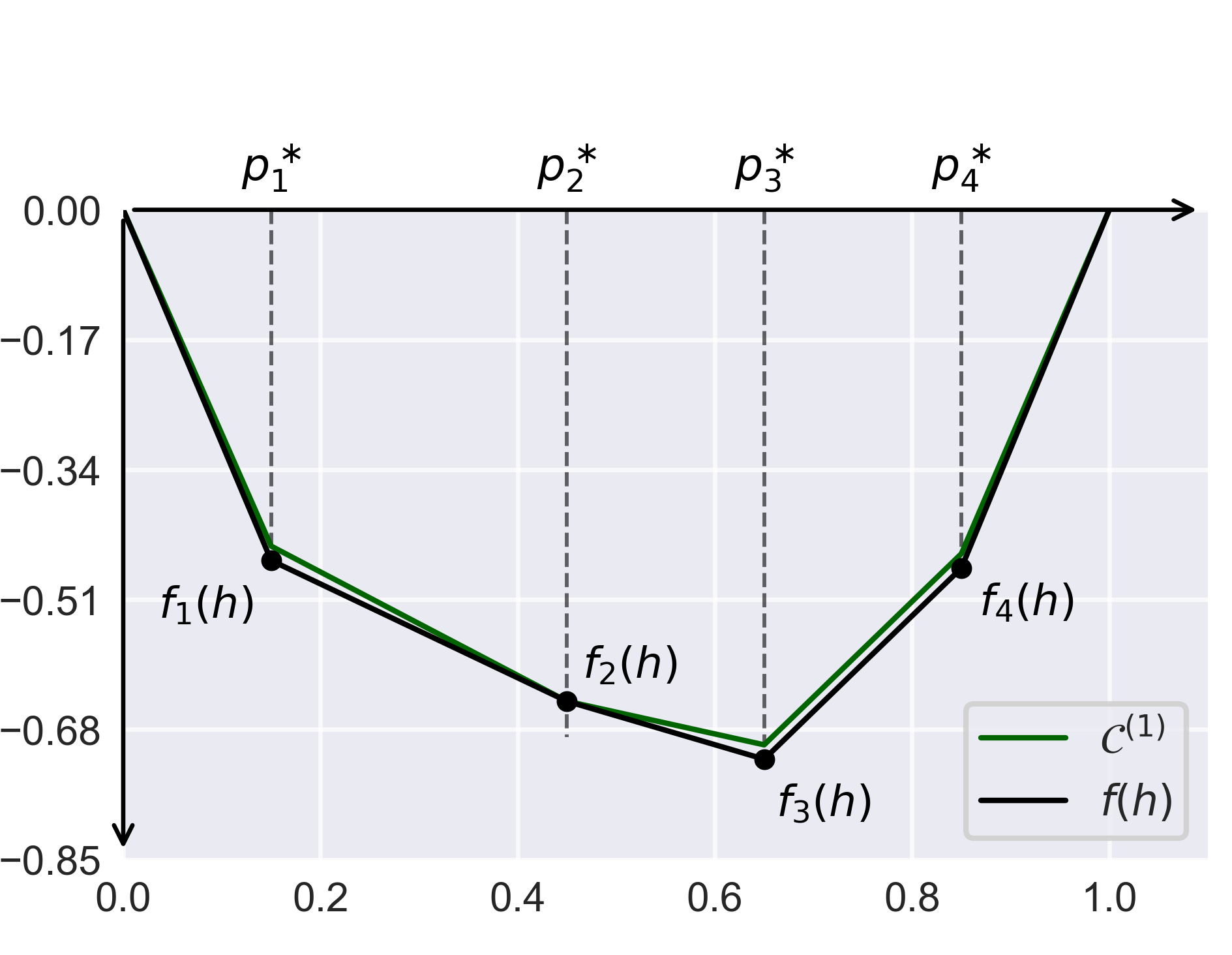}{Increase $h_2$ again until $f_2(h)=\Ccal_2^{(1)}$. Then $|f_3(h)-\Ccal_3^{(1)}|\leq L^3K\delta$, and the remaining errors are bounded by $(L+L^2+L^3)K\delta$.}

\R{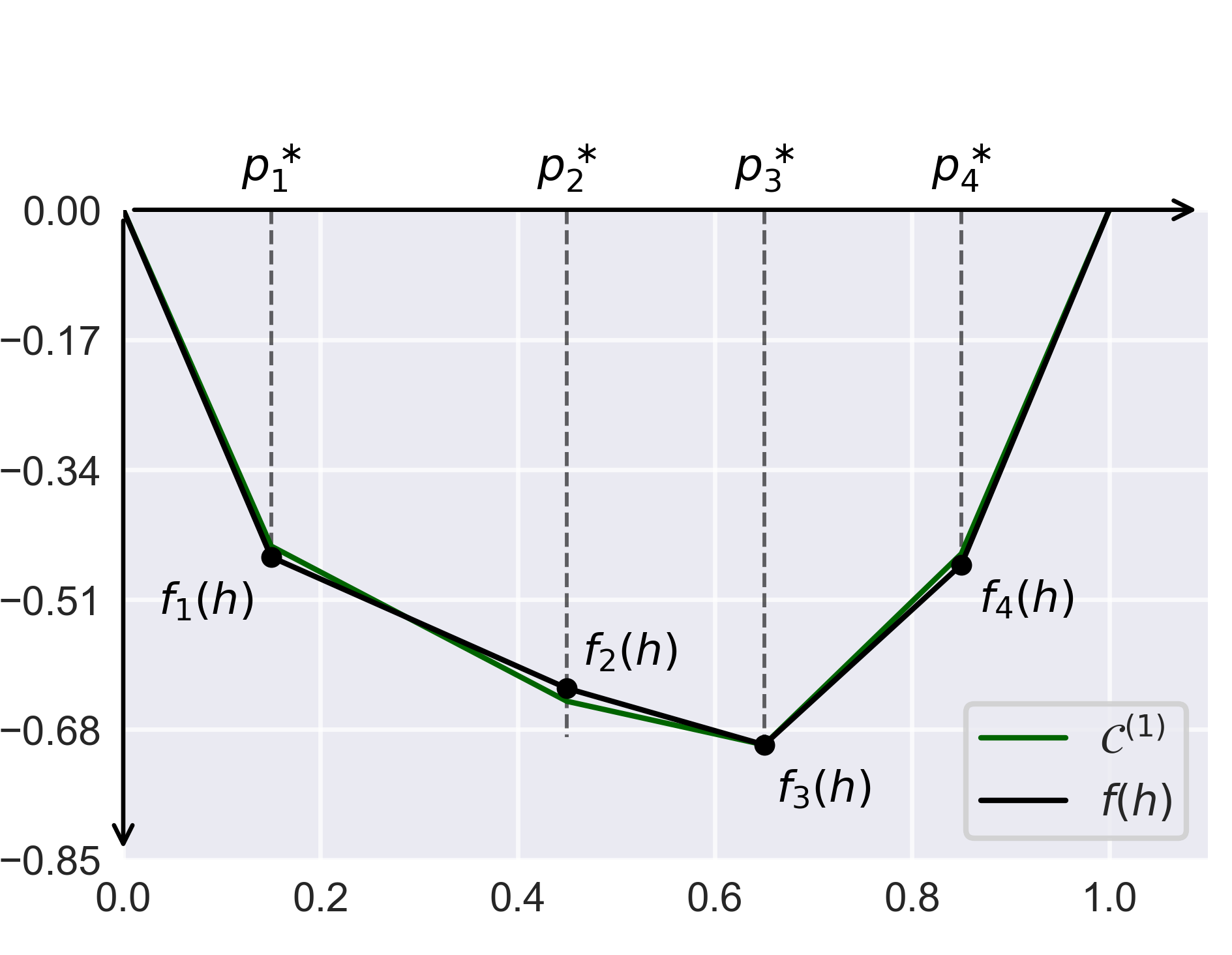}{Decrease $h_3$ again until $f_3(h)=\Ccal_3^{(1)}$. Then $|f_2(h)-\Ccal_2^{(1)}|\leq L^4K\delta$, and the remaining errors are bounded by $(L+L^2+L^3+L^4)K\delta$.}
{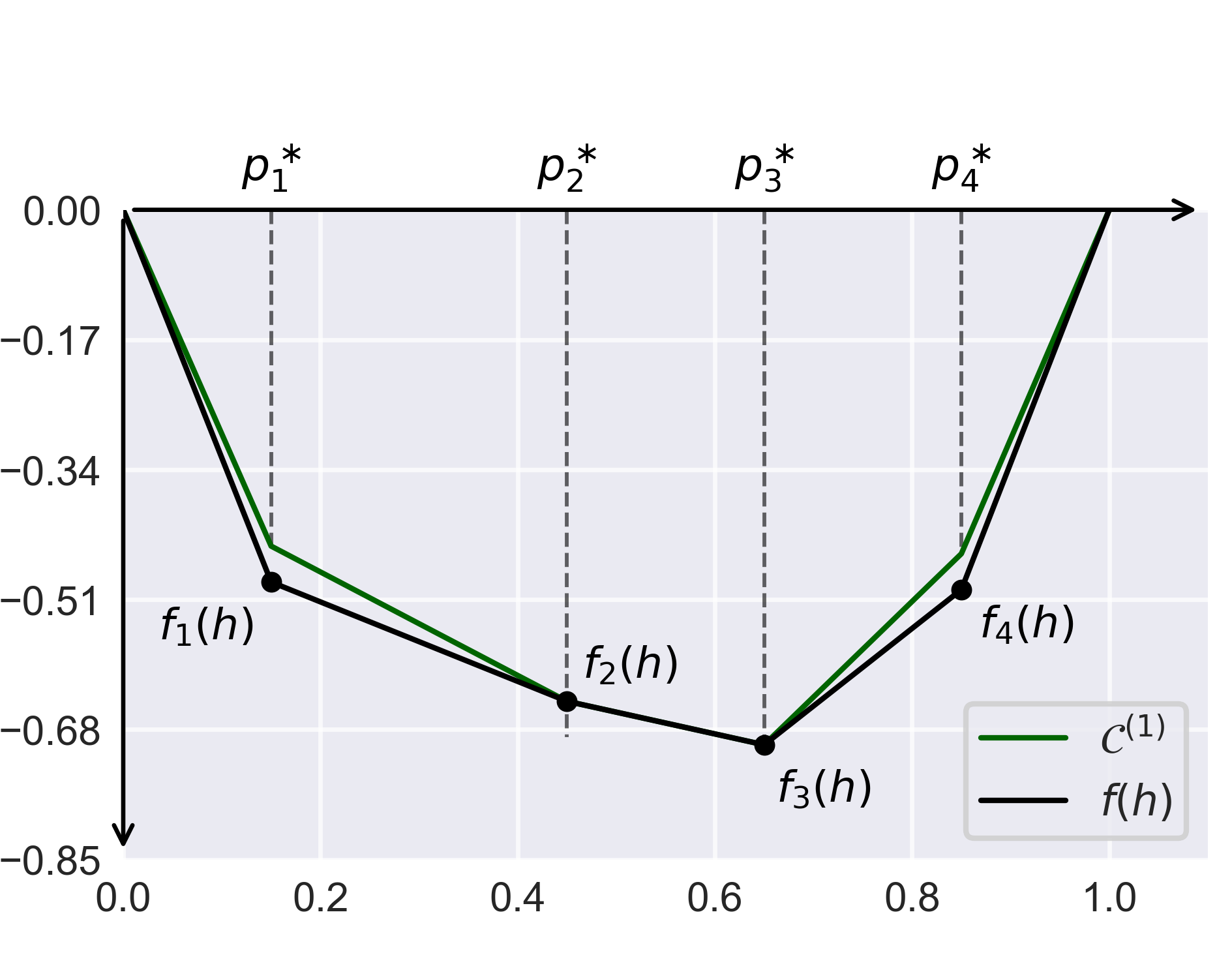}{Continuing this alternating correction and passing to the limit, determine $h_2,h_3$, with $h_1,h_4$ fixed, so that $f_2(h)=\Ccal_2^{(1)}$ and $f_3(h)=\Ccal_3^{(1)}$. By Prop. \ref{prop:non-expans}, with \textbf{(b)} as reference, $|f_i(h)-\Ccal_i^{(1)}|\leq LK\delta$ for all $i$.}
{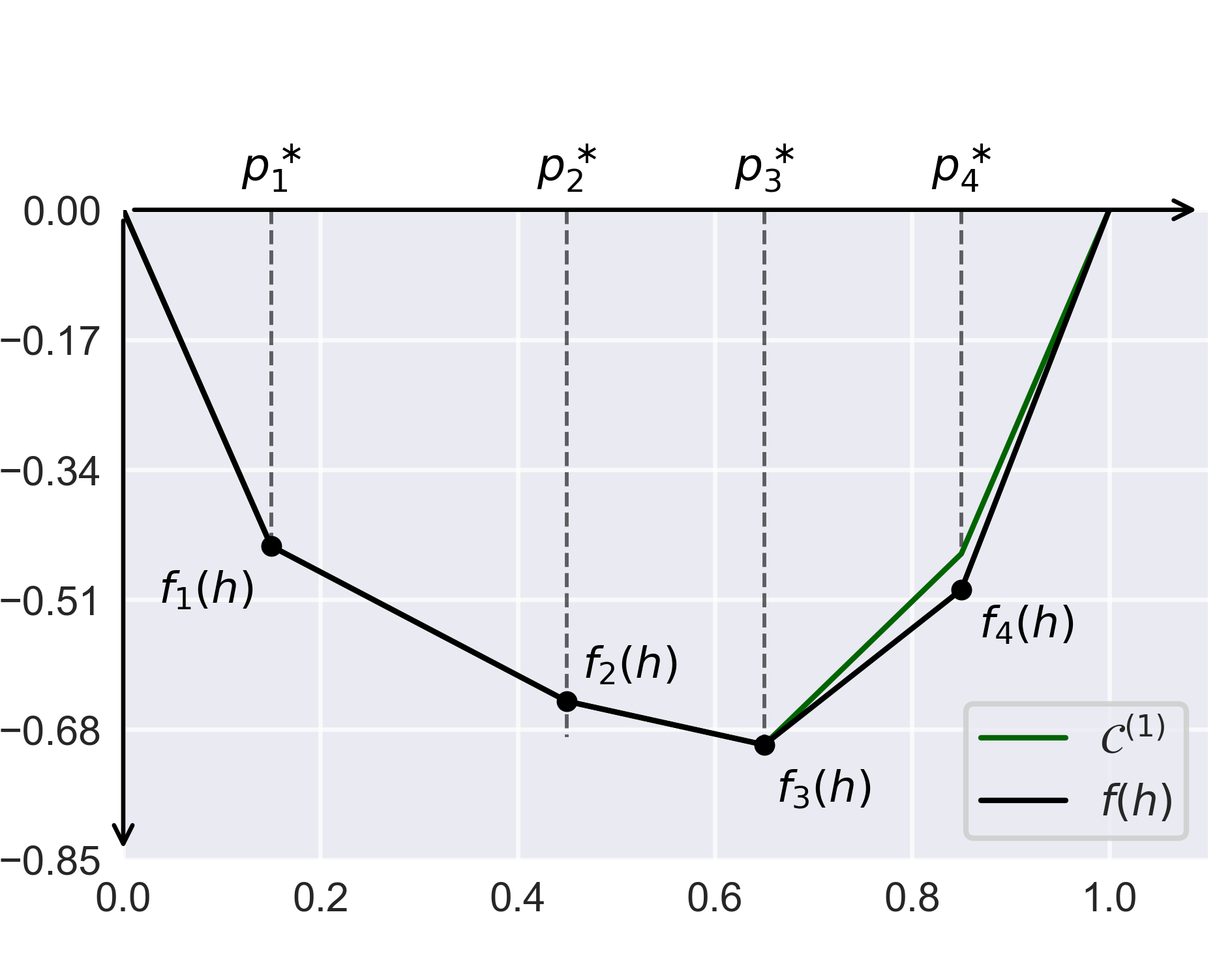}{We now want to match the set $\{1,2,3\}$, as shown above. First alternate $h_1,h_3$ to match components $1$ and $3$; then alternate $h_2,h_3$ to match components $2$ and $3$.}

\R{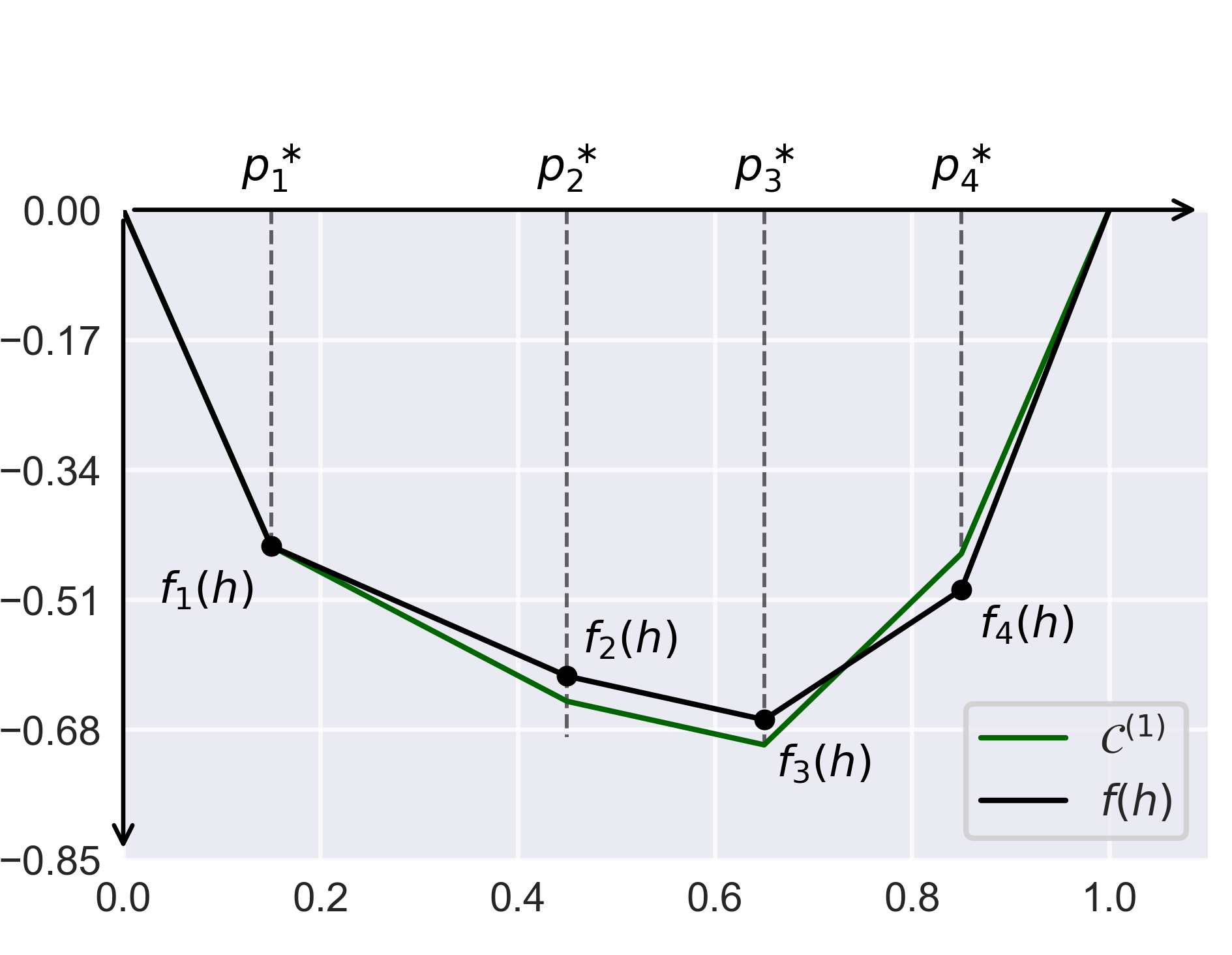}{Decrease $h_1$ until $f_1(h)=\Ccal_1^{(1)}$. Then $|f_i(h)-\Ccal_i^{(1)}|\leq L^2K\delta$ for $i=2,3$, and $|f_4(h)-\Ccal_4^{(1)}|\leq (L+L^2)K\delta$.}
{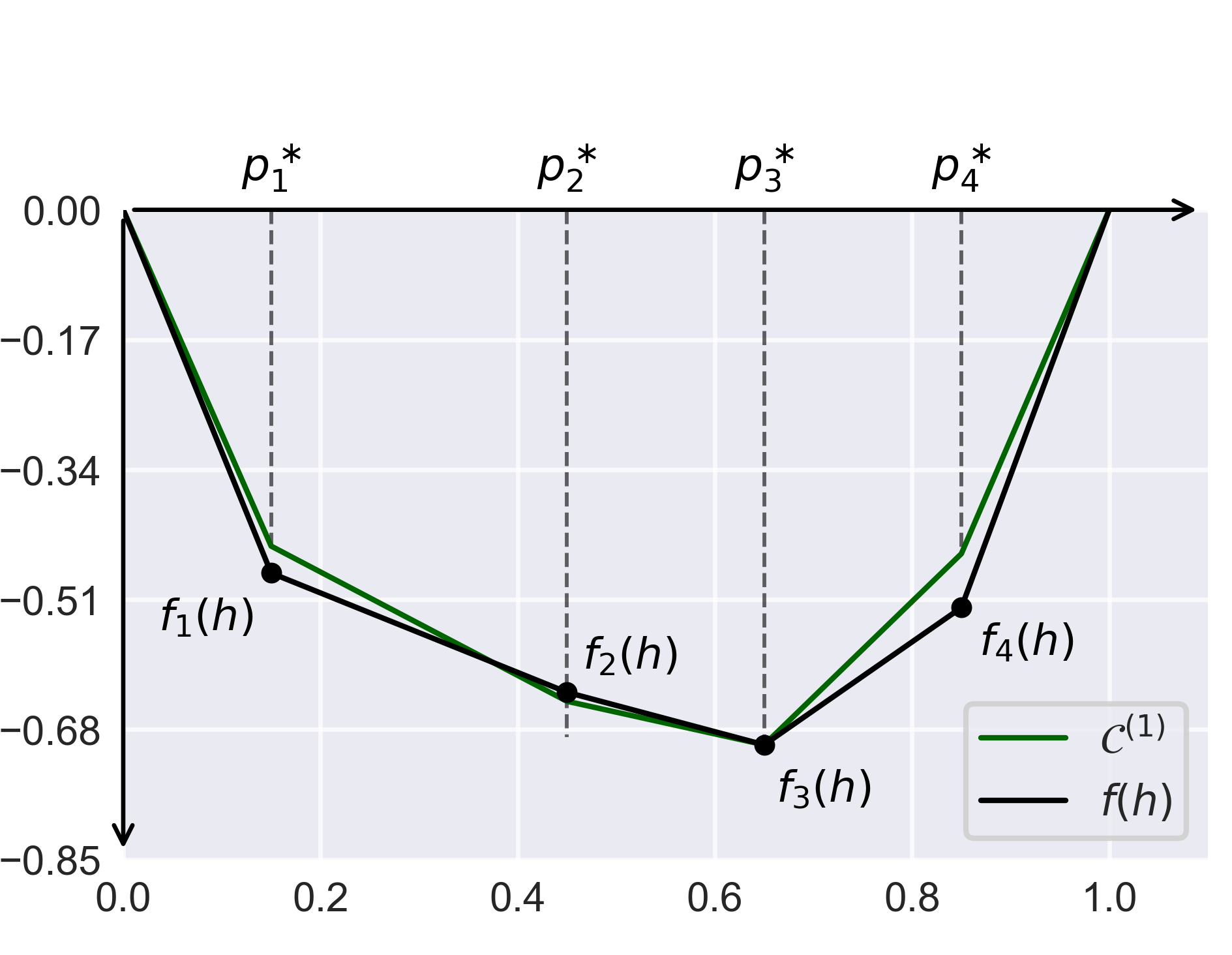}{Increase $h_3$ until $f_3(h)=\Ccal_3^{(1)}$. Then $|f_1(h)-\Ccal_1^{(1)}|\leq L^3K\delta$, $|f_2(h)-\Ccal_2^{(1)}|\leq (L^2+L^3)K\delta$, and $|f_4(h)-\Ccal_4^{(1)}|\leq (L+L^2+L^3)K\delta$.}
{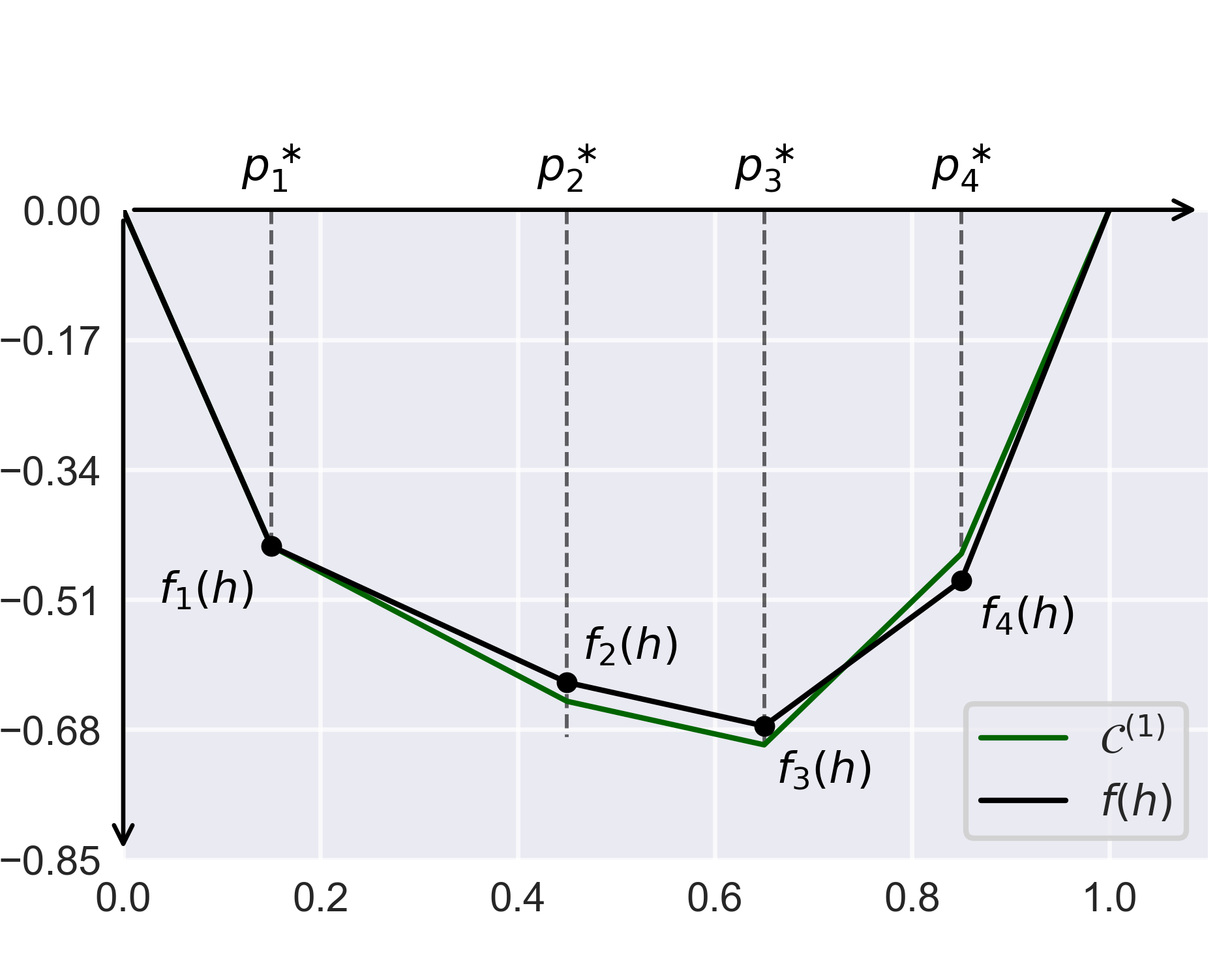}{Decrease $h_1$ again until $f_1(h)=\Ccal_1^{(1)}$. Then $|f_3(h)-\Ccal_3^{(1)}|\leq L^4K\delta$, while the errors in components $2$ and $4$ are bounded by $(L^2+L^3+L^4)K\delta$ and $(L+L^2+L^3+L^4)K\delta$, resp.}

\R{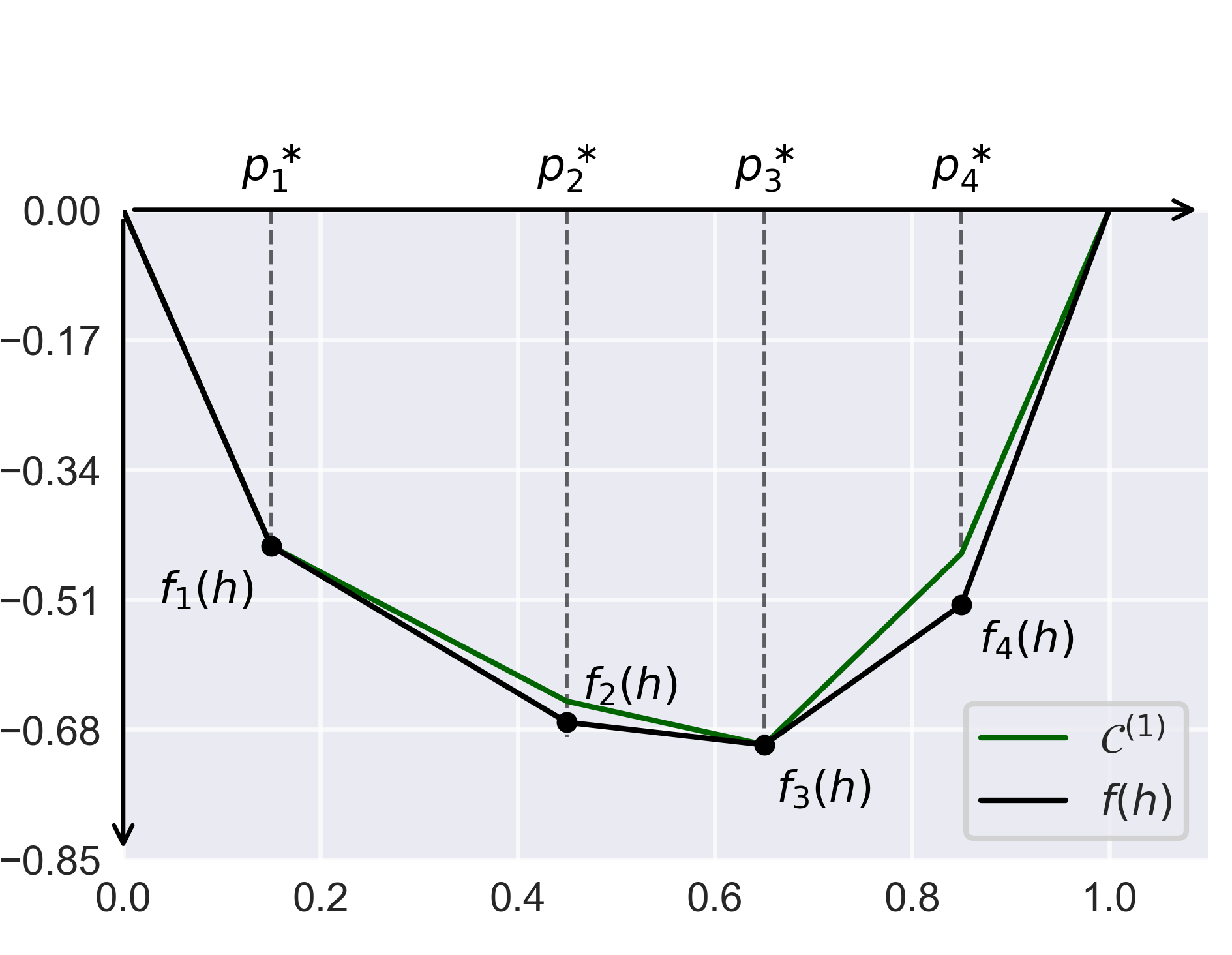}{Iterating the previous two corrections gives $f_i(h)=\Ccal_i^{(1)}$ for $i=1,3$. By Prop. \ref{prop:non-expans}, with \textbf{(h)} as reference, $|f_2(h)-\Ccal_2^{(1)}|\leq L^2K\delta$ and $|f_4(h)-\Ccal_4^{(1)}|\leq (L+L^2)K\delta$.}
{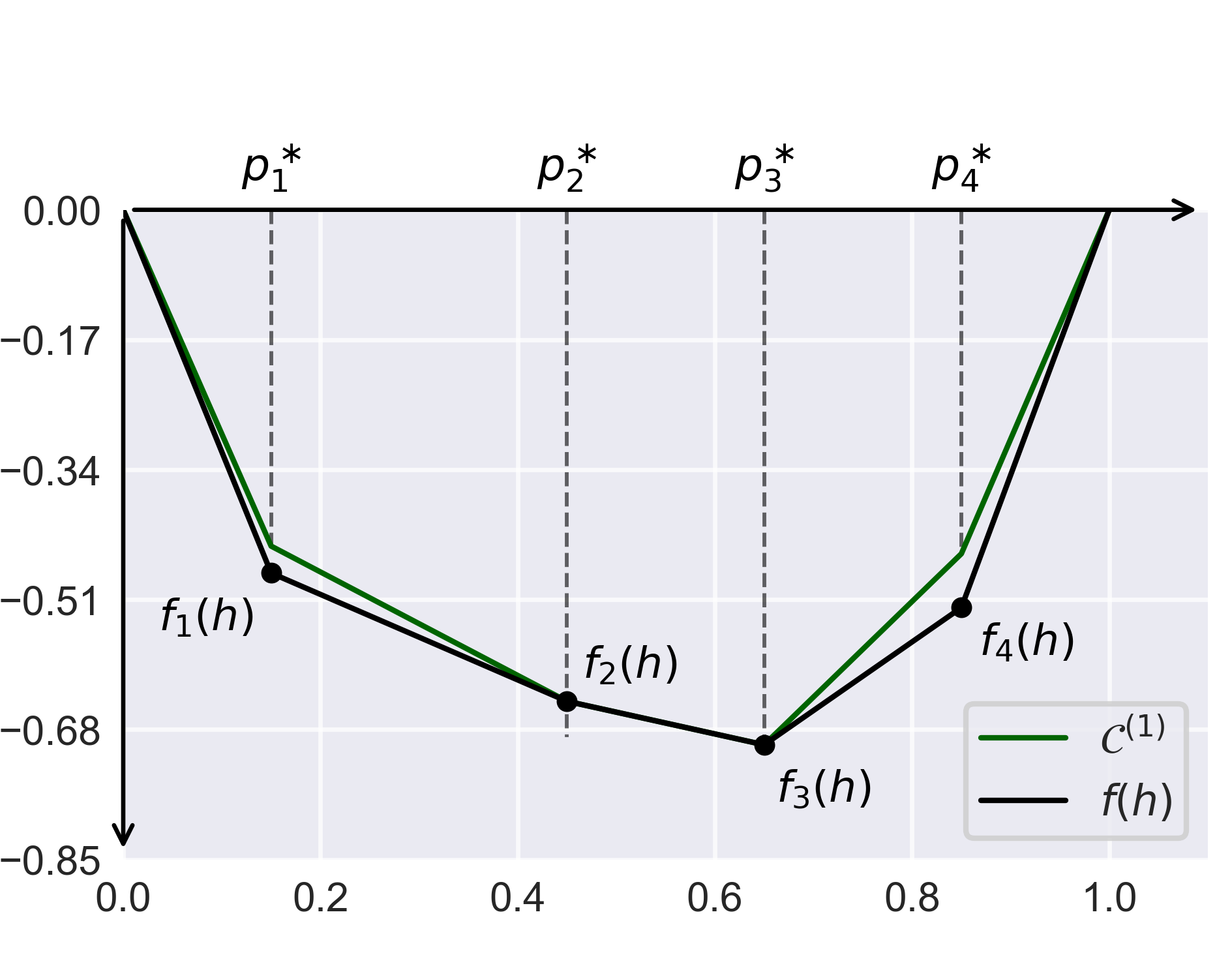}{Alternate $h_2,h_3$ and pass to the limit to obtain $f_i(h)=\Ccal_i^{(1)}$ for $i=2,3$. By Prop. \ref{prop:non-expans}, with \textbf{(m)} as reference, $|f_1(h)-\Ccal_1^{(1)}|\leq L^3K\delta$ and $|f_4(h)-\Ccal_4^{(1)}|\leq (L+L^2+L^3)K\delta$.}
{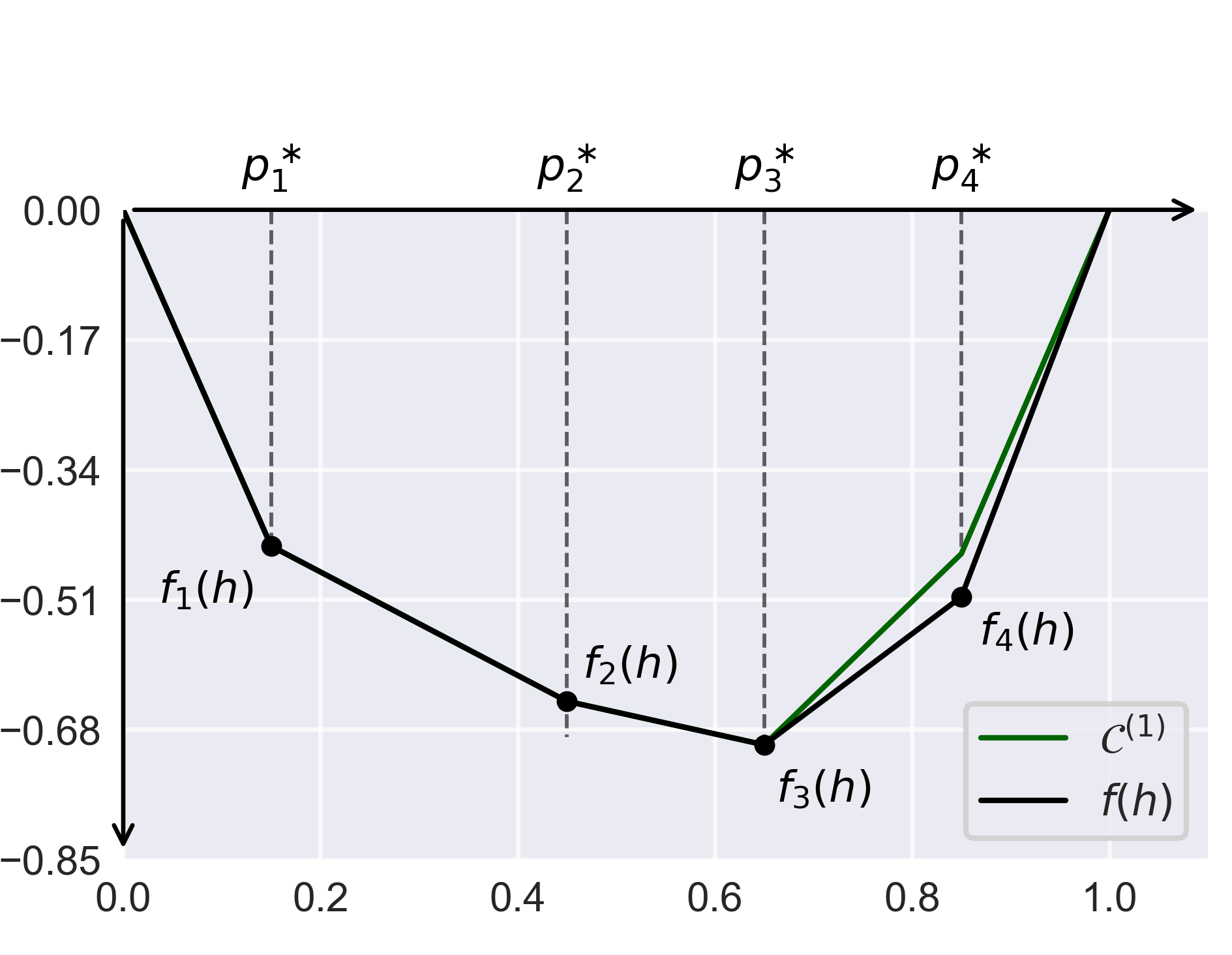}{Repeat the two alternating procedures, first for $h_1,h_3$ and then for $h_2,h_3$. This yields $f_i(h)=\Ccal_i^{(1)}$ for $i=1,2,3$. By Prop. \ref{prop:non-expans}, with \textbf{(c)} as reference, $|f_4(h)-\Ccal_4^{(1)}|\leq LK\delta$.}

\R{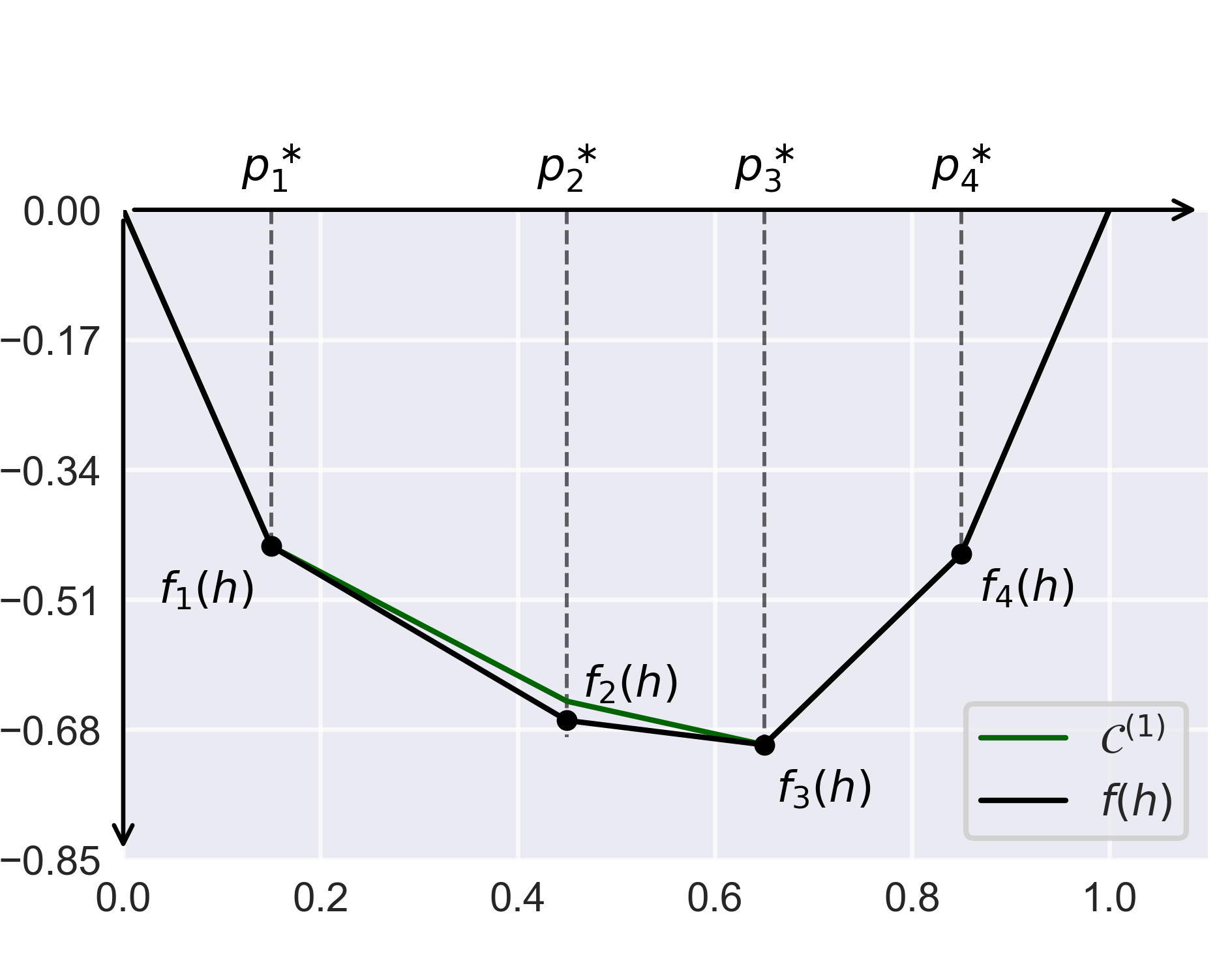}{Apply the same construction to $h_1,h_3,h_4$ to obtain $f_i(h)=\Ccal_i^{(1)}$ for $i=1,3,4$. By Prop. \ref{prop:non-expans}, with \textbf{(o)} as reference, $|f_2(h)-\Ccal_2^{(1)}|\leq L^2K\delta$.}
{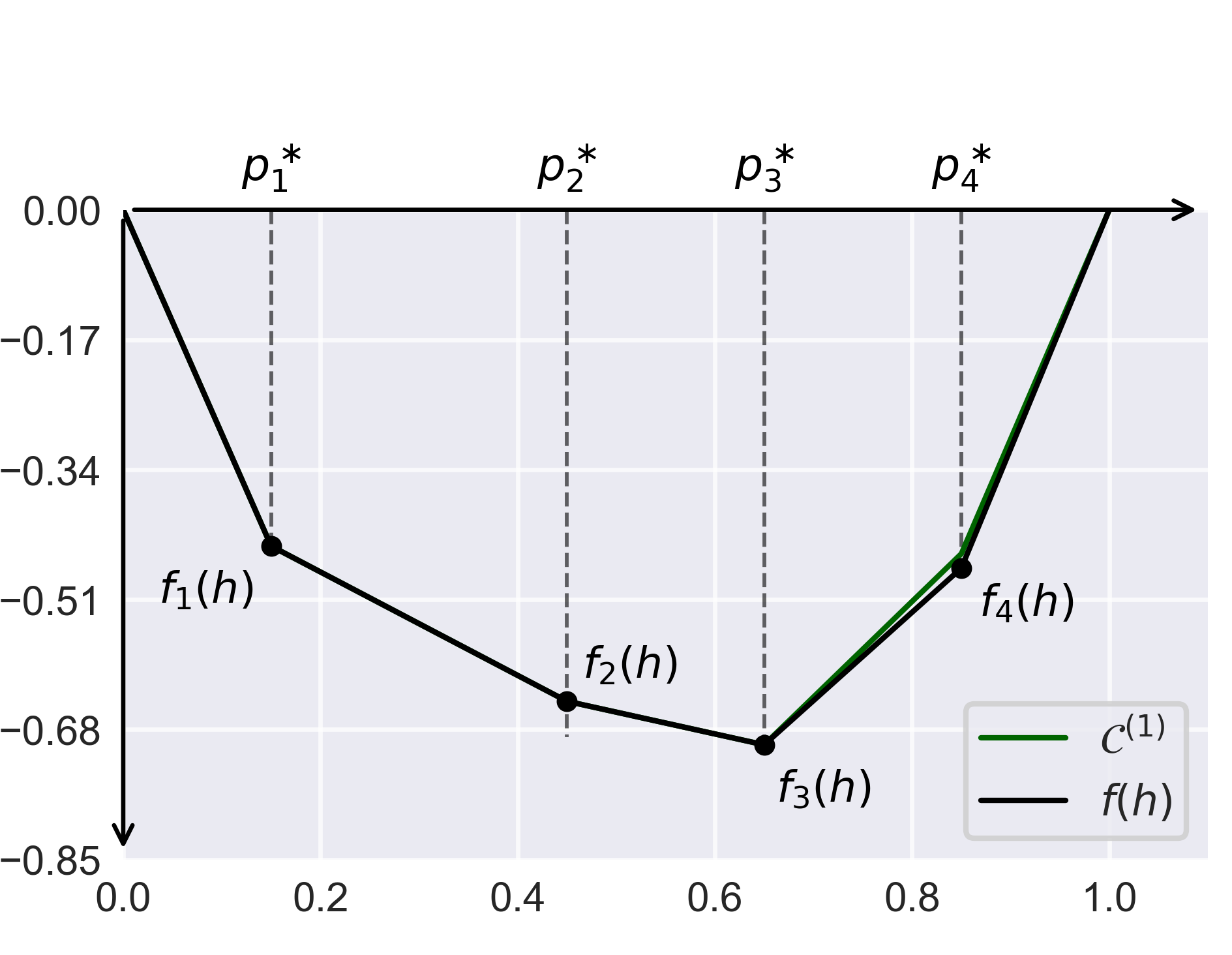}{Apply the construction to $h_1,h_2,h_3$ to obtain $f_i(h)=\Ccal_i^{(1)}$ for $i=1,2,3$. By Prop. \ref{prop:non-expans}, with \textbf{(p)} as reference, $|f_4(h)-\Ccal_4^{(1)}|\leq L^3K\delta$.}
{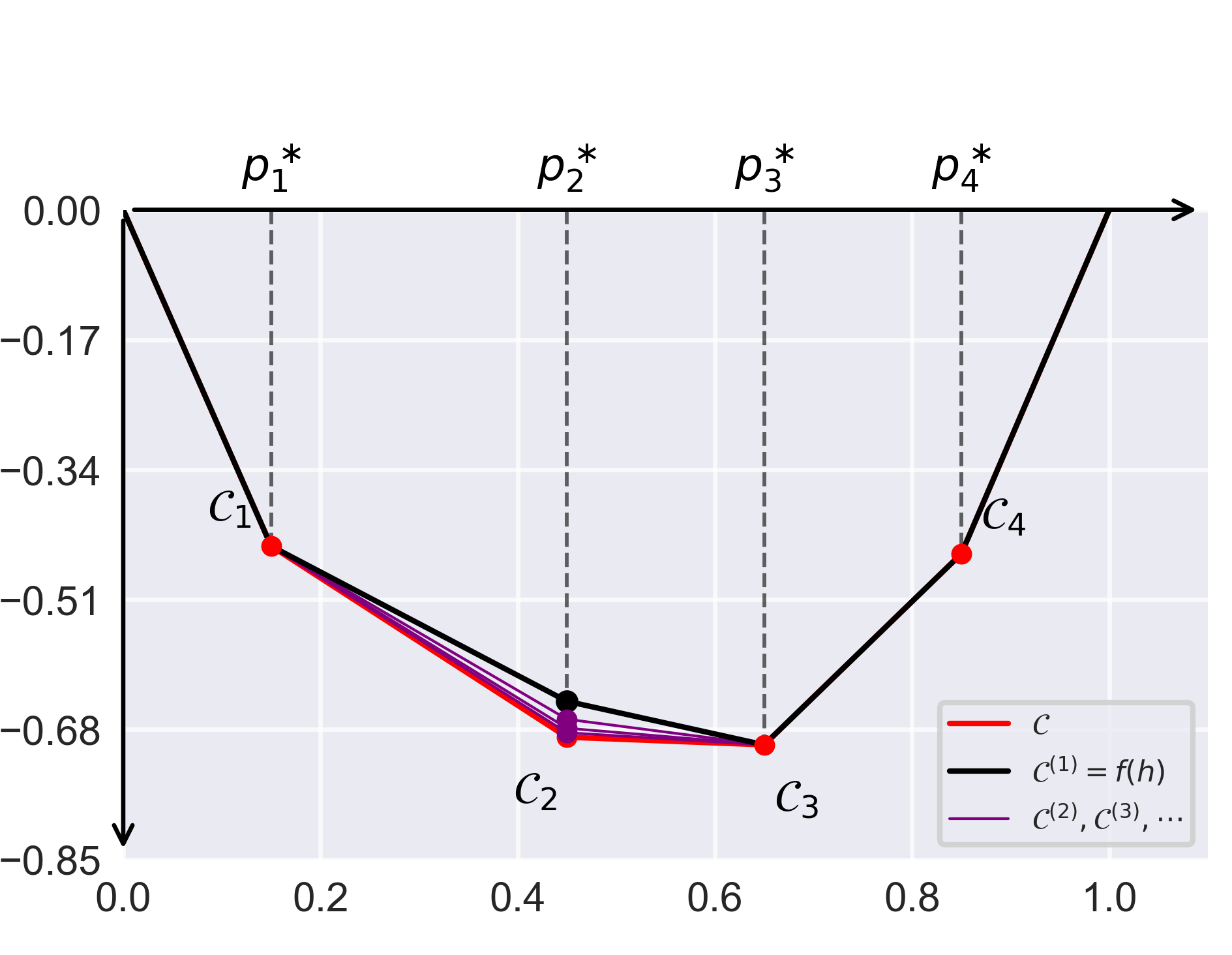}{Repeating this construction, we finally obtain $f_i(h)=\Ccal_i^{(1)}$ for every $i$. The same procedure is then applied to $\Ccal^{(2)},\Ccal^{(3)},\ldots$.}

\closestepfig

\begin{lemma}
\label{lemma:induction}
Let $\nu,q\in\Prob(\RR)$ such that Assumption~\ref{ass:technical-assumptions} holds, let $f$ be the corresponding $n$-atomic $q$-Bass map, and let
$\Ccal\in \Lambda_\nu^{p_1,\dots,p_n}$.
Let $r \in \NN$, $\emptyset \neq I \subsetneq \{1, \dots, n-1\}$, $\iadd \notin I$, and
$h \in [0, \diam(\supp(q)))^{n-1}$.
Define
\[
H = \max \left(
\max_{i \in I \cup \{\iadd\}} \widetilde f_i^{-1}(\Ccal_i),
\|h\|_\infty
\right),
\]
where $\widetilde f:\RR_{\ge 0}\to\RR^{n-1}$ is given by
\begin{equation}
\label{eq:bound-map-q-Bass}
	\widetilde f_i(h) = \int_\RR Q_\nu\!\left(p_i^* F_q(z) + (1-p_i^*)F_q(z-h)\right)\, p_i^* \rho_q(z)\,dz.
\end{equation}
Assume that there exists $L\in(0,1)$ such that
\[
\max_{\substack{
\xi \in [0,H]^{n-1}\\
\Jcal \subseteq \{1,\dots,n-1\}\\
i,j \in \Jcal,\ i\neq j
}}
\left|
\frac{\adj(Jf(\xi)_{\Jcal,\Jcal})_{ij}}{\adj(Jf(\xi)_{\Jcal,\Jcal})_{ii}}
\right|
\le L,
\qquad
K:=1-L.
\]
Let $\delta$ be the minimal diagonal gap of $\Ccal$, and let $i_\delta$ be a minimal-gap index for $\Ccal$.
Assume that $i_\delta \notin I$.
Let $\Ccal^{(r)}$ be the $L$-approximation of $\Ccal$ of order $r$ along $i_\delta$, and let
$E^{(r)}$ be the corresponding error map of order $r$.
Assume that $E_i^{(r)}(h)=0$, for all $i \in I$. If $\iadd=i_\delta$, assume moreover that
\begin{equation}
	\label{eq:tech-implications}
(i_\delta=1 \implies 2 \in I),\qquad
(i_\delta=n-1 \implies n-2 \in I),\qquad
(1<i_\delta<n-1 \implies i_\delta-1,\, i_\delta+1 \in I).
\end{equation}
Suppose that there exists $m \in \NN$ such that
\begin{equation}
\label{eq:bounds-for-inductions}
|E_{\iadd}^{(r)}(h)| \le L^m K\delta
\qquad\text{and}\qquad
\sup_{\substack{j=1,\dots,n-1 \\ j \neq \iadd}} |E_j^{(r)}(h)|
\le \sum_{k=1}^m L^k K\delta
\le L\delta.
\end{equation}
Then there exists $\widehat h \in [0,H]^{n-1}$ such that
\begin{enumerate}[label=(\roman*)]
    \item \label{it:induction_1} $\widehat h_i = h_i$ for all $i \notin I \cup \{\iadd\}$, and
    $E_i^{(r)}(\widehat h)=0$ for all $i \in I \cup \{\iadd\}$;
    \item \label{it:induction_2} the following bound holds:
    \[
    \sup_{j=1,\dots,n-1} |E_j^{(r)}(\widehat h)|
    \le \sum_{k=1}^{m+1} L^k K\delta
    \le L\delta.
    \]
\end{enumerate}
\end{lemma}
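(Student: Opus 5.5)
Our plan is induction on $|I|$, with an inner alternating correction scheme that turns the non-expansiveness principle of Proposition~\ref{prop:non-expans} into a convergent construction.

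\emph{Base case $|I|=1$, say $I=\{i_1\}$.} We alternate two moves.
\begin{itemize}
\item Move $h_{\iadd}$ alone until $E^{(r)}_{\iadd}=0$. This is possible by continuity (Proposition~\ref{prop:q-Bass-map-stability}) and strict monotonicity of $f_{\iadd}$ in $h_{\iadd}$ (Remark~\ref{rmk:one-coordinate-q-Bass-map}). The target is attained inside $[0,H]$ because $\widetilde f_{\iadd}$ bounds the range from below and the chain dominates $U_\nu$.
\item Move $h_{i_1}$ alone to restore $E^{(r)}_{i_1}=0$.
\end{itemize}
Each move is a pair $(h,\widetilde h)$ with $I_{\mathrm{dom}}$ a singleton and $I_{\mathrm{cdom}}=\emptyset$, so Proposition~\ref{prop:non-expans}, with the adjugate ratios bounded by $L$ on $[0,H]^{n-1}$, gives the following.
\begin{itemize}
\item The first move changes every other component by at most $L$ times $|E^{(r)}_{\iadd}(h)|\le L^mK\delta$.
\item The second move then spoils $E_{\iadd}$ by at most $L^{m+2}K\delta$, and so on.
\end{itemize}
Hence the iterates form a sequence whose successive corrections decay geometrically. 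Compactness of $[0,H]^{n-1}$ and continuity of $f$ give a limit $\widehat h$ with $E_i^{(r)}(\widehat h)=0$ for $i\in I\cup\{\iadd\}$.

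For the error bound (ii), we do not sum the individual steps. Instead we compare $\widehat h$ directly with the starting point $h$ through a single application of Proposition~\ref{prop:non-expans}. Here $I_{\mathrm{dom}}\subseteq I\cup\{\iadd\}$ and $I_{\mathrm{cdom}}\supseteq I$, since those components were already zero and stay zero, and the only changed matched component is $\iadd$. So every $j\notin I\cup\{\iadd\}$ moves by at most $L|E_{\iadd}^{(r)}(h)|\le L^{m+1}K\delta$. This yields $\sup_j|E_j^{(r)}(\widehat h)|\le\sum_{k=1}^{m+1}L^kK\delta$, and the final inequality $\le L\delta$ follows from $K\sum_{k\ge1}L^k=L$.

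\emph{Inductive step.} For general $I$, we alternate between two operations:
\begin{itemize}
\item matching $\iadd$ while keeping a proper subfamily of $I$ fixed, by applying the lemma inductively to a smaller index set;
\item re-matching the remaining indices of $I$ by inductive applications in which those indices play the role of $\iadd$.
\end{itemize}
This follows the pattern of Figure~\ref{fig:steps}. At each stage the hypothesis \eqref{eq:bounds-for-inductions} is propagated with $m$ increased. This is exactly why the hypothesis is phrased with a geometric tail, and it closes the recursion.

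\emph{Domain constraints.} The bound $\le L\delta<\delta$ keeps the targets valid. The perturbed chains stay convex and strictly above $U_\nu$: the minimal gap $\delta$ leaves room at every vertex, and we use condition \eqref{eq:tech-implications} when $\iadd=i_\delta$, so that neighbouring vertices are pinned and the $L$-approximation at $i_\delta$ stays strictly convex. Proposition~\ref{prop:mainThm-n_atoms-part1} then guarantees that the corresponding $h$-coordinates stay in $(0,\diam(\supp(q)))$. For the upper bound $H$, each coordinate is controlled via the monotone comparison with $\widetilde f$, and this keeps everything in $[0,H]^{n-1}$.

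\emph{Main obstacle.} The hardest part is verifying, at every intermediate move, the exact hypotheses of Proposition~\ref{prop:non-expans}, namely $I_{\mathrm{cdom}}\subseteq I_{\mathrm{dom}}$ with a single extra index, rather than mere inequalities. If the sets fail to match exactly, one must treat the degenerate case where some component accidentally coincides. We would handle this by taking $I_{\mathrm{cdom}}$ as the intersection with $I_{\mathrm{dom}}$ and observing that the ratio bound only improves.
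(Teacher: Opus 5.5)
Your architecture matches the paper's in three respects:
\begin{itemize}
\item alternating one-coordinate corrections when $|I|=1$;
\item alternating applications of the induction hypothesis to $(I\setminus\{\widetilde i\},\iadd)$ and $(I\setminus\{\widetilde i\},\widetilde i)$ in the inductive step;
\item a single application of Proposition~\ref{prop:non-expans} to the pair $(h,\widehat h)$ to obtain (ii).
\end{itemize}
Your treatment of accidental coincidences in $I_{\mathrm{cdom}}$ is also fine. The gap is the case $\iadd=i_\delta$ with $1<i_\delta<n-1$. There \eqref{eq:tech-implications} forces $\{i_\delta-1,i_\delta+1\}\subseteq I$, so $|I|\ge 2$. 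The configuration $I=\{i_\delta-1,i_\delta+1\}$ is then reached neither by your base case nor by your inductive step: removing any index from $I$ violates \eqref{eq:tech-implications}, so the induction hypothesis is not available. More generally, in the inductive step the removed index $\widetilde i$ must be chosen outside $\{i_\delta-1,i_\delta+1\}$. This case needs its own base-case argument, and it is not a formality.

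Here is why. The one-coordinate adjustment of $h_i$ works because, once $h_i$ is set to $0$, $f_i$ becomes the collinear interpolation of $f_{i-1}$ and $f_{i+1}$ (Proposition~\ref{prop:mainThm-n_atoms-part1}). Moreover, $f_{i\pm1}$ can only increase when $h_i$ is lowered to $0$, by monotonicity. For $i\neq i_\delta$, neighbour errors below $\delta$ together with $\delta_i\ge\delta$ then give a value $\ge\Ccal_i$, so the target is in the range.

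At $i=i_\delta$, however, the target is $\Ccal^{(r)}_{i_\delta}=\Ccal_{i_\delta}+L^r\delta$, so the available slack is only $(1-L^r)\delta$. A bound $\sup_j|E^{(r)}_j|\le L\delta$ on the neighbours therefore does not suffice: it would need $L\le 1-L^r$, e.g.\ $L\le\frac12$ for $r=1$. Your sentence ``the bound $\le L\delta<\delta$ keeps the targets valid'' is thus false at the vertex $i_\delta$. Also, \eqref{eq:tech-implications} pins the neighbours only at the starting point $h$. As soon as you alternate corrections of $h_{i_\delta}$ with those of one neighbour, the other neighbour drifts, and nothing in your argument guarantees that $h_{i_\delta}$ can be readjusted. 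For $i_\delta\in\{1,n-1\}$ this does not happen, since the unique neighbour is re-matched right before each correction of $h_{i_\delta}$.

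The paper closes this with a dedicated base case $s=2$. Using the mean value theorem and the comparison of partial derivatives that follows from \eqref{eq:secondDeriv} in Proposition~\ref{prop:regulatiry_f}, it shows that after correcting $h_{i_\delta}$ and then $h_{i_\delta-1}$ one has $E^{(r)}_{i_\delta+1}\ge0$. That is enough, because non-negative neighbour errors keep the interpolated value above $\Ccal_{i_\delta}+\delta\ge\Ccal^{(r)}_{i_\delta}$. It then alternates over the pairs $(i_\delta,i_\delta-1)$ and $(i_\delta-1,i_\delta+1)$.

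Separately, you have the role of $\widetilde f$ reversed. Since $\widetilde f_i(h_i)=f_i(h_ie_i)\ge f_i(h)$, it bounds $f_i$ from above and yields the a priori bound $h_i\le H$. That the target lies below the upper end of the range of $t\mapsto f_i$ is exactly the minimal-gap argument described above.
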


\begin{remark}
    The constant $H$ is chosen so that the entire construction takes place inside the compact cube $[0,H]^{n-1}$. This is guaranteed by implication \eqref{eq:bound-coordinate} below, which applies to every vector produced by the lemma. Hence all relevant points remain in $[0,H]^{n-1}$, where the ratios of adjugate minors are uniformly bounded by the constant $L$. This uniform bound is what allows Proposition~\ref{prop:non-expans} to be applied throughout the construction.
\end{remark}

\begin{remark}
The set $I$ in Lemma~\ref{lemma:induction} is the set of indices which are already matched: $E_i^{(r)}(h)=0$ for $i\in I$, or equivalently, $f_i(h)=\Ccal_i^{(r)}$. Lemma~\ref{lemma:induction} adds one new index $\iadd\notin I$ where the error map equals zero, producing $\widehat h$ such that $E_i^{(r)}(\widehat h)=0$ for every $i\in I\cup\{\iadd\}$, while the coordinates of $f$ outside $I\cup\{\iadd\}$ are unchanged.

Condition \eqref{eq:tech-implications} is needed only in the case $\iadd=i_\delta$. In this case, the new index to be matched is precisely the index along which the approximating polygonal chains $\Ccal^{(r)}$ differ from $\Ccal$. Therefore, before correcting the component $i_\delta$, we require the neighbouring components to have already been matched. This point is slightly delicate because, for a fixed approximation $\Ccal^{(r)}$, the index $i_\delta$, which is chosen from the original chain $\Ccal$, need not be a minimal-gap index for $\Ccal^{(r)}$. Thus condition \eqref{eq:tech-implications} is a technical requirement that allows the construction in Lemma~\ref{lemma:induction} to handle the case $\iadd=i_\delta$ safely. If $\iadd\neq i_\delta$, no additional ordering condition is needed.
\end{remark}
\medskip

\begin{proof}[Proof of Lemma \ref{lemma:induction}]
We first present some preliminary arguments that will be used repeatedly in the proof. Fix $h'\in (0,\diam(\supp(q)))^{\,n-1}$. Observe that, for any $i\in I\cup\{\iadd\}$, the implication
\begin{equation}
\label{eq:bound-coordinate}
	f_i(h')\ge \Ccal_i \quad\Longrightarrow\quad h_i'\le H
\end{equation}
holds. Indeed, by monotonicity of $f_i$ with respect to the variables $\{h_j'\}_{j\neq i}$ (see Remark~\ref{rmk:one-coordinate-q-Bass-map}),
\[
\Ccal_i \le f_i(h')\le f_i(h_i'e_i)=\widetilde f_i(h_i'),
\]
where $\widetilde f_i$ is the is the one--dimensional map defined in \eqref{eq:bound-map-q-Bass}, which  is precisely the  $2$-atomic $q$-Bass map associated with the two weights $p_i^*$ and $1-p_i^*$ with respect to $\nu$. In particular, $\widetilde f_i$ is decreasing by Remark~\ref{rmk:one-coordinate-q-Bass-map} and hence admits an inverse $\widetilde f_i^{-1}$. Therefore,
\[
h_i' \le \widetilde f_i^{-1}(\Ccal_i) \le H.
\]

\medskip
\noindent$\bullet$ \emph{A one--coordinate adjustment.}
Assume there exists $\widetilde h\in[0,H]^{n-1}$ such that $\sup_j |E_j^{(r)}(\widetilde h)|<\delta$, and let $i\in\{1,\dots,n-1\}\setminus\{i_\delta\}$.
By Remark~\ref{rmk:one-coordinate-q-Bass-map},
\[
E_i^{(r)}(\widetilde h_1,\dots,\widetilde h_{i-1},0,\widetilde h_{i+1},\dots,\widetilde h_{n-1})\ge 0.
\]
Indeed,
\[
f_j(\widetilde h_1,\dots,\widetilde h_{i-1},0,\widetilde h_{i+1},\dots,\widetilde h_{n-1})
\ge f_j(\widetilde h),
\quad\text{for all } j\in\{1,\dots,n-1\}.
\]
Moreover, by Proposition~\ref{prop:mainThm-n_atoms-part1}, the point
\[
\bigl(p_i^*,\,f_i(\widetilde h_1,\dots,\widetilde h_{i-1},0,\widetilde h_{i+1},\dots,\widetilde h_{n-1})\bigr)
\]
lies on the segment joining the points $\bigl(p_j^*,\,f_j(\widetilde h_1,\dots,\widetilde h_{i-1},0,\widetilde h_{i+1},\dots,\widetilde h_{n-1})\bigr)$ with $j=i-1,i+1$. Hence, Definition \ref{def:minimal-diagonal-gap} of the minimal gap yields (see Figure \ref{fig:proof-1})
\begin{equation}\label{eq:one-coorinate-1}
    f_i(\widetilde h_1,\dots,\widetilde h_{i-1},0,\widetilde h_{i+1},\dots,\widetilde h_{n-1}) \ge \Ccal_i.
\end{equation}
\vspace{-5mm}
\begin{figure}[H]
     \centering
          \begin{subfigure}{0.65\textwidth}
         \centering
         \includegraphics[width=\textwidth]{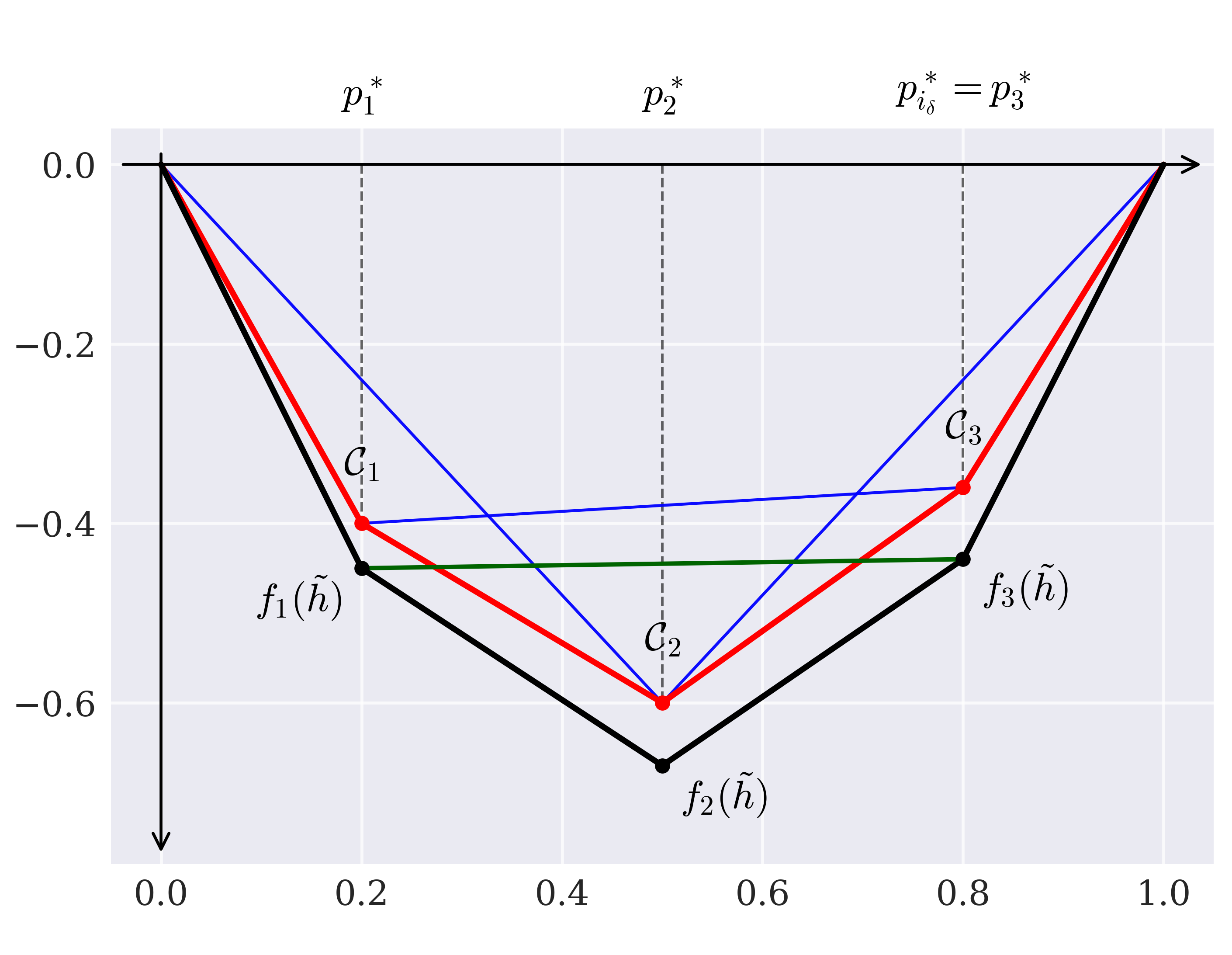}
     \end{subfigure}
	\caption{Example with $i=2$. Definition \ref{def:minimal-diagonal-gap} gives $\delta_2 < \delta$. Therefore, by Remark~\ref{rmk:one-coordinate-q-Bass-map} and Proposition~\ref{prop:mainThm-n_atoms-part1},
$f_2(\widetilde h_1,0,\widetilde h_3) \ge \Ccal_2$.}
     		\label{fig:proof-1}
\end{figure}
On the other hand, Proposition~\ref{prop:mainThm-n_atoms-part1} and Definition \ref{def:lambda_nu} imply
\[
\lim_{t\to\infty}E_i^{(r)}(\widetilde h_1,\dots,\widetilde h_{i-1},t,\widetilde h_{i+1},\dots,\widetilde h_{n-1})
=U_\nu(p_i^*)-\Ccal_i^{(r)}
\le U_\nu(p_i^*)-\Ccal_i
\le 0.
\]
Therefore, by continuity, there exists $t_i\geq 0$ such that replacing the $i$-th component of $\widetilde h$ by $t_i$ yields a vector satisfying $E_i^{(r)}=0$ and the bound $t_i\le H$ follows from \eqref{eq:bound-coordinate}. 

We shall use the above argument repeatedly. In the case $i=i_\delta$, the same conclusion holds under the additional assumption that $E_j^{(r)}(\widetilde h)=0$ for $j=i_\delta \pm 1$. This extra assumption is needed precisely at the step where the minimal diagonal gap is used. Indeed, for $i\neq i_\delta$, the argument above gives \eqref{eq:one-coorinate-1}, as illustrated in Figure~\ref{fig:proof-1}. If $i=i_\delta$, this implication is no longer automatic, because the vertex $\Ccal_{i_\delta}$ has been replaced by $\Ccal_{i_\delta}^{(r)}$. Indeed, since $\Ccal_{i_\delta}$ has been replaced by $\Ccal_{i_\delta}^{(r)}$, the diagonal gap corresponding to the index $i_\delta$ for the perturbed chain $\Ccal^{(r)}$ need not be equal to the minimal diagonal gap $\delta$ of $\Ccal$, and may be smaller.  By requiring the adjacent errors to vanish, the neighboring vertices are already fixed at their prescribed values for $\Ccal^{(r)}$, which allows us to conclude \eqref{eq:one-coorinate-1} also in this case.

\medskip

We argue by induction on $s:=|I|$. The smallest possible value of $s$ depends on the relation between $\iadd$ and $i_\delta$. If $\iadd \neq i_\delta$, then the minimum possible cardinality of $I$ is $1$. The same holds when $\iadd=i_\delta=1$ or $\iadd=i_\delta=n-1$, since \eqref{eq:tech-implications} implies that $2\in I$ in the first case and $n-2\in I$ in the second. Thus, the base case is $s=1$ whenever $\iadd \neq i_\delta$ or $\iadd=i_\delta\in\{1,n-1\}$, with the unique element of $I$ determined by \eqref{eq:tech-implications}. If instead $1<\iadd=i_\delta<n-1$, then \eqref{eq:tech-implications} implies that $\{i_\delta-1,i_\delta+1\}\subseteq I$. Consequently, the smallest possible value of $s$ is $2$, and this case has to be treated separately.

\medskip
\noindent$\bullet$ \emph{Base case $s=1$ $\mathrm (\iadd \not = i_\delta$ or $\iadd=i_\delta\in\{1,n-1\}\mathrm)$.}
Let $I=\{\widetilde i\}$. Starting from $h^{(0)}:=h$, we first apply the above one-coordinate adjustment with $\widetilde h=h^{(0)}$ and $i=\iadd$. In this way, we obtain a vector $h^{(1)}$, defined from $h^{(0)}$ by changing only the $\iadd$-th component so that $E_{\iadd}^{(r)}(h^{(1)})=0$. By Proposition~\ref{prop:non-expans}, applied with $I_{\mathrm{dom}}=\{\iadd\}$ and $I_{\mathrm{cdom}}=\emptyset$,
\[
    |f_i(h^{(1)})-f_i(h^{(0)})| \le L|f_{\iadd}(h^{(1)})-f_{\iadd}(h^{(0)})| = L |E_{\iadd}^{(r)}(h^{(0)})|, \quad \text{for all } i\neq\iadd.
\]
Let $m\in\NN$ be such that \eqref{eq:bounds-for-inductions} holds. Since $|E_{\iadd}^{(r)}(h^{(0)})|\le L^mK\delta$, the previous estimate gives $|E_{\widetilde i}^{(r)}(h^{(1)})|=|f_{\widetilde i}(h^{(1)})-\Ccal_{\widetilde i}^{(r)}|=|f_{\widetilde i}(h^{(1)})-f_{\widetilde i}(h^{(0)})|\le L^{m+1}K\delta$, where we used $E_{\widetilde i}^{(r)}(h^{(0)})=0$. Moreover, for every $j\neq\iadd$,
\[
    |E_j^{(r)}(h^{(1)})|
    = |f_j(h^{(1)})-\Ccal_j^{(r)}|
    \le |f_j(h^{(0)})-\Ccal_j^{(r)}|
    + |f_j(h^{(1)})-f_j(h^{(0)})|
    \le |E_j^{(r)}(h^{(0)})| + L |E_{\iadd}^{(r)}(h^{(0)})|.
\]
Hence, by \eqref{eq:bounds-for-inductions},
\[
    \sup_{\substack{j=1,\dots,n-1\\ j\neq\iadd}}
    |E_j^{(r)}(h^{(1)})|
    \le \sum_{k=1}^m L^kK\delta+L^{m+1}K\delta
    = \sum_{k=1}^{m+1}L^kK\delta
    <\delta.
\]
Together with $E_{\iadd}^{(r)}(h^{(1)})=0$, this shows that $h^{(1)}$ satisfies \eqref{eq:bounds-for-inductions} with $m+1$ in place of $m$.

We can therefore apply the one-coordinate adjustment once more, now with $\widetilde h=h^{(1)}$ and $i=\widetilde i$. Since $i_\delta\notin I$ by assumption, we have $\widetilde i\neq i_\delta$. Hence the bound provided by \eqref{eq:bounds-for-inductions} is sufficient to perform the one-coordinate adjustment. In this way, we obtain a vector $h^{(2)}$, defined from $h^{(1)}$ by changing only the $\widetilde i$-th component so that $E_{\widetilde i}^{(r)}(h^{(2)})=0$. Applying Proposition~\ref{prop:non-expans} again gives
\[
    |E_{\iadd}^{(r)}(h^{(2)})|\le L^{m+2}K\delta,
    \qquad
    \sup_j |E_j^{(r)}(h^{(2)})|
    \le \sum_{k=1}^{m+2}L^kK\delta
    <\delta.
\]

Iterating (alternating the correction of the $\iadd$-th and $\widetilde i$-th components), we construct a sequence $(h^{(k)})_{k\in\NN}$
contained in $[0, H]^{n-1}$ such that
\[
\sup_{i = \widetilde i, \iadd} |E_{i}^{(r)}(h^{(k)})| \le L^{m+k}K\delta.
\]
In particular, since $L<1$,
\[
E_i^{(r)}(h^{(k)})\to 0 \quad \text{for } i\in \{\iadd, \widetilde i\}.
\]
Because $[0,H]^{n-1}$ is compact, the sequence $(h^{(k)})_{k\in\NN}$ admits a subsequence converging to some $\widehat h\in[0,H]^{n-1}$.
By continuity of $E^{(r)}$, we have $E_i^{(r)}(\widehat h)=0$ for $i\in \{\iadd, \widetilde i\}$ and, by construction, $\widehat h_i=h_i$ for all $i\notin \{\iadd, \widetilde i\}$. Therefore, $(i)$ of Lemma~\ref{lemma:induction} holds.

Finally, applying Proposition~\ref{prop:non-expans} to $\widehat h$ and $h$,  with $I_{\mathrm{dom}}=\{\iadd, \widetilde i \}$ and $I_{\mathrm{cdom}}=\{\widetilde i \}$, gives
\[
\sup_{j=1,\dots,n-1} |E_j^{(r)}(\widehat h)|\le \sum_{k=1}^{m+1} L^k K\delta \le L\delta,
\]
which is $(ii)$ of Lemma~\ref{lemma:induction}.

\medskip
\noindent$\bullet$ \emph{Base case $s=2$ $\mathrm(1<\iadd=i_\delta<n-1\mathrm)$.}
Assume $\iadd=i_\delta$, $1<i_\delta<n-1$, and let $m \in \NN$ be such that \eqref{eq:bounds-for-inductions} holds.
Then, by assumption, $\{i_\delta-1,i_\delta+1\}\subseteq I$, and the smallest case is $I=\{i_\delta-1,i_\delta+1\}$.
In this situation, we need to ensure that the one--coordinate adjustment for the $i_\delta$-th component remains available along the iteration.
We proceed as follows.

Start from $h^{(0)}:=h$ and adjust the $i_\delta$-th component to obtain $h^{(1)}$ with $E_{i_\delta}^{(r)}(h^{(1)})=0$.
By Proposition~\ref{prop:non-expans}, the deviations at $i_\delta\pm 1$ satisfy
\[
|E_{i_\delta-1}^{(r)}(h^{(1)})|\le L^{m+1}K\delta,
\qquad
|E_{i_\delta+1}^{(r)}(h^{(1)})|\le L^{m+1}K\delta,
\qquad
\sup_j |E_j^{(r)}(h^{(1)})| \leq \sum_{k=1}^{m+1} L^k K\delta <\delta.
\]
Next adjust the $(i_\delta-1)$-th component to obtain $h^{(2)}$ with $E_{i_\delta-1}^{(r)}(h^{(2)})=0$.
If $E_{i_\delta}^{(r)}(h)\ge 0$, then by monotonicity of $f$ the adjustment of the $(i_\delta-1)$-th component preserves
$E_{i_\delta}^{(r)}(h^{(2)})\ge 0$, hence the $i_\delta$-th component can be adjusted again.
If instead $E_{i_\delta}^{(r)}(h)\le 0$, then $h^{(1)}_{i_\delta}-h_{i_\delta}\le 0$ and $h^{(2)}_{i_\delta-1}-h_{i_\delta-1}\ge 0$.
Since $f_{i_\delta-1}(h^{(2)})=f_{i_\delta-1}(h)$, Taylor's Theorem yields
\[
0=f_{i_\delta-1}(h^{(2)})-f_{i_\delta-1}(h)
= \frac{\partial f_{i_\delta-1}(\xi)}{\partial h_{i_\delta-1}} |h^{(2)}_{i_\delta-1}-h_{i_\delta-1}|
- \frac{\partial f_{i_\delta-1}(\xi)}{\partial h_{i_\delta}} |h^{(1)}_{i_\delta}-h_{i_\delta}|,
\]
for some $\xi = t h + (1-t)h^{(2)}$, $t\in [0,1]$.
By Proposition~\ref{prop:regulatiry_f}, $\left|\frac{\partial f_{i_\delta-1}(\xi)}{\partial h_{i_\delta-1}}\right| \geq
\left|\frac{\partial f_{i_\delta-1}(\xi)}{\partial h_{i_\delta}}\right|$, hence
\[
|h^{(2)}_{i_\delta-1}-h_{i_\delta-1}| \leq |h^{(1)}_{i_\delta}-h_{i_\delta}|.
\]
Applying Taylor's Theorem to $f_{i_\delta+1}$ and using again Proposition~\ref{prop:regulatiry_f}, we conclude that
\[
f_{i_\delta+1}(h^{(2)})-f_{i_\delta+1}(h)
= \frac{\partial f_{i_\delta+1}(\xi)}{\partial h_{i_\delta-1}} |h^{(2)}_{i_\delta-1}-h_{i_\delta-1}|
- \frac{\partial f_{i_\delta+1}(\xi)}{\partial h_{i_\delta}} |h^{(1)}_{i_\delta}-h_{i_\delta}| \ge 0,
\]
thus $E_{i_\delta+1}^{(r)}(h^{(2)})\ge 0$. In particular, the $i_\delta$-th component can be adjusted again (see Figure \ref{fig:proof-2}). This sign property is preserved at each subsequent step of the same construction. 
Therefore, we can repeat the same iterative procedure used in the base case $s=1$. More precisely, we construct a sequence $(h^{(k)})_{k\in\NN}\subseteq[0,H]^{n-1}$ by alternating the one-coordinate adjustment between the coordinates $i_\delta-1$ and $i_\delta$. As in the base case $s=1$, compactness allows us to extract a convergent subsequence, whose limit we denote by $\widetilde h^{(1)}\in[0,H]^{n-1}$. By continuity,
\[
E_j^{(r)}(\widetilde h^{(1)})=0, \qquad j=i_\delta-1,i_\delta.
\]
Moreover, applying Proposition~\ref{prop:non-expans} to $h$ and $\widetilde h^{(1)}$, with $I_{\mathrm{dom}}=\{i_\delta-1,i_\delta\}$ and  $I_{\mathrm{cdom}}=\{i_\delta-1\}$, we obtain
\[
|E_{i_\delta+1}^{(r)}(\widetilde h^{(1)})|\le L^{m+1}K\delta,
\qquad
\sup_{j=1,\dots,n-1}|E_j^{(r)}(\widetilde h^{(1)})|
\le \sum_{k=1}^{m+1}L^kK\delta
\le L\delta.
\]
We can now repeat the same argument, this time alternating the adjustment between the coordinates $i_\delta+1$ and $i_\delta-1$. Let $\widetilde h^{(2)}\in[0,H]^{n-1}$ be a limit point of the sequence obtained in this way. Then
\[
E_j^{(r)}(\widetilde h^{(2)})=0,
\qquad j=i_\delta-1,i_\delta+1.
\]
Applying Proposition~\ref{prop:non-expans} to $\widetilde h^{(1)}$ and $\widetilde h^{(2)}$, with $I_{\mathrm{dom}}=\{i_\delta-1,i_\delta+1\}$ and $I_{\mathrm{cdom}}=\{i_\delta-1\}$, gives
\[
|E_{i_\delta}^{(r)}(\widetilde h^{(2)})|\le L^{m+2}K\delta,
\qquad
\sup_{j=1,\dots,n-1}|E_j^{(r)}(\widetilde h^{(2)})|
\le \sum_{k=1}^{m+2}L^kK\delta
\le L\delta.
\]

\begin{figure}[H]
     \centering
          \begin{subfigure}{0.65\textwidth}
         \centering
         \includegraphics[width=\textwidth]{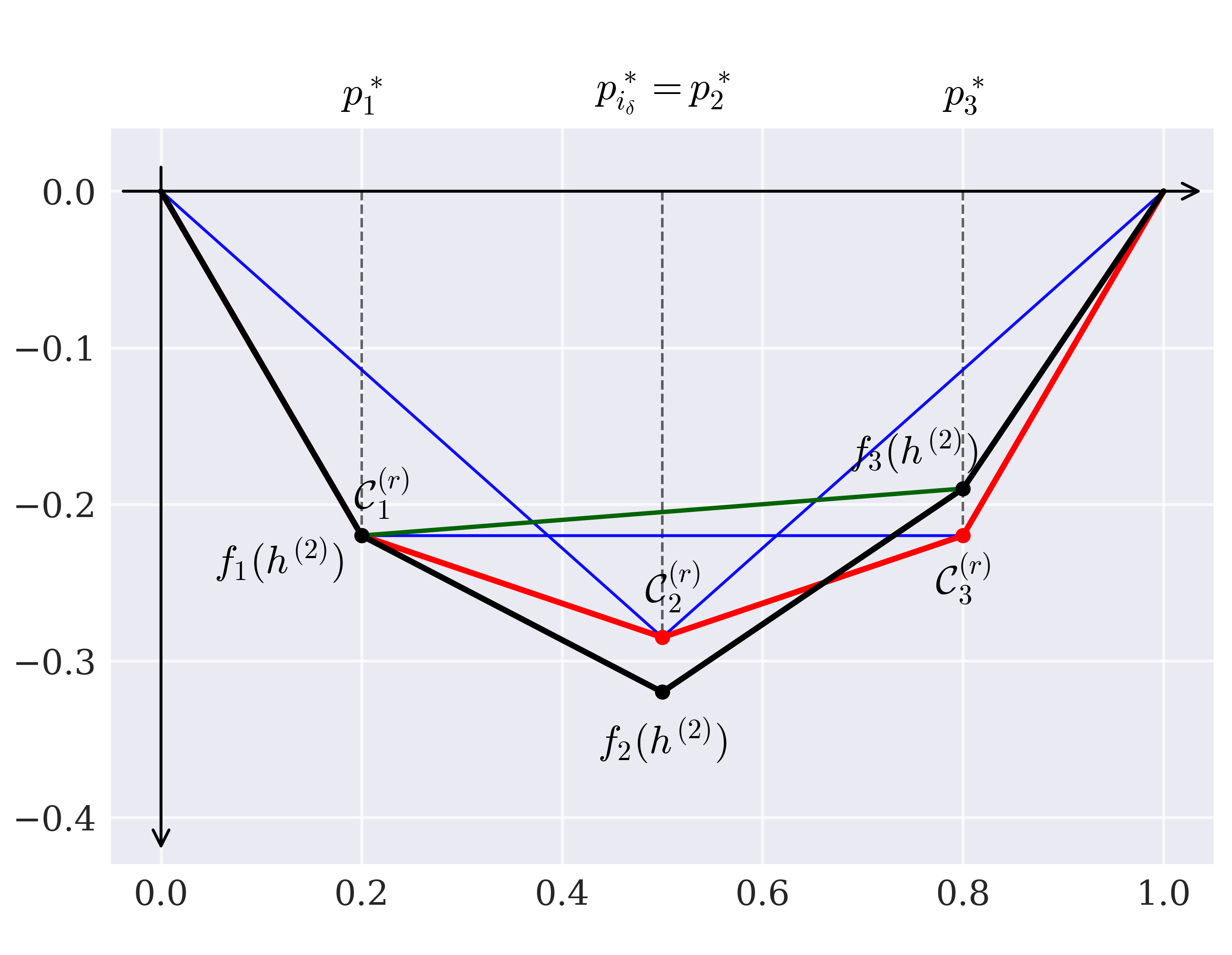}
     \end{subfigure}
	\caption{In this example, $\Ccal^{(r)}$ is shown in red and $i_\delta=2$. Since $E^{(r)}_{i_\delta+1}(h^{(2)}) \ge 0$, we also have $E^{(r)}_{i_\delta}(h') \ge 0$, where $h'$ is the vector obtained from $h^{(2)}$ by setting its $i_\delta$-th component equal to $0$. Hence, the $i_\delta$-th component can be adjusted again using the standard argument.}
	\label{fig:proof-2}
\end{figure}

Running the procedure alternately on the pairs $(i_\delta,i_\delta-1)$ and $(i_\delta-1,i_\delta+1)$, in the order 
\[
(i_\delta,i_\delta-1),\ (i_\delta-1,i_\delta+1),\ (i_\delta,i_\delta-1),\ (i_\delta-1,i_\delta+1),\ \dots,
\]
yields a sequence $(\widetilde h^{(k)})_{k\in\NN}\subseteq [0,H]^{n-1}$ such that $E_i^{(r)}(\widetilde h^{(k)})\to 0$ for all $i\in I\cup\{\iadd\}$. By compactness of $[0,H]^{n-1}$, there exists a convergent subsequence with limit $\widehat h\in[0,H]^{n-1}$, and by continuity
\[
E_{i_\delta}^{(r)}(\widehat h)=E_{i_\delta-1}^{(r)}(\widehat h)=E_{i_\delta+1}^{(r)}(\widehat h)=0.
\]
Finally, the estimate in \ref{it:induction_2} follows by applying Proposition~\ref{prop:non-expans} to $h$ and $\widehat h$, with $I_{\mathrm{dom}}=\{i_\delta-1,i_\delta,i_\delta+1\}$ and $I_{\mathrm{cdom}}=\{i_\delta-1,i_\delta+1\}$.

\medskip
\noindent$\bullet$ \emph{Induction step.}
Fix $s\in\{1,\dots,n-2\}$ and assume the statement holds for every $I'\subseteq\{1,\dots,n-1\}$ with $|I'|=s$ (with the same hypotheses on $i_\delta$ and, when relevant, on the neighbors of $\iadd=i_\delta$).
Let $I\subseteq\{1,\dots,n-1\}$  such that $|I|=s+1$, $i_\delta \notin I$ and fix $m \in \NN$ such that \eqref{eq:bounds-for-inductions} holds.
If $\iadd=i_\delta$ and $1<i_\delta<n-1$, choose $\widetilde i\in I\setminus\{i_\delta-1,i_\delta+1\}$; otherwise choose any $\widetilde i\in I$.
Set $\widetilde I:=I\setminus\{\widetilde i\}$, so $|\widetilde I|=s$ and $\widetilde I$ still satisfies the assumptions of the lemma.

Apply the induction hypothesis to the pair $(\widetilde I,\iadd)$ and the vector $h$.
We obtain $h^{(1)}\in [0,H]^{n-1}$ such that
\[
h^{(1)}_i=h_i\ \text{ for } i\notin \widetilde I\cup\{\iadd\},
\qquad
E_i^{(r)}(h^{(1)})=0\ \text{ for } i\in \widetilde I\cup\{\iadd\},
\qquad
\sup_j |E_j^{(r)}(h^{(1)})|\le \sum_{k=1}^{m+1} L^kK\delta.
\]
By Proposition~\ref{prop:non-expans} applied to $h$ and $h^{(1)}$ with $I_{\mathrm{dom}}=\widetilde I\cup\{\iadd\}$ and $I_{\mathrm{cdom}}=\widetilde I$, we also have $|E_{\;\widetilde i}^{(r)}(h^{(1)})|\le L^{m+1}K\delta$.
Now apply the induction hypothesis again, this time to the pair $(\widetilde I,\widetilde i)$ and the vector $h^{(1)}$ (note $\widetilde i\notin \widetilde I$ and $i_\delta\notin \widetilde I$),
to obtain $h^{(2)}\in [0,H]^{n-1}$ such that
\[
h^{(2)}_i=h_i\ \text{ for } i\notin I,
\qquad
E_i^{(r)}(h^{(2)})=0\ \text{ for } i\in I,
\qquad
\sup_j |E_j^{(r)}(h^{(2)})|\le \sum_{k=1}^{m+2} L^kK\delta.
\]
Again, Proposition~\ref{prop:non-expans}  applied to $h^{(1)}$ and $h^{(2)}$ with $I_{\mathrm{dom}}=I$ and $I_{\mathrm{cdom}}=\widetilde I$, gives $|E_{\iadd}^{(r)}(h^{(2)})|\le L^{m+2}K\delta$.
Iterating these two steps alternately produces a sequence $(h^{(k)})_{k\in\NN}\subseteq [0,H]^{n-1}$
for which $h^{(k)}_i=h_i$ for $i\notin I\cup\{\iadd\}$ and
\[
\sup_{i \in I\cup\{\iadd\}}|E_{i}^{(r)}(h^{(k)})|\le L^{m+k}K\delta.
\]
Since $L<1$, it follows that $E_i^{(r)}(h^{(k)})\to 0$ for every $i\in I\cup\{\iadd\}$.
By compactness of $[0,H]^{n-1}$, the sequence $(h^{(k)})_{k\in\NN}$ admits a convergent subsequence with limit $\widehat h\in[0,H]^{n-1}$.
By continuity, $E_i^{(r)}(\widehat h)=0$ for all $i\in I\cup\{\iadd\}$, and \ref{it:induction_1} holds by construction.
Finally, applying Proposition~\ref{prop:non-expans} to $\widehat h$ and $h$ gives \ref{it:induction_2}, completing the induction.
\end{proof}
\medskip

\begin{proof}[Proof of  Theorem~\ref{thm:induction_thm}]
We first prove the statement under the additional Assumption~\ref{ass:technical-assumptions}, arguing by induction on $s:=|J|$.  The induction step repeatedly uses Lemma~\ref{lemma:induction}. We recall that this lemma relies on Assumption~\ref{ass:technical-assumptions}, under which the required regularity properties of $f$ were established. These properties enter in two ways. First, they are used to prove Proposition~\ref{prop:non-expans}, which is a key ingredient in the proof of Lemma~\ref{lemma:induction}. Second, they ensure that the constant $L\in(0,1)$ can be chosen with the required properties.
In this first part we therefore construct $\widehat h\in D$ satisfying \eqref{eq:theoremClaim} within that setting. Once this is done, we remove Assumption~\ref{ass:technical-assumptions} and we choose suitable sequences $(\nu_k,q_k)$ satisfying Assumption~\ref{ass:technical-assumptions} and converging to $(\nu,q)$, obtain $\widehat h^{(k)}$ satisfying \eqref{eq:theoremClaim} from the first part, and then pass to the limit to produce a vector $\widehat h$ solving \eqref{eq:theoremClaim} for the original problem.

\medskip
\noindent \emph{I. Proof by induction under Assumption~\ref{ass:technical-assumptions}.}\\
\noindent$\bullet$ \emph{Base case $s=1$.}
Let $J=\{\widetilde i\}$. Consider the one--parameter family
\[
h(t):=(0,\dots,0,t,0,\dots,0)\in\RR^{n-1}_{\ge 0}, \qquad t\ge 0,
\]
where $t$ sits in the $\widetilde i$-th coordinate. By continuity and monotonicity of $f_{\;\widetilde i}$ in the $\widetilde i$-th coordinate, the map $t\mapsto f_{\;\widetilde i}(h(t))$ is continuous and monotone. Moreover,
\[
f_{\;\widetilde i}(h(0))=f_{\;\widetilde i}(0)=0,
\qquad
\lim_{t\to\infty} f_{\;\widetilde i}(h(t)) = U_\nu(p_{\;\widetilde i}^*),
\]
by Proposition~\ref{prop:mainThm-n_atoms-part1}. Therefore, by the intermediate value theorem, there exists $t_{\;\widetilde i}\in(0,\diam(\supp(q)))$ such that
$f_{\;\widetilde i}(h(t_{\;\widetilde i}))=\Ccal_{\;\widetilde i}$. Setting $\widehat h:=h(t_{\;\widetilde i})$ yields \eqref{eq:theoremClaim}.

\medskip
\noindent$\bullet$ \emph{Induction step.}
Assume that the statement holds for every nonempty subset $J'\subseteq\{1,\dots,n-1\}$ with $|J'|=s$, and let $J\subseteq\{1,\dots,n-1\}$ satisfy $|J|=s+1$.
By Remark \ref{rmk:restrictions}, restricting $f$ to the coordinate subspace
\[
\{h\in\RR^{n-1}_{\ge 0}:h_i=0\text{ for }i\notin J\}\cong \RR^J_{\ge 0},
\]
and keeping only the components indexed by $J$, yields a map $f_J:\RR^J_{\ge 0}\to\RR^J$. Denote by $\Ccal_J$ the convex polygonal chain with vertices $(0,0), \{(p_i^*,\Ccal_i), i\in J\}, (1,0)$. With this identification, it is enough to find $\widehat h_J\in\RR^J_{\ge 0}$ such that
\[
f_J(\widehat h_J)=\Ccal_J,
\]
and then extend $\widehat h_J$ by $0$ outside $J$.

Let $\delta$ be the minimal diagonal gap of $\Ccal_J$, and let $i_\delta$ be a minimal-gap index with respect to $\Ccal_J$. Set $J':=J\setminus\{i_\delta\}$. Then $|J'|=s$. By the induction hypothesis applied to $J'$, there exists $h_J^{(0)}\in [0,\diam(\supp(q)))^J$ such that
\[
(f_J)_i(h_J^{(0)})=(\Ccal_J)_i,\qquad i\in J',\qquad\text{and}\qquad (h_J^{(0)})_{i_\delta}=0.
\]
Let $\Ccal_J^{(0)}$ be the polygonal obtained from $\Ccal_J$ by removing the vertex indexed by $i_\delta$, as in the definition of the $L$-approximation. Since the only removed coordinate is $i_\delta$, we have $(\Ccal_J^{(0)})_i=(\Ccal_J)_i$, for $i\in J'$.
Moreover, by the collinearity relation \eqref{eq:colinearity} and the fact that $(h_J^{(0)})_{i_\delta}=0$, we also have $(f_J)_{i_\delta}(h_J^{(0)})=(\Ccal_J^{(0)})_{i_\delta}$. Hence
\[
(f_J)_i(h_J^{(0)})=(\Ccal_J^{(0)})_i,\qquad i\in J.
\]

Set
\[
H:=\max\left(\max_{i\in J}\widetilde f_i^{-1}(\Ccal_i),\|h_J^{(0)}\|_\infty\right),
\]
where $\widetilde f$ is the map defined in \eqref{eq:bound-map-q-Bass}. By Lemma~\ref{adjLemma}, the quantity
\[
L:=\max_{\substack{\xi \in [0,H]^{n-1}\\ \Jcal \subseteq \{1,\dots,n-1\}\\ i,j \in \Jcal,\ i\neq j}}
\left|\frac{\adj(Jf(\xi)_{\Jcal,\Jcal})_{ij}}{\adj(Jf(\xi)_{\Jcal,\Jcal})_{ii}}\right|
\]
belongs to $(0,1)$. We set $K:=1-L$.

We now verify that the hypotheses of Lemma~\ref{lemma:induction} are satisfied in the restricted problem. We apply the lemma to the map $f_J$, with $I=J'$ and $\iadd=i_\delta$. Thus $I$ is nonempty and proper in $J$, $\iadd\notin I$, and, by construction, $i_\delta\notin I$.
The choice of $H$ gives the required a priori bound for the initial point $h_J^{(0)}$. Moreover, since every point produced by Lemma~\ref{lemma:induction} belongs to $[0,H]^J$, the same value of $H$ can be used at each step. The adjugate-ratio estimate required in Lemma~\ref{lemma:induction} is precisely ensured by the definition of $L$.

It remains to check the compatibility condition \eqref{eq:tech-implications} when the added index is the minimal-gap index. In the restricted polygonal chain $\Ccal_J$, after relabelling the indices of $J$ increasingly, the neighbours of $i_\delta$ are exactly the adjacent vertices to $(p_{i_\delta}^*,(\Ccal_J)_{i_\delta})$. Since $J'=J\setminus\{i_\delta\}$, all these neighbouring indices belong to $J'$. Hence the analogue of \eqref{eq:tech-implications} holds in the restricted coordinate system.

For every $r\in\NN$, let $\Ccal_J^{(r)}$ denote the $L$-approximation of $\Ccal_J$ of order $r$ along $i_\delta$. We prove by induction on $r$ that there exists $h_J^{(r)}\in[0,H]^J$ such that
\[
(f_J)_i(h_J^{(r)})=(\Ccal_J^{(r)})_i,\qquad i\in J.
\]
The case $r=0$ has just been established. Assume now that $h_J^{(r)}\in[0,H]^J$ has been constructed and satisfies $(f_J)_i(h_J^{(r)})=(\Ccal_J^{(r)})_i$, for all $i\in J$. We want to apply Lemma~\ref{lemma:induction} to the error map associated to $\Ccal_J^{(r+1)}$, with $I=J'$, $\iadd=i_\delta$ and $m=r$, thus we are going to show that all necessary conditions are satisfied. Since $\Ccal_J^{(r+1)}$ differs from $\Ccal_J^{(r)}$ only in the $i_\delta$-th component, we have
\[
|E_{i_\delta}^{(r+1)}(h_J^{(r)})|=|(\Ccal_J^{(r)})_{i_\delta}-(\Ccal_J^{(r+1)})_{i_\delta}|=L^rK\delta, \qquad \text{and}\qquad E_i^{(r+1)}(h_J^{(r)})=0,\quad i\in J'.
\]
Therefore, $E_{i_\delta}^{(r+1)}(h_J^{(r)})\le L^rK\delta$ and, since all other errors vanish,
\[
\sup_{\substack{i\in J\\ i\neq i_\delta}} |E_i^{(r+1)}(h_J^{(r)})|=0\le \sum_{k=1}^{r}L^kK\delta\le L\delta.
\]
Thus all the hypotheses of Lemma~\ref{lemma:induction} are satisfied. Hence there exists $h_J^{(r+1)}\in[0,H]^J$ such that
\[
(f_J)_i(h_J^{(r+1)})=(\Ccal_J^{(r+1)})_i,\qquad i\in J.
\]
This completes the induction on $r$.

By Remark \ref{rmk:approx}, we have
\[
\lim_{r\to\infty}(\Ccal_J^{(r)})_i=(\Ccal_J)_i,\qquad i\in J.
\]
Consequently,
\[
\lim_{r\to\infty}(f_J)_i(h_J^{(r)})=(\Ccal_J)_i,\qquad i\in J.
\]
Since $[0,H]^J$ is compact, there exists a subsequence $h_J^{(r_\ell)}$ converging to some $\widehat h_J\in[0,H]^J$. By continuity of $f_J$,
\[
(f_J)_i(\widehat h_J)=\lim_{\ell\to\infty}(f_J)_i(h_J^{(r_\ell)})=(\Ccal_J)_i,\qquad i\in J.
\]
Finally, define $\widehat h\in\RR^{n-1}_{\ge 0}$ by
\[
\widehat h_i=(\widehat h_J)_i \quad\text{for } i\in J,\qquad \widehat h_i=0 \quad\text{for } i\notin J.
\]
Then \eqref{eq:theoremClaim} holds.

\medskip

\noindent \emph{II. Proof by approximation argument in the general case.} We now drop the extra Assumption~\ref{ass:technical-assumptions}.
We denote by $\mu \in \Prob_1(\RR)$ the distribution whose integrated quantile function has graph equal to the polygonal chain $\Ccal$. In particular, the pair $(\mu,\nu)$ is irreducible.
By Propositions \ref{prop:approx_nu_1} and \ref{prop:approx_q_1}, there exist two sequences $(\nu_k)_{k \in \NN}$ and $(q_k)_{k \in \NN}$ in $\Prob_1(\RR)$ such that, for every $k \in \NN$,
\begin{enumerate}[label=(\roman*)]
	\item $\nu_k$ and $q_k$ are absolutely continuous,
	\item $\supp(\nu_k)$ and $\supp(q_k)$ are bounded intervals,
	\item $\rho_{\nu_k}$ is bounded away from $0$ on $\supp(\nu_k)$ and $\rho_{q_k} \in L^\infty(\RR)$,
	\item $(\mu,\nu_k)$ is irreducible,
	\item $Q_{\nu_k} \to Q_\nu$ pointwise on $(0,1)$,
	\item $\rho_{q_k} \to \rho_q$ in $L^1(\RR)$,
	\item there exists $Q \in L^1(0,1)$ such that
	\[
	|Q_{\nu_k}(p)| \leq Q(p), \qquad \text{for all } p \in (0,1).
	\]
\end{enumerate}
These properties imply that $\nu_k$ and $q_k$ satisfy Assumption~\ref{ass:technical-assumptions} for every $k \in \NN$. In particular, $\nu_k, q_k \in \Prob_\infty(\RR)$ because they are compactly supported. Moreover, $Q_{\nu_k}' \in L^\infty(0,1)$, since $\rho_{\nu_k}$ is bounded away from $0$ on its support.

Now, let
\[
f^{\nu_k,q_k}:\RR_{\geq 0}^{n-1} \to \RR^{n-1}
\]
be the $q_k$-Bass map with final marginal $\nu_k$, namely
\[
f^{\nu_k,q_k}_i(h_1,\dots,h_{n-1})
=
\int_\RR
Q_{\nu_k}\!\left(\sum_{j=1}^{n} p_j F_{q_k}\!\left(z-\sum_{\ell=1}^{j-1} h_\ell\right)\right)
\sum_{j=1}^{i} p_j \rho_{q_k}\!\left(z-\sum_{\ell=1}^{j-1} h_\ell\right)\, dz,
\]
for every $i \in \{1,\dots,n-1\}$.
Since $(\mu,\nu_k)$ is irreducible, for every $k \in \NN$ we can apply Theorem~\ref{thm:induction_thm} already proved under Assumption~\ref{ass:technical-assumptions}. Therefore, there exists $h^{(k)} \in \RR_{\geq 0}^{n-1}$ such that
\[
f^{\nu_k,q_k}_i(h^{(k)})=\Ccal_i
\quad \text{for all } i \in J, \qquad \text{and} \qquad h^{(k)}_i = 0 \quad \text{for all } i \not\in J.
\]

Next, we prove that the sequence $(h^{(k)})_{k\in\NN}$ is bounded. Suppose, by contradiction, that it is unbounded. Then, by Proposition~\ref{prop:q-Bass-map-stability} there exist an index $\widehat i\in\{1,\dots,n-1\}$ and a subsequence $(h^{(k_n)})_{n\in\NN}$ such that
\[
f^{\nu_{k_n},q_{k_n}}_{\widehat i}(h^{(k_n)}) \rightarrow U_\nu(p_{\widehat i}^*)<\Ccal_{\widehat i},
\]
which contradicts
\[
f^{\nu_{k_n},q_{k_n}}_{\widehat i}(h^{(k_n)})=\Ccal_{\widehat i}
\qquad\text{for every } n\in\NN.
\]
Hence $(h^{(k)})_{k\in\NN}$ is bounded. Therefore, we can invoke Proposition~\ref{prop:q-Bass-map-stability} one more time to extract a converging subsequence $\widehat h\in\RR_{\geq 0}^{n-1}$ such that $h^{(k_n)}\to \widehat h \in \RR_{\geq 0}^{n-1}$ and
\[
	f_i(\widehat h) = \lim_{n\to\infty} f_i^{\nu_{k_n},q_{k_n}}(h^{(k_n)}) = \Ccal_i, \quad \text{for all } i \in J, \qquad \text{and} \qquad \widehat h_i = 0 \quad \text{for all } i \not\in J,
\]
which concludes the proof.
\end{proof}

\appendix
\section{Postponed proofs and auxiliary results}\label{app.post}
This section contains some postponed proofs, as well as auxiliary results.
We start by providing the proof of two results stated in the Introduction: the well-posedness of the map $q\star T$ in the $q$-Bass martingale, and the new characterizations of convex order and irreducibility.

\begin{proof}[Proof of Proposition~\ref{prop:q-star-T-well-posed}]
Since the function $x \mapsto |Q_\nu(F_{q \ast \alpha}(x))|$ 
is measurable and non-negative, we define $I:\RR \rightarrow [0,\infty]$
\[
	I(x) = \int_\RR |Q_\nu(F_{q \ast \alpha}(x+z))| \rho_q(z)\, dz.
\]		
Now, note that 
\begin{align*}
	\int_\RR I(x) \alpha(dx) & = \int_0^1 I(Q_\alpha(u)) du = \int_0^1 \int_\RR |Q_\nu(F_{\alpha \ast q}(Q_\alpha(u)+z))| \rho_q(z)\, dz \, du\\
	& = \int_0^1 \int_\RR \left|Q_\nu\left(\int_0^1 F_q(z+Q_\alpha(u)-Q_\alpha(v))\, dv\right)\right| \rho_q(z)\, dz \, du \\
	& =  \int_\RR \left|Q_\nu\left(\int_0^1 F_q(z-Q_\alpha(v))\, dv\right)\right| \int_0^1 \rho_q(z-Q_\alpha(u))\, du \, dz = \int_0^1 |Q_\nu(u)| du <\infty,
\end{align*}
since $\nu \in \Prob_1(\RR)$. Then $I(x)<\infty$ $\alpha$-a.e.
\end{proof}
\medskip

\begin{proof}[Proof of Proposition~\ref{prop:convex_order_char}]
\emph{Proof of (i).}\    
    Let $p\in(0,1)$. We have
	\[
		|U_\eta(p)|
		=
		\left|\int_0^p Q_\eta(u)\,du\right|
		\leq
		\int_0^p |Q_\eta(u)|\,du
		\leq
		\int_0^1 |Q_\eta(u)|\,du
		<\infty.
	\]
	Therefore $U_\eta$ is well-defined in $[0,1]$. Since $Q_\eta$ is
	non-decreasing, $U_\eta$ is convex. Moreover,
	\[
		U_\eta(0)=0,
		\qquad
		U_\eta(1)=\int_0^1 Q_\eta(u)\,du=\mean(\eta).
	\]

\noindent\emph{Proof of (ii).}\ By \cite[Theorem 3.A.5]{ShSh07}, for
	$\eta,\eta'\in\Prob_1(\RR)$ one has $\eta\preceq_c\eta'$ if and only if
	\[
		\mean(\eta)=\mean(\eta')
	\]
	and
	\[
		\int_0^p Q_\eta(u)\,du
		\geq
		\int_0^p Q_{\eta'}(u)\,du,
		\qquad p\in(0,1).
	\]
	By (i), $\mean(\eta)=U_\eta(1)$ and
	$\mean(\eta')=U_{\eta'}(1)$. Hence the previous condition is equivalent
	to
	\[
		U_\eta(1)=U_{\eta'}(1),
		\qquad
		U_\eta(p)\geq U_{\eta'}(p)
		\quad\text{for all }p\in(0,1).
	\]

\noindent\emph{Proof of (iii).}\ We use the call-function characterization recalled in Remark \ref{rmk:convex-order}, so that, if $\eta\preceq_c\eta'$, then $D(x):=C_{\eta'}(x)-C_\eta(x)\geq 0$ for all $x \in \RR$. We also use the following elementary duality. For
	$\rho\in\Prob_1(\RR)$ and $q\in(0,1)$, set $\overline U_\rho(q):=\int_{1-q}^1 Q_\rho(u)\,du$. Then
	\[
		\overline U_\rho(q)
		=
		\inf_{x\in\RR}\{C_\rho(x)+qx\},
	\]
	and the minimizers are precisely the $(1-q)$-quantiles of $\rho$.
	Therefore,
	\[
		\overline U_\rho(q)=\mean(\rho)-U_\rho(1-q).
	\]

	Assume first that $(\eta,\eta')$ is irreducible. By definition,
	$\eta\preceq_c\eta'$, and
	\[
		I:=\{x\in\RR:C_\eta(x)<C_{\eta'}(x)\}
		=
		\{x\in\RR:D(x)>0\}
	\]
	is an interval such that $\eta(I)=1$. By (ii), $U_\eta(1)=U_{\eta'}(1)$, and	
	$U_\eta(p)\geq U_{\eta'}(p)$
	for $p\in(0,1)$.
	Now, let $p\in(0,1)$ and set $q:=1-p$. Suppose by contradiction that $U_\eta(p)=U_{\eta'}(p)$. Since the means are equal, this is equivalent to $\overline U_\eta(q)=\overline U_{\eta'}(q)$. 
	
	Let $x$ be a minimizer of $z\mapsto C_{\eta'}(z)+qz$. Then, using $C_\eta\leq C_{\eta'}$,
	\[
		\overline U_\eta(q)
		\leq
		C_\eta(x)+qx
		\leq
		C_{\eta'}(x)+qx
		=
		\overline U_{\eta'}(q).
	\]
	All inequalities are therefore equalities. Hence $C_\eta(x)=C_{\eta'}(x)$, and $x$ is also a minimizer of $z\mapsto C_\eta(z)+qz$. Thus $x$ is a $p$-quantile of $\eta$. Since $p\in(0,1)$ and $\eta(I)=1$, every such quantile belongs to $I$. Therefore $C_\eta(x)<C_{\eta'}(x)$, which contradicts $C_\eta(x)=C_{\eta'}(x)$. Hence $U_\eta(p)>U_{\eta'}(p)$, for all $p \in (0,1)$.

	Conversely, assume that
	\[
		U_\eta(1)=U_{\eta'}(1), \qquad U_\eta(p)>U_{\eta'}(p) \quad\text{for all }p\in(0,1).
	\]
	By (ii), $\eta\preceq_c\eta'$, and hence $D\geq0$. Let $I:=\{x\in\RR:D(x)>0\}$. We prove that $\eta(I)=1$ and that $I$ is an interval. Let $x$ be a $p$-quantile of $\eta$ for some $p\in(0,1)$, and set $q:=1-p$. If $D(x)=0$, then by the dual formula,
	\[
		\overline U_\eta(q) = C_\eta(x)+qx = C_{\eta'}(x)+qx \geq \overline U_{\eta'}(q).
	\]
	On the other hand, since $C_\eta\leq C_{\eta'}$, $\overline U_\eta(q)\leq \overline U_{\eta'}(q)$. Therefore $\overline U_\eta(q)=\overline U_{\eta'}(q)$ and, since the means are equal, $U_\eta(p)=U_{\eta'}(p)$ contradicting the assumed strict inequality. Hence $D(x)>0$ for every non-trivial quantile $x$ of $\eta$. It follows in particular that $\eta(I)=1$.

	It remains to observe that $I$ is an interval. Let
	\[
		J:=\{x\in\RR:\exists p\in(0,1) \text{ such that }x\text{ is a }p\text{-quantile of }\eta\}.
	\]
	Then $J$ is an interval, $\eta(J)=1$, and the previous argument gives $J\subseteq I$. Moreover, $D$ is locally absolutely continuous and, at $\lambda$-a.e.
	point,
	\[
		D'(x)=F_{\eta'}(x)-F_\eta(x).
	\]
	On the left of $J$ we have $F_\eta=0$, hence $D'\geq0$ a.e. Since $D\geq0$, the set $\{D>0\}$ on the left of $J$ can only be an interval attached to $J$. Similarly, on the right of $J$ we have $F_\eta=1$, hence $D'\leq0$ a.e. and $\{D>0\}$ on the right of $J$ can only be an interval attached to $J$. Since $J\subseteq I=\{D>0\}$, no further connected component of $I$ can occur away from $J$. Hence $I$ is an interval. Moreover, $\eta(I)=1$ because $\eta(J)=1$ and $J\subseteq I$. Thus $(\eta,\eta')$ is irreducible.
\end{proof}
To show regularity of the potential function, we will use the following result.
\begin{lemma}[Differentiation under the integral sign]
\label{prop:interchange}
	Let $X\subseteq \RR$ be an open set, and $\Omega$ be a measure space. Suppose that a measurable function $f : X \times \Omega \rightarrow \RR$ satisfies the following conditions:
	\begin{enumerate}[label=(\roman*)]
		\item for any $x \in X$, $\int_\Omega |f(x, \omega)| d\omega<\infty$;
		\item for any $\omega \in \Omega$, $f(x, \omega)$ is an absolutely continuous function of $x$;
		\item one of the following holds:
			\begin{enumerate}
				\item $\int_a^b \int_\Omega \left | \frac{\partial }{\partial x}f(x,\omega) \right| d\omega dx < \infty$ for all $a, b \in X$, $a<b$, 
				\item $\frac{\partial }{\partial x}f(x,\omega) \geq 0$ for all $(x, \omega) \in X \times \Omega$.
			\end{enumerate}
	\end{enumerate}
	Then the map $x \mapsto \int_\Omega f(x, \omega) dw$ is  absolutely continuous and, for almost every $x \in X$, its derivative exists and is given by
	\begin{equation}
		\label{interchange}
		\frac{d}{dx}\int f(x, \omega) d\omega = \int_\Omega \frac{\partial}{\partial x} f(x, \omega) d\omega.
	\end{equation}
\end{lemma}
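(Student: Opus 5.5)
The plan is to deduce the statement from the Lebesgue differentiation theorem for absolutely continuous functions, combined with Fubini--Tonelli. Fix $a<b$ with $[a,b]\subseteq X$; it suffices to work on such intervals, since $X$ is open and hence a countable union of intervals. For every $\omega$, condition (ii) gives
\[
f(x,\omega)-f(a,\omega)=\int_a^x \partial_s f(s,\omega)\,ds, \qquad x\in[a,b].
\]
Integrating in $\omega$ (both sides are integrable by (i)) yields
\[
\int_\Omega f(x,\omega)\,d\omega-\int_\Omega f(a,\omega)\,d\omega=\int_\Omega\int_a^x \partial_s f(s,\omega)\,ds\,d\omega .
\]

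The main step is to swap the order of integration on the right. Under (iii)(a), $\partial_s f$ is absolutely integrable on $[a,x]\times\Omega$, so Fubini applies directly. Under (iii)(b), the integrand is non-negative, so Tonelli applies without any integrability hypothesis. The left-hand side is finite by (i), so the iterated integral is finite as well. Consequently $G(s):=\int_\Omega \partial_s f(s,\omega)\,d\omega$ belongs to $L^1(a,b)$, and
\[
F(x):=\int_\Omega f(x,\omega)\,d\omega=F(a)+\int_a^x G(s)\,ds .
\]
One also needs measurability of $(s,\omega)\mapsto\partial_s f(s,\omega)$. I would obtain it by writing the derivative as a limit of measurable difference quotients along $1/k$, and defining it as $0$ where the limit fails to exist.

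From the integral representation, $F$ is absolutely continuous on $[a,b]$. By the Lebesgue differentiation theorem, $F'=G$ almost everywhere on $[a,b]$, which is exactly \eqref{interchange}. Covering $X$ by countably many such intervals gives the conclusion almost everywhere on $X$.

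The only delicate point is case (b), where integrability of $G$ is not assumed but must be extracted from (i) via the Tonelli identity; this is the step I expect to require care. I would also make sure $\Omega$ is $\sigma$-finite so that Tonelli applies; this holds in the applications here, since $\Omega=\RR$ with Lebesgue measure.
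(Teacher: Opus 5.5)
Your proposal is correct and follows essentially the same route as the paper: apply the fundamental theorem of calculus to each $f(\cdot,\omega)$, swap the integrals by Fubini under (iii)(a) or Tonelli under (iii)(b), and read off absolute continuity and the a.e.\ derivative from the resulting integral representation. You also make explicit several points the paper leaves implicit: joint measurability of $\partial_x f$, $\sigma$-finiteness of $\Omega$, finiteness of $\int_\Omega \partial_x f\,d\omega$ in case (b) via (i), and the final appeal to Lebesgue's differentiation theorem.
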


\begin{proof}
	For any $\omega \in \Omega$, we have
	\[
		f(b, \omega) - f(a, \omega) = \int_a^b \frac{\partial}{\partial x }f(x, \omega) dx.
	\]
Therefore, condition (iii)(a) allows us to apply Fubini's theorem, while condition (iii)(b) allows us to apply Tonelli's theorem. In either case, we conclude that
\[
	\int_{\Omega}f(b, \omega)d\omega - \int_{\Omega}f(a, \omega) d\omega = \int_{\Omega}\int_a^b \frac{\partial}{\partial x }f(x, \omega) dx d\omega = \int_a^b\int_{\Omega}\frac{\partial}{\partial x }f(x, \omega) d\omega dx.
	\]
	Therefore, the function $x \mapsto \int_\Omega f(x, \omega)$ is absolutely continuous and \eqref{interchange} holds a.e.
\end{proof}
\medskip

\begin{proof}[Proof of Proposition
\ref{prop:regulatiry_f}]

Fix $h\in D$. Showing that $V(h)$ is well-defined amounts to proving that $z\mapsto v(h,z)=U_\nu(g(h,z))$ is integrable. Set
\[
s_j:=\sum_{k=1}^{j-1}h_k, \qquad j=1,\ldots,n, \qquad\text{and}\qquad \overline h:=\sum_{k=1}^{n-1}h_k.
\]
Since $h\in D\subseteq\RR_{\geq 0}^{n-1}$, one has $0\leq s_j\leq\overline h$ for all $j=1,\dots,n$. Since $\mean(\nu)=0$, one has $\int_0^1 Q_\nu(u)\,du=0$ and hence
\[
U_\nu(p)=\int_0^p Q_\nu(u)\,du = -\int_p^1 Q_\nu(u)\,du,
\qquad p\in[0,1].
\]
Let $a>1$ be as in \ref{ass:A3}. By Hölder's inequality, for every $p\in[0,1]$,
\begin{align*}
|U_\nu(p)|
&=\left|\int_0^p Q_\nu(u)\,du\right|
\le \|Q_\nu\|_{L^a(0,1)}\,\|\mathbf 1_{[0,p]}\|_{L^{\frac{a}{a-1}}(0,1)}
= \|Q_\nu\|_{L^a(0,1)}\,p^{\frac{a-1}{a}},\\
|U_\nu(p)|
&=\left|\int_p^1 Q_\nu(u)\,du\right|
\le \|Q_\nu\|_{L^a(0,1)}\,\|\mathbf 1_{[p,1]}\|_{L^{\frac{a}{a-1}}(0,1)}
= \|Q_\nu\|_{L^a(0,1)}\,(1-p)^{\frac{a-1}{a}}.
\end{align*}
Consequently,
\begin{equation}
|U_\nu(p)| \leq \|Q_\nu\|_{L^a(0,1)} \min\bigl\{p^{\frac{a}{a-1}},(1-p)^{\frac{a}{a-1}}\bigr\}, \qquad p\in[0,1]. \label{eq:tails-estim-integrated}
\end{equation}

Since $F_q$ is non-decreasing and $0\leq s_j\leq\overline h$, we have $g(h,z) = \sum_{j=1}^n p_jF_q(z-s_j) \leq F_q(z)$ and $g(h,z) \geq F_q(z-\overline h)$. Thus, it follows from \eqref{eq:tails-estim-integrated} that
\[
|U_\nu(g(h,z))| \leq \|Q_\nu\|_{L^a(0,1)}
\begin{cases}
F_q(z)^{\frac{a}{a-1}}, & z\leq -1,\\ 1, & -1<z<\overline h+1,\\ \bigl(1-F_q(z-\overline h)\bigr)^{\frac{a}{a-1}}, & z\geq\overline h+1.
\end{cases}
\]
Therefore, by the change of variables $w=z-\overline h$ in the
last integral,
\[
\int_{\RR}|U_\nu(g(h,z))|\,dz \leq \|Q_\nu\|_{L^a(0,1)} \Bigg( \overline h+2 +\int_{-\infty}^{-1}F_q(z)^{\frac{a}{a-1}}\,dz +\int_1^\infty\bigl(1-F_q(w)\bigr)^{\frac{a}{a-1}}\,dw \Bigg).
\]

Let $b$ be as in \ref{ass:A3}, and let $X\sim q$. For $z>1$, Markov's inequality applied to the non-negative random variable $|X|^b$ gives
\[
1-F_q(z)=\PP(X>z)\le \PP(|X|>z)\le \frac{\EE[|X|^b]}{z^b}
=\frac{\|Q_q\|_{L^b(0,1)}^b}{z^b}.
\]
For $z<-1$, set $t:=|z|>1$ and note that $\{X\le z\}\subseteq \{|X|\ge t\}$. Hence, again by Markov's inequality,
\[
F_q(z)=\PP(X\le z)\le \PP(|X|\ge t)\le \frac{\EE[|X|^b]}{t^b}
=\frac{\|Q_q\|_{L^b(0,1)}^b}{|z|^b}.
\]
Therefore,
\[
\int_{1}^{\infty} (1-F_q(z))^{\frac{a-1}{a}}\,dz
+ \int_{-\infty}^{-1} (F_q(z))^{\frac{a-1}{a}}\,dz
\le 2\|Q_q\|_{L^b(0,1)}^{\frac{b(a-1)}{a}}
\int_{1}^{\infty} z^{-\frac{b(a-1)}{a}}\,dz<\infty,
\]
since $b>\frac{a}{a-1}$. This proves that $V$ is well-defined on $D$.

Fix $1\le r\le n-1$ and set $H(h):=\sum_{k=1}^{n-1} h_k$. By the change of variables $z\mapsto z+H(h)$,
\[
V(h)=\int_\RR U_\nu(g(h,z+H(h)))\,dz.
\]
For each fixed $z\in\RR$ and fixed $(h_i)_{i\neq r}$, the map $h_r\mapsto U_\nu(g(h,z+H(h)))$ is absolutely
continuous and is differentiable for a.e. $h_r$. Using the chain rule and $\partial_{h_r}H(h)=1$,
\begin{align*}
\frac{\partial}{\partial h_r}U_\nu(g(h,z+H(h)))
&= Q_\nu(g(h,z+H(h)))\left(\frac{\partial}{\partial h_r}g(h,z+H(h))+\frac{\partial}{\partial z}g(h,z+H(h))\right).
\end{align*}
Since
\[
g(h,w)=\sum_{j=1}^n p_j F_q\left(w-\sum_{k=1}^{j-1}h_k\right),
\]
one has
\[
\frac{\partial}{\partial z}g(h,w)=\sum_{j=1}^n p_j\,\rho_q\left(w-\sum_{k=1}^{j-1}h_k\right),
\qquad
\frac{\partial}{\partial h_r}g(h,w)=-\sum_{j=r+1}^n p_j\,\rho_q\left(w-\sum_{k=1}^{j-1}h_k\right),
\]
and therefore
\[
\frac{\partial}{\partial h_r}g(h,w)+\frac{\partial}{\partial z}g(h,w)
=\sum_{j=1}^r p_j\,\rho_q\left(w-\sum_{k=1}^{j-1}h_k\right).
\]
Thus
\[
\frac{\partial}{\partial h_r}U_\nu(g(h,z+H(h)))
=
Q_\nu(g(h,z+H(h)))\sum_{j=1}^r p_j\,\rho_q\left(z+H(h)-\sum_{k=1}^{j-1}h_k\right).
\]

Let $-\infty<a<b<\infty$. Then
\begin{align*}
\int_a^b \int_\RR \Bigg|\frac{\partial}{\partial h_r}U_\nu(g(h,z+H&(h)))\Bigg|\,dz\,dh_r\\
&\le \int_a^b \int_\RR \left|Q_\nu(g(h,z+H(h)))\right|
\sum_{j=1}^n p_j\,\rho_q\left(z+H(h)-\sum_{k=1}^{j-1}h_k\right)\,dz\,dh_r \\
&= \int_a^b \int_\RR \left|Q_\nu(g(h,w))\right|\,\frac{\partial}{\partial z}g(h,w)\,dw\,dh_r \\
&= \int_a^b \int_0^1 |Q_\nu(u)|\,du\,dh_r
\le (b-a)\|Q_\nu\|_{L^1(0,1)}<\infty,
\end{align*}
where we used the change of variables $w=z+H(h)$ and then $u=g(h,w)$, which is admissible since, for each $h\in D$, the map $w\mapsto g(h,w)$ is differentiable.
Therefore, Lemma~\ref{prop:interchange} applies and yields that $V$ is absolutely continuous in $h_r$ and
\begin{align*}
\frac{\partial V(h)}{\partial h_r}
&=\int_\RR \frac{\partial}{\partial h_r}U_\nu(g(h,z+H(h)))\,dz
= \int_\RR Q_\nu(g(h,w))\sum_{j=1}^r p_j\,\rho_q\left(w-\sum_{k=1}^{j-1}h_k\right)\,dw
= f_r(h).
\end{align*}

Now fix $1\le r\le s\le n-1$. For each fixed $z\in\RR$ and fixed $(h_i)_{i\neq s}$, the map
\[
h_s \longmapsto Q_\nu(g(h,z))\sum_{j=1}^r p_j\,\rho_q\left(z-\sum_{k=1}^{j-1}h_k\right)
\]
is absolutely continuous. Moreover, since $j\le r\le s$, the sum $\sum_{j=1}^r p_j\,\rho_q\left(z-\sum_{k=1}^{j-1}h_k\right)$ does not depend on $h_s$, and differentiating yields
\begin{align*}
\frac{\partial}{\partial h_s}\Bigg[ Q_\nu(g(h,z))\sum_{j=1}^r p_j\,&\rho_q \left(z-\sum_{k=1}^{j-1}h_k\right)\Bigg]
= Q_\nu'(g(h,z))\,\frac{\partial g(h,z)}{\partial h_s}\sum_{j=1}^r p_j\,\rho_q\left(z-\sum_{k=1}^{j-1}h_k\right) \\
&= -Q_\nu'(g(h,z))\sum_{j=1}^r\sum_{\ell=s+1}^{n} p_j p_\ell\,
\rho_q\left(z-\sum_{k=1}^{j-1}h_k\right)\rho_q\left(z-\sum_{k=1}^{\ell-1}h_k\right),
\end{align*}
since $\partial_{h_s}g(h,z)=-\sum_{\ell=s+1}^n p_\ell\,\rho_q\left(z-\sum_{k=1}^{\ell-1}h_k\right)$.

To apply Lemma~\ref{prop:interchange} again, it suffices to show that for $-\infty<a<b<\infty$,
\[
\int_a^b \int_\RR |Q_\nu'(g(h,z))|\left[\frac{\partial}{\partial z}g(h,z)\right]^2 dz\,dh_s<\infty,
\]
because
\[
\sum_{j=1}^r\sum_{\ell=s+1}^{n} p_j p_\ell\,
\rho_q\left(z-\sum_{k=1}^{j-1}h_k\right)\rho_q\left(z-\sum_{k=1}^{\ell-1}h_k\right)
\le \left(\sum_{j=1}^n p_j\,\rho_q\left(z-\sum_{k=1}^{j-1}h_k\right)\right)^2
=\left[\frac{\partial}{\partial z}g(h,z)\right]^2.
\]
Using the change of variables $u=g(h,z)$ (so $du=\partial_z g(h,z)\,dz$), we obtain
\[
\int_\RR |Q_\nu'(g(h,z))|\left[\frac{\partial}{\partial z}g(h,z)\right]^2 dz
=
\int_0^1 |Q_\nu'(u)|\,\frac{\partial}{\partial z}g\bigl(h,g^{-1}(h,u)\bigr)\,du.
\]
Applying Hölder's inequality with exponents $\varsigma$ and $\frac{\varsigma}{\varsigma-1}$ yields
\begin{align*}
\int_0^1 |Q_\nu'(u)|\,\frac{\partial}{\partial z}g\bigl(h,g^{-1}(h,u)\bigr)\,du
&\le \|Q_\nu'\|_{L^\varsigma(0,1)}
\left(\int_0^1 \left|\frac{\partial}{\partial z}g\bigl(h,g^{-1}(h,u)\bigr)\right|^{\frac{\varsigma}{\varsigma-1}}du\right)^{\frac{\varsigma-1}{\varsigma}} \\
&= \|Q_\nu'\|_{L^\varsigma(0,1)}
\left(\int_\RR \left|\frac{\partial}{\partial z}g(h,z)\right|^{\frac{2\varsigma-1}{\varsigma-1}}dz\right)^{\frac{\varsigma-1}{\varsigma}}.
\end{align*}
Since $\varsigma>1$, the map $x\mapsto x^{\frac{2\varsigma-1}{\varsigma}}$ is convex and, therefore, we infer
\[
\left\|\frac{\partial}{\partial z}g(h,\cdot)\right\|_{L^{\frac{2\varsigma-1}{\varsigma-1}}(\RR)}^{\frac{2\varsigma-1}{\varsigma}}
\le \sum_{j=1}^n p_j
\left\|\rho_q\left(\cdot-\sum_{k=1}^{j-1}h_k\right)\right\|_{L^{\frac{2\varsigma-1}{\varsigma-1}}(\RR)}^{\frac{2\varsigma-1}{\varsigma}}
=
\|\rho_q\|_{L^{\frac{2\varsigma-1}{\varsigma-1}}(\RR)}^{\frac{2\varsigma-1}{\varsigma}}.
\]
Therefore,
\[
\int_a^b \int_\RR |Q_\nu'(g(h,z))|\left[\frac{\partial}{\partial z}g(h,z)\right]^2 dz\,dh_s
\le (b-a)\|Q_\nu'\|_{L^\varsigma(0,1)}\,
\|\rho_q\|_{L^{\frac{2\varsigma-1}{\varsigma-1}}(\RR)}^{\frac{2\varsigma-1}{\varsigma}}
<\infty,
\]
and Lemma~\ref{prop:interchange} yields that the derivative exists for a.e. $h_s$ and is given by
\[
\frac{\partial^2V(h)}{\partial h_s\partial h_r} = -\sum_{j=1}^r\sum_{\ell=s+1}^n\psi_{j\ell}(h).
\]
where, for $1\le j<\ell\le n$,
\[
\psi_{j\ell}(h)
:= p_j p_\ell \int_\RR Q_\nu'(g(h,z))\,
\rho_q\left(z-\sum_{k=1}^{j-1}h_k\right)\rho_q\left(z-\sum_{k=1}^{\ell-1}h_k\right)\,dz \ge 0.
\]

By Lemma~\ref{lem:psi-continuous}, the mixed second derivatives $\partial_{s}\partial_{r}V$ are continuous on $D$, and \cite[Theorem 9.4]{Rudin1976} implies
\[
\frac{\partial^2 V(h)}{\partial h_r \partial h_s}=\frac{\partial^2 V(h)}{\partial h_s \partial h_r},
\qquad 1\le r,s\le n-1,\ \ h\in D,
\]
so the Hessian matrix of $V$ is symmetric and $V\in C^2(\interior(D))$.

Finally, it follows from the representation of the mixed derivatives that
\[
D^2 V(h) = -\sum_{1 \leq j< \ell\leq n} \psi_{j\ell}(h)\, v^{j\ell}(v^{j\ell})^T,
\]
where $v^{j\ell} \in \RR^{n-1}$ has components
\[
v^{j\ell}_r = \begin{cases}
1, & \text{if } j \leq r< \ell,\\
0, & \text{otherwise}.
\end{cases}
\]
Hence $D^2V(h)$ is negative semidefinite. Moreover, by Assumptions \ref{ass:A1}--\ref{ass:A2}, for each $h\in D$ and $m=1,\dots,n-1$, one has $Q_\nu' > 0$, $h_m \in [0, \diam(\supp(q)))$ and
\begin{align*}
\psi_{m(m+1)}(h)
&= p_m p_{m+1}\int_\RR Q_\nu'(g(h,z))\,
\rho_q\left(z-\sum_{k=1}^{m-1}h_k\right)\rho_q\left(z-\sum_{k=1}^{m}h_k\right)\,dz \\
&= p_m p_{m+1}\int_\RR Q_\nu'\left(g\left(h,z+\sum_{k=1}^{m}h_k\right)\right)\rho_q(z+h_m)\rho_q(z)\,dz>0,
\end{align*}
and therefore $D^2V(h)$ is negative definite. This proves that $V$ is strictly concave. Since $D^2V(h)$ is negative definite for all $h\in \interior(D)$, the inverse function theorem yields that $\nabla V$ is a local $C^1$-diffeomorphism. Additionally, strict concavity implies that $\nabla V$ is injective on $D$, and hence $f|_{\interior(D)}$ is a global diffeomorphism onto $f(\interior(D))$.
\end{proof}

\begin{lemma}\label{adjLemma}
Let $d\in\NN$ and let $H\in \RR^{d\times d}$ be a symmetric positive definite matrix of the form
\[
H=\sum_{1\le j<\ell\le d+1} a_{j\ell}\, v^{j\ell}(v^{j\ell})^T,
\]
where $a_{j\ell}\ge0$ for all $1\le j<\ell\le d+1$, $a_{j(j+1)}>0$ for all $1\le j< d+1$, and $v^{j\ell}\in \RR^{d}$ is the vector whose components satisfy
\[
v^{j\ell}_r=\begin{cases}
1, & \text{if } j \leq r< \ell,\\
0, & \text{otherwise}.
\end{cases}
\]
Let $m=\binom{d+1}{2}$, let $\tau : \{1,\dots,m\} \rightarrow \{(j,\ell): 1\leq j < \ell \leq d+1\}$ be a bijection, and let $W \in \RR^{d\times m}$ be the matrix whose $s$-th column is the vector $v^{\tau(s)}, s=1,\ldots,m$.
Then, for every $i,j\in \{1,\dots,d\}$, \[
\adj(H)_{ij} = (-1)^{i+j} \sum_{\substack{S \subseteq \{1,\dots,m\}\\ |S|=d-1 }} \prod_{s \in S}a_{\tau(s)}\det(W_{\{1,\dots,d\}\setminus\{i\}, S}) \det(W_{\{1,\dots,d\}\setminus\{j\}, S}).
\]
In particular,
\[
\frac{|\adj(H)_{ij}|}{\adj(H)_{jj}}<1 \quad \text{for } i \not = j.
\]
\end{lemma}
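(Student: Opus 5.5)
The plan is to factor $H$ through $W$ and read off all cofactors via the Cauchy--Binet formula. Then the ratio bound reduces to the fact that $W$ is an interval (consecutive-ones) matrix, so all its minors lie in $\{-1,0,1\}$.

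\emph{Step 1: the formula for $\adj(H)_{ij}$.} Let $A:=\diag(a_{\tau(1)},\dots,a_{\tau(m)})\in\RR^{m\times m}$. Then $H=WAW^T$ by the definition of $W$ as the matrix with columns $v^{\tau(s)}$. By definition of the adjugate and symmetry of $H$, $\adj(H)_{ij}=(-1)^{i+j}\det\bigl(H_{\{1,\dots,d\}\setminus\{j\},\{1,\dots,d\}\setminus\{i\}}\bigr)$, and by symmetry this is the same as deleting row $i$ and column $j$. That submatrix equals $W_{-i,\cdot}\,A\,(W_{-j,\cdot})^T$, where $W_{-i,\cdot}$ denotes $W$ with row $i$ removed; it is a product of a $(d-1)\times m$, an $m\times m$ diagonal, and an $m\times(d-1)$ matrix. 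Applying Cauchy--Binet twice, or once to $(W_{-i,\cdot}A)(W_{-j,\cdot})^T$ and using that $A$ is diagonal so $\det((W_{-i,\cdot}A)_{\cdot,S})=\prod_{s\in S}a_{\tau(s)}\det(W_{-i,S})$, gives exactly the stated expansion.

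\emph{Step 2: non-strict bound.} Each column of $W$ is the indicator of a discrete interval $\{j,\dots,\ell-1\}$, so $W$ is an interval matrix. Interval matrices are totally unimodular: subtracting each row from the next turns every column into one with at most one $+1$ and one $-1$, i.e. a network matrix. Hence every minor $\det(W_{R,S})$ lies in $\{-1,0,1\}$. Write $c_S:=\prod_{s\in S}a_{\tau(s)}\ge0$, $\alpha_S:=\det(W_{-i,S})$ and $\beta_S:=\det(W_{-j,S})$.
\begin{itemize}
\item If $\beta_S=0$, the term is $0$ in both $\adj(H)_{ij}$ and $\adj(H)_{jj}$.
\item If $\beta_S\neq0$, then $|\alpha_S\beta_S|\le1=\beta_S^2$.
\end{itemize}
Thus $|\adj(H)_{ij}|\le\sum_S c_S|\alpha_S\beta_S|\le\sum_S c_S\beta_S^2=\adj(H)_{jj}$. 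Moreover $\adj(H)_{jj}=\det(H_{-j,-j})>0$, because $H$ is positive definite, so the ratio is well defined.

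\emph{Step 3: strictness (the only delicate point).} It suffices to exhibit one $S$ with $c_S>0$, $\beta_S\neq0$ and $\alpha_S=0$; that term contributes strictly more to the right-hand side than to the left. Take $S$ to be the set of indices $s$ with $\tau(s)=(k,k+1)$ for $k\in\{1,\dots,d\}\setminus\{j\}$.
\begin{itemize}
\item Since $v^{k(k+1)}=e_k$ and $a_{k(k+1)}>0$, we have $c_S>0$.
\item $W_{-j,S}$ is a permutation of the identity, so $\beta_S=\pm1$.
\item In $W_{-i,S}$ (with $i\ne j$) the row indexed by $j$ survives, and it is identically zero because no column $e_j$ was selected. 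Hence $\alpha_S=0$.
\end{itemize}
This gives $|\adj(H)_{ij}|<\adj(H)_{jj}$ for $i\neq j$. The main thing to double-check is the total unimodularity claim; if one prefers to avoid it, one can instead verify directly that each $(d-1)\times(d-1)$ minor of an interval matrix is $0$ or $\pm1$ by induction, expanding along a column whose interval starts earliest.
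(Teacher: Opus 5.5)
Your proof is correct and is essentially the paper's argument: the factorization $H=W\,\diag(a_{\tau(1)},\dots,a_{\tau(m)})\,W^T$ with Cauchy--Binet, the termwise bound from total unimodularity of the consecutive-ones matrix $W$, and strictness via the index set of adjacent pairs $(k,k+1)$, $k\neq j$. For the vanishing of $\det(W_{\{1,\dots,d\}\setminus\{i\},S})$ you use the surviving zero row $j$, whereas the paper uses the zero column coming from $e_i$ once row $i$ is deleted; the two observations are equivalent. One small remark: apply your row-differencing justification of total unimodularity to each square submatrix $W_{R,S}$, which is again an interval matrix and on which differencing has determinant $1$, rather than to $W$ as a whole, since left multiplication by a unimodular matrix does not by itself preserve total unimodularity (the paper simply cites this standard fact).
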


\begin{proof}
By construction,
\[
H=\sum_{s=1}^m a_{\tau(s)}\, v^{\tau(s)}(v^{\tau(s)})^T
= W \diag(a_{\tau(1)},a_{\tau(2)}, \dots, a_{\tau(m)}) W^T.
\]
Using $\adj(H)_{ij}=(-1)^{i+j}\det\!\bigl(H_{\{1,\dots,d\}\setminus\{i\},\{1,\dots,d\}\setminus\{j\}}\bigr)$, we obtain
\[
\adj(H)_{ij}
= (-1)^{i+j}\det \!\left(W_{\{1,\dots,d\}\setminus \{i\}, \{1,\dots,m\}}\diag(a_{\tau(1)},a_{\tau(2)}, \dots, a_{\tau(m)})W_{\{1,\dots,d\}\setminus \{j\}, \{1,\dots,m\}}^T\right).
\]
Applying the Cauchy--Binet formula gives
\begin{align*}
	\adj(H)_{ij}
	 & = (-1)^{i+j} \sum_{\substack{S \subseteq \{1,\dots,m\}\\ |S|=d-1 }} \det \!\left(W_{\{1,\dots,d\}\setminus \{i\}, S}\,\,\diag((a_{\tau(s)})_{s\in S}) \,\, W_{\{1,\dots,d\}\setminus \{j\}, S}^T\right)\\
	& = (-1)^{i+j} \sum_{\substack{S \subseteq \{1,\dots,m\}\\ |S|=d-1 }} \,\,\prod_{s \in S}a_{\tau(s)}\det(W_{\{1,\dots,d\}\setminus\{i\}, S}) \det(W_{\{1,\dots,d\}\setminus\{j\}, S}).
\end{align*}

Every column of $W$ is the indicator of a discrete interval, hence has the form \[(0,\dots,0,1,\dots,1,0,\dots,0)^T\] with consecutive ones. Matrices with the consecutive-ones property are totally unimodular. In particular, for any $I\subseteq \{1,\dots,d\}$ and any $S\subseteq \{1,\dots,m\}$ with $|I|=|S|$,
\[
\det(W_{I,S})\in\{0,\pm 1\}.
\]
Fix $i\neq j$ in $\{1,\dots,d\}$. Using the expansion above, we obtain
\begin{align*}
|\adj(H)_{ij}|
&\le \sum_{\substack{S \subseteq \{1,\dots,m\}\\ |S|=d-1 }} \prod_{s \in S}a_{\tau(s)}
\left|\det(W_{\{1,\dots,d\}\setminus\{i\}, S}) \det(W_{\{1,\dots,d\}\setminus\{j\}, S})\right| \\
&\le \sum_{\substack{S \subseteq \{1,\dots,m\}\\ |S|=d-1 }} \prod_{s \in S}a_{\tau(s)}\det(W_{\{1,\dots,d\}\setminus\{j\}, S})^2
= \adj(H)_{jj},
\end{align*}
where the last identity is the same Cauchy--Binet expansion with $i=j$.

To see that the inequality is strict, consider the subset
\[
S_j:=\{\,s\in\{1,\dots,m\}:\tau(s)=(r,r+1)\ \text{for some }r\in\{1,\dots,d\}\setminus\{j\}\,\}.
\]
Then $|S_j|=d-1$ and the columns $\{v^{r(r+1)}:r\in\{1,\dots,d\}\setminus\{j\}\}$ form a permutation of the standard basis of $\RR^{d-1}$, so
\[
\det(W_{\{1,\dots,d\}\setminus\{j\},S_j})^2=1.
\]
and,  by the assumption $a_{r,r+1}>0$, one has
$a_{\tau(s)}>0$ for every $s\in S_j$.

Since $i\neq j$, the set $S_j$ contains the unique index $s_i$ with $\tau(s_i)=(i,i+1)$, and the corresponding column of $W_{\{1,\dots,d\}\setminus\{i\},S_j}$ is the zero vector, because $v^{i(i+1)}=e_i$ and the $i$-th row is removed. Hence, one has
\[
\det(W_{\{1,\dots,d\}\setminus\{i\},S_j})=0.
\]
Thus $S_j$ contributes the strictly positive term $\prod_{s\in S_j}a_{\tau(s)}$ to $\adj(H)_{jj}$, while it contributes $0$ to $|\adj(H)_{ij}|$. Consequently $|\adj(H)_{ij}|<\adj(H)_{jj}$, which proves the claim.
\end{proof}
\medskip

\begin{proof}[Proof of Proposition \ref{prop:q-Bass-map-stability}]
Let $(h^{(k)})_{k \in \NN} \subseteq \RR^{n-1}_{\geq 0}$ be an arbitrary sequence, independent of $(\nu_k)_{k \in \NN}$ and $(q_k)_{k \in \NN}$, where $(\nu_k)_{k \in \NN} \subseteq \Prob_1(\RR)$ and $(q_k)_{k \in \NN} \subseteq \Prob(\RR)$ satisfy the assumptions of Proposition~\ref{prop:q-Bass-map-stability}.
By the dominated convergence theorem, $(Q_{\nu_k})_{k \in \NN}$ converges to $Q_\nu$ in $L^1(0,1)$. Moreover,
\[
    \sup_{x\in\RR}|F_{q_k}(x)-F_q(x)| \leq \|\rho_{q_k}-\rho_q\|_{L^1(\RR)} \longrightarrow 0 .
\]
By a change of variables, for every $\psi\in L^1(0,1)$,
\begin{equation} \label{eq:change-of-variables-continuouty}
    \int_{\RR} \psi\left(\sum_{r=1}^{m}p_r F_{q_k}\left( z-\sum_{\ell=1}^{r-1}h_\ell^{(k)} \right) \right) \sum_{r=1}^{m}p_r \rho_{q_k}\left( z-\sum_{\ell=1}^{r-1}h_\ell^{(k)} \right)\,dz = \int_0^{p_m^*}\psi(u)\,du \leq \|\psi\|_{L^1(0,1)}.
\end{equation}
The same bound holds with $q_k,h^{(k)}$ replaced by $q,h$.  
\medskip

\noindent\emph{Step 1.} Assume first that $(h^{(k)})_{k\in\NN}$ is bounded. Passing to a subsequence, we may suppose that $h^{(k)}\rightarrow h$ for some $h\in\RR_{\geq0}^{n-1}$.
We prove that $f_i^{(k)}(h^{(k)}) \rightarrow f_i(h)$, for every $i\in\{1,\dots,n-1\}$. Fix $i\in\{1,\dots,n-1\}$. Let $\varphi\in C_b([0,1])$. For every $r\in\{1,\dots,n\}$,
\begin{align*}
        \sup_{z\in\RR} \Bigg| F_{q_k}\left(z-\sum_{\ell=1}^{r-1}h_\ell^{(k)} \right) & - F_q\left( z-\sum_{\ell=1}^{r-1}h_\ell \right) \Bigg|
        \\
        & \leq \sup_{x\in\RR}|F_{q_k}(x)-F_q(x)| +\sup_{z\in\RR} \left| F_q\left( z-\sum_{\ell=1}^{r-1}h_\ell^{(k)} \right) - F_q\left( z-\sum_{\ell=1}^{r-1}h_\ell \right) \right|.
\end{align*}
As $k\to\infty$, the first term tends to $0$, and so does the second one, since $F_q$ is uniformly continuous.
Moreover, we have
\begin{align*}
    \Bigg\|\sum_{r=1}^{i}p_r & \rho_{q_k}\left( \cdot-\sum_{\ell=1}^{r-1}h_\ell^{(k)} \right)  - \sum_{r=1}^{i}p_r \rho_q\left(
    \cdot-\sum_{\ell=1}^{r-1}h_\ell \right) \Bigg\|_{L^1(\RR)} \\
    & \leq \sum_{r=1}^{i}p_r \left\| \rho_{q_k}\left( \cdot-\sum_{\ell=1}^{r-1}h_\ell^{(k)} \right) - \rho_q\left( \cdot-\sum_{\ell=1}^{r-1}h_\ell^{(k)} \right) \right\|_{L^1(\RR)} \\
    &\qquad + \sum_{r=1}^{i}p_r \left\| \rho_q\left( \cdot-\sum_{\ell=1}^{r-1}h_\ell^{(k)} \right) - \rho_q\left( \cdot-\sum_{\ell=1}^{r-1}h_\ell \right) \right\|_{L^1(\RR)} \longrightarrow 0 .
\end{align*}
Indeed, the first sum tends to $0$ since $\rho_{q_k}\to\rho_q$ in $L^1(\RR)$, and the second one tends to $0$ by continuity of translations in $L^1(\RR)$.
Since $\varphi$ is bounded and uniformly continuous on $[0,1]$, we obtain
\begin{equation} \label{eq:conv-continuoty-Cb}
    \begin{aligned}
    \int_{\RR} \varphi &\left( \sum_{r=1}^{n}p_r F_{q_k}\left( z-\sum_{\ell=1}^{r-1}h_\ell^{(k)}\right)\right) \sum_{r=1}^{i}p_r \rho_{q_k}\left( z-\sum_{\ell=1}^{r-1}h_\ell^{(k)} \right)\,dz \\
    &\qquad\qquad\qquad\longrightarrow \int_{\RR} \varphi\left(\sum_{r=1}^{n}p_r F_q\left( z-\sum_{\ell=1}^{r-1}h_\ell \right) \right) \sum_{r=1}^{i}p_r \rho_q\left( z-\sum_{\ell=1}^{r-1}h_\ell \right)\,dz .
\end{aligned}
\end{equation}

We now pass from bounded continuous functions $\varphi$ to $Q_\nu$. Let $\varepsilon>0$, and choose $\varphi\in C_b([0,1])$ such that $\|Q_\nu-\varphi\|_{L^1(0,1)}<\varepsilon$. By \eqref{eq:change-of-variables-continuouty}, we get
\begin{align*}
    \int_{\RR} \Bigg| Q_\nu\left( \sum_{r=1}^{n}p_r F_{q_k}\left( z-\sum_{\ell=1}^{r-1}h_\ell^{(k)} \right) \right) - & \varphi\left( \sum_{r=1}^{n}p_r F_{q_k}\left( z-\sum_{\ell=1}^{r-1}h_\ell^{(k)} \right) \right) \Bigg| \\
    &\qquad \qquad\cdot \sum_{r=1}^{i}p_r \rho_{q_k}\left( z-\sum_{\ell=1}^{r-1}h_\ell^{(k)} \right)\,dz \leq \varepsilon .
\end{align*}
The same estimate holds with $q_k,h^{(k)}$ replaced by $q,h$. Combining these two estimates with \eqref{eq:conv-continuoty-Cb}, and then letting $\varepsilon\downarrow0$, gives
\begin{equation} \label{eq:conv-continuoty-Q-nu}
    \begin{aligned}
    \int_{\RR} Q_\nu &\left( \sum_{r=1}^{n}p_r F_{q_k}\left( z-\sum_{\ell=1}^{r-1}h_\ell^{(k)}\right)\right) \sum_{r=1}^{i}p_r \rho_{q_k}\left( z-\sum_{\ell=1}^{r-1}h_\ell^{(k)} \right)\,dz \\
    &\qquad\qquad\qquad\longrightarrow \int_{\RR} Q_\nu\left(\sum_{r=1}^{n}p_r F_q\left( z-\sum_{\ell=1}^{r-1}h_\ell \right) \right) \sum_{r=1}^{i}p_r \rho_q\left( z-\sum_{\ell=1}^{r-1}h_\ell \right)\,dz .
\end{aligned}
\end{equation}
Finally, again by \eqref{eq:change-of-variables-continuouty},
\[
\begin{aligned}
    &\left| \int_{\RR} (Q_{\nu_k}-Q_\nu)\left( \sum_{r=1}^{n}p_r F_{q_k}\left(z-\sum_{\ell=1}^{r-1}h_\ell^{(k)} \right) \right) \sum_{r=1}^{i}p_r \rho_{q_k}\left(z-\sum_{\ell=1}^{r-1}h_\ell^{(k)} \right)\,dz \right|  \leq \|Q_{\nu_k}-Q_\nu\|_{L^1(0,1)} \rightarrow 0 .
\end{aligned}
\]
Together with \eqref{eq:conv-continuoty-Q-nu}, this proves $f_i^{(k)}(h^{(k)})\longrightarrow f_i(h)$.
Since $i\in\{1,\dots,n-1\}$ was arbitrary,  case $(i)$ in the statement of Proposition~\ref{prop:q-Bass-map-stability} holds.
\medskip

\noindent\emph{Step 2.}  Assume now that $(h^{(k)})_{k\in\NN}$ is unbounded. Since all components are nonnegative, there exist a subsequence, still denoted by $(h^{(k)})_{k\in\NN}$, and some index $i\in\{1,\dots,n-1\}$, such that $h_i^{(k)}\rightarrow +\infty$. 
In this case, we prove that $f_i^{(k)}(h^{(k)})\rightarrow U_\nu(p_i^*)$. By \eqref{eq:change-of-variables-continuouty} with $m=i$ and $\psi=Q_{\nu_k}$,
\begin{equation}
\label{eq:continuous-partial-result-2}
\begin{aligned}
    &\int_{\RR} Q_{\nu_k}\left(\sum_{r=1}^{i}p_r F_{q_k}\left( z-\sum_{\ell=1}^{r-1}h_\ell^{(k)} \right) \right) \sum_{r=1}^{i}p_r \rho_{q_k}\left( z-\sum_{\ell=1}^{r-1}h_\ell^{(k)} \right)\,dz \\
    &\qquad = \int_0^{p_i^*}Q_{\nu_k}(u)\,du \longrightarrow \int_0^{p_i^*}Q_\nu(u)\,du = U_\nu(p_i^*).
\end{aligned}
\end{equation}
It remains to replace the sum up to $i$ inside $Q_{\nu_k}$ by the sum up to $n$. Fix $r>i$ and $j\leq i$. By the change of variable $x=z-\sum_{\ell=1}^{j-1}h_\ell^{(k)}$, we have
\begin{equation}\label{eq.fqk}
    \int_{\RR} F_{q_k}\left( z-\sum_{\ell=1}^{r-1}h_\ell^{(k)} \right) \rho_{q_k}\left( z-\sum_{\ell=1}^{j-1}h_\ell^{(k)} \right)\,dz = \int_{\RR} F_{q_k}\left(x-\sum_{\ell=j}^{r-1}h_\ell^{(k)} \right) \rho_{q_k}(x)\,dx .
\end{equation}
Since $j\leq i<r$, one has $\sum_{\ell=j}^{r-1}h_\ell^{(k)} \geq h_i^{(k)} \longrightarrow+\infty$. Therefore,
\begin{align}
\begin{split}\label{eq.fqkq}
    &\int_{\RR} F_{q_k}\left(x-\sum_{\ell=j}^{r-1}h_\ell^{(k)} \right)\rho_{q_k}(x)\,dx \\
    &\qquad\quad \leq \sup_{y\in\RR}|F_{q_k}(y)-F_q(y)|+\|\rho_{q_k}-\rho_q\|_{L^1(\RR)}
    +\int_{\RR} F_q\left(x-\sum_{\ell=j}^{r-1}h_\ell^{(k)} \right)\rho_q(x)\,dx .
    \end{split}
\end{align}
As $k\to\infty$, the first two terms tend to $0$. The last term also tends to $0$, by the dominated convergence theorem, since $F_q\left(x-\sum_{\ell=j}^{r-1}h_\ell^{(k)} \right)$ tends to $0$ for every $x \in \RR$ and it is bounded by $1$, while $\rho_q\in L^1(\RR)$. By \eqref{eq.fqk} and \eqref{eq.fqkq}, this yields
\begin{align*}
    \int_{\RR} F_{q_k}\left( z-\sum_{\ell=1}^{r-1}h_\ell^{(k)} \right)\rho_{q_k}\left( z-\sum_{\ell=1}^{j-1}h_\ell^{(k)} \right)\,dz \longrightarrow 0 .
\end{align*}
Since there are only finitely many pairs $j\leq i<r$, it follows that
\begin{equation}
\label{eq:integral-sum-CDFs}
\begin{aligned}
    &\int_{\RR} \left[ \sum_{r=i+1}^{n}p_r F_{q_k}\left( z-\sum_{\ell=1}^{r-1}h_\ell^{(k)} \right) \right] \left[ \sum_{j=1}^{i}p_j \rho_{q_k}\left( z-\sum_{\ell=1}^{j-1}h_\ell^{(k)} \right) \right]\,dz \longrightarrow 0 .
\end{aligned}
\end{equation}
Let $\varphi\in C_b([0,1])$. By uniform continuity of $\varphi$, \eqref{eq:change-of-variables-continuouty}, and \eqref{eq:integral-sum-CDFs}, we get
\begin{equation}
\label{eq:result-test-functions}
\begin{aligned}
    &\int_{\RR} \Bigg| \varphi\left( \sum_{r=1}^{n}p_r F_{q_k}\left( z-\sum_{\ell=1}^{r-1}h_\ell^{(k)} \right) \right) - \varphi\left( \sum_{r=1}^{i}p_r F_{q_k}\left( z-\sum_{\ell=1}^{r-1}h_\ell^{(k)} \right) \right) \Bigg| \\
    &\qquad\qquad\cdot \sum_{j=1}^{i}p_j \rho_{q_k}\left(z-\sum_{\ell=1}^{j-1}h_\ell^{(k)} \right)\,dz \longrightarrow 0 .
\end{aligned}
\end{equation}
Indeed, for any $\eta>0$, choose $\delta>0$ such that $|\varphi(x)-\varphi(y)|<\eta$ whenever $|x-y|<\delta$. The difference between the two arguments of $\varphi$ is $\sum_{r=i+1}^{n}p_r F_{q_k}\left( z-\sum_{\ell=1}^{r-1}h_\ell^{(k)} \right)$. On the set where this quantity is smaller than $\delta$, the integrand is bounded by $\eta$ times the density factor. On the complementary set, we use the bound $2\|\varphi\|_\infty$ and the inequality
\begin{align*}
    \mathbf 1_{\left\{\sum_{r=i+1}^{n}p_r F_{q_k}\left( z-\sum_{\ell=1}^{r-1}h_\ell^{(k)} \right)\geq \delta\right\}}
    \leq
    \frac{1}{\delta}
    \sum_{r=i+1}^{n}p_r F_{q_k}\left( z-\sum_{\ell=1}^{r-1}h_\ell^{(k)} \right).
\end{align*}
Therefore, the integral in \eqref{eq:result-test-functions} is bounded by
\begin{align*}
    &\eta p_i^* + \frac{2\|\varphi\|_\infty}{\delta}\int_{\RR} \left[ \sum_{r=i+1}^{n}p_r F_{q_k}\left( z-\sum_{\ell=1}^{r-1}h_\ell^{(k)} \right) \right] \left[ \sum_{j=1}^{i}p_j \rho_{q_k}\left( z-\sum_{\ell=1}^{j-1}h_\ell^{(k)} \right) \right]\,dz,
\end{align*}
and the second term tends to $0$ by \eqref{eq:integral-sum-CDFs}. Since $\eta>0$ is arbitrary, \eqref{eq:result-test-functions} follows. 

As done in Step 1, we now pass from bounded continuous functions $\varphi$ to $Q_{\nu_k}$. Let $\varepsilon>0$, and choose $\varphi\in C_b([0,1])$ such that $\|Q_\nu-\varphi\|_{L^1(0,1)}<\varepsilon$.
We decompose the difference by adding and subtracting the same two terms with $\varphi$ in place of $Q_{\nu_k}$, and then with $Q_\nu$ in place of $Q_{\nu_k}$. By \eqref{eq:change-of-variables-continuouty},
\begin{align*}
    &\int_{\RR} \left| (Q_{\nu_k}-Q_\nu)\left( \sum_{r=1}^{s}p_r F_{q_k}\left( z-\sum_{\ell=1}^{r-1}h_\ell^{(k)} \right) \right) \right| \sum_{j=1}^{i}p_j \rho_{q_k}\left( z-\sum_{\ell=1}^{j-1}h_\ell^{(k)} \right)\,dz \leq \|Q_{\nu_k}-Q_\nu\|_{L^1(0,1)},
\end{align*}
for $s=i, n$.
Similarly, replacing $Q_{\nu_k}-Q_\nu$ by $Q_\nu-\varphi$, the corresponding two terms are bounded by $\varepsilon$. Therefore, using \eqref{eq:result-test-functions}, we obtain
\begin{align*}
    &\limsup_{k\to\infty} \left| \int_{\RR} Q_{\nu_k}\left( \sum_{r=1}^{n}p_r F_{q_k}\left( z-\sum_{\ell=1}^{r-1}h_\ell^{(k)}\right) \right) \sum_{j=1}^{i}p_j \rho_{q_k}\left(z-\sum_{\ell=1}^{j-1}h_\ell^{(k)} \right)\,dz \right. \\
    &\qquad\left. - \int_{\RR} Q_{\nu_k}\left( \sum_{r=1}^{i}p_r F_{q_k}\left( z-\sum_{\ell=1}^{r-1}h_\ell^{(k)} \right) \right) \sum_{j=1}^{i}p_j \rho_{q_k}\left( z-\sum_{\ell=1}^{j-1}h_\ell^{(k)} \right)\,dz \right| \leq 2\varepsilon .
\end{align*}
Since $\varepsilon>0$ was arbitrary,
\begin{equation}
\label{eq:continuous-final-result-2}
\begin{aligned}
    &\int_{\RR} Q_{\nu_k}\left( \sum_{r=1}^{n}p_r F_{q_k}\left( z-\sum_{\ell=1}^{r-1}h_\ell^{(k)} \right) \right) \sum_{j=1}^{i}p_j \rho_{q_k}\left( z-\sum_{\ell=1}^{j-1}h_\ell^{(k)} \right)\,dz \\
    &\qquad - \int_{\RR} Q_{\nu_k}\left( \sum_{r=1}^{i}p_r F_{q_k}\left( z-\sum_{\ell=1}^{r-1}h_\ell^{(k)} \right) \right) \sum_{j=1}^{i}p_j \rho_{q_k}\left( z-\sum_{\ell=1}^{j-1}h_\ell^{(k)} \right)\,dz \longrightarrow 0 .
\end{aligned}
\end{equation}
Combining \eqref{eq:continuous-partial-result-2} and \eqref{eq:continuous-final-result-2}, we conclude that $f_i^{(k)}(h^{(k)})\longrightarrow U_\nu(p_i^*)$, so that  case $(ii)$ in the statement of Proposition~\ref{prop:q-Bass-map-stability} holds.
\end{proof}

\section{Approximation results}
In this section, we prove approximation results for the final marginal and the reference measure.
\begin{proposition}[Approximation of the final marginal]\label{prop:approx_nu_1}
Let $\mu,\nu\in \Prob_1(\RR)$ such that $\mu$ is $n$-atomic with representation \eqref{n_atoms_repr}, and $(\mu,\nu)$ is irreducible.
Then there exists a sequence $(\nu_k)_{k \in \NN}\subseteq \Prob_1(\RR)$ such that, for all $k \in \NN$,
\begin{enumerate}[label=(\roman*)]
\item\label{prop:approx_nu_1:ac} $\nu_k \ll \lambda$ and $\supp(\nu_k)$ is a bounded interval,
\item\label{prop:approx_nu_1:lb} $\rho_{\nu_k}$ is bounded away from $0$ on $\supp(\nu_k)$,
\item\label{prop:approx_nu_1:cx}  $(\mu,\nu_k)$ is irreducible,
\item\label{prop:approx_nu_1:pt} $Q_{\nu_k}\to Q_\nu$ pointwise a.e. on $(0,1)$ as $k\to\infty$,
\item\label{prop:approx_nu_1:dom} there exists $Q\in L^1(0,1)$ such that $|Q_{\nu_k}(p)|\le Q(p)$, for all $p\in(0,1)$.
\end{enumerate}
\end{proposition}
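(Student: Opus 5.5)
Everything will be phrased through quantile functions and Proposition~\ref{prop:convex_order_char}\ref{pot3}. Irreducibility of $(\mu,\nu_k)$ then reduces to
\[
U_{\nu_k}(1)=U_\mu(1) \quad\text{and}\quad U_{\nu_k}(p)<U_\mu(p) \quad\text{for all } p\in(0,1).
\]
The graph of $U_\mu$ is a convex polygonal chain with finitely many vertices, and $U_\mu-U_\nu>0$ on $(0,1)$. This gap may degenerate only near $p=0$ and $p=1$. Near $p=0$ it behaves like $U_\mu(p)-U_\nu(p)=\int_0^p (x_1-Q_\nu)\,du$, which is positive because $Q_\nu(u)<x_1$ for small $u$. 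The symmetric statement holds near $1$. This is where irreducibility forces $\inf\supp\nu<x_1$ and $\sup\supp\nu>x_n$.

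\textbf{Construction.} First truncate and smooth $Q_\nu$. Fix $\varepsilon_k\downarrow 0$ and let $\widetilde Q_k$ be $Q_\nu$ clipped to $[-k,k]$ and then mollified, or piecewise-linearly interpolated on a fine grid. Add $\varepsilon_k u$ to make it strictly increasing with derivative bounded and bounded below, so that $\rho_{\nu_k}=1/Q_{\nu_k}'\circ F_{\nu_k}$ is bounded away from $0$ on a bounded interval. Properties \ref{prop:approx_nu_1:ac} and \ref{prop:approx_nu_1:lb} then hold automatically.

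Next, restore the mean and irreducibility by an affine adjustment near the ends. Set
\[
Q_{\nu_k}:=\widetilde Q_k-a_k\mathbf 1_{(0,\eta)}+b_k\mathbf 1_{(1-\eta,1)}+c_k,
\]
with a suitable small fixed $\eta$ (pushing the lower tail down and the upper tail up), followed by monotone smoothing. Choose $a_k,b_k\ge 0$ tending to $0$ and $c_k$ so that $\int_0^1 Q_{\nu_k}=0$. The downward push lowers $U_{\nu_k}$ near $0$, and the upward push lowers $U_{\nu_k}$ near $1$, since $U_{\nu_k}(p)=-\int_p^1 Q_{\nu_k}$. In the interior, $U_{\nu_k}\to U_\nu$ uniformly on $[0,1]$ by $L^1$ convergence, and the gap $U_\mu-U_\nu$ is bounded below by a positive constant on $[\eta,1-\eta]$, so strict domination holds there for large $k$.

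\textbf{Main obstacle.} The hard part is strict domination on $(0,\eta]\cup[1-\eta,1)$, uniformly as $k\to\infty$. On $(0,\eta]$ one needs $\widetilde Q_k-a_k+c_k<x_1$ pointwise, since then $U_{\nu_k}(p)<x_1p=U_\mu(p)$. Choose $\eta$ so that $Q_\nu(\eta)<x_1-2\theta$ for some $\theta>0$. Clipping and mollifying can only move $Q_\nu$ by a uniformly small amount at interior points, and clipping only decreases $Q_\nu$ near $0$. With $|c_k|$ small, this gives $Q_{\nu_k}<x_1$ on $(0,\eta]$ for large $k$, and the symmetric argument works near $1$. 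The mollification must be done carefully so as not to raise $Q$ near $0$. Using $\widetilde Q_k(u)=\max(Q_\nu,-k)$ averaged only over $[u-\epsilon,u]$ near the left end, which is monotone-compatible, handles this. If needed, take $a_k,b_k$ to be small positive constants in $k$ to buy extra room; these also vanish in the limit.

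Finally, \ref{prop:approx_nu_1:pt} follows because mollification of a monotone function converges at its continuity points, hence a.e., and all perturbations vanish. For \ref{prop:approx_nu_1:dom}, take
\[
Q=|Q_\nu|+\sup_k\bigl|\text{local averages of } Q_\nu\bigr|+C.
\]
Domination is easiest to obtain using monotone piecewise-linear interpolation of $Q_\nu$ on dyadic grids, bounded by $|Q_\nu(\cdot\pm\text{grid})|$, together with integrability near the endpoints from $Q_\nu\in L^1$ and monotonicity; alternatively, clip at the values $Q_\nu(1/k)$ and $Q_\nu(1-1/k)$.
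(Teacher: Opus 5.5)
Your outline works, but it differs from the paper both in how $\nu_k$ is built and, more importantly, in how irreducibility is checked.

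\emph{Construction.} The paper works at the level of measures. It replaces the quantile tails on $(0,\alpha_k]$ and $[1-\beta_k,1)$ by constants chosen so that the mean stays exactly $0$. It then convolves with $\mathrm{Unif}_{[-\varepsilon_k,\varepsilon_k]}$ and mixes, with weight $\delta_k$, with $\mathrm{Unif}_{[-M_k,M_k]}$. These operations give two-sided quantile sandwiches, from which (iv), the bound $\|Q_{\nu_k}-Q_\nu\|_{L^1(0,1)}\le 8\cdot 2^{-k}$, and an explicit dominating function built from $|Q_\nu(p/2)|+|Q_\nu((1+p)/2)|$ plus summable tail terms all follow directly. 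Your quantile-level smoothing is also valid and even gives a bi-Lipschitz $Q_{\nu_k}$ (density bounded above and below), but it needs more bookkeeping.

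\emph{Irreducibility.} Here the paper needs nothing beyond that $L^1$ bound. Since $U_\mu$ is affine on each $[p_{i-1}^*,p_i^*]$ and $U_{\nu_k}$ is convex with $U_{\nu_k}(0)=U_\mu(0)$ and $U_{\nu_k}(1)=U_\mu(1)$, the difference $U_\mu-U_{\nu_k}$ is concave on each such interval. Hence strict positivity at the $n-1$ breakpoints already gives strict positivity on all of $(0,1)$. Positivity at the breakpoints follows from $\sup_p|U_{\nu_k}(p)-U_\nu(p)|\le\|Q_{\nu_k}-Q_\nu\|_{L^1(0,1)}<\vartheta:=\min_i\bigl(U_\mu(p_i^*)-U_\nu(p_i^*)\bigr)$. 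So your ``main obstacle'' disappears: you need no pointwise control of $Q_{\nu_k}$ near $0$ and $1$, no tail pushes $a_k,b_k$, and no direction-sensitive smoothing, only $L^1$ convergence of quantiles and an exact mean. Your split into $[\eta,1-\eta]$ and the two end intervals is correct, just longer.

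Two justifications in your endpoint step are wrong, although the conclusion survives. First, clipping at $-k$ \emph{raises} $Q_\nu$ near $0$ rather than lowering it. Second, mollification does not move a monotone function uniformly little near its jumps. What actually works is monotonicity. With a symmetric window of width $\epsilon_k$, the smoothed clipped quantile at $u$ is at most $\max(Q_\nu(u+\epsilon_k),-k)$. So choose $\eta$ with $Q_\nu(2\eta)<x_1-2\theta$, then take $k$ large enough that $\epsilon_k\le\eta$ and $-k<x_1-2\theta$; this gives the bound on $(0,\eta]$, and no one-sided averaging is needed.

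For (v) you must commit to rates. Near the endpoints the smoothed quantile can be as large as the clipping level on a set whose length is comparable to the smoothing scale, so the supremum over $k$ is integrable only if these are coupled suitably. Your alternative of clipping at $Q_\nu(1/k)$ and $Q_\nu(1-1/k)$, with smoothing scale at most $1/(4k)$, does this. For large $k$ the clipped quantile is then bounded by $|Q_\nu|$, and the smoothed one by $|Q_\nu(p/2)|+|Q_\nu((1+p)/2)|$ plus a constant, which mirrors the paper's dominating function.
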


\begin{proof}
Without loss of generality, assume that $\mean(\mu)=\mean(\nu)=0$. If $\nu=\delta_0$, then the convex-order relation implies that $\mu=\delta_0$. In this case, we may choose $\nu_k$ to be a symmetrically truncated centered Gaussian distribution with variance $1/k$. Hence, in the remainder of the proof, we may assume that $\nu\neq\delta_0$.
Since $\mu$ is $n$-atomic, $Q_\mu$ is constant on $(p_{i-1}^*,p_i^*]$ and $U_\mu(p):=\int_0^p Q_\mu$ is affine on each $[p_{i-1}^*,p_i^*]$.
By irreducibility of $(\mu,\nu)$ and \eqref{n_atoms_repr},
\[
U_\mu(p_i^*)>U_\nu(p_i^*)\qquad\text{for all }i=1,\dots,n-1,
\]
hence
\begin{equation*}
\vartheta:=\min_{1\le i\le n-1}\bigl(U_\mu(p_i^*)-U_\nu(p_i^*)\bigr)>0.
\end{equation*}
If $\nu=\delta_0$, then $\mu=\delta_0$ and the choice $\nu_k:=\text{Unif}_{[-1/k,\,1/k]}$ satisfies \ref{prop:approx_nu_1:ac}--\ref{prop:approx_nu_1:dom}, so assume $\nu\neq\delta_0$.
Since $Q_\nu\in L^1(0,1)$ and $\int_0^1 Q_\nu=0$, there exist $p_-\in(0,1)$ and $p_+\in(0,1)$ such that $Q_\nu(p_-)\le0\le Q_\nu(p_+)$. More specifically, we set
\[
p_-:=\sup\{p\in(0,1):Q_\nu(p)\le0\}\in(0,1),\qquad p_+:=\inf\{p\in(0,1):Q_\nu(p)\ge0\}\in(0,1).
\]
For $k\ge1$, we define
\[
\alpha_k:=\sup\left\{\alpha\in(0,\min(p_-, 2^{-k})]:\int_0^\alpha |Q_\nu(u)|\,du\le 2^{-k}\right\} \in (0, 2^{-k}],
\]
\[
\beta_k:=\sup\left\{\beta\in(0,\min(1-p_+,  2^{-k})]:\int_{1-\beta}^1 |Q_\nu(u)|\,du\le 2^{-k}\right\} \in (0, 2^{-k}].
\]

Then $(\alpha_k)_{k \in \NN}$ and $(\beta_k)_{k \in \NN}$ are non-increasing and satisfy $\alpha_k\downarrow0$, $\beta_k\downarrow0$. In particular, $Q_\nu(\alpha_k)\le0$ and $Q_\nu(1-\beta_k)\ge0$.
For $\alpha,\beta\in(0,1)$, we define
\[
D(\alpha):=\alpha\,Q_\nu(\alpha)-\int_0^\alpha Q_\nu(u)\,du, \qquad E(\beta):=\int_{1-\beta}^1 Q_\nu(u)\,du-\beta\,Q_\nu(1-\beta),
\]
and set $D_k:=D(\alpha_k)$, $E_k:=E(\beta_k)$, $s_k:=D_k-E_k$.
Since $Q_\nu$ is non-decreasing, for $u\in(0,\alpha_k]$ one has $Q_\nu(u)\le Q_\nu(\alpha_k)\le0$ and hence
\[
0\le D_k=\int_0^{\alpha_k}\bigl(Q_\nu(\alpha_k)-Q_\nu(u)\bigr)\,du\le \int_0^{\alpha_k}|Q_\nu(u)|\,du\le 2^{-k},
\]
and similarly $0\le E_k\le \int_{1-\beta_k}^1|Q_\nu(u)|\,du\le 2^{-k}$, so $|s_k|\le D_k+E_k\le 2^{1-k}$.
Define
\[
a_k:=Q_\nu(\alpha_k)-\frac{(s_k)_+}{\alpha_k},\qquad b_k:=Q_\nu(1-\beta_k)+\frac{(-s_k)_+}{\beta_k}.
\]
Let $\widehat\nu_k$ be the law with quantile
\[
Q_{\widehat\nu_k}(p)=
\begin{cases}
a_k, & p\in(0,\alpha_k],\\[1mm]
Q_\nu(p), & p\in(\alpha_k,\,1-\beta_k),\\[1mm]
b_k, & p\in[1-\beta_k,\,1).
\end{cases}
\]
Then, using $\int_0^1 Q_\nu=0$ and the identities
\[
D(\alpha_k)=\int_0^{\alpha_k}\bigl(Q_\nu(\alpha_k)-Q_\nu(u)\bigr)\,du,
\qquad
E(\beta_k)=\int_{1-\beta_k}^1\bigl(Q_\nu(u)-Q_\nu(1-\beta_k)\bigr)\,du,
\]
we compute
\begin{align*}
\int_0^1 Q_{\widehat\nu_k}(u)\,du & =\int_0^1 Q_\nu(u)\,du +\int_0^{\alpha_k}\bigl(a_k-Q_\nu(u)\bigr)\,du +\int_{1-\beta_k}^1\bigl(b_k-Q_\nu(u)\bigr)\,du\\
& = \Bigl(D_k- (s_k)_+\Bigr)+\Bigl(-E_k+(-s_k)_+\Bigr)\\
& = (D_k-E_k)-(s_k)_+ +(-s_k)_+\\
& = s_k-(s_k)_+ +(-s_k)_+=0,
\end{align*}
hence $\mean(\widehat\nu_k)=0$.
For each fixed $p\in(0,1)$ one has $Q_{\widehat\nu_k}(p)=Q_\nu(p)$ for all $k$ such that $\alpha_k<p<1-\beta_k$, hence $Q_{\widehat\nu_k}(p)\to Q_\nu(p)$.

Set $\varepsilon_k:=2^{-k}\wedge \tfrac14$ and define
\[
\overline\nu_k:=\widehat\nu_k * \text{Unif}_{[-\varepsilon_k,\varepsilon_k]}.
\]
Then $\overline\nu_k\ll\lambda$ and $\mean(\overline\nu_k)=0$.
If $X\sim\widehat\nu_k$ and $Z\sim\text{Unif}_{[-\varepsilon_k,\varepsilon_k]}$ are independent, then $X-\varepsilon_k\le X+Z\le X+\varepsilon_k$ a.s., hence for all $p\in(0,1)$,
\begin{equation}\label{eq:quantile_sandwich}
Q_{\widehat\nu_k}(p)-\varepsilon_k\le Q_{\overline\nu_k}(p)\le Q_{\widehat\nu_k}(p)+\varepsilon_k.
\end{equation}
Set $M_k:=\max\{|a_k|,|b_k|\}+\varepsilon_k$, so that $\supp(\overline\nu_k)\subset[-M_k,M_k]$, and define
\[
\delta_k:=\min\Bigl\{\frac18,\ \frac{2^{-k}}{8(M_k+1)}\Bigr\}\in\Bigl(0,\frac18\Bigr].
\]
Finally, define
\begin{equation}\label{eq:def_nuk}
\nu_k := (1-\delta_k)\,\overline\nu_k + \delta_k\,\text{Unif}_{[-M_k,M_k]}.
\end{equation}
Then $\nu_k\ll\lambda$, $\supp(\nu_k)=[-M_k,M_k]$ and, for every $x\in[-M_k,M_k]$,
\[
\rho_{\nu_k}(x)=(1-\delta_k)\rho_{\overline\nu_k}(x)+\delta_k\,\rho_{\text{Unif}_{[-M_k,M_k]}}(x)
\ge \delta_k\,\frac{1}{2M_k}>0,
\]
so \ref{prop:approx_nu_1:ac} and \ref{prop:approx_nu_1:lb} hold.

Fix $p\in(0,1)$. Since $\alpha_k\downarrow0$, $\beta_k\downarrow0$, and $\varepsilon_k\downarrow0$, from \eqref{eq:quantile_sandwich} and the fact that $Q_{\widehat\nu_k}(p)=Q_\nu(p)$ for all large $k$ we obtain $Q_{\overline\nu_k}(p)\to Q_\nu(p)$.
From \eqref{eq:def_nuk} we have $(1-\delta_k)F_{\overline\nu_k}(x)\le F_{\nu_k}(x)\le (1-\delta_k)F_{\overline\nu_k}(x)+\delta_k$, hence
\begin{equation}\label{eq:quantile_mixture_sandwich}
Q_{\overline\nu_k}\!\left(\frac{(p-\delta_k)_+}{1-\delta_k}\right)
\le Q_{\nu_k}(p)
\le Q_{\overline\nu_k}\!\left(\frac{p}{1-\delta_k}\right).
\end{equation}

Now, fix a continuity point $p\in(0,1)$ of $Q_\nu$. For every sufficiently small $\epsilon>0$ and all sufficiently large $k$,
\[
p-\epsilon\leq \frac{(p-\delta_k)_+}{1-\delta_k} \leq \frac{p}{1-\delta_k}\leq p+\epsilon.
\]
Hence, by \eqref{eq:quantile_mixture_sandwich} and the monotonicity of $Q_{\overline\nu_k}$,
\[
Q_{\overline\nu_k}(p-\epsilon)\leq Q_{\nu_k}(p) \leq Q_{\overline\nu_k}(p+\epsilon).
\]
Letting first $k\to\infty$ and then $\epsilon\downarrow0$ yields $Q_{\nu_k}(p)\to Q_\nu(p)$. Since $Q_\nu$ is continuous a.e., this proves~\ref{prop:approx_nu_1:pt}.

To prove \ref{prop:approx_nu_1:cx}, we use that for any $\eta,\xi\in\Prob_1(\RR)$,
\begin{equation}\label{eq:U_L1_Q}
|U_{\eta}(p)-U_{\xi}(p)|
=\left|\int_0^p\bigl(Q_\eta(u)-Q_\xi(u)\bigr)\,du\right|
\le \int_0^1 |Q_\eta(u)-Q_\xi(u)|\,du,\qquad p\in[0,1].
\end{equation}
On $(\alpha_k,1-\beta_k)$ we have $Q_{\widehat\nu_k}=Q_\nu$. On $(0,\alpha_k]$ and $[1-\beta_k,1)$, using the definitions of $a_k,b_k$,
\[
\int_0^{\alpha_k}|Q_\nu(u)-Q_{\widehat\nu_k}(u)|\,du \le D_k + (s_k)_+ \le D_k+|s_k|,
\]
\[
\int_{1-\beta_k}^1|Q_\nu(u)-Q_{\widehat\nu_k}(u)|\,du \le E_k + (-s_k)_+ \le E_k+|s_k|,
\]
hence $\int_0^1|Q_\nu(u)-Q_{\widehat\nu_k}(u)|du\le D_k+E_k+2|s_k|\le 6\cdot 2^{-k}$. Next, \eqref{eq:quantile_sandwich} gives pointwise
$|Q_{\widehat\nu_k}(u)-Q_{\overline\nu_k}(u)|\le \varepsilon_k$, hence
\[
\int_0^1|Q_{\widehat\nu_k}(u)-Q_{\overline\nu_k}(u)|du\le \varepsilon_k.
\]
Finally, by definition \eqref{eq:def_nuk}, $\nu_k$ is obtained from $\overline\nu_k$ by replacing a draw from $\overline\nu_k$ with probability $\delta_k$ by a draw from $\mathrm{Unif}_{[-M_k,M_k]}$: since both laws are supported in $[-M_k,M_k]$, this replacement changes the value by at most $2M_k$, hence
\[
\int_0^1|Q_{\overline\nu_k}(u)-Q_{\nu_k}(u)|du = \EE[|Q_{\overline\nu_k}(U)-Q_{\nu_k}(U)|]\le 2M_k\delta_k,\qquad \text{where } U \sim \text{Unif}_{[0,1]}.
\]
Combining the three bounds yields
\[
\int_0^1 |Q_{\nu_k}-Q_\nu|
\le \int_0^1 |Q_\nu-Q_{\widehat\nu_k}| + \int_0^1 |Q_{\widehat\nu_k}-Q_{\overline\nu_k}| + \int_0^1 |Q_{\overline\nu_k}-Q_{\nu_k}|
\le 6 \cdot 2^{-k}+\varepsilon_k+2M_k\delta_k
\le 8\cdot 2^{-k},
\]
where we used $\varepsilon_k\le 2^{-k}$ and $2M_k\delta_k\le 2M_k\cdot \frac{2^{-k}}{8(M_k+1)}\le 2^{-k}/4$.

Let $K:=\min\{m\ge1:8\cdot 2^{-m}\le \vartheta/2\}$ and redefine the final sequence by $\nu_k:=\nu_{k+K}$. We apply the same procedure to all relevant sequences. Then for every $k\ge1$,
\[
\int_0^1|Q_{\nu_k}-Q_\nu|\le \vartheta/2\quad\Rightarrow\quad
\sup_{p\in[0,1]}|U_{\nu_k}(p)-U_\nu(p)|\le \vartheta/2
\]
by \eqref{eq:U_L1_Q}.
Therefore, for every $i=1,\dots,n-1$,
\[
U_\mu(p_i^*)-U_{\nu_k}(p_i^*)
\ge \bigl(U_\mu(p_i^*)-U_\nu(p_i^*)\bigr)-|U_\nu(p_i^*)-U_{\nu_k}(p_i^*)|
\ge \vartheta-\frac{\vartheta}{2}=\frac{\vartheta}{2}>0,
\]
so  $(\mu,\nu_k)$ is irreducible for every $k$, proving \ref{prop:approx_nu_1:cx}.

To prove \ref{prop:approx_nu_1:dom}, define for $p\in(0,1)$
\begin{align*}
	Q(p):=2\bigl(|Q_\nu(p/2)|+|Q_\nu((1+p)/2)|+1\bigr)
+\sum_{k\ge1}(M_k+1)\bigl(\mathbf 1_{(0,4\delta_k]}(p)+\mathbf 1_{[1-4\delta_k,1)}(p)\bigr)\\
+2\left(\sum_{k\ge1}(|a_k|+1)\mathbf 1_{(0,4\alpha_k]}(p)
+\sum_{k\ge1}(|b_k|+1)\mathbf 1_{[1-4\beta_k,1)}(p)\right).
\end{align*}
Then $Q\in L^1(0,1)$ since $Q_\nu\in L^1(0,1)$ and $\sum_{k\ge1}4\delta_k(M_k+1)\le \sum_{k\ge1} 4\cdot\frac{2^{-k}}{8}<\infty$, and moreover
\[
\alpha_k(|a_k|+1)\le \alpha_k|Q_\nu(\alpha_k)|+(s_k)_+ +\alpha_k \le \int_0^{\alpha_k}|Q_\nu(u)|du + (D_k+E_k) +\alpha_k \le 4\cdot 2^{-k}
\]
\[
\beta_k(|b_k|+1)\le \int_{1-\beta_k}^1|Q_\nu(u)|du + (D_k+E_k) + \beta_k\le 4\cdot 2^{-k},
\]
which imply $\sum_k 4\alpha_k(|a_k|+1)<\infty$ and $\sum_k 4\beta_k(|b_k|+1)<\infty$.

Fix $k$ and $p\in(0,1)$.
If $p\in(0,4\delta_k]\cup[1-4\delta_k,1)$, then $|Q_{\nu_k}(p)|\le M_k\le Q(p)$.
Assume $p\in[4\delta_k,1-4\delta_k]$ and set
\[
r^-:=\frac{p-\delta_k}{1-\delta_k},\qquad r^+:=\frac{p}{1-\delta_k}.
\]
By \eqref{eq:quantile_mixture_sandwich} and $(p-\delta_k)_+=p-\delta_k$, we have $Q_{\overline\nu_k}(r^-)\le Q_{\nu_k}(p)\le Q_{\overline\nu_k}(r^+)$.
Moreover $p\ge4\delta_k$ gives $r^-\ge p-\delta_k\ge p/2$, while $p\le 1-4\delta_k$ and $\delta_k\le 1/8$ give $\frac{1}{1-\delta_k}\le 1+2\delta_k$ and hence $r^+\le p(1+2\delta_k) \le p+2\delta_k  \le (1+p)/2$.
Thus $r^-,r^+\in I_p:=[p/2,(1+p)/2]$.
Since $Q_{\overline\nu_k}$ is non-decreasing, we obtain
\[
|Q_{\nu_k}(p)|\le |Q_{\overline\nu_k}(r^-)|+|Q_{\overline\nu_k}(r^+)|.
\]
Fix $r\in I_p$. By \eqref{eq:quantile_sandwich} and $\varepsilon_k\le1$,
\[
|Q_{\overline\nu_k}(r)|\le |Q_{\widehat\nu_k}(r)|+1.
\]
If $r\in(\alpha_k,1-\beta_k)$ then $Q_{\widehat\nu_k}(r)=Q_\nu(r)$ and $r\in I_p$ implies $|Q_\nu(r)|\le |Q_\nu(p/2)|+|Q_\nu((1+p)/2)|$ by monotonicity of $Q_\nu$.
If $r\le \alpha_k$ then $Q_{\widehat\nu_k}(r)=a_k$ and $r\ge p/2$ implies $p\le 2\alpha_k<4\alpha_k$, hence $\mathbf 1_{(0,4\alpha_k]}(p)=1$.
If $r\ge 1-\beta_k$ then $Q_{\widehat\nu_k}(r)=b_k$ and $r\le (1+p)/2$ implies $p\ge 1-2\beta_k>1-4\beta_k$, hence $\mathbf 1_{[1-4\beta_k,1)}(p)=1$.
Combining these observations yields, for every $r\in I_p$,
\[
|Q_{\overline\nu_k}(r)|
\le |Q_\nu(p/2)|+|Q_\nu((1+p)/2)|+1+(|a_k|+1)\mathbf 1_{(0,4\alpha_k]}(p)+(|b_k|+1)\mathbf 1_{[1-4\beta_k,1)}(p).
\]
Applying this bound to $r=r^-$ and $r=r^+$ gives $|Q_{\nu_k}(p)|\le Q(p)$.
This proves \ref{prop:approx_nu_1:dom}.
\end{proof}

\begin{proposition}[Approximation of the reference measure]
\label{prop:approx_q_1}
Let $q\in \Prob(\RR)$ be such that $q \ll \lambda$.
Then there exists $(q_k)_{k\in\NN} \subseteq \Prob(\RR)$ such that
\begin{enumerate}[label=(\roman*)]
	\item $\supp(q_k)$ is a bounded interval  for all $k \in \NN$,
	\item $q_k \ll \lambda$ and $\rho_{q_k} \in L^\infty(\RR)$ for all $k \in \NN$,
	\item $\|\rho_{q_k}-\rho_q\|_{L^1(\RR)} \to 0$ as $k\to\infty$.
\end{enumerate}
\end{proposition}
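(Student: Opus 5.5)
The construction I would use is truncation, then mollification, then normalization. Since $\rho_q\in L^1(\RR)$ with $\|\rho_q\|_{L^1}=1$, I would first pick $R_k\uparrow\infty$ with $\int_{[-R_k,R_k]}\rho_q\ge 1-1/k$. Then I would replace $\rho_q$ by the bounded function
\[
\widetilde\rho_k:=\min(\rho_q,k)\,\mathbf 1_{[-R_k,R_k]} .
\]
Monotone convergence gives $\|\widetilde\rho_k-\rho_q\|_{L^1(\RR)}\to0$: the integrand $\min(\rho_q,k)\mathbf 1_{[-R_k,R_k]}$ increases pointwise to $\rho_q$, and it is dominated by $\rho_q$.

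The support of $\widetilde\rho_k$ need not be an interval, because $\rho_q$ may vanish on subsets of $[-R_k,R_k]$. So I would add a small uniform component:
\[
\rho_{q_k}:=\frac{\widetilde\rho_k+\tfrac1k\,\tfrac{1}{2R_k}\mathbf 1_{[-R_k,R_k]}}{\|\widetilde\rho_k\|_{L^1}+\tfrac1k}.
\]
This is a probability density. It is bounded by $(k+1/(2kR_k))/(\|\widetilde\rho_k\|_{L^1})$, and for large $k$ the denominator is at least $1/2$. It is strictly positive exactly on $[-R_k,R_k]$, so $\supp(q_k)=[-R_k,R_k]$ is a bounded interval. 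This gives (i) and (ii).

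For (iii), I would use the triangle inequality:
\[
\|\rho_{q_k}-\rho_q\|_{L^1}\le \Bigl|\tfrac{1}{c_k}-1\Bigr|\,\|\widetilde\rho_k\|_{L^1}+\tfrac{1}{kc_k}+\|\widetilde\rho_k-\rho_q\|_{L^1},
\qquad c_k:=\|\widetilde\rho_k\|_{L^1}+\tfrac1k .
\]
Each term tends to $0$ because $c_k\to1$. If $\|\widetilde\rho_k\|_{L^1}$ happens to be small for the first few $k$, I would simply start the sequence at an index $k_0$ where $\|\widetilde\rho_{k}\|_{L^1}\ge 1/2$.

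I do not expect a genuine obstacle. The only point needing care is the interval-support requirement, and adding the uniform component $\frac1{2kR_k}\mathbf 1_{[-R_k,R_k]}$ is exactly what handles it. If later uses also needed strict positivity of the density on the interior of the support, as in Assumption~\ref{ass:A2}, the same construction already provides it.
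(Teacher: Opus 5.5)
Your proof is correct and is essentially the paper's construction: truncate $\rho_q$ at height $k$ and to $[-R_k,R_k]$, add a $1/k$-weighted uniform density on $[-R_k,R_k]$ so the support is an interval, renormalize, and conclude with monotone convergence and the triangle inequality. The differences are cosmetic. You put the $1/k$ into the normalizing constant, which keeps the denominator positive and so removes the need for the paper's choice of $R_0$ with $q([-R_0,R_0])>0$. Also, the ``mollification'' step you announce in your first sentence is never used and is not needed.
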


\begin{proof}
Choose $R_0>0$ such that $q([-R_0,R_0])>0$, and set $R_k:=R_0+k$ for every $k\in\NN$.
Define
\[
g_k(x):=\mathbf{1}_{[-R_k,R_k]}(x)\,(\rho_q(x)\wedge k),
\qquad
\rho_{q_k}(x):=\left(1-\frac1k\right)\frac{g_k(x)}{\int_\RR g_k(x)\,dx}
+\frac1k\,\frac{\mathbf{1}_{[-R_k,R_k]}(x)}{2R_k}.
\]
Since $q([-R_0,R_0])>0$, one has $\int_\RR g_k(x)\,dx>0$ for every $k$, so $\rho_{q_k}$ is well-defined.
Moreover, $q_k\ll\lambda$, $\rho_{q_k}\in L^\infty(\RR)$, and
\[
\rho_{q_k}(x)\ge \frac{1}{2kR_k}>0
\qquad\text{for every }x\in[-R_k,R_k],
\]
while $\rho_{q_k}(x)=0$ for every $x\notin[-R_k,R_k]$. Hence
\[
\supp(q_k)=[-R_k,R_k].
\]

Now observe that, for every $x\in\RR$,
\[
g_k(x) \uparrow \rho_q(x)
\qquad\text{as }k\to\infty.
\]
Therefore, by the Monotone Convergence Theorem,
\[
\int_\RR g_k(x)\,dx \longrightarrow \int_\RR \rho_q(x)\,dx = 1.
\]
Also,
\[
\|g_k-\rho_q\|_{L^1(\RR)}
=\int_\RR (\rho_q(x)-g_k(x))\,dx
=1-\int_\RR g_k(x)\,dx
\longrightarrow 0.
\]

Finally,
\[
\left\|\frac{g_k}{\int_\RR g_k(x)\,dx}-\rho_q\right\|_{L^1(\RR)}
\le \left\|\frac{g_k}{\int_\RR g_k(x)\,dx}-g_k\right\|_{L^1(\RR)}+\|g_k-\rho_q\|_{L^1(\RR)}.
\]
Since $\|g_k\|_{L^1(\RR)}=\int_\RR g_k(x)\,dx$, we get
\[
\left\|\frac{g_k}{\int_\RR g_k(x)\,dx}-g_k\right\|_{L^1(\RR)}
=\left|\frac1{\int_\RR g_k(x)\,dx}-1\right|\int_\RR g_k(x)\,dx
=\longrightarrow 0.
\]
Therefore,
\[
\|\rho_{q_k}-\rho_q\|_{L^1(\RR)}
\le \left(1-\frac1k\right)
\left\|\frac{g_k}{\int_\RR g_k(x)\,dx}-\rho_q\right\|_{L^1(\RR)}
+\frac1k\left\|\frac{\mathbf{1}_{[-R_k,R_k]}}{2R_k}-\rho_q\right\|_{L^1(\RR)}.
\]
Since both $\frac{\mathbf{1}_{[-R_k,R_k]}}{2R_k}$ and $\rho_q$ have $L^1$-norm equal to $1$, we have
\[
\left\|\frac{\mathbf{1}_{[-R_k,R_k]}}{2R_k}-\rho_q\right\|_{L^1(\RR)}\le 2,
\]
so
\[
\|\rho_{q_k}-\rho_q\|_{L^1(\RR)}
\le
\left\|\frac{g_k}{\int_\RR g_k(x)\,dx}-\rho_q\right\|_{L^1(\RR)}+\frac{2}{k}
\longrightarrow 0.
\]
\end{proof}

\printbibliography

\end{document}